	\newcommand{\cA}{\mathcal{A}}	 	
	\newcommand{\cB}{\mathcal{B}}	
	\newcommand{\cC}{\mathcal{C}}	
		\newcommand{\DD}{\mathbbm{D}}
	\newcommand{\cE}{\mathcal{E}}	\newcommand{\EE}{\mathbbm{E}}
	\newcommand{\cF}{\mathcal{F}}	
	\newcommand{\cH}{\mathcal{H}}
	\newcommand{\cK}{\mathcal{K}}
	\newcommand{\cN}{\mathcal{N}}	\newcommand{\NN}{\mathbbm{N}}
	\newcommand{\cP}{\mathcal{P}}	\newcommand{\PP}{\mathbbm{P}}
	\newcommand{\cR}{\mathcal{R}}	\newcommand{\RR}{\mathbbm{R}}
	\newcommand{\cS}{\mathcal{S}}	
	\newcommand{\cT}{\mathcal{T}}
	\newcommand{\bB}{\mathbf{B}}
	\newcommand{\bJ}{\mathbf{J}}
	\newcommand{\bV}{\mathbf{V}}	
	\newcommand{\bW}{\mathbf{W}}	
	\newcommand{\bX}{\mathbf{X}}	
	\newcommand{\bZ}{\mathbf{Z}}	
	\newcommand{\eps}{\varepsilon}
	\newcommand{\dd}{\mathop{}\!\mathrm{d}}
	\newcommand{\blue}[1]{#1}
	\newcommand{\ind}{\mathbbm{1}}
	\newcommand{\1}{\mathbbm{1}}
	\DeclareMathOperator{\var}{Var}
	\newcommand{\bx}{\mathbf{x}}
	\newcommand{\by}{\mathbf{y}}
	\newcommand{\bl}{\mathbf{l}}
	\newcommand{\bw}{\mathbf{w}}
	\newcommand{\bv}{\mathbf{v}}
	\theoremstyle{plain}
		\newtheorem{theorem}{Theorem}[section]
		\newtheorem*{theorem*}{Theorem}
		\newtheorem{corollary}[theorem]{Corollary}
		\newtheorem{lemma}[theorem]{Lemme}
		\newtheorem{proposition}[theorem]{Proposition}
		\newtheorem{remark}[theorem]{Remark}
		\newtheorem{assumption}[theorem]{Assumption}
	\theoremstyle{definition}
		\newtheorem{definition}[theorem]{Definition}
		\newtheorem{example}[theorem]{Example}
	\newif\ifhidecorrections
	\newtcolorbox{hp}[1][]{
   		colback=Gainsboro, enhanced, frame hidden, borderline west = {0.5pt}{0pt}{dashed}, #1
		}
	\author{R\'emi Catellier}
	\email{remi.catellier@univ-cotedazur.fr}
	\address[R\'emi Catellier]{Université Côte D'Azur, CNRS, INRIA, LJAD}
	\date{\today}
	\title{Regularization by noise for rough differential equations driven by Gaussian rough paths}
	\subjclass{60L20,60H07,60H50,60G22} 
	\keywords{Rough paths, regularization by noise, Malliavin calculus} 
\author{Romain Duboscq}
\address[Romain Duboscq]{IMT; UMR5219, Universit\'e de Toulouse; CNRS, INSA IMT}
\email{romain.duboscq@math.univ-toulouse.fr}
\newcommand{\UMDp}{$\text{UMD}_p$ }
\begin{document}

\begin{abstract}
We consider the rough differential equation with drift driven by a Gaussian geometric rough path. Under natural conditions on the rough path, namely non-determinism, and uniform ellipticity conditions on the diffusion coefficient, we prove path-by-path well-posedness of the equation for poorly regular drifts. In the case of the fractional Brownian motion $B^H$ for $H>\frac14$, we prove that the drift may be taken to be $\kappa>0$ Hölder continuous and bounded for $\kappa>\frac32 - \frac1{2H}$. A flow transform of the equation and Malliavin calculus for Gaussian rough paths are used to achieve such a result. 
\end{abstract}

\maketitle

\section{Introduction}

In this paper, we want to study a regularization by noise property for the following rough differential equation :
\begin{equation}\label{eq:RDE_first}
\dd x_t = b(x_t) \dd t + \sigma(x_t) \dd \bw_t,
\end{equation}
where $\bw$ is a (weakly) geometric rough path. We intend to show that under suitable conditions on $\sigma$ and $\bw$ this equation is wellposed when $b$ has (very) poor regularity properties.

This phenomenon of regularization by noise is now well-studied in several situations. A lot of work has been devoted to the additive case $\sigma\equiv 1$ and for several kind of processes. One can think of the seminal work of \cite{veretennikovStrongSolutionsExplicit1981,krylovStrongSolutionsStochastic2005} for strong solutions of this equation when $w$ is a Brownian motion. In this additive case and when $w$ is a Brownian motion, Davie \cite{davieUniquenessSolutionsStochastic2007} exhibits a new and stronger notion of uniqueness and proved that the previous equation has a strong solution and enjoys "path-by-path" uniqueness whenever $b\in L^\infty(\RR^d)$, whereas standard theory (with supporting counterexamples) requires $b$ to be Lipschitz continuous. One can consult \cite{shaposhnikovPathwiseVsPathbypath2020} for a deep discussion about notions of solutions in the additive case.

Since then, Davie's work has led to a certain number of results in several directions which are usually done in the additive cases. One can take $w$ to be a more general stochastic process and still has (even better) regularization by noise phenomenon. For example one can consider $w$ to be a   fractional Brownian motion \cite{catellierAveragingIrregularCurves2016}, a L\'evy process, see \cite{priolaDavieUniquenessDegenerate2020,athreyaStrongExistenceUniqueness2020} and the references therein, or a more general stochastic process \cite{gerencserRegularisationRegularNoise2022a,harangRegularizationODEsPerturbed2021,duboscqStochasticRegularizationEffects2016}. 
In a more general context some similar results when $w$ is non random and as general as possible can be exhibited \cite{galeatiNoiselessRegularisationNoise2021,catellierAveragingIrregularCurves2016} and \cite{galeatiPrevalenceRhoIrregularity2020,romitoAnotherNotionIrregularity2022}. 
This phenomenon of regularization by noise in a path-by-path manner was also study in other kind of problems, such as SPDEs \cite{catellierPathwiseRegularizationStochastic2021,choukNonlinearPDEsModulated2015a,choukNonlinearPDEsModulated2014,choukPathbypathRegularizationNoise2019,athreyaWellposednessStochasticHeat2022,catellierRoughLinearTransport2016}, mean field differential equations \cite{galeatiDistributionDependentSDEs2021,bauerStrongSolutionsMeanfield2018}, and in a mixed additive/multiplicative setting \cite{bechtoldWeakSolutionsSingular2022a,galeatiRegularizationMultiplicativeSDEs2020}.

A general strategy in this additive context is to look at the averaged field
\[t,x \mapsto \int_0^t b(x+w_r) \dd t\]
and to study its space-time regularity properties. To do so, one could use for example an Itô-Tanaka trick (see  \cite{coutinItOTanakaTrick2019} in the fractional Brownian motion setting) or the corresponding Kolmogorov equation \cite{priolaDavieUniquenessDegenerate2020}.
More recently, techniques involving stochastic sewing lemma \cite{leStochasticSewingLemma2020} and/or properties of the occupation measure/local time of the process \cite{harangRegularizationODEsPerturbed2021} where used. Once a space-time regularity of the averaged field is exhibited, one can then use some pathwise non-linear calculus \cite{galeatiNonlinearYoungDifferential2021} to conclude. We will see in the following how this strategy of proof is implemented in our setting.

In the full multiplicative case, there are very few results (up to our knowledge only two). First of all, one must make sense of the equation in a pathwise sense. To do so, one usually relies on rough path theory  \cite{lyonsDifferentialEquationsDriven1998,
davieDifferentialEquationsDriven2007,
lyonsSystemControlRough2002,
bailleulFlowsDrivenRough2015,
gubinelliControllingRoughPaths2004,friz2020course,friz2010multidimensional}.

Up to our knowledge in this setting, the first related result is due (again) to Davie \cite{davieIndividualPathUniqueness2011a}, where he studies Equation \eqref{eq:RDE_first} where $\bw$ is the Stratonovitch Brownian rough path. In this setting, when $\sigma$ is an invertible $C^{3}_b$ matrix, he shows that uniqueness holds whenever $b\in L^\infty$. The strategy of the previous work is to consider a strong solution $(X_t)_{t\in[0,T]}$ of the Stratonovitch SDE
\[X_{t} = X_0 + \int_0^t b(X_r)\dd r + \int_0^t \sigma(X_r)\circ\dd B_r,\]
to use a Girsanov transform on
\[W_t = B_t + \int_0^t \sigma(X_r)^{-1} b(X_r) \dd r\]
and to get back to the driftless  Stratonovitch SDE on an equivalent probability measure $\tilde{\PP}$.
\[Y_t = Y_0 +  \int_0^t \sigma(Y_t) \circ\dd W_r,.\]
Then the use  of the Kolmogorov PDE related to the previous equation enables to  prove that $Y$ enjoys some regularizing properties. Finally, one can write $X=Y + Z$ and $Z$ solve the following equation 
\[Z_t = X_0 + \int_0^t b(Z_r + Y_r )\dd r.\]
Hence, under $\tilde{\PP}$, one can use the regularizing properties of $Z$ and prove the result.

It is not directly possible to implement such a strategy in our context. Firstly, one lacks of a suitable Girsanov transform in a general (Gaussian) rough path setting. Secondly, there is no Kolmogorov equation. Nevertheless, one can see two ingredients of the proof : going back to the driftless equation, and considering an ODE where the drift and the solution of the driftless equation appears. 

The second result about uniqueness of SDE in a poor regularity setting is due to Athreya, Bhar and Shekhar \cite{athreyaSmoothnessFlowPathbyPath2017} when $\bw$ is the fractional Brownian motion rough path (for $H>\frac13$). There strategy is to make Lamperti transform to get back to an additive problem, and then to use the result of \cite{catellierAveragingIrregularCurves2016}. \blue{Whereas} in dimension $d=1$ this strategy works pretty well, in dimension $d\ge 2$ one has to ask that $\sigma^{-1}$ is conservative. This is a huge restriction, since one could expect that $\sigma$ strictly elliptic \blue{should} be sufficient.

While we were finishing the writing of this paper, a paper by Dareiotis and Gerenc\'ser with similar results \cite{dareiotisPathbypathRegularisationMultiplicative2022} came up. Note that the techniques involved are quite different, and we are able to handle more general cases (general Gaussian rough paths instead of only fractional Brownian motion). The price to pay in our case is a slightly worse condition on $\sigma$ and on $b$, but a far better result on the flow of the equation.

Finally let us mention that in the rough path setting, some results have appeared considering non-Lipschitz drift \cite{bonnefoiPrioriBoundsRough2022,riedelRoughDifferentialEquations2017}. These \blue{works} focus on growth (non-linear damping) of the coefficients and wellposedness in that context. Furthermore, the \emph{local}-Lipschitz continuity is always needed. 

Our strategy to tackle the problem of uniqueness  of Equation \eqref{eq:RDE_first} with singular coefficients may be summarize as follow, and is inspired by the work of Davie \cite{davieIndividualPathUniqueness2011a}. In order to compensate the lack of \blue{a} Girsanov transform in this setting, we rely on a flow transformation presented in the rough path setting in \cite{riedelRoughDifferentialEquations2017}. Indeed, it  allows us to consider the regularization properties of the flow of the driftless equation. Furthermore, in order to replace the Kolmogorov equations arising in a Brownian context, we will use some Malliavin calculus in a Gaussian rough path setting, which have been developed  by several authors \cite{baudoinProbabilityLawsSolutions2016,cass2010densities,cass2009non,louLocalTimesStochastic2017,cass2015smoothness,gess2020density,inahama2014malliavin}

\subsection{Difficulties and extended plan of the paper}

Let us emphasize the difficulties and achievements of the paper. We have divided the study of the uniqueness of solutions of Equation \eqref{eq:RDE_first} in three parts.

The first one, in Section \ref{section:rough_paths}, recalls the basic definitions and results about rough path theory. The main idea of this part is a development of an idea of \cite{riedelRoughDifferentialEquations2017}, and consists of proving an equivalence for the uniqueness of solution (in the sense of Davie) of the rough differential equation  \eqref{eq:RDE_first}  with the uniqueness of the solution to the standard ODE 
\[y'(t) = D\varphi_t\big(y(t)\big)^{-1} b\big(\varphi_t(y(t)\big),\] 
where $\varphi$ is the flow arising from the rough differential equation when $b\equiv 0$. This is done in Theorem \ref{theorem:main1}. Somehow, this is a way of avoiding the Girsanov transform from Davie's work \cite{davieIndividualPathUniqueness2011a} while still working with the solution of the driftless equation. However, this comes at \blue{a} price: we need to work with the flow instead of a trajectory of the driftless system. Furthermore, we can see the appearance of the inverse of the Jacobian $(D\varphi)^{-1}$ that we need to handle. Another objective of this part is to exploit an idea from \cite{catellierAveragingIrregularCurves2016} and to exhibit a criterion for this averaged field 
\[t,x \mapsto \int_{0}^t \big(D\varphi_r(x)\big)^{-1} b\big(\varphi_r(x)\big)\dd r\]
such that the ODE (and hence the RDE) has a unique solution. 

The second part (Section \ref{section:kolmo}) focuses on the regularity of the averaged field whenever the flow $\varphi$ has some stochastic properties, this is done in Theorem \ref{theorem:kolmo} and Corollary \ref{corollary:averaged_holder}. The idea here is to use Burkholder-Davis-Gundy (BDG) inequality for martingales in an infinite dimensional setting (namely in $L^p(\RR^d)$). This is quite close to the use of the stochastic sewing lemma in infinite dimensional spaces \cite{leStochasticSewingBanach2022}. Nevertheless, since those work were developed in parallel we keep here with our presentation. Furthermore, since a BDG inequality in Banach spaces is not such a common topic, and in order to be as self contained as possible, we have included in the Appendix \ref{appen:BDG} \blue{for some key elements on this topic}. Note that in this section we have proved a general Kolmogorov criterion for regularity of the averaged field in Besov spaces which could be interesting \blue{by} itself. 

The third part (Section \ref{section:Malliavin} and \ref{section:Malliavin2}) focuses on a tool to avoid the use of Kolmogorov equations: the Malliavin calculus. Indeed, by using Section \ref{section:kolmo}, the main idea to deduce the regularizing properties of the averaged field constructed thanks to the flow of the driftless RDE is to have a integration by part formula, such that, formally
\blue{\[ \EE[ D\varphi_t(x)^{-1} \nabla b\big(\varphi_t(x)\big) | \cF_s] = \EE[b\big(\varphi_t(x)\big) H_{s,t}(x) |\cF_s],\]
}
with
\[\|H_{s,t}(x)\|_{L^q(\Omega)} \lesssim |t-s|^{-H},\]
for some $H>0$. This is precisely one of the key points of Malliavin calculus. Note that, in that setting, one has to focus on conditional Malliavin calculus. Hence, Section \ref{section:Malliavin} recalls the standard facts and results about Malliavin calculus in a Gaussian context. In this section, some conditional integrations by part results are also proved. Section \ref{section:Malliavin2} focuses itself on Malliavin calculus for solutions of driftless rough differential equation driven by Gaussian rough paths. 

Finally, in Section \ref{section:main} we are able to prove the desired wellposedness result which can be stated as follow (see Theorem \ref{cor:MAIN000} and \ref{thm:MAIN000} for precise statements and assumptions):

\begin{theorem*}
Let $2<p<4$. Let $\bW$ be a $p$-geometric Gaussian rough path such that its first component $(W_t)_{t\in[0,T]}=(\bW^1_{0,t})_{t\in[0,T]}$ is $\alpha$-locally non-determinism, namely
\begin{equation*}
\inf_{0\leq s\leq t\leq T} (t-s)^{-\alpha} \var\left[W_t-W_s\middle| \cF_{[0,s]}\vee\cF_{[t,1]}\right] = c_W >0.
\end{equation*}
Assume that $\sigma \in C^\infty_b(\RR^d;(\RR^d)^{\otimes 2})$ is uniformly elliptic. Let $b\in \cC^{\kappa}$ with $\kappa + \frac{1}{\alpha} > \frac32 $ and $\kappa>0$. Then  almost surely there is a unique solution of Equation \eqref{eq:RDE_first}. Furthermore the solutions of this equation are locally-Lipschitz continuous with respect to the initial condition.

Moreover, when $\bB^H$ is the geometric rough path above the fractional Brownian motion of Hurst parameter $\frac14<H<\frac12$, one can take $\alpha=2H$.
\end{theorem*}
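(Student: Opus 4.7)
The plan is to combine the three main ingredients developed in Sections \ref{section:rough_paths}, \ref{section:kolmo}, and \ref{section:Malliavin}--\ref{section:Malliavin2}, closely following the roadmap laid out in the introduction.

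First, by Theorem \ref{theorem:main1}, path-by-path uniqueness of the RDE \eqref{eq:RDE_first} is equivalent to uniqueness for the ODE $y'(t) = D\varphi_t(y(t))^{-1} b(\varphi_t(y(t)))$, where $\varphi_t(\cdot)$ is the random flow of the driftless RDE $\dd z_t = \sigma(z_t)\dd \bw_t$. This reduces the rough problem to a classical ODE with a random, time-dependent drift. The non-linear Young approach (as in \cite{catellierAveragingIrregularCurves2016,galeatiNonlinearYoungDifferential2021}) then guarantees uniqueness of this ODE in a path-by-path manner, together with local-Lipschitz dependence on the initial condition, as soon as the averaged field
\[T_{s,t}(x) := \int_s^t \big(D\varphi_r(x)\big)^{-1} b\big(\varphi_r(x)\big)\dd r\]
enjoys slightly more than $1/2$ regularity in time and slightly more than $1$ regularity in space, in the $(C^{1/2+\eps}_tC^{1+\eps'}_x)$-sense.

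Second, I would apply the Kolmogorov criterion of Section \ref{section:kolmo} (Theorem \ref{theorem:kolmo} and Corollary \ref{corollary:averaged_holder}) to the random field $T_{s,t}$. Via the Banach-valued BDG inequality of Appendix \ref{appen:BDG}, the required space-time Hölder-Besov regularity follows once one establishes a conditional bound of the form
\[\big\|\EE\bigl[(D\varphi_r(\cdot))^{-1} b(\varphi_r(\cdot))\bigm|\cF_s\bigr]\big\|_{C^{1+\eps'}} \lesssim (r-s)^{-\beta}\|b\|_{\cC^\kappa}\]
for some $\beta < 1/2$ depending on $\kappa$ and $\alpha$. The whole game is thus to gain, conditionally on $\cF_s$, almost $1-\kappa$ spatial derivatives on $b$ at the price of a time-singularity of order $\beta$.

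Third, this gain-of-regularity is exactly what conditional Malliavin calculus on Gaussian rough paths provides. Writing $F_r := D\varphi_r(x)^{-1}$ and $G_r := \varphi_r(x)$, the conditional integration-by-parts formula from Section \ref{section:Malliavin} yields
\[\EE\bigl[F_r \,\nabla b(G_r)\bigm|\cF_s\bigr] = \EE\bigl[b(G_r)\, H_{s,r}(x)\bigm|\cF_s\bigr]\]
for an explicit Malliavin weight $H_{s,r}(x)$ built from $F_r$, from Malliavin derivatives of $\varphi$, and from the inverse of the conditional Malliavin covariance matrix $\gamma_{s,r}(x)$ of $G_r$. The $\alpha$-local non-determinism assumption, combined with the uniform ellipticity of $\sigma$ and the Malliavin regularity of $\varphi$ and $D\varphi^{-1}$ established in Section \ref{section:Malliavin2}, should give $\|\gamma_{s,r}(x)^{-1}\|_{L^q(\Omega)} \lesssim (r-s)^{-\alpha}$, and hence a weight estimate $\|H_{s,r}(x)\|_{L^q(\Omega)} \lesssim (r-s)^{-\alpha/2}$. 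By interpolating between the trivial bound on $b$ in $\cC^\kappa$ and this integration by parts used on $\nabla b$, one obtains an effective spatial smoothing in $\cF_s$-conditional expectation of order $1-\kappa$, at the cost of a time-singularity of order $\frac{\alpha}{2}(1-\kappa+\eps')$. The assumption $\kappa + \frac{1}{\alpha} > \frac{3}{2}$ is precisely what ensures this singularity is strictly less than $1/2$, so that the Kolmogorov input of the previous step is satisfied.

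Finally, plugging these bounds back into the Kolmogorov/BDG step produces the required regularity of $T_{s,t}$, whence the non-linear Young ODE is well-posed and, by Theorem \ref{theorem:main1}, Equation \eqref{eq:RDE_first} admits a unique solution depending locally-Lipschitz continuously on the initial condition. For the fractional Brownian rough path $\bB^H$ with $1/4 < H < 1/2$, it is classical that $\bB^H$ is $p$-geometric for any $p > 1/H$ (allowing $p<4$ when $H>1/4$) and that $B^H$ is $2H$-locally non-deterministic, so that one can take $\alpha = 2H$, producing the stated condition $\kappa > \frac{3}{2} - \frac{1}{2H}$. The main obstacle I anticipate is the careful control of the (unbounded) inverse Jacobian $F_r = D\varphi_r(x)^{-1}$ and of its Malliavin derivatives, together with the propagation of these bounds uniformly in the spatial argument $x$ — the latter being essential to turn pointwise moment bounds into Besov-norm bounds on the averaged field.
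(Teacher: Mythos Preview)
Your overall architecture matches the paper exactly: flow transform (Theorem \ref{theorem:main1}), then the deterministic criterion for uniqueness of the transformed ODE via regularity of the averaged field (Theorem \ref{theorem:main2}), then the Kolmogorov/BDG machinery of Section \ref{section:kolmo}, fed by the conditional Malliavin integration by parts of Sections \ref{section:Malliavin}--\ref{section:Malliavin2}. Your third paragraph --- the Malliavin weight estimate $\|H_{s,r}(x)\|_{L^q}\lesssim (r-s)^{-\alpha/2}$ coming from local non-determinism, uniform ellipticity, and the moment bounds on $\gamma^{-1}$, $D\varphi$, $D\varphi^{-1}$ --- is exactly what the paper proves and uses. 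The anticipated difficulty you name (uniform-in-$x$ control of the inverse Jacobian and its Malliavin derivatives) is indeed the technical core of Section \ref{section:Malliavin2}.

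However, your second paragraph contains a genuine gap in how the pieces are assembled, and your arithmetic does not give the stated threshold. The input to Theorem \ref{theorem:kolmo}/Corollary \ref{corollary:averaged_holder} is \emph{not} a bound of the form $\|\EE[(D\varphi_r)^{-1}b(\varphi_r)\,|\,\cF_s]\|_{C^{1+\eps'}}\lesssim (r-s)^{-\beta}$; it is the per-derivative bound of Assumption \ref{asm:phipsi}, namely $|\EE[\phi(r,x)\,\partial^\beta f(\varphi(r,x))\,|\,\cF_s]|\lesssim (r-s)^{-|\beta|H}\|f\|_\infty$ with $H=\alpha/2$, for several auxiliary pairs $(\phi,\varphi)$ built from $\varphi$, $(D\varphi)^{-1}$ and their spatial derivatives. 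From this, the BDG/martingale argument of Theorem \ref{theorem:kolmo} produces a spatial regularity gain of $\frac{1}{2H}=\frac1\alpha$ for the averaged field, together with time H\"older exponent $\nu$ just above $\tfrac12$: this ``halving'' is the whole point of the martingale step and cannot be replaced by simply integrating a conditional bound in $r$. Corollary \ref{corollary:averaged_holder} then gives $Tb\in\cC^\nu_T\cC^{\kappa+\frac1\alpha-\eps}_w$, and the threshold $\kappa+\frac1\alpha>\frac32$ comes from the requirement $\kappa+\frac1\alpha>2-\nu$ in Theorem \ref{theorem:main2}, not from any condition ``$\beta<\tfrac12$''. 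Indeed, your computation $\frac{\alpha}{2}(1-\kappa+\eps')<\tfrac12$ yields only $\kappa+\frac1\alpha>1+\eps'$, which is strictly weaker than the claimed $\frac32$; so your heuristic both misidentifies the mechanism and fails to recover the correct condition.
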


Finally, in order to be self-contained as possible, we have included some multidimensional Garsia-Rodemich-Rumsey inequalities in the Appendix (Appendix \ref{appendix:GRR}) as well as some background on Besov spaces (Appendix \ref{sec:Besov}).

\subsection{Notations and preliminary}

We gather here some useful notations and definitions for the rest of the article.

Throughout this paper, we consider $(\Omega,\cF,(\cF_t)_{t\in [0,T]},\PP)$ to be a filtered probability space. We recall that $\cS$ is the \blue{Schwartz} space (see \cite{trevesTopologicalVectorSpaces2016}) defined  as 
\begin{equation*}
\cS(\RR^d) = \left\{ f\in\cC^{\infty}_0(\RR^d):\;\forall k\in\NN, \|f\|_{\cS,k}<+\infty \right\},
\end{equation*}
where
\begin{equation*}
\|f\|_{\cS,k} = \max_{|\beta|+|\gamma|\leq k}\sup_{x\in\RR^d} |x^\beta D^{\gamma} f(x)|.
\end{equation*}
It is a topological vector space that becomes a Fréchet space when it is equipped with the distance
\begin{equation*}
d_{\cS}(\phi,\varphi) = \sum_{k = 0}^{\infty} 2^{-k}\frac{\|\phi-\varphi\|_{\cS,k}}{1+\|\phi-\varphi\|_{\cS,k}}.
\end{equation*}
Its continuous dual is $\cS'$ the space of tempered distributions and, moreover, we have the following Gelfand triple
\begin{equation*}
\cS\subset L^2(\RR^d) \subset \cS',
\end{equation*}
where $\cS$ is dense in $\cS'$ with respect to the weak topology of $L^2(\RR^d)$. We denote $\langle\cdot,\cdot\rangle$ the duality product which is an extension of the scalar product in $L^2(\RR^d)$. The Besov spaces \blue{$B^s_{p,r}$} (associated to the Littlewood-Paley blocks) are Banach spaces constructed by a completion in $\cS'$ 
\begin{align*}
B_{p,r}^s : = \left\{ u \in \cS'(\RR^d): \|u\|_{B_{p,r}^s} : = \left(\sum_{j = -1}^{+\infty} 2^{r j s}\|\Delta_{j} u\|_{L^p(\RR^d)}^r \right)^{1/r}<\infty\right\},
\end{align*}
where $s\in\RR$ and $p,q\in [1,\infty]$ (see Appendix \ref{sec:Besov} for details and some results).

We also denote, for any $k\geq 0$, $C^{k}(\RR^d;\RR^\ell)$ the set of functions from $\RR^d$ to $\RR^\ell$ that are continuous, $k$ times differentiable and whose derivatives are continuous. We denote $C^{\infty}(\RR^d;\RR^\ell) = \bigcap_{k = 1}^{+\infty}C^{k}(\RR^d;\RR^\ell)$. We denote by $C^k_b$ (respectively $C^\infty_b$) the functions in $C^k$ (respectively in $C^\infty)$ bounded together with all their derivatives.

\begin{definition}\label{def:weight}
A function from $\RR_+$ to $\RR_+\backslash\{0\}$ is called a weight. Let $f$ be a function from $\RR^d$ to $\RR$. Let us define the weighted sup norm of $f$ by
\[\|f\|_{\infty,w} = \sup_{x\in\RR^d} \frac{|f(x)|}{w(|x|)}.\]
Furthermore, we define 
\begin{equation*}
L^{\infty}_{w}(\RR^d;\RR) = \left\{ f:\RR^d\to\RR:\;\|f\|_{\infty,w}<+\infty\right\}.
\end{equation*}
\end{definition}
When there exists $\gamma>0$ such that $\sup_{x\in\RR_+} \frac{w(x)}{(1+x)^\gamma} <+\infty$, we say that $w$ has $\gamma$-power growth.
 Let us now (with a slight abuse of notations) consider $w = (w_k)_{k\ge 0}$ to be a sequence of positive functions from $\RR_+$ to $\RR_+\backslash\{0\}$. We say that $w$ has $\gamma$-power growth if each $w_k$ has a $\gamma$-power growth.

\begin{definition}\label{def:weighted_holder}
Let $w$ be a sequence of weights and $\alpha>0$. We denote
 \[
\|f\|_{\cC^\alpha_{w}} := \sum_{k = 0}^{\lfloor \alpha \rfloor-1} \|D^k f\|_{\infty,w_k} + \sup_{x\neq y} \frac{|D^{\lfloor \alpha \rfloor-1}f(x)-D^{\lfloor \alpha \rfloor-1}f(y)|}{|x-y|^{\alpha-\lfloor\alpha \rfloor}w_{\lfloor\alpha\rfloor} (|x| + |y|)},
 \]
and define the Banach space
\[\cC^\alpha_{w}(\RR^d;\RR^\ell) := \{f:\RR^d \mapsto \RR^\ell\, : \, \|f\|_{\cC^{\alpha}_{w}}< + \infty \}.\]
\end{definition}
To simplify notations, we denote $\cC^{\alpha} = \cC^{\alpha}_{w}$ when $w = (1,1,\ldots,1)$, and we have (see Appendix \ref{sec:Besov}) $\cC^\alpha = B^\alpha_{\infty,\infty}$ for all $\alpha\in\mathbb{R}^{+*}\backslash \mathbb{N}$. We also denote 
\[C^k_{lin}(\RR^d) = C^{k}_{(1+|\cdot|,1,\cdots,1)}(\RR^d)\]

\begin{remark}
Note that when $w = (w_0,1)$, then $\cC^1_{w}$ denotes the set of Lipschitz continuous function with a growth rate $w_0$.
\end{remark}

Furthermore, we also  extend the definition of the H\"older space for functions of two variables. That is, for some Banach space $E$, and for any $T,\nu>0$, we define
\begin{equation*}
    \mathcal{C}^\nu_T = \mathcal{C}^{\nu}(\Delta^2_T;\RR^d) = \left\{ \psi:\Delta^2_T\mapsto\mathbb{R}^d:\; \|\psi\|_{\mathcal{C}^\nu_T} : = \sup_{(s,t)\in\Delta^2_T}\frac{\|\psi_{s,t}\|_E}{|t-s|^{\nu}}<\infty \right\},
\end{equation*}
where $\Delta^2_T = \{(s,t)\in [0,T]:\; s<t\}$.

We finally recall some properties of linear operator. Let $T$ be a linear operator from $D(T)\supset\cS$ to $E$ where $E$ is a Banach space. For any Banach space $F\supset D(T)$, if we have, for any $x\in D(T)$,
\begin{equation*}
\|Tx\|_{E}\leq C \|x\|_F,
\end{equation*}
for some constant $C>0$ independent of $x$, then $T$ admits a closure from $F$ to $E$. The space of bounded linear operators from $E$ to $F$ is denoted $\cB(F,E)$ and is equipped with the norm
\begin{equation*}
\|T\|_{\cB(F,E)} = \sup_{\|x\|_F\leq 1} \|Tx\|_E
\end{equation*}

Finally, we write $a \lesssim b$ when there exists a constant $c>0$ such that $a \le cb$. 
\section{Rough differential equation with drift}\label{section:rough_paths}
In this section, we recall some basic facts about rough paths and rough differential equations. For more details, one can consult the seminal papers \cite{lyonsDifferentialEquationsDriven1998,davieDifferentialEquationsDriven2007,
gubinelliControllingRoughPaths2004}, but also the monographs \cite{lyonsSystemControlRough2002,friz2020course,friz2010multidimensional}. Note that an interesting perspective for flow driven by rough differential equations is given by Bailleul in \cite{bailleulFlowsDrivenRough2015}. We will focus here on the Davie's definition of solutions of rough differential equations driven by Hölder (weakly) geometric $p$-rough path, with $p\ge 2$.
	\subsection{Rough paths and differential equations in a nutshell}
	
For any $N\in\NN$, a truncated tensor algebra $\cT^N(\RR^d)$ is defined by
\begin{equation*}
\cT^N(\RR^d) = \bigoplus_{k = 0}^N (\RR^d)^{\otimes k},
\end{equation*}
with the convention $(\RR^d)^{\otimes 0} = \RR$. Let $(e_1,\cdots,e_d)$ be the canonical basis of $\RR^d$, then for any $k\in \{1,\cdots,N\}$, 
\[(e_I)_{I\in\{1,\cdots,d\}^k}:=\big(e_{i_1}\otimes\cdots \otimes e_{i_k}\big)_{I=(i_1,\cdots,i_k)\in \{1,\cdots,d\}^k}\]
is the canonical basis of $(\RR^d)^{\otimes k}$, and for any $\bx\in \cT^N(\RR^d)$,
\[\bx = \sum_{k\in\{0,\cdots,N\}} \bx^k = x^0 + \sum_{\substack{k\in\{1,\cdots,N\}\\ I\in \{1,\cdots,d\}^k }} \bx^{k,I} e_I,\]
where $\bx^k$ is the projection of $\bx$ on the $k$-th tensor.

This space is equipped with a vector space structure as well as an operation $\otimes$ defined by
\begin{equation*}
(\bx\otimes \by)^k = \sum_{\ell = 0}^N (\bx^{k-\ell})\otimes\by^\ell,\quad\forall \bx,\by\in \cT^N(\RR^d),
\end{equation*} 
In the end, $(\cT^N(\RR^d),+,\otimes)$ is an associative algebra with unit element $\mathbf{1}\in(\RR^d)^{\otimes 0}$. 

For any $s<t$ and $k\geq 1$, we define the simplex
\begin{equation*}
\Delta^k_{s,t} = \{(u_1,\ldots,u_k)\in[s,t]^k; u_1<\ldots<u_k\}.
\end{equation*}
We also denote $\Delta^k_T : = \Delta^k_{0,T}$. A multiplicative functional is a continuous map $\bw:\Delta^2_T\to \cT^N(\RR^d)$ that verifies, for any $s<u<t$,
\begin{equation*}
\bw_{s,t} = \bw_{s,u}\otimes\bw_{u,t}
\end{equation*}
A fundamental example of such a map are the iterated integrals of a smooth paths $w:[0,T]\to \RR^d$
\begin{equation*}
\bw^k_{s,t} = \sum_{1\leq i_1,\ldots, i_k\leq d} \left(\int_{\Delta^k_{s,t}} \dd w^{i_1}\ldots \dd w^{i_k}\right) e_{i_1}\otimes \ldots\otimes e_{i_k},
\end{equation*}
where $(e_1,\cdots,e_d)$ is the canonical basis of $\RR^d$ and $k\ge 1$.
Then, we call the signature of $w$ the mapping $S_N(w):\Delta^2_T\to \cT^N(\RR^d)$ given by
\begin{equation*}
(s,t)\to \mathbf{1} + \sum_{k = 1}^N\bw_{s,t}^k.
\end{equation*}
It turns out that every signature is a multiplicative functional that belongs to $G^N(\RR^d)$ which is a subset of $\cT^N(\RR^d)$ of group-like elements given by
\begin{equation*}
G^N(\RR^d) : = \exp^{\oplus} (L^N(\RR^d)),
\end{equation*}
where $L^N(\RR^d)$ is the linear span of elements that can be written as a commutator $a\otimes b - b\otimes a$ with $a,b\in \cT^N(\RR^d)$. Furthermore, there is a Carnot-Caratheodory norm on $G^N(\RR^d)$, denoted $\|\cdot\|_{\mathrm{CC}}$, which is homogeneous with respect to the natural scaling operation on $\cT^N(\RR^d)$. We can now introduce the notion of rough paths.

\begin{definition}
The space of weakly geometric $p$-rough paths is the set of multiplicative functionals $\bx:\Delta^2\to G^{\lfloor p\rfloor}(\RR^d)$ such that
\begin{equation*}
\|\bw\|_{p-var ;[0,T]} := \sup_{\pi\in\Pi([0,T])}\left( \sum_{[u,v]\in\pi} \|\bw_{u,v}\|_{\mathrm{CC}}^p\right)^{1/p}<\infty,
\end{equation*}
where $\Pi([0,T])$ is the set of all subdivisions of $[0,T]$. The space of geometric $p$-rough paths is the closure under $\|\cdot\|_{p-var ;[0,T]}$ of smooth signatures $\left\{S_{\lfloor p \rfloor}(w); \; w\in\cC^{\infty}([0,T];\RR^d)\right\}$.
\end{definition}

In the following, we will also use the H\"older norm for rough paths that is given by, for $\alpha\in(0,1)$,
\begin{equation*}
\|\bw\|_{\alpha\textrm{-H\"ol};[0,T]} : = \sup_{[s,t]\subset[0,T]} \frac{\|\bw_{s,t}\|_{\mathrm{CC}}}{(t-s)^{\alpha}}.
\end{equation*}

\subsection{Flow generated by driftless differential equations driven by rough paths}

In this subsection, we give known and basic properties about solutions of rough differential equations of the form

\begin{equation}\label{eq:rde_basic}
\dd y_t = \sigma(y_t) \dd \bw_t,\quad y_0 = x,
\end{equation}
where $\sigma:\RR^d \mapsto (\RR^d)^{\otimes 2}$ is such that 
\[\sigma(x) w = \sum_{i=1}^d \sigma_i(x) w^i,\]
for any $w = (w^i)_{i\in\{1,\cdots,d\}} \in \RR^d$.
Following \cite{bailleulDefinitionSolutionRough2021} we identify $\sigma_i$ with a first order differential operator, namely for any smooth function $f : \mathbb{R}^d\mapsto \mathbb{R}^d$,
\[\sigma_i f (x) = Df(x) \sigma_i(x), \quad x \in \RR^d.\]
For $k\ge 1$ and $I \in \{1,\cdots,d\}^k$ we define the $k$-th order differential operator 
\[\sigma_I = \sigma_{i_1}\cdots \sigma_{i_k}\]
via $\sigma_I f = \sigma_{i_1}(\cdots (\sigma_{i_k}f))$, with the slight abuse of notations $\sigma_I(x) = (\sigma_I id )(x)$.

\begin{definition}\label{def:davie_solutions}
Let $p\ge 2$, $T>0$, 
 $\bw$ be a $\frac1p$-Hölder weakly geometric $p$-rough path  and  $\sigma\in \cC^{\beta}$ for some $\beta\ge \lfloor p \rfloor$. 
A function $y\in \cC^{\frac1p}\big([0,T];\RR^d\big)$ is a solution (in the sense of Davie) to the driftless Rough Differential Equation (RDE) \eqref{eq:rde_basic} if there exists $a>1$  such that for all $(s,t)\in \Delta^2_{T}$,
\[\Bigg|(y_t-y_s) - \Bigg(\sum_{\substack{k\in\{1,\cdots,\lfloor p \rfloor\} \\ I\in \{1,\cdots,d\}^k }} \bw^{k,I}_{s,t}\sigma_{I}(y_s)\Bigg) \Bigg| \lesssim_{\|\bw\|_{\frac{1}{p}-\text{H\"ol};[0,T]},\sigma} |t-s|^a.\]
\end{definition}

The existence, uniqueness and flow properties of solution to driftless RDE is given in \cite{gubinelliControllingRoughPaths2004,friz2010multidimensional,friz2020course,davieDifferentialEquationsDriven2007,bailleulNonexplosionCriteriaRough2020,
bailleulFlowsDrivenRough2015,bailleulDefinitionSolutionRough2021}. 

Note that, by using \cite{bailleulDefinitionSolutionRough2021,cassTreeAlgebrasTopological2017}, one can see the equivalence between Davie's notion of solutions to RDE and Bailleul's one. 
Namely, a path $y$ is a solution to Equation \eqref{eq:rde_basic} if and only if, for any $f\in C^{\lfloor p \rfloor + 1}_{lin}(\RR^d;\RR^d)$ and for all $(s,t)\in \Delta^2_{T}$,
\begin{equation}\label{eq:driftlessRDE_equivalent}\Bigg|\big(f(y_t)-f(y_s)\big) - \Bigg(\sum_{\substack{k\in\{1,\cdots,\lfloor p \rfloor\} \\ I\in \{1,\cdots,d\}^k }} \bw^{k,I}_{s,t}\sigma_{I}f(y_s)\Bigg) \Bigg| \lesssim_f \|f\|_{C^{\lfloor p \rfloor +1}_{lin}} |t-s|^a.
\end{equation}
Furthermore, by using \cite{bailleulNonexplosionCriteriaRough2020}, one has the following result.

\begin{theorem}\label{theorem:driftlessRDE}
Let $p\ge 2$,  $T>0$,  $n\ge 0$, $\bw$ be a $\frac{1}{p}$-Hölder weakly geometric $p$-rough path, and  $\sigma \in C^{\lfloor p \rfloor + n + 1}_b(\RR^d;(\RR^d)^{\otimes 2})$. Then, there is a unique 
\[\varphi:\Delta^2_{T} \mapsto C_{lin}^{n}(\RR^d;\RR^d)\]
and a unique 
\[\psi:\Delta^2_{T} \mapsto C_{lin}^{n}(\RR^d;\RR^d)\]
such that the following points hold.
\begin{enumerate}
\item \label{driftlessRDE:point1}
For all $s\in [0,T)$ and $x\in \RR^d$, $t:[s,T] \mapsto \varphi_{s,t}(x)$ is the unique solution to 
\[\dd x_t = \sigma(x_t) \dd \bw_t,\quad x_s  =x,\quad t\in[s,T].\]
\item \label{driftlessRDE:point2} For all $x\in \RR^d$ and all $(s,u),(u,t) \in \Delta^2_T$, 
\[\varphi_{u,t}(\varphi_{s,u}(x)) = \varphi_{s,t}(x) \quad \text{and} \quad \psi_{s,u}(\psi_{u,t}(x)) = \psi_{s,t}(x).\]
\item \label{driftlessRDE:point3} For all $(s,t)\in \Delta^2_T$, all $x\in\RR^d$  and all $0\le m\le n$, we have
\[
\Bigg|
D^m \varphi_{s,t}(x) - \Bigg((D^m id)(x)+\sum_{\substack{ k\in \{1,\cdots,\lfloor p \rfloor \}\\ I \in \{1,\cdots,d\}^k }} \bw^{k,I}_{s,t} (D^m\sigma_I)(x)\Bigg) 
\Bigg| 
\lesssim_{\|\bw\|_{\frac{1}{p}\text{-H\"ol},[0,T]},\sigma} |t-s|^{\frac{\lfloor p \rfloor + 1}{p}}.
\]
\item  \label{driftlessRDE:point4}
For all $(s,t) \in \Delta^2_T$ and all $x\in \RR^d$	,
\[\varphi_{s,t}(\psi_{s,t}(x)) = \psi_{s,t}(\varphi_{s,t}(x)) = x.\]
\item \label{driftlessRDE:point5}
For all $(s,t)\in \Delta^2_T$, all $x\in\RR^d$  and all $0\le m\le n$, we have
\[\Bigg|D^m \psi_{s,t}(x) - \Bigg((D^m id)(x)+\sum_{\substack{ k\in \{1,\cdots,\lfloor p \rfloor \}\\ I \in \{1,\cdots,d\}^k }} \bv^{k,I}_{s,t} (D^m\sigma_I)(x)\Bigg) \Bigg| \lesssim_{\|\bw\|_{\frac{1}{p}\text{-H\"ol},[0,T]},\sigma} |t-s|^{\frac{\lfloor p \rfloor +1}{p}}\]
where
\[\bv_{s,t} = \sum_{k=0}^{\lfloor p \rfloor} (\mathbf{1}-\bw_{s,t})^{\otimes k}.\]
\item
The maps from $\cC^{\frac1p}\big([0,T];G^{\lfloor p \rfloor}(\RR^d)\big)\big)$ to $\cC^{\frac{1}{p}}([0,T];C^n_{lin}(\RR^d;\RR^d))$ given by
\[\bw \mapsto \varphi\quad \text{and}\quad \bw \mapsto \psi\]
are continuous.
\end{enumerate}
\end{theorem}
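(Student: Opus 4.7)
The plan is to combine the existing results on existence/uniqueness of the flow with a careful analysis of its derivatives and inverse, most of the pieces already being available in the literature we cite. For points \eqref{driftlessRDE:point1} and \eqref{driftlessRDE:point2}, existence, uniqueness and the semi-flow property of $\varphi$ are directly the content of \cite{bailleulFlowsDrivenRough2015,bailleulNonexplosionCriteriaRough2020}: the Davie-type local expansion gives a well-defined almost-flow $\mu_{s,t}(x) = x + \sum_{k,I} \bw^{k,I}_{s,t}\sigma_I(x)$ which, by Bailleul's machinery, admits a unique exact flow $\varphi_{s,t}$ as the limit of compositions along refining partitions. Because $\sigma\in C^{\lfloor p\rfloor +n+1}_b$ one can iterate this construction for partial derivatives: each $D^m\sigma_I$ for $m\le n$ is still bounded (up to linear growth at order $m=0$), so the same sewing argument produces the required regularity class $C^n_{lin}(\RR^d;\RR^d)$ for $\varphi_{s,t}$.

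Next, for the local Taylor estimate \eqref{driftlessRDE:point3}, the idea is to differentiate the RDE in the initial condition and observe that $D\varphi_{s,t}(x)$ solves a linear rough differential equation driven by the same $\bw$ with coefficients $D\sigma_i(\varphi_{s,t}(x))$; by induction $D^m\varphi_{s,t}(x)$ solves a system of linear RDEs whose vector fields are polynomial combinations of $D^{\le m}\sigma$ evaluated at $\varphi$. Applying the equivalent Davie formulation \eqref{eq:driftlessRDE_equivalent} to the test function $f = D^m id$, or equivalently the $\lfloor p\rfloor$-step Euler scheme for the full jet, gives the remainder bound of order $|t-s|^{(\lfloor p\rfloor + 1)/p}$ with a constant depending only on $\|\bw\|_{1/p\text{-H\"ol}}$ and $\|\sigma\|_{C^{\lfloor p\rfloor + m+1}_b}$.

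For points \eqref{driftlessRDE:point4} and \eqref{driftlessRDE:point5}, I would construct $\psi_{s,t}$ as the solution of the time-reversed equation. Recall that if $\tilde{\bw}_{s,t} := \bw_{t,s}$ denotes the time-reversed rough path, then the solution $\tilde{y}$ of $\dd\tilde{y}_u = \sigma(\tilde{y}_u)\dd\tilde{\bw}_u$ on $[s,t]$ started at $y_t$ reaches $y_s$; in terms of the underlying forward rough path, $\tilde{\bw}_{s,t}$ is computed via the antipode of the shuffle Hopf algebra, which, truncated at level $\lfloor p\rfloor$, is exactly $\bv_{s,t} = \sum_{k=0}^{\lfloor p\rfloor}(\mathbf{1}-\bw_{s,t})^{\otimes k}$. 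Thus $\psi_{s,t}$ is just the forward flow for $\bv$, and its regularity, its semigroup property, and its Taylor expansion are obtained by applying points \eqref{driftlessRDE:point1}--\eqref{driftlessRDE:point3} to $\bv$ (noting that $\|\bv\|_{1/p\text{-H\"ol}}$ is controlled by $\|\bw\|_{1/p\text{-H\"ol}}$). The identity $\varphi_{s,t}\circ\psi_{s,t}=\psi_{s,t}\circ\varphi_{s,t}=\mathrm{id}$ then follows either from uniqueness of the solution to the forward RDE (started at $\psi_{s,t}(x)$ and arriving at $x$) or from the continuity of the Itô--Lyons map applied to smooth approximations, where the statement is classical.

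Finally, continuity of $\bw\mapsto \varphi,\psi$ in the stated topology is the continuity of the Itô--Lyons solution map, proven in \cite{friz2010multidimensional,bailleulFlowsDrivenRough2015}, applied jointly to $\varphi$ and to the derivative RDEs governing $D^m\varphi$ up to order $n$. The main technical obstacle is really the uniform-in-$x$ control of the Taylor remainders at every order $m\le n$: one has to track how the constants depend on $\|\sigma\|_{C^{\lfloor p\rfloor + m+1}_b}$ and on linearly growing weights to stay in $C^n_{lin}$, which is handled by the weight bookkeeping built into Definition \ref{def:weighted_holder} combined with Gronwall-type estimates along the iteration producing $D^m\varphi$. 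Since none of these steps is new, I would mainly refer to the cited literature for the precise statements, stressing only the additional regularity tracking that yields the $C^n_{lin}$-valued conclusion.
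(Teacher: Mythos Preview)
Your overall strategy is correct and close to the paper's, with one genuine difference in the construction of $\psi$. For $\varphi$ and points \eqref{driftlessRDE:point1}--\eqref{driftlessRDE:point3} the paper does exactly what you propose: it cites Bailleul's machinery (specifically Theorem~2.2, Corollary~3.5, Remark~3.6, Theorem~4.2 of \cite{bailleulNonexplosionCriteriaRough2020}) for existence, uniqueness, the $C^n_{lin}$ regularity, the Euler expansion of $D^m\varphi$, and continuity in $\bw$. One minor point: your phrase ``test function $f=D^m\mathrm{id}$'' is not quite right (for $m\ge 2$ this is zero), but your alternative ``Euler scheme for the full jet'' is exactly the correct mechanism and is what Bailleul's results provide.

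For $\psi$, the paper takes a different route from your time-reversal/antipode argument. It writes $\bw_{s,t}=\exp^\oplus(\bl_{s,t})$, recalls that $\varphi$ is the limit of compositions $\mu_{t_{N-1},t_N}\circ\cdots\circ\mu_{t_0,t_1}$ where $\mu_{s,t}(x)=y(1)$ for the log-ODE $y'(r)=\sum \bl^{k,[I]}_{s,t}\sigma_{[I]}(y(r))$, $y(0)=x$, and then defines $\nu_{s,t}(x)=z(0)$ with $z$ the solution of the \emph{same} ODE with terminal condition $z(1)=x$. Since $\mu_{s,t}$ and $\nu_{s,t}$ are mutually inverse, the reversed compositions $\nu^\pi_{s,t}=\nu_{t_0,t_1}\circ\cdots\circ\nu_{t_{N-1},t_N}$ are inverse to $\mu^\pi_{s,t}$, and the same sewing argument shows they converge to a backward flow $\psi$ with Euler expansion governed by $\bv=\exp^\oplus(-\bl)$, which is then identified with the geometric series $\sum_{k=0}^{\lfloor p\rfloor}(\mathbf{1}-\bw_{s,t})^{\otimes k}$.

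Your antipode approach is a legitimate alternative, but be careful with the sentence ``$\psi_{s,t}$ is just the forward flow for $\bv$'': the two-parameter object $\bv_{s,t}:=\bw_{s,t}^{-1}$ satisfies the \emph{reversed} Chen relation $\bv_{s,t}=\bv_{u,t}\otimes\bv_{s,u}$, so it is not a forward rough path and you cannot literally feed it into the forward flow theorem. What you can do is reverse time (e.g.\ set $\overleftarrow W_u=W_{s+t-u}$ on $[s,t]$), obtain a genuine forward rough path whose increments equal $\bv_{s,t}$, and then pull back its forward flow; this gives exactly $\psi$ and the backward semi-flow property. The paper's log-ODE construction avoids this reparametrisation by producing the local inverse $\nu_{s,t}$ directly and reusing Bailleul's convergence argument verbatim, which is slightly cleaner for the Euler expansion of $D^m\psi$.
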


\begin{remark}\label{remark:equivalent_sol_1}
Note that Theorem \ref{theorem:driftlessRDE} \eqref{driftlessRDE:point5} with $m=0$, \eqref{driftlessRDE:point2} and Definition \ref{def:davie_solutions}  is equivalent to say that for all $s_0\in[0,T)$ and for all $x\in \RR^d$, $t\mapsto \psi_{s_0,t}(x)$ is the unique solution to the driftless RDE
\[\dd y_t = \sigma(y_t) \dd \bv_t,\quad y_{s_0} = x,\quad t\in[s_0,T].
\]
In that setting, one can use the equivalent definition of solutions seen in Equation \eqref{eq:driftlessRDE_equivalent}.

Indeed let $s\le t \in (s_0,T]$. We have thanks to Theorem \ref{theorem:driftlessRDE} \eqref{driftlessRDE:point2}
\begin{equation*}
    y_t - y_s =  \psi_{s_0,t}(x) -  \psi_{s_0,s}(x) 
    =  \psi_{s,t}(\psi_{s_0,s}(x)) -  \psi_{s_0,s}(x),
\end{equation*}
which gives exactly Definition \ref{def:davie_solutions} thanks to Theorem \ref{theorem:driftlessRDE} \eqref{driftlessRDE:point5}.
\end{remark}

\begin{proof}
The proof is somehow classical. For a similar proof without the full Euler scheme expansion, one can consult \cite[Lemma 2.3]{riedelRoughDifferentialEquations2017} or \cite[Theorem 10.14 and Theorem 10.26]{friz2010multidimensional}.

The proof here is simply an adaptation of the one from \cite{bailleulNonexplosionCriteriaRough2020}. Indeed, one can combine Theorem 2.2, Corollary 3.5, Remark 3.6 and  Theorem 4.2 from \cite{bailleulNonexplosionCriteriaRough2020} to get the existence and uniqueness of $\varphi$, as well as the points \eqref{driftlessRDE:point1}, \eqref{driftlessRDE:point2} and \eqref{driftlessRDE:point3}. Note also that the continuity of $\varphi$ with respect to $\bw$ is also a consequence of the previous theorems.

To construct $\psi$ and prove the desired Euler expansion, let us remind the strategy of proof in \cite{bailleulNonexplosionCriteriaRough2020} (see also \cite{bailleulFlowsDrivenRough2015}). Since $\bw$ is in $G^{\lfloor p \rfloor}(\RR^d)$, there exists an $\bl \in L^{\lfloor p \rfloor}(\RR^d)$ such that 
\[\exp^{\oplus}(\bl) = \bw.\]
Remark that $\mathbf{l} \in L^{\lfloor p \rfloor}(\RR^d)$ and a basis of this space is 
\[e_{[i]}= e_i, \quad e_{[i,I]} = [e_i,e_{[I]}] = e_i \otimes e_{[I]} - e_{[I]}\otimes e_i.\]
Hence 
\[\mathbf{l} = \sum_{k=1}^{\lfloor p \rfloor} \sum_{I\in \{1,\cdots,d\}^k} \mathbf{l}^{k,[I]} e_{[I]}\]
and one can show that, in $\cC^{\frac{1}{p}}([0,T];C^n_{lin}(\RR^d;\RR^d))$, 
\[
\varphi_{s,t} 
	= 
\lim_{\substack{\pi \in \Pi([s,t])\\ |\pi|\to 0} }
\mu_{t_{N-1},t_N} \circ \cdots \circ \mu_{t_0,t_1},\]
where $\Pi([s,t])$ denotes the set of partitions of $[s,t]$,  $\pi=\{s=t_0 < t_1 \cdots < t_{N-1} < t_N=t\}$ is a partition of $(s,t)$ and $|\pi| = \sup_{i\in\{0,\cdots,N-1\}} |t_{i+1}-t_i|$ is the radius of $\pi$, and $\mu_{s,t} = y(1)$ where $y$ is the solution of the following ordinary differential equation :
\[y'(r) = \sum_{\substack{ k\in\{1,\cdots, \lfloor p \rfloor\} \\ I\in\{1,\cdots,d\}^k}}  \bl^{k,[I]}_{s,t} \sigma_{[I]}(y(r))  ,\quad y(0) = x, r\in[0,1].\]
Here, for $i\in\{1,\cdots,d\}$, $\sigma_{[i] }=\sigma_i$ and for $k\ge 1$, $i\in\{1,\cdots,d\}$ and $I\in\{1,\cdots,d\}^{k-1}$, $\sigma_{[(i,I)]} = [\sigma_i,\sigma_{[I]}] = \sigma_i \sigma_{[I]} - \sigma_{[I]} \sigma_i $.
Now, let us define the terminal value solution of the previous equation
\[z'(r) = \sum_{\substack{ k\in\{1,\cdots, \lfloor p \rfloor\} \\ I\in\{1,\cdots,d\}^k}}  \bl^{k,[I]}_{s,t} \sigma_{[I]}(z(r))  ,\quad z(1) = x, r\in[0,1]\] 
and
\[\nu_{s,t}(x) = z(0).\]
We have, for all $x\in \RR^d$ and all $(s,t) \in \Delta^2_T$,
\[\nu_{s,t}(\mu_{s,t})(x) = \mu_{s,t}(\nu_{s,t})(x) = x.\]
We  define, for any partition $\pi \in\Pi([s,t])$,
\[\nu^{\pi}_{s,t} = \nu_{t_{0},t_1} \circ \cdots \circ \nu_{t_{N-1},t_N}\]
and
\[\mu^{\pi}_{s,t} = \mu_{t_{N-1},t_N} \circ \cdots \circ \mu_{t_0,t_1}.\]
Then, we observe that
\[\mu^{\pi}_{s,t}(\nu^\pi_{s,t}(x))=\nu^{\pi}_{s,t}(\mu^\pi_{s,t}(x))=x.\]
Hence, one only has to prove that $\nu^{\pi}$ converges to a (backward semi-)flow in the suitable space. Note that this follows from the same arguments as the proof of Theorem 2.2 in \cite{bailleulNonexplosionCriteriaRough2020}, which guaranties that $\psi$ exists in $C^n_{lin}$, is continuous with respect to $\bw$ and that, for $(s,t)\in \Delta^2_T$ small enough (with respect to $\bw$), one has, for any $0\le m\le n$,
\[\sup_{x\in \RR^d}|D^m\psi_{s,t} - D^m\nu_{s,t}(x)| \lesssim_{\bw} (t-s)^{\frac{\lfloor p \rfloor + 1}{p}}.\]
Furthermore, one also have (see Lemmas 3.3, 3.4 and Corollary 3.5 in \cite{bailleulNonexplosionCriteriaRough2020}), for  $a=\frac{\lfloor p \rfloor +1}{p}$,

\[\Bigg|
	D^m\nu_{s,t}(x) 
		- 
	\Bigg(
		(D^m id)(x) 
			+ 
		\sum_{j=1}^{\lfloor p \rfloor} 
			\frac{(-1)^j}{j!} 
				\sum_{\substack{1\le k_1,\cdots, k_j \le \lfloor p \rfloor \\ \sum_{l=1}^{j} k_l \le \lfloor p \rfloor \\ I_1 \in \{1,\cdots,d\}^{k_1}\\ \cdots\\ I_j \in \{1,\cdots,d\}^{k_j} }}
				 	\prod_{l=1}^j \bl^{k_l,I_l}_{s,t}
				 		D^m\big(\sigma_{[I_j]}\cdots \sigma_{[I_1]}\big)(x) 
	\Bigg)
\Bigg| 
\lesssim_{\bw} 
(t-s)^{a}. \]
Hence, if we  define $\bv = \exp^{\oplus}(-\bl)$, we have
\[\Bigg|D^m\nu_{s,t}(x) - \Bigg((D^m id)(x) + \sum_{k=1}^{\lfloor p \rfloor} \sum_{I \in \in\{1,\cdots,d\}^k} \bv^{k,I}_{s,t} D^m\big(\sigma_{I}\big)(x) \Bigg)\Bigg| \lesssim_{\bw} (t-s)^{a}. \]
We can see that 
\[\bv_{s,t} \otimes \bw_{s,t} = \exp^{\oplus}(-\bl_{s,t}) \otimes \exp^{\oplus}(\bl_{s,t}) = \exp^{\oplus}(0) = \mathbf{1} = \bw_{s,t} \otimes \bv_{s,t}.\]
Furthermore, we also have that, for $k> \lfloor p \rfloor$,
\[(1-\bw_{s,t})^{\otimes k} = 0,\]
which yields
\[\bv_{s,t}= \sum_{k=0}^{\lfloor p \rfloor} (\mathbf{1} - \bw_{s,t} )^{\otimes k},\]
and the result follows.
\end{proof}

\begin{corollary}\label{corollary:rde_inverse_jacobian}
Let $n\ge 1$, $p,T,\bw$ and $\sigma$ as in the previous Theorem. Then 
\[(D\varphi)^{-1},(D\psi)^{-1} \in \cC^{\frac1p}\big([0,T];C^{n-1}_b(\RR^d;(\RR^d)^{\otimes 2})\big).\]
\end{corollary}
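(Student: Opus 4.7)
The plan is to bypass any direct matrix inversion by writing $(D\varphi)^{-1}$ and $(D\psi)^{-1}$ directly in terms of the forward and backward flows $\varphi,\psi$ constructed in Theorem \ref{theorem:driftlessRDE}, and then to read off the announced regularity from the estimates already contained in that theorem. First I would differentiate in $x$ the identity $\varphi_{s,t}\circ\psi_{s,t}=\psi_{s,t}\circ\varphi_{s,t}=id_{\RR^d}$ provided by point \eqref{driftlessRDE:point4}, which yields for all $(s,t)\in\Delta^2_T$ and $x\in\RR^d$ the pointwise matrix identities
\[
\big(D\varphi_{s,t}(x)\big)^{-1}=D\psi_{s,t}\big(\varphi_{s,t}(x)\big),\qquad \big(D\psi_{s,t}(x)\big)^{-1}=D\varphi_{s,t}\big(\psi_{s,t}(x)\big).
\]
In particular, both inverses exist globally, with no need for a Neumann series.

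Next, I would derive the spatial $C^{n-1}_b$-regularity. From point \eqref{driftlessRDE:point3} applied with $m=1,\dots,n$, the Jacobian $D\varphi_{s,t}$ differs from $I$ by a function whose $C^{n-1}_b(\RR^d;\cM_d(\RR))$-norm is bounded uniformly in $(s,t)\in\Delta^2_T$, and the analogous bound for $D\psi_{s,t}$ follows from point \eqref{driftlessRDE:point5}. Since $\varphi_{s,t},\psi_{s,t}$ lie in $C^n_{lin}$ with bounded first derivative, the Faà di Bruno formula then shows that the compositions $D\psi_{s,t}\circ\varphi_{s,t}$ and $D\varphi_{s,t}\circ\psi_{s,t}$ still belong to $C^{n-1}_b(\RR^d;\cM_d(\RR))$, with norms controlled by $\|\bw\|_{\frac1p\text{-H\"ol};[0,T]}$ and $\sigma$; this is where the loss of one order of differentiability, from $n$ to $n-1$, is spent.

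For the $\frac1p$-Hölder regularity in time, the Euler-type expansions in \eqref{driftlessRDE:point3} and \eqref{driftlessRDE:point5} give $\|D^m\varphi_{s,t}-D^m id\|_\infty\lesssim |t-s|^{1/p}$ and the same for $\psi$, for every $0\le m\le n$. Substituting these into the composition formula for $(D\varphi_{s,t})^{-1}$, the increment between two close times telescopes into a contribution controlled by the Hölder regularity of $D\psi$ and a contribution controlled by the Hölder regularity of $\varphi$ (together with boundedness of $D^2\psi$). The main obstacle is the transfer of this estimate along $\Delta^2_T$: one must check that the constants do not degenerate as $|t-s|$ grows, which is handled by iterating the multiplicative flow relation $D\varphi_{s,t}(x)=D\varphi_{u,t}(\varphi_{s,u}(x))\,D\varphi_{s,u}(x)$ (and its analog for $\psi$) along a sub-partition of $[s,t]$ whose mesh is small enough compared to $\|\bw\|_{\frac1p\text{-H\"ol};[0,T]}$, and combining with the already established boundedness of the inverses.
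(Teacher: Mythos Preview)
Your approach is correct and essentially the same as the paper's: both proofs rest on differentiating the identity $\varphi_{s,t}\circ\psi_{s,t}=id$ to obtain $(D\varphi_{s,t}(x))^{-1}=D\psi_{s,t}(\varphi_{s,t}(x))$, and then reading off the regularity from the properties of $\varphi$ and $\psi$ in Theorem~\ref{theorem:driftlessRDE}. The paper's argument is only a few lines and leaves the composition and time-regularity steps implicit, whereas you have spelled them out carefully via Fa\`a di Bruno and the Euler expansions. One remark: your final paragraph about iterating the flow relation to prevent constants from degenerating as $|t-s|$ grows is unnecessary here, since the bounds in points \eqref{driftlessRDE:point3} and \eqref{driftlessRDE:point5} of Theorem~\ref{theorem:driftlessRDE} are already stated globally on $\Delta^2_T$ with constants depending only on $\|\bw\|_{\frac1p\text{-H\"ol};[0,T]}$ and $\sigma$.
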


\begin{proof}
Let us remark that under the previous hypothesis, both $\varphi$ and $\psi$ are in $C^n_{lin}$. Hence
$D\varphi_{s,t}(x) D\psi_{s,t}\big(\varphi_{s,t}(x)\big) = x$, for any $x\in\mathbb{R}^d$.
It follows that
 $\big(D\varphi_{s,t}(x)\big)^{-1} = D\psi_{s,t}(\varphi_{s,t}(x))$,
and there exists a constant, depending on $\bw,T,\sigma$ such that 
\[\sup_{(s,t)\in \Delta^2_T}\frac{\left\|\big(D\varphi_{s,t}(x)\big)^{-1}\right\|_{C^{n-1}_b}}{|t-s|^{\frac1p}} \lesssim_{\bw} 1,\]
which ends the proof.
\end{proof}

	\subsection{Solution of RDE with drift : flow transform}
	
	We turn now to the study of rough differential equations with a drift:
	\begin{equation}\label{eq:rde_with_drift}
		\dd x_t = b(x_t) \dd t + \sigma(x_t) \dd \bw_t,\quad x_0 = x,
		\quad t\in [0,T].
	\end{equation} 
	In the spirit of Definition \ref{def:davie_solutions}, let us define the solutions of the rough differential equation with drift as follow
	
\begin{definition}\label{def:rde_with_drift}
Let $p\ge 2$, $T>0$ and $n\ge 0$. Let $b\in C^0_b(\RR^d;\RR^d)$, $\sigma \in C^{\lfloor p \rfloor}(\RR^d;(\RR^d)^{\otimes 2})$ and $\bw$ be a $\frac{1}{p}$-H\"older weakly geometric $p$-rough path. A path $(x_t)_{t\in [0,T]}\in\cC^{\frac1p}([0,T];\RR^d)$ is a solution to Equation \eqref{eq:rde_with_drift} if $x_0 = x$ and there exists a constant $a>1$ independent of $\bw$ such that for all $(s,t)\in \Delta^2_T$, 
\[\Bigg|x_{t} - \Bigg(x_s + b(x_s)(t-s) + \sum_{k=1}^{\lfloor p \rfloor } \sum_{I \in \{1,\cdots,d\}^k} \bw^{k,I}_{s,t} \sigma_I(x_s) \Bigg)\Bigg| \lesssim_{\bw,\sigma,b} |t-s|^a.\]
\end{definition}	

\begin{remark}\label{remark:equivalent_sol_2}
As proved in \cite{bailleulDefinitionSolutionRough2021} and \cite{cassTreeAlgebrasTopological2017} (we also refer to \cite{bailleulNonexplosionCriteriaRough2020} for the precise value of the following constants), the following equivalent \blue{notion of} solutions can be used :
Let $p,\bw,T,b,\sigma$ be as in the previous Definition. Then $(x_t)_{t\in[0,T]}$ is a solution to Equation \eqref{eq:rde_with_drift} if and only if there exists $a>1$ independent of $\bw,\sigma,b$, $\eps=\eps(\bw)>0$ such that for all $f\in C^{\lfloor p \rfloor +1}_b(\RR^d;\RR^d)$ and all $(s,t)\in \Delta^2_T$ with $|t-s|\le \eps$, 
\[\bigg|f(x_t) - \bigg((bf)(x_s)(t-s) + \sum_{\substack{k\in\{1,\cdots,\lfloor p \rfloor\}\\ I \in \{1,\cdots,d\}^k}} \bw^{k,I}_{s,t} (\sigma_I f)(x_s)\bigg)\bigg| \lesssim_{\bw,\sigma,b} \|f\|_{C^{\lfloor p \rfloor+1}_{lin}} |t-s|^a.\]
This point will be crucial in the following.
\end{remark}

Several results concerning the (optimal) regularity of the drift are available in order to have existence and uniqueness for rough differential equations. Namely, in \cite{friz2010multidimensional}, one can see that whenever $\sigma \in \cC_b^{\lfloor p \rfloor +1}$ and $b$ is globally Lipschitz continuous with linear growth, there is a unique solution to equation \eqref{eq:rde_with_drift}. Furthermore, some improvement of this criteria appears in     \cite{riedelRoughDifferentialEquations2017} (weak local Lipschitz condition and some control on the growth) and in \cite{bonnefoiPrioriBoundsRough2022} (Lyapounov conditions on $b$). In order to go beyond (using some stochasticity of the rough path), one needs to develop an other approach of the solution to the RDE with drift. Following a classical ideas of flow transform (see for example \cite{riedelRoughDifferentialEquations2017}) we give an equivalent formulation for the solutions of Equation \eqref{eq:rde_with_drift}. 
		
\begin{theorem}\label{theorem:main1}
Let $p\ge 2$, $T>0$, $\kappa>0$, $b\in \cC^\kappa(\RR^d;\RR^d)$, $\sigma \in C^{2(\lfloor p \rfloor+1)}_b(\RR^d;(\RR^d)^{\otimes 2})$ and $\bw$ be a $\frac{1}{p}$-H\"older weakly $p$-geometric rough path. Then $(x_t)_{t\in[0,T]}$ is a solution to the rough differential equation with drift \eqref{eq:rde_with_drift} if and only if 
\[x_t= \varphi_{0,t}(z(t))\]
and $(z(t))_{t\in[0,T]}$ is a solution to the ordinary differential equation 
\begin{equation}\label{eq:ODE_rough}
z'(t) = \big(D\varphi_{0,t}(z(t))\big)^{-1} b\big(\varphi_{0,t}(z(t))\big),\quad z_0 = x,\quad t\in[0,T],
\end{equation}
where $\varphi$ is the flow constructed in Theorem \ref{theorem:driftlessRDE} and Corollary \ref{corollary:rde_inverse_jacobian}.
\end{theorem}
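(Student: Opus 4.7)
The plan is to prove the two implications separately, using the equivalent Davie/Bailleul characterization of solutions stated in Remark \ref{remark:equivalent_sol_2} as the main working tool, combined with the Euler expansions of the flows $\varphi_{s,t}$ and $\psi_{s,t}$ provided by Theorem \ref{theorem:driftlessRDE}.

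For the direction ``$z$ solves \eqref{eq:ODE_rough} $\Rightarrow$ $x_t := \varphi_{0,t}(z(t))$ solves \eqref{eq:rde_with_drift}'', I take any $f \in C^{\lfloor p \rfloor + 1}_{lin}(\RR^d;\RR^d)$ and split
\[
f(x_t) - f(x_s) = \bigl[f(\varphi_{0,t}(z(t))) - f(\varphi_{0,t}(z(s)))\bigr] + \bigl[f(\varphi_{s,t}(x_s)) - f(x_s)\bigr],
\]
using $\varphi_{0,t}(z(s)) = \varphi_{s,t}(\varphi_{0,s}(z(s))) = \varphi_{s,t}(x_s)$ from Theorem \ref{theorem:driftlessRDE}\eqref{driftlessRDE:point2}. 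The second bracket is handled by applying \eqref{eq:driftlessRDE_equivalent} to the driftless solution $t \mapsto \varphi_{s,t}(x_s)$, which produces exactly the rough contribution $\sum_{k,I} \bw^{k,I}_{s,t}(\sigma_I f)(x_s) + O(|t-s|^a)$. For the first bracket, a first order Taylor expansion combined with the chain rule $D\varphi_{0,t} = (D\varphi_{s,t} \circ \varphi_{0,s}) \, D\varphi_{0,s}$ and the ODE identity $z(t)-z(s) = (D\varphi_{0,s}(z(s)))^{-1} b(x_s)(t-s) + o(t-s)$ reduces to $Df(x_s)\, D\varphi_{s,t}(x_s)\, b(x_s)(t-s) + O((t-s)^{1+\epsilon})$; using $D\varphi_{s,t}(x_s) = \mathrm{id} + O((t-s)^{1/p})$ from Theorem \ref{theorem:driftlessRDE}\eqref{driftlessRDE:point3} this becomes $(bf)(x_s)(t-s) + O((t-s)^{1+1/p})$, which is the drift contribution.

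For the converse direction, I set $z(t) := \psi_{0,t}(x_t)$, so that $x_t = \varphi_{0,t}(z(t))$ by Theorem \ref{theorem:driftlessRDE}\eqref{driftlessRDE:point4}. The hypothesis $\sigma \in C^{2(\lfloor p \rfloor + 1)}_b$ combined with Theorem \ref{theorem:driftlessRDE} applied with $n = \lfloor p \rfloor + 1$ ensures that $\psi_{s,t} \in C^{\lfloor p \rfloor + 1}_{lin}$ uniformly in $(s,t)$, so I may legitimately apply Remark \ref{remark:equivalent_sol_2} to $x$ with the test function $f = \psi_{s,t}$. This yields
\[
\psi_{s,t}(x_t) - \psi_{s,t}(x_s) = D\psi_{s,t}(x_s)\, b(x_s)(t-s) + \sum_{k,I} \bw^{k,I}_{s,t}(\sigma_I \psi_{s,t})(x_s) + O((t-s)^a).
\]
Plugging in the Euler expansions $\psi_{s,t}(x_s) - x_s = \sum_{k',I'} \bv^{k',I'}_{s,t}\sigma_{I'}(x_s) + O((t-s)^{(\lfloor p \rfloor+1)/p})$, $D\psi_{s,t}(x_s) = \mathrm{id} + O((t-s)^{1/p})$ and $(\sigma_I \psi_{s,t})(x_s) = \sigma_I(x_s) + \sum_{k',I'} \bv^{k',I'}_{s,t}\sigma_{I}\sigma_{I'}(x_s) + O$, I regroup the rough contributions by total degree as $\sum_{n\geq 1, J}(\bw_{s,t}\otimes \bv_{s,t})^{n,J}\sigma_J(x_s)$; the identity $\bw_{s,t}\otimes \bv_{s,t} = \mathbf{1}$ (which is the crux of Theorem \ref{theorem:driftlessRDE}\eqref{driftlessRDE:point5}) kills every term of degree $\geq 1$, leaving $\psi_{s,t}(x_t) - x_s = b(x_s)(t-s) + O((t-s)^a)$. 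Composing with $\psi_{0,s}$ via $\psi_{0,t} = \psi_{0,s}\circ \psi_{s,t}$ and a Taylor expansion at $x_s$ then gives
\[
z(t) - z(s) = D\psi_{0,s}(x_s)\, b(x_s)(t-s) + O((t-s)^a) = \bigl(D\varphi_{0,s}(z(s))\bigr)^{-1} b(\varphi_{0,s}(z(s)))(t-s) + O((t-s)^a),
\]
where the second equality uses Corollary \ref{corollary:rde_inverse_jacobian}. Since the integrand is continuous in $s$ and the error exponent $a$ is $>1$, a standard summation-over-partitions argument upgrades this local expansion into the global integral identity \eqref{eq:ODE_rough}.

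The main obstacle is the algebraic cancellation in the second direction: it hinges on the fact that applying a differential operator $\sigma_I$ of order $\leq \lfloor p \rfloor$ coordinatewise to the Euler expansion of $\psi_{s,t}$ preserves the remainder estimate $O((t-s)^{(\lfloor p \rfloor+1)/p})$, which is precisely why I impose the regularity $\sigma \in C^{2(\lfloor p \rfloor +1)}_b$ (guaranteeing $\psi_{s,t} \in C^{\lfloor p \rfloor +1}_{lin}$), and on the careful identification of the sum of rough terms with the tensor product $\bw_{s,t} \otimes \bv_{s,t}$. Once this bookkeeping is done, the remainder of the proof is a routine Taylor/composition argument.
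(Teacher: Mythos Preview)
Your proof is correct and rests on the same key ingredients as the paper: the flow factorization $\varphi_{0,t}=\varphi_{s,t}\circ\varphi_{0,s}$ and $\psi_{0,t}=\psi_{0,s}\circ\psi_{s,t}$, the Euler expansions of Theorem~\ref{theorem:driftlessRDE}, the algebraic cancellation $\bw_{s,t}\otimes\bv_{s,t}=\mathbf{1}$, and the final Riemann-sum upgrade from a local Taylor estimate to the integral identity~\eqref{eq:ODE_rough}. The organization differs in two minor but noteworthy respects.

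In the forward direction you work with a generic test function $f$ via Remark~\ref{remark:equivalent_sol_2} and split through the intermediate point $\varphi_{s,t}(x_s)=\varphi_{0,t}(z(s))$; the paper instead verifies Davie's definition directly (i.e.\ $f=\mathrm{id}$) and splits through $\varphi_{0,s}(z(t))$. Both decompositions work; the paper's choice makes the drift term collapse immediately via $D\varphi_{0,s}(z(s))\,(D\varphi_{0,s}(z(s)))^{-1}=\mathrm{id}$, whereas your choice makes the rough term appear at the correct base point $x_s$ without further correction.

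In the converse direction, your choice of test function $f=\psi_{s,t}$ (rather than the paper's $f=\psi_{0,s}$) is a genuine simplification: it lets you exhibit the cancellation $\sum_{n\ge 1,J}(\bw_{s,t}\otimes\bv_{s,t})^{n,J}\sigma_J(x_s)=0$ \emph{before} composing with $\psi_{0,s}$, so that the remaining step is a single first-order Taylor expansion of $\psi_{0,s}$. The paper instead carries $\psi_{0,s}$ through the whole computation, which forces an additional application of Remark~\ref{remark:equivalent_sol_2} to the functions $\sigma_I\psi_{0,s}$ and produces the same cancellation in the form $\sum_{k\ge1,I}(\bv_{s,t}\otimes\bw_{s,t})^{k,I}(\sigma_I\psi_{0,s})(x_s)=0$. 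Your route is marginally more economical; both require the same regularity $\sigma\in C^{2(\lfloor p\rfloor+1)}_b$ to ensure $\psi_{s,t}\in C^{\lfloor p\rfloor+1}_{lin}$ uniformly, so that Remark~\ref{remark:equivalent_sol_2} applies with a uniform remainder constant.
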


\begin{proof}
First, we remark that since $\sigma \in C^{\lfloor p \rfloor+1}_b(\RR^d;(\RR^d)^{\otimes 2})$, thanks to Theorem \ref{theorem:driftlessRDE} and Corollary \ref{corollary:rde_inverse_jacobian},
\[\Big((t,x)\mapsto \big(D\varphi_{0,t}(x)\big)^{-1} b\big(\varphi_{0,t}(x)\big) \Big) \in C^{0}_b([0,T];C^0(\RR^d;\RR^d)).\]
Hence, it follows from Peano's existence theorem that there exists a solution $(z(t))_{t\in[0,T]}$ to Equation \eqref{eq:ODE_rough}. Let us define for all $t\in[0,T]$, $x_t=\varphi_{0,t}(z(t))$. Note that, thanks to the hypothesis and Corollary \ref{corollary:rde_inverse_jacobian},
\[|z(t) -x| \lesssim_{\bw,\sigma,T} 1,\]
and  
\[|z(t) -z(s)| \lesssim_{\bw,\sigma,T} |t-s|.\]
Furthermore, note that for any $s,t\in \Delta^2_T$,
\begin{align}
z(t) - z(s) 
= &
\int_s^t 
	\left(
		D\varphi_{0,r}(z(r))
	\right)^{-1} b\big(\varphi_{0,r}(z(r))\big) \dd r 
\nonumber\\
=&
\left(D\varphi_{0,s}(z(s))\right)^{-1} b\big(\varphi_{0,s}(z(s))\big)(t-s) \nonumber\\
& +
\int_s^t \Big(\left(D\varphi_{0,r}(z(r))\right)^{-1}-\left(D\varphi_{0,s}(z(r))\right)^{-1}\Big) b\big(\varphi_{0,r}(z(r))\big) \dd r
\label{eq:rde_equiv_direct_1}\\
& +
\int_s^t \Big(\left(D\varphi_{0,s}(z(r))\right)^{-1}-\left(D\varphi_{0,s}(z(s))\right)^{-1}\Big) b\big(\varphi_{0,r}(z(r))\big) \dd r
\label{eq:rde_equiv_direct_2}\\
& +
\left(D\varphi_{0,s}(z(s))\right)^{-1}\int_s^t \left( b\big(\varphi_{0,r}(z(r))\big) -  b\big(\varphi_{0,s}(z(s))\big)\right) \dd r\label{eq:rde_equiv_direct_3}
\end{align}

Note that thanks to Corollary \ref{corollary:rde_inverse_jacobian}, $(t,x)\mapsto \big( D\varphi_{0,t}(x)\big)^{-1} \in \cC^{\frac{1}{p}}([0,T];C^1_b(\RR^d;(\RR^d)^{\otimes 2}))$. Hence, since $b$ is bounded, the following estimates hold
\[|\eqref{eq:rde_equiv_direct_1}| \lesssim_{\bw,\sigma,T,b} |t-s|^{1+\frac{1}{p}},\]
\[|\eqref{eq:rde_equiv_direct_2}| \lesssim_{\bw,\sigma,T,b} |t-s|^{2}\]
and, since $b\in \cC^{\blue{\kappa}}(\RR^d;\RR^d)$, $r\mapsto\psi_{0,s}(z)$ is H\"older continuous uniformly in $z$ and $r\mapsto z(r)$ is H\"older continuous, we deduce
\[|\eqref{eq:rde_equiv_direct_3}| \lesssim_{\bw,\sigma,T,b} |t-s|^{1+\kappa}.\]
Let us define $R'_{s,t}(x) = \eqref{eq:rde_equiv_direct_1} + \eqref{eq:rde_equiv_direct_2} + \eqref{eq:rde_equiv_direct_3}$.

Furthermore, let us remind that we have for all $(s,t)\in\Delta^2_T$,
\[\varphi_{s,t}(x) = x + \sum_{\substack{k\in\{1,\cdots,\lfloor p \rfloor\}}} \bw^{k,I}_{s,t} \sigma_I(x) + R_{s,t}(x)\]
with
\[\sup_{x\in \RR^d} |R_{s,t}(x)| \lesssim_{\bw,\sigma,T} |t-s|^{\frac{\lfloor p \rfloor + 1}{p}}.\]
Finally
\begin{align*}
x_t - x_s 
= & 
	\bigg(\varphi_{s,t}\Big(\varphi_{0,s}\big(z(t)\big)\Big)-\varphi_{0,s}\Big(z(t)\Big) \bigg)
+ 
	\bigg(\varphi_{0,s}\big(z(t)\big)-\varphi_{0,s}\big(z(s)\big) \bigg)
\\
= & 
	\sum_{\substack{k\in\{1,\cdots,\lfloor p \rfloor\} \\ I\in\{1,\cdots,d\}^k}} \bw^{k,I}_{s,t} \sigma_I(\varphi_{0,s}(z(s)))
\\
& +
	\sum_{\substack{k\in\{1,\cdots,\lfloor p \rfloor\} \\ I\in\{1,\cdots,d\}^k}} \bw^{k,I}_{s,t} \big(\sigma_I(\varphi_{0,s}(z(t)))-\sigma_I(\varphi_{0,s}(z(s))) \big)
+ R_{s,t}(\varphi_{0,s}(z(t)))
\\
&+
	D\varphi_{0,s}(z(s))\left(D\varphi_{0,s}(z(s))^{-1}b\big(\varphi_{0,s}(z(s))\big) + R'_{s,t}(x)\right) 
\\
+& \int_0^1 \int_{0}^1 \lambda \left(D^2\varphi_{0,s}\big(\lambda\mu(z(t)-z(s)) + z(s) \big)\right)(z(t)-z(s))^{\otimes 2} \dd \mu \dd \lambda 
\\
=& b\big(\varphi_{0,s}(z(s))\big) + \sum_{\substack{k\in\{1,\cdots,\lfloor p \rfloor\} \\ I\in\{1,\cdots,d\}^k}} \bw^{k,I}_{s,t} \sigma_I(\varphi_{0,s}(z(s))) + R''_{s,t}(x),
\end{align*}
where 
\begin{align*}
R''_{s,t}(x) 
=& R_{s,t}(\varphi_{0,s}(z(t))) + D \varphi_{0,s}(z(s)) R'_{s,t}(x) 
\\ &+ \sum_{\substack{k\in\{1,\cdots,\lfloor p \rfloor\} \\ I\in\{1,\cdots,d\}^k}} \bw^{k,I}_{s,t} \big(\sigma_I(\varphi_{0,s}(z(t)))-\sigma_I(\varphi_{0,s}(z(s))) \big)
\\ &+
\int_0^1 \int_{0}^1 \lambda \left(D^2\varphi_{0,s}\big(\lambda\mu(z(t)-z(s)) + z(s) \big)\right)(z(t)-z(s))^{\otimes 2} \dd \mu \dd \lambda.
\end{align*}
Thanks to the hypothesis and the previous computations, there exists $a>1$ such that 
\[\sup_{x\in\RR^d} |R''_{s,t}(x)| \lesssim_{\bw,\sigma,b,T} |t-s|^a,\]
which proves that whenever $(z(t))_{t\in[0,T]}$ is a solution of \eqref{eq:ODE_rough}, $(x_t)_{t\in[0,T]} = (\varphi_{0,t}(z(t)))_{t\in[0,T]}$ is a solution of Equation \eqref{eq:rde_with_drift}.

Now, let us take $(x_t)_{t\in[0,T]}$ a solution of Equation \eqref{eq:rde_with_drift}. Let us denote by $(z(t))_{t\in[0,T]} = \big(\psi_{0,t}(x_t)\big)_{t\in[0,T]}$ and let us prove that $ (z(t))_{t\in[0,T]}$ is a solution of \eqref{eq:ODE_rough}. In the following, we will crucially use Remarks \ref{remark:equivalent_sol_1} and \ref{remark:equivalent_sol_2}, and  the fact that since $\sigma \in C^{2(\lfloor p \rfloor + 1)}$,
 for all $s\in[0,T]$, 
\[\|\psi_{0,s}\|_{C^{\lfloor p \rfloor + 1}_{lin}} \lesssim_{\bw,T,\sigma} 1.\]

We have, for all $(s,t)\in \Delta^2_T$ small enough (depending on $\bw$),
\begin{align*}
z(t) - z(s) =& \psi_{0,t}(x_t) - \psi_{0,s}(x_s) 
\\
=&
\psi_{0,s}(\psi_{s,t}(x_t)) - \psi_{0,s}(x_t)
+
\psi_{0,s}(x_t)-\psi_{0,s}(x_s) \\
=&
\sum_{\substack{ k\in\{1,\cdots,\lfloor p \rfloor\} \\ I \in\{1,\cdots,d\}^k  }}\bv^{k,I}_{s,t} \big((\sigma_I \psi_{0,s})(x_t)-(\sigma_I \psi_{0,s})(x_s)\big) + R_{s,t}
\\
&+
\sum_{\substack{ k\in\{1,\cdots,\lfloor p \rfloor\} \\ I \in\{1,\cdots,d\}^k  }} \bv^{k,I}_{s,t} (\sigma_I \psi_{0,s})(x_s)
\\
& + (b\psi_{0,s})(x_s)(t-s) + \sum_{\substack{ k\in\{1,\cdots,\lfloor p \rfloor\} \\ I \in\{1,\cdots,d\}^k  }} \bw^{k,I}_{s,t} (\sigma_I \psi_{0,s})(x_s) + R'_{s,t}\\
=&
\sum_{\substack{ k\in\{1,\cdots,\lfloor p \rfloor\} \\ I \in\{1,\cdots,d\}^k  }} \bv^{k,I}_{s,t}\left((b \sigma_I\psi_{0,s})(x_s)(t-s)\sum_{\substack{ k'\in\{1,\cdots,\lfloor p \rfloor\} \\ I' \in\{1,\cdots,d\}^k  }}\bw^{k',I'}_{s,t} (\sigma_{I'}\sigma_I \psi_{0,s})(x_s)+R''_{s,t}\right)
\\
&+
\sum_{\substack{ k\in\{1,\cdots,\lfloor p \rfloor\} \\ I \in\{1,\cdots,d\}^k  }} \bv^{k,I}_{s,t} (\sigma_I \psi_{0,s})(x_s) + \sum_{\substack{ k\in\{1,\cdots,\lfloor p \rfloor\} \\ I \in\{1,\cdots,d\}^k  }} \bw^{k,I}_{s,t} (\sigma_I \psi_{0,s})(x_s)\\
&+ (b\psi_{0,s})(x_s)(t-s)+ R_{s,t} + R'_{s,t}.
\end{align*}
Here, we have 
\[\sup_{x\in\RR^d} \big(|R_{s,t}| +|R'_{s,t}|+|R''_{s,t}|\big) \lesssim_{\bw,\sigma,b,T} |t-s|^a \]
for a certain $a>1$. Furthermore, note that since $b$ is bounded, $\sigma\in C^{2(\lfloor p \rfloor  +1 )}_b$ and $\psi_{0,s} \in C^{\lfloor p \rfloor +1}_{lin}$, for all $k'\in \{1,\cdots,\lfloor p \rfloor + 1\}$ and for all $I,I' \in \{1,\cdots, d \}^k$, 
\[\sup_{x\in\RR^d} |(b\sigma_I\psi_{0,s})(x_s)(t-s)|\lesssim |t-s| \quad \text{and} \quad \sup_{x\in\RR^d} |\bw^{k',I'}_{s,t} (\sigma_{I'}\sigma_I \psi_{0,s})(x_s)| \lesssim |t-s|^{\frac{k'}{p}},\]
where the previous bounds depend on $\bw,\sigma,T,b$. 
Hence, there exists $a>1$ and $\tilde{R}_{s,t}$ with $\sup_{x\in\RR^d} |\tilde{R}_{s,t}| \lesssim |t-s|^a$
such that

\begin{align*}
z(t) -z(s) 
=&
(b\psi_{0,s})(x_s) + \tilde{R}_{s,t} + 
\sum_{\substack{k\in \{1\cdots,\lfloor p \rfloor\} \\ j\in\{1,\cdots,k-1\} \\ J\in\{1,\cdots,d\}^j\\J'\in\{1,\cdots,d\}^{k-j}}} \bv^{j,J}_{s,t}\bw^{k-j,J'}_{s,t} (\sigma_{J}\sigma_{J'} \psi_{0,s})(x_s)
\\
&+
\sum_{\substack{ k\in\{1,\cdots,\lfloor p \rfloor\} \\ I \in\{1,\cdots,d\}^k  }} \bv^{k,I}_{s,t} (\sigma_I \psi_{0,s})(x_s) + \sum_{\substack{ k\in\{1,\cdots,\lfloor p \rfloor\} \\ I \in\{1,\cdots,d\}^k  }} \bw^{k,I}_{s,t} (\sigma_I \psi_{0,s})(x_s)
\\
=& (b\psi_{0,s})(x_s) + \tilde{R}_{s,t} + \sum_{\substack{k\in\{1,\cdots,\lfloor p \rfloor\} \\ I\in\{1,\cdots,d\}^k}} \big(\bv_{s,t}\otimes \bw_{s,t}\big)^{k,I} \big(\sigma_I \psi_{0,s}\big)(x_s).
\end{align*}
Note that since $\bv_{s,t}\otimes \bw_{s,t} = \mathbf{1}$, we have for all $k\ge 1$, $(\bv_{s,t}\otimes \bw_{s,t})^k=0$. 
 
Furthermore $D\psi_{0,s}(x) = \big(D\varphi_{0,s}(\psi_{0,s}(x))\big)^{-1}$, hence
\[(b\psi_{0,s})(x_s) = \big(D\varphi_{0,s}(\psi_{0,s}(x_s))\big)^{-1} b\left(\varphi_{0,s}(\psi_{0,s}(x_s))\right)= \big(D\varphi_{0,s}(z(s))\big)^{-1} b\left(\varphi_{0,s}(z(s))\right).\]
Finally, for $(s,t)\in \Delta^2_T$ small enough
\[z(t) - z(s) = \big(D\varphi_{0,s}(z(s))\big)^{-1} b\left(\varphi_{0,s}(z(s))\right) + \tilde{R}_{s,t}.\]
We also know that $s\mapsto \big(D\varphi_{0,s}(z(s))\big)^{-1} b\left(\varphi_{0,s}(z(s))\right) $ is a continuous functions. Hence, thanks to a standard Riemann sum argument, 
\[z(t) - z(s) = \int_s^t \big(D\varphi_{0,r}(z(r))\big)^{-1} b\left(\varphi_{0,r}(z(r))\right)\dd r\]
and $(z(t))_{t\in[0,T]}$ is a solution of Equation \eqref{eq:ODE_rough}, and $(x_t)_{t\in[0,T]} = (\varphi_{0,t}(z(t))\big)_{t\in[0,T]}$, which ends the proof.
\end{proof}

\subsection{Rough differential equation with drift : the averaged field}

Whenever $\sigma$ is regular (and bounded), and $b$ is a bit more than continuous, Theorem \ref{theorem:main1} allows us to focus, in order to prove existence and uniqueness of solutions to  rough differential equations with drift
\[\dd x_t = b(x_t) \dd t + \sigma(x_t) \dd \bw_t, \quad x_0 \in \RR^d,\quad t\in[0,T],\]
to existence and uniqueness of solutions, the ordinary differential equation 
\[z'(t) = \big(D\varphi_{0,t}(z(t))\big)^{-1} b\big(\varphi_{0,t}(z(t))\big),\quad z_0 = x_0 \in \RR^d,\quad t\in[0,T],  \]
where $\varphi$ is the flow generated by the driftless RDE \eqref{eq:rde_basic}.

The previous equation may be rewritten in its integral form as 
\begin{equation}\label{eq:integral_equation}
z(t) = x_0 + \int_0^t \big(D\varphi_{0,r}(z(r))\big)^{-1} b\big(\varphi_{0,r}(z(r))\big) \dd r, \quad t\in[0,T]. 
\end{equation}

In order to study wellposedness of the previous equation, we rely on ideas from \cite{catellierAveragingIrregularCurves2016} and we will try to exhibit a general criterion of regularity of the space-time averaged fields defined in the following definition~:

\begin{definition}
Let $p\ge 2$, $T>0$, $\sigma\in C^{\lfloor p \rfloor + 2}_b(\RR^d;(\RR^d)^{\otimes 2})$ and $b\in\cC^\kappa_b(\RR^d;\RR^d)$. We define the averaged field $\cT b$ of $b$ along the flow generated by the driftless rough differential equation driven by $\bw$ has the following space-time vector field :
\begin{equation}\label{eq:averaged_field}
(t,x)\in[0,T] \times \RR^d \mapsto \cT b_t(x):=\int_{0}^t \big(D\varphi_{0,r}(x)\big)^{-1} b\big(\varphi_{0,r}(x)\big) \dd r.
\end{equation}
For all $(s,t)\in\Delta^2_T$ we denote by $\cT b_{s,t}=\cT b_t - \cT b_s$.
\end{definition}

When $\sigma = 1$ and $\bw$ is a Brownian motion (or fractional Brownian motion), one can see (in \cite{davieUniquenessSolutionsStochastic2007} and \cite{catellierAveragingIrregularCurves2016} for example) that the previous averaged field enjoys better regularity properties (in space) than $b$. On can also consult \cite{galeatiNoiselessRegularisationNoise2021} and \cite{galeatiPrevalenceRhoIrregularity2020} for a systematic study of the previous averaged field in a more general context. 

In our setting, one generally use non-linear Young integration theory (see \cite{catellierAveragingIrregularCurves2016} and \cite{galeatiNonlinearYoungDifferential2021}) to link the space-time regularity of the averaged field \eqref{eq:averaged_field} and the wellposedness of the integral equation \eqref{eq:integral_equation}. Nevertheless the examples that our techniques will allow us to treat (Gaussian rough path, see Section \ref{section:Malliavin}) will not allow us to go beyond $b\in\cC^\kappa$ for $\kappa>0$. In that case, Equation \eqref{eq:integral_equation} always makes sense, it has a solution (thanks to Peano's existence theorem) and one only needs to focus on uniqueness.  The following theorem is our main result in a general context~:

\begin{theorem}\label{theorem:main2}
Let $p\ge 2$, $T>0$, $\kappa>0$, $b\in \cC^{\kappa}(\RR^d;\RR^d)$, $\sigma\in C^{2(\lfloor p \rfloor + 1)}_b(\RR^d;(\RR^d)^{\otimes 2})$ and $\bw$ be a $\frac{1}{p}$-H\"older weakly-geometric $p$ -rough path. Assume that there exists $1\ge\nu>\frac12$, $\alpha>0$ such that $\kappa + \frac{1}{\alpha}>2-\nu$, a weight $w_0$ (see Definitions \ref{def:weight} and \ref{def:weighted_holder}) such that 
\[\|\cT b\|_{\cC^\nu_T \cC^{\kappa+\frac1\alpha}_{w_0}} < +\infty.\]
Then there is a unique solution to Equation \eqref{eq:integral_equation}. 

Furthermore, let $x_1, x_2 \in \RR^d$ and $b_1, b_2$ satisfying the previous hypothesis, and let $z_1$ (respectively $z_2$) be the unique solution to Equation \eqref{eq:integral_equation} with $b=b_1$ and $x=x_1$ (respectively $b=b_2$ and $x=x_2$). Let us suppose that $\cT (b_1)-\cT (b_2) \in \cC^\nu_T \cC^{1}_{w_0}$. 

Then there is a non decreasing positive function 
$K:\RR_+ \mapsto \RR_+\backslash\{0\}$, which depends on $\|\cT b_1\|_{\cC^\nu_T\cC^{\kappa+\frac1\alpha}_{w_0}}$, $\|\cT b_2\|_{\cC^\nu_T\cC^{\kappa+\frac1\alpha}_{w_0}}$, $\bw$, $\|b_1\|_\infty$, $\|b_2\|_\infty$, $\sigma$ and $T$
such that 
\[\|z_1 - z_2\|_{\infty,[0,T]} \le K\big(|x_1|+ |x_2|\big) \big(|x_1-x_2| + \|\cT b_1 - \cT b_2\|_{\cC^\nu_T\cC^1_{w_0}}).\]

In this setting, there is a unique solution to the RDE with drift \eqref{eq:rde_with_drift} in the sense of the Definition \ref{def:rde_with_drift}. Furthermore it generates a locally Lipschitz-continuous semiflow with respect to the initial condition.
\end{theorem}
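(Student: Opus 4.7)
The strategy is to recast Equation \eqref{eq:integral_equation} as a non-linear Young differential equation and then to invoke the framework of \cite{catellierAveragingIrregularCurves2016, galeatiNonlinearYoungDifferential2021}. A preliminary observation is that, for any Lipschitz path $z$, the classical integral coincides with the non-linear Young integral:
\[\int_0^t \big(D\varphi_{0,r}(z(r))\big)^{-1} b\big(\varphi_{0,r}(z(r))\big) \dd r = \int_0^t Tb(\dd r, z(r)),\]
the right-hand side being defined as the sewing limit of $\sum_i Tb_{r_i,r_{i+1}}(z(r_i))$. The assumption $\nu > 1/2$ together with $\kappa + 1/\alpha > 1$ (ensured by $\kappa + 1/\alpha > 2 - \nu$) makes this sewing well-defined: the sewing defect is $Tb_{u,t}(z(s)) - Tb_{u,t}(z(u))$, bounded by $|t-u|^\nu |s-u|^\nu \leq |t-s|^{2\nu}$ with $2\nu>1$.

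For uniqueness, given two solutions $z_1, z_2$ starting from the same initial datum, I would analyze $\delta z := z_1 - z_2$ through the germ $G_{s,t} := Tb_{s,t}(z_1(s)) - Tb_{s,t}(z_2(s))$. A mean-value expansion together with the $(\kappa + 1/\alpha - 1)$-H\"older regularity of $\nabla Tb_{u,t}$ (of size $|t-u|^\nu$ in time) and the Lipschitz regularity of $z_2$ gives
\[|\delta G_{s,u,t}| \lesssim |t-s|^{\nu + \kappa + 1/\alpha - 1} \|\delta z\|_{\infty,[s,t]},\]
whose exponent is strictly greater than one by the hypothesis $\nu + \kappa + 1/\alpha > 2$. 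The sewing lemma then upgrades this to $|\delta z(t) - \delta z(s) - G_{s,t}| \lesssim |t-s|^{\nu + \kappa + 1/\alpha - 1} \|\delta z\|_{\infty,[s,t]}$, and a short iteration on intervals of small mesh yields $z_1 \equiv z_2$.

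For the stability claim, I would use the decomposition
\[z_1(t) - z_2(t) = (x_1 - x_2) + \int_0^t \big[Tb_1(\dd r, z_1(r)) - Tb_1(\dd r, z_2(r))\big] + \int_0^t (Tb_1 - Tb_2)(\dd r, z_2(r)).\]
The first integral is handled exactly as in the uniqueness argument, producing a Gr\"onwall-type contribution in $\|\delta z\|_\infty$; the second is bounded via sewing directly by $\|Tb_1 - Tb_2\|_{\cC^\nu_T \cC^1_{w_0}}$ along $z_2$, the weight $w_0$ being absorbed by the a priori bound $|z_2(r)| \leq |x_2| + T\|b_2\|_\infty$. Uniqueness and the locally Lipschitz semi-flow property for the RDE with drift \eqref{eq:rde_with_drift} then follow from Theorem \ref{theorem:main1}, combined with the $C^n_{lin}$-regularity of the driftless flow $\varphi$ supplied by Theorem \ref{theorem:driftlessRDE}.

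The main obstacle I anticipate is the careful bookkeeping of the weighted H\"older norms along the trajectory: the control of $Tb$ is only available in $\cC^{\kappa + 1/\alpha}_w$ with a power-growth weight $w$, so one must partition the time interval with a mesh calibrated to both $\bw$ and $w$, and propagate the weight through the sewing estimates using the a priori growth bounds on $z$. This is precisely what forces the stability constant $K$ to depend on $|x_1| + |x_2|$ rather than being universal, and it is the reason why the statement requires $w$ to have polynomial growth.
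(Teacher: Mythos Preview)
Your overall approach matches the paper's: both recast \eqref{eq:integral_equation} as a non-linear Young equation via the germ $G_{s,t}=Tb_{s,t}(z_1(s))-Tb_{s,t}(z_2(s))$ (the paper writes $A_{s,t}$), invoke the sewing lemma, and iterate on small intervals. The reduction to Theorem~\ref{theorem:main1} at the end is also the same.

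There is, however, a genuine gap in your sewing-defect estimate. You claim
\[
|\delta G_{s,u,t}|\lesssim |t-s|^{\nu+\kappa+\frac1\alpha-1}\,\|\delta z\|_{\infty,[s,t]},
\]
but this cannot hold using only the $(\kappa+\tfrac1\alpha-1)$-H\"older regularity of $\nabla Tb$ and the Lipschitz regularity of $z_2$. After the mean-value expansion, $\delta G_{s,u,t}$ contains a term of the form
\[
\int_0^1 D(Tb)_{u,t}\big(\,\cdot\,\big)\,\big(\delta z(s)-\delta z(u)\big)\,\dd\lambda,
\]
and the increment $\delta z(s)-\delta z(u)$ is \emph{not} controlled by $\|\delta z\|_{\infty}$. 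Since $\kappa+\tfrac1\alpha$ may be below $2$, you cannot trade it for a second derivative of $Tb$ either. The paper's Lemma following Lemma~\ref{lemma:riemann} makes this explicit: the defect is bounded by
\[
K_0\Big(\|T(b_1-b_2)\|_{\cC^\nu_T\cC^1_{w_0}}+\|z_1-z_2\|_{\infty,[s,t]}+\llbracket z_1-z_2\rrbracket_{\nu,[s,t]}\Big)|t-s|^{a},\qquad a=\min\big(2\nu,\ \kappa+\tfrac1\alpha-1+\nu\big),
\]
with an unavoidable $\llbracket\delta z\rrbracket_{\nu}$ contribution (exponent $2\nu>1$). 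The paper then closes the argument by a bootstrap: on intervals short enough that $K_0(h^\nu+h^{a-\nu})\le\tfrac13$, one first absorbs $\llbracket\delta z\rrbracket_{\nu,[s,t]}$ back into $\|\delta z\|_{\infty,[s,t]}$, and only then runs the Gr\"onwall-type iteration. Your ``short iteration on intervals of small mesh'' is the right instinct, but it must be preceded by this control of the H\"older seminorm of $\delta z$; otherwise the sewing bound does not yield a closed inequality in $\|\delta z\|_\infty$ alone.
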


The proof is an adaptation of \cite[Theorem 2.21]{catellierAveragingIrregularCurves2016}. We also refer to Section 5 (and in particular to Theorem 5.6) of \cite{galeatiNonlinearYoungDifferential2021}. In order to be self-contained, we give here another proof, relying on the standard sewing lemma (see for example \cite{gubinelliControllingRoughPaths2004, feyelNoncommutativeSewingLemma2007,braultNonlinearSewingLemma2019}  and the reference therein). We recall here its standard formulation :

\blue{
\begin{lemma}\label{lemma:sewing}
Let $V$ be a Banach space and $A:\Delta^2_T \mapsto V$ be such that there exists two constants $C>0$ and $a>1$ such that for all $(s,u),(u,t) \in \Delta^2_T$, 
\[|A_{s,t} - A_{s,u} - A_{u,t}|_V \le C|t-s|^a\]
Then there exists a unique function $\cA : [0,T] \mapsto V$ such that $\cA_0=0$ and for all $(s,t)\in \Delta^2_T$,
\[\cA_t - \cA_s = \lim_{\substack{\pi \in \Pi([s,t])\\ |\pi|\to 0}} \sum_{(u,v) \in \pi} A_{u,v}.\]
Furthermore there exists a constant $k=k(a)>0$ such that for all $(s,t)\in\Delta^2_T$,
\[|\cA_t - \cA_s - A_{s,t}|_{V} \le k C|t-s|^a.\]
\end{lemma}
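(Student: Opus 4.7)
My plan follows the nonlinear Young philosophy of \cite{catellierAveragingIrregularCurves2016}, Theorem~2.21, with Lemma~\ref{lemma:sewing} as the central tool. The first step is to recast the integral equation \eqref{eq:integral_equation} as a nonlinear Young--type equation driven by the averaged field $Tb$. Writing $F_r(x) := \big(D\varphi_{0,r}(x)\big)^{-1} b\big(\varphi_{0,r}(x)\big)$, which is uniformly bounded in $(r,x)$ by Corollary~\ref{corollary:rde_inverse_jacobian} and $b \in L^\infty$, any candidate solution $z$ is Lipschitz in time. Using the $\kappa$-H\"older regularity of $F_r$ in space, for the germ $A_{s,t} := Tb_{s,t}(z(s))$ I first establish
\[\big|z(t) - z(s) - A_{s,t}\big| = \Big|\int_s^t \big(F_r(z(r)) - F_r(z(s))\big)\,dr\Big| \lesssim |t-s|^{1+\kappa}.\]
The sewing condition $|A_{s,t} - A_{s,u} - A_{u,t}| = |Tb_{u,t}(z(s)) - Tb_{u,t}(z(u))| \lesssim |t-s|^{\nu+1}$ follows since $Tb_{u,t}$ is Lipschitz in space with norm $\lesssim |t-u|^\nu$ (as $\kappa + 1/\alpha > 1$) and $z$ is Lipschitz in time. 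Lemma~\ref{lemma:sewing} then identifies $z$ as the unique sewing limit of $A$.

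The core of the proof is the stability estimate, from which uniqueness will follow by specializing to $x_1=x_2$, $b_1=b_2$. For $\delta := z_1 - z_2$, I apply the sewing machinery to
\[B_{s,t} := Tb_{1,s,t}(z_1(s)) - Tb_{2,s,t}(z_2(s)),\]
whose main term satisfies $|B_{s,t}| \lesssim |t-s|^\nu \big(|\delta(s)| + \|Tb_1 - Tb_2\|_{\cC^\nu_T \cC^1_{w_0}}\big)$. The delicate point is the two-increment $B_{s,u,t}$: using the fundamental theorem of calculus, the $(\kappa + 1/\alpha - 1)$-H\"older regularity of $D(Tb_i)$ in space together with the time-Lipschitzness of $z_i$ (and $|\delta(s)-\delta(u)| \lesssim \|\delta\|_\infty^\kappa |s-u|$ from the ODE itself), I expect the bound
\[|B_{s,u,t}| \lesssim |t-s|^{\nu + \kappa + 1/\alpha - 1}\|\delta\|_\infty + |t-s|^{\nu+1}\|\delta\|_\infty^\kappa + |t-s|^{\nu+1}\|Tb_1-Tb_2\|_{\cC^\nu_T \cC^1_{w_0}}.\]
The hypothesis $\kappa + 1/\alpha > 2 - \nu$ is used precisely so that the first exponent exceeds $1$, so Lemma~\ref{lemma:sewing} applies and the sewing remainder inherits the same estimate.

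Combining these pieces yields an increment inequality roughly of the form
\[|\delta(t) - \delta(s)| \lesssim |t-s|^\nu |\delta(s)| + |t-s|^{\nu+\kappa+1/\alpha-1}\|\delta\|_\infty + |t-s|^{\nu+1}\|\delta\|_\infty^\kappa + |t-s|^\nu \|Tb_1-Tb_2\|_{\cC^\nu_T \cC^1_{w_0}},\]
with $\delta(0) = x_1 - x_2$. The hard part will be closing the Gr\"onwall-type iteration when $\kappa < 1$, since the sublinear term $\|\delta\|_\infty^\kappa$ could a priori dominate $\|\delta\|_\infty$. The remedy I envision is to partition $[0,T]$ into short sub-intervals on which the linear coefficient $C\tau^\nu$ is $<1/2$, exploit the fact that on any such sub-interval $[s,s+\tau]$ the sup-norm of $\delta$ is dominated by $2|\delta(s)|$ up to the forcing, and absorb the sublinear contribution using the smallness of $\tau$ together with the boundedness of the weight $w$ on the (compact) range of the solutions. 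Once $\|\delta\|_{\infty,[0,T]}$ is controlled by the stated Lipschitz bound, uniqueness for \eqref{eq:integral_equation} is immediate, and the uniqueness together with the locally Lipschitz flow property for the RDE with drift \eqref{eq:rde_with_drift} follow from Theorem~\ref{theorem:main1}, which transfers both from the transformed ODE \eqref{eq:ODE_rough} back to the original equation via $x_t = \varphi_{0,t}(z(t))$.
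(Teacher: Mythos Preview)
Your proposal does not prove the stated Lemma~\ref{lemma:sewing} at all. What you have written is a sketch of Theorem~\ref{theorem:main2} (uniqueness and stability for the ODE~\eqref{eq:integral_equation}), in which the sewing lemma is \emph{invoked} as a black box. The sewing lemma itself is a purely analytic statement about germs $A:\Delta^2_T\to V$ satisfying $|A_{s,t}-A_{s,u}-A_{u,t}|_V\le C|t-s|^a$ with $a>1$: one must construct the limit $\cA_t-\cA_s=\lim_{|\pi|\to 0}\sum_{(u,v)\in\pi}A_{u,v}$ and prove the remainder bound $|\cA_t-\cA_s-A_{s,t}|_V\le kC|t-s|^a$. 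No flow $\varphi$, no averaged field $Tb$, no ODE is involved.

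In the paper the lemma is not proved either; it is simply recalled as the standard sewing lemma with references to \cite{gubinelli2004controlling,feyelNoncommutativeSewingLemma2007,braultNonlinearSewingLemma2019}. A correct proof would proceed along those classical lines: for a fixed $(s,t)$ and a partition $\pi$, remove a point $u$ from $\pi$ chosen so that the two adjacent intervals have total length at most $2(t-s)/\#\pi$, use the hypothesis to bound the change in $\sum_{(u,v)\in\pi}A_{u,v}$ by $C(2(t-s)/\#\pi)^a$, and sum over successive removals to get $|\sum_{\pi}A-A_{s,t}|\le C\,2^a\zeta(a)\,|t-s|^a$; this yields both existence of the limit (as a Cauchy argument comparing two partitions via their common refinement) and the remainder estimate with $k=2^a\zeta(a)$. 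Your write-up contains none of this, so as a proof of Lemma~\ref{lemma:sewing} it is vacuous.
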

}

The end of this section is devoted to the proof of Theorem \ref{theorem:main2}. We begin by the following lemma.

\begin{lemma}\label{lemma:riemann} Let us suppose that the hypothesis of Theorem \ref{theorem:main2} are satisfied. 
Let $(z(t))_{t\in[0,T]}$ be a continuous functions. Then for all $(s,t)\in \Delta^2_T$,
\[\int_s^t \big(D\varphi_{0,r}(z(r))\big)^{-1} b\big(\varphi_{0,r}(z(r))\big) \dd r = \lim_{\substack{\pi \in \Pi([s,t])\\ |\pi|\to 0}} \sum_{(u,v) \in \pi} \cT b_{u,v}(z(u)).\]
\end{lemma}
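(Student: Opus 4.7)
The plan is to recognize the right-hand sum as a piecewise-constant, Riemann-like approximation of the left-hand integral and pass to the limit by uniform convergence. Setting $f(r,x) := \big(D\varphi_{0,r}(x)\big)^{-1} b\big(\varphi_{0,r}(x)\big)$, the definition of $Tb$ gives, for any partition $\pi = \{s = t_0 < t_1 < \cdots < t_N = t\}$,
\begin{equation*}
\sum_{(u,v) \in \pi} Tb_{u,v}\big(z(u)\big) = \sum_{i=0}^{N-1} \int_{t_i}^{t_{i+1}} f\big(r,z(t_i)\big) \dd r = \int_s^t f\big(r, z_\pi(r)\big) \dd r,
\end{equation*}
where $z_\pi$ denotes the piecewise-constant left-endpoint approximation of $z$ along $\pi$. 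The claim therefore reduces to showing $\int_s^t f(r, z_\pi(r))\dd r \to \int_s^t f(r, z(r))\dd r$ as $|\pi|\to 0$.

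To establish this convergence I would first observe that $z$ is continuous on $[s,t]$, hence uniformly continuous with compact image $K := z([s,t]) \subset \RR^d$; uniform continuity then forces $z_\pi \to z$ uniformly on $[s,t]$. Next, the map $f$ is continuous on $[0,T]\times\RR^d$: Theorem \ref{theorem:driftlessRDE} gives joint continuity of $(r,x)\mapsto \varphi_{0,r}(x)$ together with $D\varphi_{0,r}(x)$, Corollary \ref{corollary:rde_inverse_jacobian} yields joint continuity of $(r,x)\mapsto \big(D\varphi_{0,r}(x)\big)^{-1}$ with bounds uniform in $r$ on any compact set in $x$, and $b$ is continuous and bounded. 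In particular $f$ is uniformly continuous on $[0,T]\times K'$ for any compact neighbourhood $K'$ of $K$. Combining this with the uniform convergence $z_\pi \to z$ yields $f(r, z_\pi(r)) \to f(r, z(r))$ uniformly in $r\in[s,t]$, and integration finishes the argument.

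There is no genuine obstacle here: the lemma is essentially the assertion that a continuous integrand admits a Riemann-sum approximation on a compact interval, so the full space-time Hölder regularity of $Tb$ postulated in Theorem \ref{theorem:main2} is not needed. That regularity, together with the sewing lemma, is reserved for the subsequent quantitative argument yielding uniqueness of solutions to Equation \eqref{eq:integral_equation}.
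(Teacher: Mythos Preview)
Your proof is correct and follows the same Riemann-sum idea as the paper's proof. The paper argues slightly more quantitatively: rather than invoking joint continuity of $f$ on compacts, it splits the remainder $\int_s^t f(r,z(r))\dd r - Tb_{s,t}(z(s))$ into two pieces (one from the variation of $(D\varphi_{0,r})^{-1}$, one from the variation of $b\circ\varphi_{0,r}$) and bounds them using the uniform-in-time Lipschitz continuity of $x\mapsto\varphi_{0,r}(x)$ and $x\mapsto(D\varphi_{0,r}(x))^{-1}$ together with the $\cC^\kappa$ regularity of $b$, obtaining $\lesssim_{\bw}\|b\|_{\cC^\kappa_b}\int_s^t|z(r)-z(s)|^\kappa\dd r$. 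Your compactness argument is a bit more elementary and in fact only requires $b$ continuous and bounded rather than $b\in\cC^\kappa$; the paper's route has the minor advantage of yielding an explicit modulus of convergence. Your closing remark that the space-time H\"older regularity of $Tb$ is not used here is accurate and worth keeping.
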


\begin{proof}
First, let us remark that thanks to Theorem \ref{theorem:driftlessRDE}, for all $(s,t)\in\Delta^2_T$, $x\mapsto \varphi_{s,t}(x)$ and $x\mapsto \big(D\varphi_{s,t}(x)\big)^{-1}$ are Lipschitz continuous uniformly in time, and $x\mapsto \big(D\varphi_{s,t}(x)\big)^{-1}$ is bounded. Furthermore,  since $b\in\cC^\kappa_b$ for some $\alpha>0$, we have
\begin{align*}
\bigg|\int_s^t \big(D\varphi_{0,r}(z(r))\big)^{-1} &b\big(\varphi_{0,r}(z(r))\big) \dd r - \cT b_{s,t}(z(s))\bigg| 
\\
&\le
\left|\int_s^t \big(D\varphi_{0,r}(z(r))\big)^{-1}\Big( b\big(\varphi_{0,r}(z(r))\big) - b\big(\varphi_{0,r}(z(s))\big)\Big)\dd r\right|
\\
&\quad + 
\left|\int_s^t \Big(\big(D\varphi_{0,r}(z(r))\big)^{-1}-D\varphi_{0,r}(z(s))\big)^{-1}\Big) b\big(\varphi_{0,r}(z(s))\big) \dd r\right|
\\
&\lesssim_{\bw}
\|b\|_{\cC^{\kappa}_b}\int_{s}^t |z(r) - z(s)|^\kappa \dd r.
\end{align*}
Since $z$ is uniformly continuous on $[0,T]$ the result follows by standard arguments.
\end{proof}

Until the end of this Section, we write $\kappa'=\kappa + \frac1\alpha$.

\begin{lemma} Again, let us work in the setting of Theorem \ref{theorem:main2}.
Let $z_1,z_2\in \cC^1_b([0,T];\RR^d)$ be two Lipschitz continuous paths. There exists a positive, non-decreasing locally bounded function 
$K_0:\RR_+ \mapsto \RR_+\backslash\{0\}$ depending on 
$\|\cT b_1\|_{\cC^\nu_T\cC^{\kappa'}_{w_0}}$, $\|\cT b_2\|_{\cC^\nu_T\cC^{\kappa'}_{w_0}}$, $T$ such that for all $(s,t)\in\Delta^2_T$, we have the following bound 
\begin{multline*}
\bigg| \int_s^t \big(D\varphi_{0,r}(z_1(r))\big)^{-1} b_1\big(\varphi_{0,r}(z_1(r))\big) \dd r - \int_s^t \big(D\varphi_{0,r}(z_2(r))\big)^{-1} b_2\big(\varphi_{0,r}(z_2(r))\big) \dd r \\ - \big((\cT b_1)_s(z_1(s)) - (\cT b_2)_s(z_2(s))\big)\bigg| \\
\lesssim
K_0\big(\|z_1\|_{\cC^1_T}+\|z_2\|_{\cC^1_T}\big)
\left(\|\cT (b_1-b_2)\|_{\cC^\nu_T \cC^1_{w_0}}+\|z_1 - z_2\|_{\infty,[s,t]} + \llbracket z_1 - z_2\rrbracket_{\nu,[s,t]} \right)|t-s|^a,
\end{multline*}
where $\tilde{w_0}(x) = (1+x )w_0(x)$, $a=\min\{2\nu,\kappa'-1 +\nu\}$ and for a path $z:[0,T] \mapsto \RR^d $
\[\llbracket z\rrbracket_{\nu,[s,t]} = \sup_{(r,r')\in\Delta^2_{s,t}} \frac{|z(r')-z(r)|}{|r'-r|^\nu}.\]
\end{lemma}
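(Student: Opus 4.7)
The plan is to apply the sewing lemma (Lemma~\ref{lemma:sewing}) to the two-parameter germ
\[
A_{s,t}:= (Tb_1)_{s,t}(z_1(s)) - (Tb_2)_{s,t}(z_2(s)).
\]
Indeed, by Lemma~\ref{lemma:riemann} applied to each of the two integrals separately, the difference of integrals on the left-hand side of the statement equals the limit, as $|\pi|\to 0$, of the Riemann sum $\sum_{(u,v)\in\pi} A_{u,v}$. Hence, provided one establishes the germ inequality $|\delta A_{s,u,t}|:=|A_{s,t}-A_{s,u}-A_{u,t}|\le C|t-s|^{a}$ with $a>1$ and $C$ of the announced form, Lemma~\ref{lemma:sewing} identifies that limit with the continuous additive functional associated to $A$, up to an error controlled by $k\,C\,|t-s|^{a}$, which is precisely the conclusion of the lemma.

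The bulk of the proof is therefore the control of $\delta A$. Using time-additivity of $(Tb_i)_{\cdot,\cdot}$ and writing $(Tb_1)_{u,t}=(Tb_2)_{u,t}+(T(b_1-b_2))_{u,t}$, one obtains the decomposition $\delta A_{s,u,t}=\mathrm{(I)}+\mathrm{(II)}$, where
\[
\mathrm{(I)}:=\bigl[(Tb_2)_{u,t}(z_1(s))-(Tb_2)_{u,t}(z_1(u))\bigr]-\bigl[(Tb_2)_{u,t}(z_2(s))-(Tb_2)_{u,t}(z_2(u))\bigr],
\]
\[
\mathrm{(II)}:=(T(b_1-b_2))_{u,t}(z_1(s))-(T(b_1-b_2))_{u,t}(z_1(u)).
\]
Term $\mathrm{(II)}$ is a first-order spatial increment of a map whose $\cC^1_{w_0}$-norm is bounded by $\|T(b_1-b_2)\|_{\cC^\nu_T\cC^1_{w_0}}|t-u|^\nu$; combined with $|z_1(s)-z_1(u)|\lesssim\|z_1\|_{\cC^1_T}|u-s|$ and the trivial bound $|u-s|\le T^{1-\nu}|u-s|^{\nu}$, this yields a bound of the form $\|T(b_1-b_2)\|_{\cC^\nu_T\cC^1_{w_0}}\|z_1\|_{\cC^1_T}\,w_0\,|t-s|^{2\nu}$.

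Term $\mathrm{(I)}$ is a mixed rectangular second difference $F(z_1(s))-F(z_2(s))-F(z_1(u))+F(z_2(u))$ with $F=(Tb_2)_{u,t}$. Setting $\theta_r^\lambda:=\lambda z_1(r)+(1-\lambda)z_2(r)$ and performing a Taylor expansion, one splits it as
\[
\int_0^1\!\bigl(DF(\theta_s^\lambda)-DF(\theta_u^\lambda)\bigr)\cdot(z_1(s)-z_2(s))\,d\lambda \;+\; \int_0^1\! DF(\theta_u^\lambda)\cdot\bigl((z_1-z_2)(s)-(z_1-z_2)(u)\bigr)\,d\lambda.
\]
The first integral is estimated via the $(\kappa'-1)$-Hölder regularity of $DF$ (that is, $\|F\|_{\cC^{\kappa'}_{w_0}}\lesssim \|Tb_2\|_{\cC^\nu_T\cC^{\kappa'}_{w_0}}|t-u|^\nu$), the displacement $|\theta_s^\lambda-\theta_u^\lambda|\lesssim(\|z_1\|_{\cC^1_T}+\|z_2\|_{\cC^1_T})|u-s|$, and the pointwise factor $\|z_1-z_2\|_{\infty,[s,t]}$, contributing a term of order $|t-s|^{\nu+\kappa'-1}$. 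The second integral uses $|DF|\lesssim \|Tb_2\|_{\cC^\nu_T\cC^1_{w_0}}|t-u|^\nu w_0$ together with $\llbracket z_1-z_2\rrbracket_{\infty,[s,t]}|u-s|$, giving a term of order $|t-s|^{2\nu}$. Summing all the pieces, $|\delta A_{s,u,t}|$ is bounded by the right-hand side announced in the statement multiplied by $|t-s|^{a}$ with $a=\min(2\nu,\,\nu+\kappa'-1)$, which is strictly greater than $1$ under the standing assumptions $\nu>\tfrac12$ and $\kappa'=\kappa+\tfrac1\alpha>2-\nu$. The sewing lemma then closes the argument.

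The main technical nuisance is the bookkeeping of the weight $w_0$ along the segment $\theta_r^\lambda$: one needs $|\theta_r^\lambda|\le\|z_1\|_{\infty,T}+\|z_2\|_{\infty,T}\le\|z_1\|_{\cC^1_T}+\|z_2\|_{\cC^1_T}$ uniformly in $\lambda,r$, so that every weight evaluation $w_0(|\theta_r^\lambda|)$ is dominated by $w_0(\|z_1\|_{\cC^1_T}+\|z_2\|_{\cC^1_T})$. All remaining constants depending only on $\bw,\sigma,T,\|b_i\|_\infty$ and on the seminorms $\|Tb_i\|_{\cC^\nu_T\cC^{\kappa'}_{w_0}}$ then fit into the advertised explicit prefactor $K_0(x)=c\bigl(1+\|Tb_1\|_{\cC^\nu_T\cC^{\kappa'}_{w_0}}+\|Tb_2\|_{\cC^\nu_T\cC^{\kappa'}_{w_0}}\bigr)(1+x)w_0(x)$.
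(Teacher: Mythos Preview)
Your proof is correct and follows essentially the same approach as the paper: apply the sewing lemma to the germ $A_{s,t}=(Tb_1)_{s,t}(z_1(s))-(Tb_2)_{s,t}(z_2(s))$, identify the Riemann sums via Lemma~\ref{lemma:riemann}, and control $\delta A$ by splitting off a $T(b_1-b_2)$ first difference and a rectangular second difference of the main averaged field. The only cosmetic differences are that the paper keeps $Tb_1$ (rather than $Tb_2$) as the ``main'' field and handles the rectangular term via a shifted-point identity (inserting $z_1(u)-z_2(u)+z_2(s)$) instead of your linear interpolation $\theta_r^\lambda$; these two devices are equivalent and yield the same exponents $\nu+\kappa'-1$ and $2\nu$.
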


\begin{proof}
Let us define for $(s,t) \in \Delta^2_T$, 
\[A_{s,t} = (\cT b_1)_{s,t}(z_1(s)) - (\cT b_2)_{s,t}(z_2(s)).\]
Thanks to the previous lemma, we already know that each integral is the limit of the Riemann sum involving $\cT b_1,\cT b_2$, hence
\begin{multline*}
\int_s^t \big(D\varphi_{0,r}(z_1(r))\big)^{-1} b_1\big(\varphi_{0,r}(z_1(r))\big) \dd r - \int_s^t \big(D\varphi_{0,r}(z_2(r))\big)^{-1} b_2\big(\varphi_{0,r}(z_2(r))\big) \dd r \\= \lim_{\substack{\pi \in \Pi([s,t])\\ |\pi|\to 0}} \sum_{(u,v) \in \pi} A_{u,v}.
\end{multline*}
Furthermore, we have for $(s,u),(u,t) \in \Delta^2_T$,
\begin{align*}
A_{s,u} + A_{u,t} - A_{s,t}
=&
\Big((\cT b_1)_{u,t}(z_1(u)) - (\cT b_2)_{u,t}(z_2(u))\Big) - \Big((\cT b_1)_{u,t}(z_1(s)) - (\cT b_2)_{u,t}(z_2(s))\Big)
\\
&=
(\cT b_1)_{u,t}(z_1(u)) - (\cT b_1)_{u,t}(z_2(u))
\\
& - \Big((\cT b_1)_{u,t}\big(z_1(u)-z_2(u) + z_2(s)\big) - (\cT b_1)_{u,t}\big(z_2(s)\big)\Big)\\
& + (\cT b_1)_{u,t}\big(z_1(u)-z_2(u) + z_2(s)\big) - (\cT b_1)_{u,t}\big(z_1(s)\big) \\
& + \cT (b_1 - b_2)_{u,t}\big(z_2(u)\big)-\cT (b_1 - b_2)_{u,t}\big(z_2(s)\big).
\end{align*}
We deduce that
\begin{align*}
&A_{s,u} + A_{u,t} - A_{s,t}\\
&=
\int_0^1 \bigg( D (\cT b_1)_{u,t}\Big(\lambda\big(z_1(u) - z_2(u)\big) + z_2(u) \Big) -D (\cT b_1)_{u,t}\Big(\lambda\big(z_1(u) - z_2(u)\big) + z_2(s)\Big)\bigg) \\ &\quad\quad\quad\quad\quad\quad \times\big(z_1(u) - z_2(u)\big) \dd \lambda \\
&\quad+ 
\int_0^1 \bigg( D (\cT b_1)_{u,t}\Big(\lambda\big((z_1-z_2)(u)-(z_1-z_2)(s)\big) + z_2(s)\Big)\bigg)\big((z_1-z_2)(u)-(z_1-z_2)(s)\big) \dd \lambda \\
&\quad+
\cT (b_1 - b_2)_{u,t}\big(z_2(u)\big)-\cT (b_1 - b_2)_{u,t}\big(z_2(s)\big).
\end{align*}
It follows that
\begin{align*}
|A_{s,u}+A_{u,t} - A_{s,t}| 
\lesssim &
\|D(\cT b_1)\|_{\cC^\nu_T \cC^{\kappa'-1}_{w_0}}
w_0(\|z_1\|_{\infty,[0,T]}+\|z_2\|_{\infty,[0,T]}) \llbracket z_2 \rrbracket^{\kappa' - 1}_{\cC^1_T} \\ & \hspace{15em} \times \|z_1 - z_2\|_{\infty,[s,t]} |t-s|^{\nu + \kappa' - 1} \\
& + \|D(\cT b_1)\|_{\cC^\nu_T \cC^{\kappa'-1}_{w_0}} w_0(\|z_1\|_{\infty,[0,T]} + \|z_2\|_{\infty,[0,T]}) \llbracket z_1 - z_2 \rrbracket_{\cC^\nu_T} |t-s|^{2\nu}\\
& + 
\|\cT (b_1-b_2)\|_{\cC^\nu_T \cC^{1}_{w_0}} w_0(\|z_2\|_{\infty,[0,T]})\llbracket z_2 \rrbracket_{\cC^\nu_T} |t-s|^{2\nu}.
\end{align*}
Since the previous computation is symmetric in $z_1$ and $z_2$ by using the Sewing Lemma \ref{lemma:sewing}, we obtain the desired result.
\end{proof}

\begin{remark} \label{rem:valueK0}
Thanks to the previous proof, one can choose $K_0$ as follow :  
\[K_0(x) = c\big(1+\|\cT b_1\|_{\cC^\nu_T\cC^{\kappa'}_{w_0}} + \|\cT b_2\|_{\cC^\nu_T\cC^{\kappa'}_{w_0}}\big)(1+x){w}_{0}\big(x\big),\]
for some constant $c=c(a)>0$.
\end{remark}

\begin{proof}[Proof of Theorem \ref{theorem:main2}]
Let us consider directly that we have $b_1$ and $b_2$ and $x_1$ and $x_2$ as in the theorem. 
Let us remark that thanks to Peano's existence theorem, there exists $z_1$ and $z_2$ solutions of the corresponding integral equations. Furthermore, we have 
\[z_1(t) = x_1 + \int_s^t \big(D\varphi_{0,r}(z_1(r))\big)^{-1} b_1\big(\varphi_{0,r}(z_1(r))\big) \dd r.\]
Since $b_1$  and $D\varphi$  are bounded, there exists a constant $C_1 = C_1(\bw,b_1,\sigma,T)$ such that
\[\|z_1\|_{\cC^1_T} \le C_1( |x_1| + 1),\]
and the same holds for $z_2$ (with the corresponding constant $C_2$). Let us define
\[K = c \Big(\big(1+\|\cT b_1\|_{\cC^\nu_T\cC^{\kappa'}_{w_0}} + \|\cT b_2\|_{\cC^\nu_T\cC^{\kappa'}_{w_0}}\big)\tilde{w}_{0}\big( 2 (C_1 + C_2)(1 + |x_1|+|x_2|)\Big),\]
where $c>0$ is the constant in Remark \ref{rem:valueK0}. 
Hence, for all $(s,t)\in \Delta^2_T$, the following estimate holds
\begin{multline*}
\big|\big(z_1(t) - z_2(t)\big)-\big(z_1(s) - z_2(s)\big)\big|
\le 
K(|z_1(s) - z_2(s)| + \|\cT (b_1-b_2)\|_{\cC^\nu_T L^{\infty}_{w_0}})|t-s|^\nu \\
 + K\big(\|\cT (b_1-b_2)\|_{\cC^\nu_T \cC^1_{w_0}}+\|z_1 - z_2\|_{\infty,[s,t]} + \llbracket z_1 - z_2\rrbracket_{\nu,[s,t]} \big)|t-s|^{a}.
\end{multline*}
Here the second line comes directly from the previous Lemma and Remark \ref{rem:valueK0} whereas the first line comes form the estimate for $(\cT b_1)_{s,t}(z_1(s)) -(\cT b_2)_{s,t}(z_1(s))$.

Let $h>0$ such that $\left(h^\nu+h^{a-\nu} \right) K \le \frac13$ and assume that $t-s \le h$. Then, we have
\begin{multline*}
\llbracket z_1 - z_2 \rrbracket_{\nu,[s,t]} \le \frac{1}{3(t-s)^{\nu}}(\|z_1 - z_2\|_{\infty,[s,t]} + \|\cT (b_1-b_2)\|_{\cC^\nu_T L^{\infty}_{w_0}}) \\+ \frac13\big(\|\cT (b_1-b_2)\|_{\cC^\nu_T \cC^1_{w_0}}+\|z_1 - z_2\|_{\infty,[s,t]} + \llbracket z_1 - z_2\rrbracket_{\nu,[s,t]} \big),
\end{multline*}
as well as
\[\llbracket z_1 - z_2 \rrbracket_{\nu,[s,t]} \le \frac{1}{2}\left(1+\frac{1}{(t-s)^{\nu}}\right)(\|z_1 - z_2\|_{\infty,[s,t]} + \|\cT (b_1-b_2)\|_{\cC^\nu_T \cC^{1}_{w_0}}),\]
and
\[K|t-s|^a\llbracket z_1 - z_2 \rrbracket_{\nu,[s,t]} \le \frac49(\|z_1 - z_2\|_{\infty,[s,t]} + \|\cT (b_1-b_2)\|_{\cC^\nu_T \cC^{1}_{w_0}}).\]
Finally, we have, by injecting this inequality into the previous one
\[
|z_1(t)-z_2(t)| \le \frac43|z_1(s)-z_2(s)| + \frac59 \big( \|\cT (b_1-b_2)\|_{\cC^\nu_T \cC^{1}_{w_0}} + \|z_1 - z_2\|_{\infty,[s,t]}\big). 
\]
This gives
\[\|z_1-z_2\|_{\infty,[s,t]} \le 3\left(|z_1(s)-z_2(s)| +  \|\cT (b_1-b_2)\|_{\cC^\nu_T \cC^{1}_{w_0}}\right). 
\]
One can iterate the previous bound on small intervals to deduce the desired estimate.

Note that  the uniqueness of the solution follows directly by setting $b_1=b_2=b$.

Uniqueness and regularity of the semiflow for equation \eqref{eq:rde_with_drift} are then a direct consequence of Theorems \ref{theorem:main1} and \ref{theorem:driftlessRDE}.
\end{proof}

\begin{remark}

Theorem \ref{theorem:main2} requires that $b\in \cC^{\kappa}$ for some $\kappa>0$. This hypothesis is needed in view of Theorem \eqref{theorem:main1} which allows us to state an equivalent notion of solutions for RDE with drift as well as to make sense of Equation \eqref{eq:ODE_rough} as an actual ODE. 

Nevertheless, for any $b\in\cS'(\RR^d)$ such that $\cT b$ exists and $\cT b\in\cC^\nu_T\cC^1_{w_0}$ for some sublinear weight $w_0$ and $\nu >\frac12$, there exists a solution $(\theta_t)_{t\in[0,T]}\in\cC^\nu_T$ to the non linear Young differential equation
\begin{equation}\label{eq:NLYDE}
\theta_t = \theta_0 + \int_0^t (\cT b)_{\dd r}(\theta_r),\end{equation}
where the integral is constructed using the sewing lemma applied to 
\[A_{s,t} = \cT b_{s,t}(\theta_s).\]
One can consult \cite{catellierAveragingIrregularCurves2016,galeatiNonlinearYoungDifferential2021}
for more details.

Hence, one could use the following definition in order to extend the notion of solution to RDE with drift :
\begin{definition}\label{def:RDE_NLY}
Let $p\ge 2$, $T>0$ and $\sigma \in C^{\lfloor p \rfloor +2}(\RR^d;(\RR^d)^{\otimes 2})$. 
Let 
$\bw$ 
be a 
$\frac1p$-H\"older weakly 
geometric $p$-rough path. 
Let 
$b\in\cS'(\RR^d;\RR^d)$ 
and 
$w_0$ 
be a sublinear weight such that 
$\cT b \in \cC^\nu_T\cC^1_{w_0}$ 
for some $\nu>\frac12$. A path $(x_t)_{t\in[0,T]}$ is a solution to Equation \eqref{eq:rde_with_drift} if $(x_t)_{t\in[0,T]}=\big(\varphi_{0,t}(\theta_t)\big)_{t\in[0,T]}$, where $\varphi$ is the flow generated by the driftless RDE \eqref{eq:rde_basic}, and $(\theta_t)_{t\in[0,T]}$ is a solution to the non-linear Young differential equation \eqref{eq:NLYDE} with $\theta_0 = x_0$.
\end{definition}

An obvious remark is that whenever $b\in\cC^\kappa$ for some $\kappa>0$ and when $\sigma\in C^{2(\lfloor p \rfloor +1)}$, thank  to Lemma \ref{lemma:riemann} and Theorem \ref{theorem:main1}, this definition is equivalent to Definition \ref{def:rde_with_drift}. The huge difference when $b$ is not bounded, is the lack of a priori estimates for the solution $\theta$. In the setting, one could prove the following theorem :

\begin{theorem}\label{theorem:main2bis}
Let $p\ge 2$, $T>0$, $b\in \cS'(\RR^d;\RR^d)$ and $\sigma\in C^{\lfloor p \rfloor + 2}_b(\RR^d;(\RR^d)^{\otimes 2})$. Let $\bw$ be a $\frac{1}{p}$-H\"older  weakly geometric $p$-rough path. 

Let us suppose that there exists $1\ge\nu>\frac12$, $\kappa>2$ and a sublinear weight $w_0$ (see Definitions \ref{def:weight} and \ref{def:weighted_holder}) such that 
\[\|\cT b\|_{\cC^\nu_T \cC^{\kappa}_{w_0}} < +\infty.\]
Then there is a unique solution to Equation \eqref{eq:NLYDE}.

In that setting, there is a unique solution, in the sense of Definition \ref{def:RDE_NLY} to Equation \eqref{eq:rde_with_drift}.
\end{theorem}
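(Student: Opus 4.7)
The plan is to apply non-linear Young integration theory directly to the equation
\[\theta_t = \theta_0 + \int_0^t (Tb)_{\dd r}(\theta_r),\]
as in \cite{catellierAveragingIrregularCurves2016,galeatiNonlinearYoungDifferential2021}, exploiting the fact that the hypothesis $Tb \in \cC^{\nu}_T \cC^\kappa_{w_0}$ with $\kappa > 2$ and $\nu > 1/2$ is strictly stronger than that of Theorem \ref{theorem:main2} (apart from the sublinear weight replacing boundedness), and then to transfer the conclusion to the RDE via Definition \ref{def:RDE_NLY}.

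For local existence and the very meaning of the integral, I would apply the sewing Lemma \ref{lemma:sewing} to $A_{s,t} = Tb_{s,t}(\theta_s)$. For a $\nu$-Hölder path $\theta$, the compatibility defect
\[A_{s,t} - A_{s,u} - A_{u,t} = Tb_{u,t}(\theta_s) - Tb_{u,t}(\theta_u)\]
is controlled using $DTb \in \cC^\nu_T \cC^{\kappa-1}_{w_0}$ (note that $\kappa-1 > 1$) by
\[\lesssim w_0\bigl(\|\theta\|_{\infty,[0,T]}\bigr)\, \llbracket \theta \rrbracket_{\nu,[s,t]} \,|t-s|^{2\nu}\]
with $2\nu > 1$, so the sewing lemma produces the integral. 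Existence on a small interval then follows by a Schauder-type fixed point argument on a ball of $\cC^\nu_T$, exactly as in the cited references.

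For uniqueness I would follow the strategy of the proof of Theorem \ref{theorem:main2}: comparing two solutions $\theta_1,\theta_2$ via the sewing lemma applied to $\widetilde A_{s,t} = Tb_{s,t}(\theta_1(s)) - Tb_{s,t}(\theta_2(s))$ and using
\[|Tb_{s,t}(\theta_1(s)) - Tb_{s,t}(\theta_2(s))| \lesssim |\theta_1(s) - \theta_2(s)|\, w_0\bigl(\|\theta_1\|_\infty + \|\theta_2\|_\infty\bigr)\, |t-s|^\nu\]
(available because $\kappa-1>1$) yields, on short intervals, a Grönwall-type inequality analogous to the one obtained at the end of the proof of Theorem \ref{theorem:main2}, whence uniqueness by iteration.

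The main obstacle is \emph{global} existence, since $b$ is no longer assumed bounded and the fixed point argument above is local. Here the sublinearity of $w_0$ is essential. From the sewing bound
\[|\theta_t - \theta_s - Tb_{s,t}(\theta_s)| \lesssim w_0\bigl(\|\theta\|_{\infty,[s,t]}\bigr)\, \llbracket \theta \rrbracket_{\nu,[s,t]}\, |t-s|^{2\nu},\]
one gets on a short interval an a priori estimate of the form
\[\|\theta\|_{\infty,[s,t]} \le |\theta(s)| + C\,|t-s|^{\nu}\, w_0\bigl(\|\theta\|_{\infty,[s,t]}\bigr),\]
and since $w_0(x)/x \to 0$ as $x \to \infty$, this produces a uniform bound on $\|\theta\|_{\infty,[s,t]}$ on any subinterval of length bounded by a deterministic constant depending only on $\|Tb\|_{\cC^\nu_T \cC^\kappa_{w_0}}$ and $w_0$. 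Patching such intervals gives a global solution and rules out blow-up. The well-posedness of \eqref{eq:rde_with_drift} in the sense of Definition \ref{def:RDE_NLY} is then a direct consequence of the well-posedness of the non-linear Young equation \eqref{eq:NLYDE} together with the diffeomorphism property of $\varphi_{0,t}$ granted by Theorem \ref{theorem:driftlessRDE}.
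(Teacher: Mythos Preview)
Your proposal is correct and follows exactly the approach the paper intends: the paper's own proof is a single sentence, ``The proof is a direct adaptation of the ideas of \cite{catellierAveragingIrregularCurves2016}'', and what you have written is precisely such an adaptation spelled out in detail (sewing lemma for the non-linear Young integral, local existence by fixed point, uniqueness via a Gr\"onwall argument using $\kappa-1>1$, and global-in-time a priori bounds from the sublinearity of $w_0$). Note only that the reference to Equation~\eqref{eq:RDEY} in the statement is a typo for the non-linear Young equation~\eqref{eq:NLYDE}, which you have correctly identified as the intended object.
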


\begin{proof}
The proof is a direct adaptation of the ideas of \cite{catellierAveragingIrregularCurves2016}.
\end{proof}

The only difference comparing to Theorem \ref{theorem:main2} is the requirement that $\cT b\in\cC^\nu\cC^\kappa_{w_0}$ with $\kappa>2$. In our context of application, when $\bw$ is a Gaussian rough with $p<4$  which enjoys some non-determinism properties (see Section \ref{section:Malliavin}), one cannot expect to have a better regularizing effect than $\frac{p}{2}-\eps$ for some small $\eps>0$ (see Section \ref{section:kolmo} and Theorem \ref{thm:MAIN000}). Namely, if $b\in\cC^{\alpha}:=B^{\alpha}_{\infty,\infty}$ for some $\alpha\in \RR$, one may  expect that $ \cT b \in \cC^\nu_T \cC^{\frac{p}{2} + \alpha - \eps}_{w_0}$ for some small $\eps>0$ and some sublinear weight $w_0$. When applying Theorem \ref{theorem:main2bis}, one should ask that $\frac{p}2+\alpha >2$, but since $p<4$, this gives necessarily $\alpha>0$, and we can apply  Theorem \ref{theorem:main2} to obtain a better result.

Nevertheless, Definition \ref{def:RDE_NLY} and Theorem \ref{theorem:main2bis} could be useful when $b\in L^\infty \cap B^{-\alpha}_{\infty,\infty}$ or when $\bw$ enjoys better regularizing properties. We leave those investigations for  future works.

Finally, in view of \cite{davieIndividualPathUniqueness2011a}, one could expect that in the setting of fractional Brownian motion, the only requirement should be $\cT b \in \cC^{\nu}\cC^{1}_{w_0}$ (this is the case when $\bw$ is the Stratonovitch Brownian rough path). This kind of result would require a use of Girsanov transform as in \cite{davieDifferentialEquationsDriven2007,catellierAveragingIrregularCurves2016,davieIndividualPathUniqueness2011a}. The bounds of Section \ref{section:Malliavin} are not good enough, neither the Kolmogorov estimates of Section \ref{section:kolmo}. Again, we leave this for future investigations, \blue{see also \cite{dareiotisPathbypathRegularisationMultiplicative2022}.}
\end{remark}

\section{Kolmogorov type theorem for averaged fields}\label{section:kolmo}

In the previous section, and especially in Theorem \ref{theorem:main2}, we  exhibit a general criterion in terms of the averaged field $\cT b$ such that Equation \eqref{eq:rde_with_drift} has a unique solution. Nevertheless, we observe that without using any additional property of the flow $\varphi$, even in the case when $b$ is Lipschitz continuous (and thus Equation \eqref{eq:rde_with_drift} has a unique solution thanks to standard arguments), this does not gives uniqueness of solutions. We will proceed by using fine stochastic properties of the flow, when $\bw$ is a random rough path.

Hence, in this section, we will focus ourselves on the action a the random flow $\varphi:[0,T]\times\RR^d\to\RR^d$ as an averaging operator where $T>0$ is fixed. Thanks to the flow decomposition technique when $\varphi$ is a $C^1$ flow of diffeomorphism, we will investigate the mixed space time regularity of the following averaged field :

\[(\cT^{(D\varphi)^{-1},\varphi}b)_t(x) = \int_0^t D\varphi_u(x)^{-1}b(\varphi_u(x))\dd u.\]
In order to be as general as possible, we want to take $b\in B^{\alpha}_{p,r}$ as generic as possible. In order to do so, we rely on estimates of the type
\begin{equation*}
\|\cT^{(D\varphi)^{-1},\varphi} f\|_{L^q(\Omega;\cC^{\gamma}([0,T];B^{\alpha}_{p,r}))} \leq C\|f\|_{B^{\alpha'}_{\ell,m}}
\end{equation*}
for some constant $C>0$ and parameters $q\geq 2$, $\gamma>0$, $\alpha,\alpha'\in \RR$, $p,r,\ell,m\in[1,+\infty)$ and any $f\in\cS$. In the case $\ell = m = +\infty$, weighted estimates are needed for the left hand side, whereas for the right hand side we must use the fact that for any $f\in B^{\alpha'}_{\infty,\infty}$ and any $\eps>0$, $f$ is the limit of Schwartz functions in $B^{\alpha'-\eps}_{\infty,\infty}$ (see Lemma \ref{lem:A3} and Theorem \ref{theorem:kolmo} for details). Then, since $\cT^{(D\varphi)^{-1},\varphi}$ is a linear operator (on $\cS$), we consider its closure from $B^{\alpha'}_{\ell,m}$ to $L^q(\Omega;\cC^{\gamma}([0,T];B^{\alpha}_{p,r}))$ (which is still denoted $\cT^{(D\varphi)^{-1},\varphi}$).

In the following, we will consider a general form of the averaging operator: for any $\phi : [0,T]\times\RR^d \to (\RR^d)^{\otimes 2}$ and any $\varphi:[0,T]\times\RR^d\to\RR^d$, we denote, for any $0\leq s<t\leq T$, $x\in\RR^d$ and $f\in\cS$,
\begin{equation}\label{eq:avgfield}
\cT^{\phi,\varphi} f_t(x) = \int_0^t \phi(u,x) f(\varphi(u,x)) du.
\end{equation}

\begin{remark}We obtain a first, rather straightforward, bound, for any $\phi \in L^{q}(\Omega; L^{\infty}([0,T]\times(\RR^d)^{\otimes 2}))$ and $\varphi \in L^{q}(\Omega; L^{\infty}_{loc}([0,T]\times\RR^d))$,
\begin{equation*}
\|\cT^{\phi,\varphi}f\|_{L^q(\Omega;L^{\infty}([0,T]\times\RR^d)} \leq T \|\phi\|_{L^q(\Omega;L^{\infty}([0,T]\times\RR^d)} \|f\|_{L^{\infty}(\RR^d)}.
\end{equation*}
\end{remark}

Before stating our main result, we need the following assumptions.
\begin{assumption}\label{asm:phipsi}
Let $p\in[2,+\infty]$, $q \ge 2$ and let $(\phi,\varphi)$ be a couple of random functions such that $\phi:[0,T]\times\RR^d\to(\RR^d)^{\otimes 2}$ and $\varphi:[0,T]\times\RR^d\to\RR^d$. 
Define \[\tilde{q} =\begin{cases} 2q & \text{if }\; p<+\infty \\ q & \text{if }\; p=+\infty\end{cases}.\]

We assume furthermore that:
\begin{enumerate}
\item
for all $x\in \RR^d$, $(\phi(t,x))_{t\in[0,T]}$ and $(\varphi(t,x))_{t\in[0,T]}$ are adapted,
\item we have
\[\phi\in L^{\tilde{q}}\Big(\Omega; L^\infty\big([0,T]; L^{\infty}(\RR^d;(\RR^d)^{\otimes 2})\big)\Big) \quad \text{and} \quad \varphi \in L^{q} \Big(\Omega; L^\infty\big([0,T];L^{\infty}_{loc}(\RR^d;\RR^d)\big)\Big),\]
\item if $p<+\infty$, for almost all $t\in[0,T]$, $\varphi^{-1}(t,\cdot)$ exists almost surely and $\det(J_{\varphi^{-1}}) \in L^{\tilde{q}}(\Omega;L^\infty([0,T];L^\infty(\RR^d;\RR)))$.
\end{enumerate}
Moreover, for any $s\in[0,T]$, we assume that there exists $G_{p,s}$, a positive $\cF_s$-measurable random variable such that
\[\sup_{s\in[0,T]} \EE[G_{p,s}^{\tilde{q}}] < + \infty,\] and that there exists $H\in(0,1)$ such that, for any $f\in \cS$, for any $x\in\RR^d$, for any multi-index $\beta \in \NN^d$ and any $0 \le s \le r \le T$, we have
\begin{equation}\label{eq:condition_kolmogorov}
\bigg|
\EE\Big[\phi(r,y)\partial^\beta f\big(\varphi(r,x)\big)\Big| \cF_s \Big]
 \bigg| \lesssim 
 \begin{cases}
 |r-s|^{-|\beta| H}  G_{p,s} \EE\Big[|f(\varphi(r,x))|^p\Big| \cF_s \Big]^{\frac1p} & \text{if } 2\le p<+\infty, \\
 |r-s|^{-|\beta| H}  G_{\infty,s} \|f\|_\infty & \text{if } p=+\infty.
 \end{cases}
\end{equation}
\end{assumption}

We can now proceed to state our main result.

\begin{theorem}\label{theorem:kolmo}
Let $T>0$,  $p,q,(\phi,\varphi)$ that satisfy Assumption \ref{asm:phipsi}. Then, for all $\varepsilon\in(0,1]$, $0<\varepsilon'<\eps<\eps''$, $f\in B^{-\frac{1-\eps}{2H}}_{p,r}$, $\eta > d/q$, $\nu = \frac{1+\eps'}{2}-\frac{1}{q}$, the following estimate holds
\begin{equation*}
\|\cT^{\phi,\varphi} f\|_{L^q(\Omega; \cC^{\nu}([0,T];E_p))} \lesssim \|f\|_{B^{-\frac{1-\eps''}{2H}}_{p,r}},\end{equation*}
with $E_p = L^p(\RR^d)$ if $p<+\infty$ and $E_\infty = L^{\infty}_w(\RR^d)$ if $p = +\infty$, for some weight $w$ with a $\eta$-polynomial growth.
When $r=1$ and $p<+\infty$, one can take $\eps''=\eps$.
\end{theorem}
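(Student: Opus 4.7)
The plan is to combine a Littlewood--Paley decomposition of $f$ with a stochastic-sewing / Banach-valued BDG argument. By linearity of $T^{\phi,\varphi}$ it suffices to establish the bound for $f\in\cS$ and then close by density. I would write $f=\sum_{j\ge -1}\Delta_j f$ and estimate each dyadic block separately.

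The key lever is that a spectrally localized $\Delta_j f$ can be written as $\Delta_j f=\partial^{\beta}g_j$ for any multi-index $\beta$ of length $k$, with $g_j$ obtained by a smooth Fourier multiplier inverting $(i\xi)^{\beta}$ on the support of $\widehat{\Delta_j f}$ and satisfying $\|g_j\|_{L^p}\lesssim 2^{-jk}\|\Delta_j f\|_{L^p}$ via Bernstein. Plugging this representation into assumption \eqref{eq:condition_kolmogorov} would give, for $0\le u\le r\le T$,
\[
|\EE[\phi(r,x)\Delta_j f(\varphi(r,x))|\cF_u]|\lesssim |r-u|^{-kH}G_{p,u}\EE[|g_j(\varphi(r,x))|^p|\cF_u]^{1/p},
\]
so that, upon integrating $r$, taking $L^p(\RR^d)$-norm in $x$ and using the integrability of $\det J_{\varphi^{-1}}$ from Assumption \ref{asm:phipsi} (for the change of variables $y=\varphi(r,x)$), one arrives at
\[
\|\EE[A_{u,v}|\cF_u]\|_{L^p}\lesssim G_{p,u}(v-u)^{1-kH}\,2^{-jk}\|\Delta_j f\|_{L^p},
\]
with $A_{u,v}(x):=\int_u^v\phi(r,x)\Delta_j f(\varphi(r,x))\dd r$ and $kH<1$.

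For the fluctuation term $A_{s,t}-\EE[A_{s,t}|\cF_s]$ I would fix a partition of $[s,t]$ and exploit that $L^p(\RR^d)$ is UMD for $1<p<\infty$: the Banach-valued BDG inequality of Appendix \ref{appen:BDG} applied to the martingale differences $d_i:=A_{t_i,t_{i+1}}-\EE[A_{t_i,t_{i+1}}|\cF_{t_i}]$, combined with the crude estimate $\|A_{u,v}\|_{L^p}\lesssim (v-u)\|\phi\|_\infty\|\Delta_j f\|_{L^p}$, would yield a contribution of order $|t-s|^{1/2}2^{-jk}\|\Delta_j f\|_{L^p}$ after passing to the limit in the mesh (in the spirit of L\^e's stochastic sewing lemma). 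The $p=\infty$ case requires first proving the bound in $L^{\tilde p}$ for large $\tilde p$ and then transferring to $L^\infty_w$ through a weighted Sobolev embedding $W^{\eta,q}(\RR^d)\hookrightarrow L^\infty_w(\RR^d)$ with $\eta>d/q$, which is the source of the polynomial weight $w$.

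Choosing $k$ so that $1-kH=(1+\eps')/2$ and combining the two estimates gives the dyadic bound
\[
\|T^{\phi,\varphi}\Delta_j f(t)-T^{\phi,\varphi}\Delta_j f(s)\|_{L^q(\Omega;E_p)}\lesssim |t-s|^{(1+\eps')/2}\,2^{-j(1-\eps')/(2H)}\|\Delta_j f\|_{E_p}.
\]
The multidimensional Garsia--Rodemich--Rumsey lemma (Appendix \ref{appendix:GRR}) then upgrades this into an $L^q(\Omega)$ bound on the H\"older seminorm $\|\cdot\|_{\cC^\nu([0,T];E_p)}$ with $\nu=(1+\eps')/2-1/q$. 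Finally, summing in $j$ via H\"older's inequality in $\ell^r$ produces the bound by $\|f\|_{B^{-(1-\eps'')/(2H)}_{p,r}}$, the slight loss $\eps''$ beyond $\eps'$ being needed for the resulting geometric series in $j$ to converge; when $r=1$ one can take $\eps''=\eps$ since no strict summability margin is required. The hard part will be the careful bookkeeping between the competing exponents (time singularity $kH$, target H\"older exponent $(1+\eps')/2$, Bernstein gain $2^{-jk}$, and $\ell^r$-summability margin) and handling the $L^\infty$ case where the UMD--BDG machinery fails directly.
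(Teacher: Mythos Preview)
Your architecture---Littlewood--Paley decomposition, conditional integration by parts to extract a factor $2^{-jk}$, Banach-valued BDG in $L^p$, then a Garsia--Rodemich--Rumsey upgrade---is exactly the paper's, but the martingale step as you describe it does not close.

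The crude bound $\|A_{u,v}\|_{L^p}\lesssim(v-u)\|\Delta_jf\|_{L^p}$ contains \emph{no} factor $2^{-jk}$, so applying BDG to the differences $d_i=A_{t_i,t_{i+1}}-\EE[A_{t_i,t_{i+1}}|\cF_{t_i}]$ over $n$ subintervals only yields $(t-s)/\sqrt n$; letting the mesh go to zero sends this to $0$, but the leftover drift $\sum_i\EE[A_{t_i,t_{i+1}}|\cF_{t_i}]$ (which is \emph{not} $\EE[A_{s,t}|\cF_s]$) is of size $n^{kH}(t-s)^{1-kH}2^{-jk}\to\infty$. There is no way to obtain $(t-s)^{1/2}2^{-jk}$ from this. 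The paper instead runs the Doob martingale $M_k^j:=\EE[A_{s,t}|\cF_{t_k^j}]$, whose increments decompose as
\[
M^j_{k+1}-M^j_k=A_{t_k^j,t_{k+1}^j}+\EE[A_{t_{k+1}^j,t}|\cF_{t_{k+1}^j}]-\EE[A_{t_k^j,t}|\cF_{t_k^j}],
\]
so that \emph{both} the crude bound $(t-s)/n_j$ and the improved conditional bound $(t-s)^{\varepsilon}2^{-j(1-\varepsilon)/H}$ enter at the level of a single increment. The decisive step---absent from your proposal---is to \emph{optimize} the partition size, choosing $n_j\sim(t-s)^{1-\varepsilon}2^{j(1-\varepsilon)/H}$ (depending on $j$) so that $(t-s)/\sqrt{n_j}$ and $\sqrt{n_j}\,(t-s)^{\varepsilon}2^{-j(1-\varepsilon)/H}$ balance to $(t-s)^{(1+\varepsilon)/2}2^{-j(1-\varepsilon)/(2H)}$. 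No mesh limit can replace this optimization; it is the analogue, in this hands-on argument, of the balancing built into L\^e's stochastic sewing lemma, but here $A$ is additive so SSL applied to $A$ is vacuous.

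Your $p=\infty$ route also fails: an element of $B^{\alpha}_{\infty,r}$ need not lie in any $B^{\alpha}_{\tilde p,r}$ for finite $\tilde p$, so you cannot bootstrap from the $L^{\tilde p}$ case and then Sobolev-embed. The paper handles $p=\infty$ by fixing $x\in\RR^d$, running the same Doob-martingale and $n_j$-optimization argument pointwise with the \emph{scalar} BDG inequality, and then applying a space--time Kolmogorov theorem (Theorem~\ref{thm:Kolmogo_std}); the polynomial weight comes from covering $\RR^d$ by dyadic annuli in that Kolmogorov step, not from an embedding.
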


We begin by proving a Lemma which allow us  to transform the condition \eqref{eq:condition_kolmogorov} into a useful one concerning Paley-Littlewood blocks. To do so, we use a trick which can be found in \cite{bahouriFourierAnalysisNonlinear2011} Lemma 2.1.

\begin{lemma}\label{ref:lemma_interpolation}
Let $f\in \cS$  and $p,q,(\phi,\varphi)$ that satisfy Assumption \ref{asm:phipsi}. Then, for any $p\in [2,+\infty]$, for all $j\ge 0$ and all $\eta\in[0,1]$, we have 
\begin{multline}\label{eq:condition_kolmogorov_paley-littlewood}
 \bigg\|
 \EE\Big[
 \phi(r,\cdot) \Delta_j f\big(\varphi(r,\cdot)\big)\Big| \cF_s 
 \Big] 
 \bigg\|_{L^p(\RR^d;\RR)} 
 \lesssim |r-s|^{-(1-\eta)} 2^{-j \frac{1-\eta}{H} } \|\Delta_j f\|_{L^{p}(\RR^d;\RR)} 
 \\ 
 \times
 G_{p,s}\left(1+ \EE\left[\|\det(J_{\varphi^{-1}})\|_{L^\infty([0,T]\times\RR^d)} | \cF_s\right]^{\frac1p}\ind_{\{p<+\infty\}}\right).
 \end{multline}
\end{lemma}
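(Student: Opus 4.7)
The plan is to apply the hypothesis \eqref{eq:condition_kolmogorov} to a suitable antiderivative of $\Delta_j f$ and then interpolate on the order of differentiation, in the spirit of \cite{bahouriFourierAnalysisNonlinear2011}, Lemma 2.1. More precisely, I fix $j \geq 1$ and an integer $N \geq 1$, and introduce $F := (-\Delta)^{-N} \Delta_j f$. This is well-defined in $\mathcal{S}$ because $\widehat{\Delta_j f}$ is supported away from the origin in an annulus of size $\sim 2^j$. A standard Bernstein-type estimate yields $\|F\|_{L^p} \lesssim 2^{-2jN} \|\Delta_j f\|_{L^p}$ with a constant independent of $j$. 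Writing $(-\Delta)^N$ as a linear combination of $\partial^{2\gamma}$ with $|\gamma|=N$, so that $\Delta_j f = (-\Delta)^N F$, I would apply \eqref{eq:condition_kolmogorov} to $F$ with $|\beta|=2N$ in each monomial and sum to obtain the pointwise bound
\begin{equation*}
\left|\EE\left[\phi(r,x)\Delta_j f(\varphi(r,x))\,\middle|\,\cF_s\right]\right| \lesssim |r-s|^{-2NH} G_{p,s}\, \EE\left[|F(\varphi(r,x))|^p\,\middle|\,\cF_s\right]^{1/p}.
\end{equation*}

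Next, I raise this to the power $p$, integrate in $x$, and exchange integral and conditional expectation by Tonelli. Thanks to Assumption \ref{asm:phipsi}(iii), the change of variable $z = \varphi(r,x)$ gives $\int |F(\varphi(r,x))|^p \dd x \leq \|\det J_{\varphi^{-1}}\|_{L^\infty}\, \|F\|_{L^p}^p$. Combining with the Bernstein estimate and pulling out the $\cF_s$-measurable factor $G_{p,s}$ yields, for every integer $N \geq 0$,
\begin{equation*}
\left\|\EE\left[\phi(r,\cdot)\Delta_j f(\varphi(r,\cdot))\,\middle|\,\cF_s\right]\right\|_{L^p} \lesssim |r-s|^{-2NH} 2^{-2jN}\, \|\Delta_j f\|_{L^p}\, G_{p,s}\,\EE\left[\|\det J_{\varphi^{-1}}\|_{L^\infty}\,\middle|\,\cF_s\right]^{1/p},
\end{equation*}
the case $N = 0$ being a direct application of \eqref{eq:condition_kolmogorov} with $\beta = 0$.

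Finally, the bound for arbitrary $\eta \in [0,1]$ follows by geometric interpolation: pick an integer $N \geq (1-\eta)/(2H)$ and take the geometric mean of the $N = 0$ bound and the $N$-th bound with exponents $\theta := 1 - (1-\eta)/(2NH)$ and $1-\theta$ respectively. Since both bounds share the factor $\|\Delta_j f\|_{L^p}\, G_{p,s}\,\EE[\|\det J_{\varphi^{-1}}\|_{L^\infty}|\cF_s]^{1/p}$ and differ only by the factor $|r-s|^{-2NH}2^{-2jN}$ versus $1$, the geometric mean produces exactly the desired factor $|r-s|^{-(1-\eta)}2^{-j(1-\eta)/H}$. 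The low-frequency case $j=0$ is immediate from the $N=0$ bound together with $|r-s|\leq T$, while the case $p = +\infty$ is easier still, since the second alternative in \eqref{eq:condition_kolmogorov} yields $\|\Delta_j f\|_\infty$ directly on the right-hand side, with no change of variable needed. The main delicate point I anticipate is the uniform-in-$j$ Bernstein bound on $F = (-\Delta)^{-N}\Delta_j f$, which requires introducing a cutoff $\tilde\chi$ equal to $1$ on the Paley--Littlewood annulus so as to realize $|\xi|^{-2N}\tilde\chi(\xi/2^j)$ as an $L^1$-convolution kernel of total mass independent of $j$; once this is in place, the rest of the argument is routine.
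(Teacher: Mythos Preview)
Your proposal is correct and rests on the same Bernstein/interpolation principle as the paper (both cite \cite{bahouriFourierAnalysisNonlinear2011}, Lemma~2.1), but you organize the computation differently, and in fact a bit more economically.

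The paper writes $\Delta_j f = \sum_{|\beta|=k} \mathsf{m}_{k,\beta,j}\star \partial^\beta \Delta_j f$ with $\|\mathsf{m}_{k,\beta,j}\|_{L^1}\lesssim 2^{-jk}$, applies \eqref{eq:condition_kolmogorov} to the \emph{shifted} function $\Delta_j f(\cdot - y)$, and is then left with a two-variable kernel $K(x,y)=\EE[\phi(r,x)\,\partial^\beta\Delta_j f(\varphi(r,x)-y)\mid\cF_s]$ integrated against $\mathsf{m}_{k,\beta,j}(y)$. Controlling the $L^p_x$-norm of this convolution requires the Young inequality for kernel operators (Theorem~\ref{thm:ineq_young}), with separate bounds on $\sup_x\|K(x,\cdot)\|_{L^p}$ and $\sup_y\|K(\cdot,y)\|_{L^p}$.

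You instead push the derivatives onto the antiderivative $F=(-\Delta)^{-N}\Delta_j f$, so that \eqref{eq:condition_kolmogorov} applied with $f=F$ and $|\beta|=2N$ delivers $\EE[|F(\varphi(r,x))|^p\mid\cF_s]^{1/p}$ on the right \emph{without} any convolution variable. Taking the $L^p_x$-norm then needs only conditional Tonelli and one change of variable, followed by the Bernstein bound $\|F\|_{L^p}\lesssim 2^{-2jN}\|\Delta_j f\|_{L^p}$. This bypasses Theorem~\ref{thm:ineq_young} entirely, at the cost of hitting only even orders $k=2N$; but since both approaches conclude by geometric interpolation with the $k=0$ bound, this costs nothing. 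Two small remarks: your main argument already works for $j=0$ (the block $\Delta_0$ has annular support away from the origin), so treating it separately is unnecessary; and the cutoff issue you flag for the Bernstein estimate on $F$ is exactly what the paper handles via Lemma~\ref{lem:FourierMult}.
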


\begin{proof}
We remark that, for all $\xi=(\xi_1,\ldots,\xi_d)\in\RR^d$ and all $k\in \NN$,
\begin{equation}\label{eq:relAalpha}
|\xi|^{2k} =\left(\sum_{j = 1}^d \xi_j^2 \right)^k =  \sum_{1_\le j_1,\ldots,j_k \le d } \xi_{j_1}^{2}\ldots \xi_{j_k}^{2} = \sum_{\substack{\beta\in\NN^d\\ |\beta| = k}} A_\beta (-i \xi)^\beta (i\xi)^\beta,
\end{equation}
where $A_\beta$ are non negative constants and for $\xi = (\xi_1,\ldots,\xi_d)\in\RR^d$ and a multi-index $\beta=(\beta,\ldots,\beta_d) \in \NN^d$, $\xi^\beta = \xi_1^{\beta_1}\ldots \xi_d^{\beta_d}$. Let $\varrho\in\cC^{\infty}_0(\RR^d)$ such that $\mathrm{supp}(\varrho)\subset\mathscr{A}$ and $\varrho_{|\mathscr{A}} \equiv 1$ where $\mathscr{A} = \{ \xi\in\RR^d; 3/4\leq |\xi|\leq 8/3\}$ is an annulus in $\RR^d$. For any $\beta\in\NN^d$, such that $|\beta| = k$, and $j\geq 0$, we define the function $\mathsf{m}_{k,\beta,j}$ given by
\[\mathsf{m}_{k,\beta,j}=  \mathfrak{F}^{-1}(A_{\beta} (-i\xi)^{\beta} |\xi|^{-2k} \varrho(2^{-j}\xi)),\]
where $\mathfrak{F}^{-1}$ denotes the inverse Fourier transform.
It follows by \eqref{eq:relAalpha} that, for any $k\in\NN$,
\begin{equation}\label{eq:trick_differentiability}
\Delta_j f = \sum_{|\beta| = k}  \mathsf{m}_{k,\beta,j} \star \partial^\beta\Delta_j  f,
\end{equation}
where, by Lemma \eqref{lem:FourierMult}, we have
\begin{equation*}
\| \mathsf{m}_{k,\beta,j}\|_{L^1(\RR^d)} \lesssim 2^{-jk}.
\end{equation*}
We have, for $p<+\infty$
\begin{multline*}
\left\|\EE\left[\phi(r,\cdot)\Delta_j f (\varphi(r,\cdot))\big| \cF_s \right]\right\|_{L^p(\RR^d)}
	\le
\sum_{|\alpha| = k} \left\|\EE\left[\phi(r,\cdot)(\mathsf{m}_{k,\alpha,j}\star\partial^\alpha \Delta_j f) (\varphi(r,\cdot))\big| \cF_s \right]\right\|_{L^p(\RR^d)}
	\\ =
\sum_{|\alpha| = k} \left(\int_{\RR ^d}\left| \int_{\RR^d} \mathsf{m}_{k,\alpha,j}(y) \EE\left[\phi(r,x)(\partial^\alpha \Delta_j f) (\varphi(r,x)-y) \big| \cF_s \right] \dd y\right|^{p} \dd x \right)^{\frac1p},
\end{multline*}
and, for $p = +\infty$,
\begin{multline*}
\left\|\EE\left[\phi(r,\cdot)\Delta_j f (\varphi(r,\cdot))\big| \cF_s \right]\right\|_{L^p(\RR^d)}\le
\sum_{|\beta|  = k} \sup_{x\in\RR^d}\bigg| \int_{\RR^d} \mathsf{m}_{k,\beta,j}(y) \\ \times \EE\Big[\phi(r,x)(\partial^\beta \Delta_j f) (\varphi(r,x)-y) \big| \cF_s \Big] \dd y\bigg|.
\end{multline*}
We want to apply Young's inequality for kernel operators (Theorem \ref{thm:ineq_young}) with  
\[K(x,y) = \EE\left[\phi(r,x)(\partial^\beta \Delta_j f) (\varphi(r,x)-y) \big| \cF_s \right].\]
Note that, thanks to Equation \eqref{eq:condition_kolmogorov}, we know that 
\[|K(x,y)| \lesssim  \begin{cases}
 |r-s|^{-|\beta| H} G_{p,s} \EE[|\Delta_jf(\varphi(r,x)-y)|^p | \cF_s ]^{\frac{1}{p}} & \text{if } 2\le p<+\infty, \\
 |r-s|^{-|\beta| H}  G_{\infty,s} \|\Delta_j f\|_\infty & \text{if } p=+\infty.
 \end{cases}\]
Hence, for all $p\in[2,+\infty]$,
\[\sup_{x\in\RR^d} \|K(x,\cdot)\|_{L^p(\RR^d)}  \lesssim |r-s|^{-|\beta|H} G_{p,s} \|\Delta_jf\|_{L^p(\RR^d)}.\]
Furthermore, we note that, if $p<+\infty$,
\begin{align*}
\int_{\RR^d} \EE\left[|\Delta_jf(\varphi(r,x)-y)|^p\Big| \cF_s\right] \dd x  
= &
\EE\left[\int_{\RR^d} |\Delta_j f(z)|^p \EE \left[|\det(J_{\varphi^{-1}}(r,z+y))| \big| \cF_s  \right] \dd z \right] \\
\lesssim & \|\Delta_j f\|_{L^p(\RR^d)}^p \EE[\|\det(J_{\varphi^{-1}})\|_{L^\infty([0,T]\times\RR^d)}|\cF_s]
\end{align*}
and we obtain
\begin{align*}
&\sup_{y\in\RR^d} \|K(\cdot,y)\|_{L^p(\RR^d)}
\\ &\hspace{2em}\lesssim |r-s|^{-|\beta|H} G_{p,s} \|\Delta_j f\|_{L^p(\RR^d)} \times\begin{cases}
 \EE[\|\det(J_{\varphi^{-1}})\|_{L^\infty([0,T]\times\RR^d)}|\cF_s]^{1/p} & \text{if } 1<p<+\infty, \\
 1 & \text{if } p=+\infty.
 \end{cases} 
\end{align*}
Applying Young's inequality (Theorem \ref{thm:ineq_young}) and denoting 
\begin{equation*}
    P_{p,s} = G_{p,s}\left( 1+\EE[\|\det(J_{\varphi^{-1}})\|_{L^\infty([0,T]\times\RR^d)}|\cF_s]^{\frac1p}\ind_{\{p<+\infty\}}\right),
\end{equation*} we deduce that
\begin{align*}
\left\|\EE\left[\phi(r,\cdot)(\mathsf{m}_{k,\beta,j}\star\partial^\beta \Delta_j f) (\varphi(r,\cdot))\big| \cF_s \right]\right\|_{L^p(\RR^d)}\lesssim 2^{-jk}  |r-s|^{-|\beta|H}  \|\Delta_j f\|_{L^p(\RR^d)}P_{p,s}.
\end{align*}
It follows that, for any $k\in\NN$,
\begin{align*}
\left\|\EE\left[\phi(r,\cdot)\Delta_j f (\varphi(r,\cdot))\big| \cF_s \right]\right\|_{L^p(\RR^d)} &\lesssim  2^{-jk}\sum_{|\beta| = k}|t-s|^{-|\beta| H} \|\Delta_j f\|_{L^p(\RR^d)}P_{p,s}
\\ & \lesssim 
2^{-jk}|t-s|^{-k H} \|\Delta_j f\|_{L^p(\RR^d)}P_{p,s}.
\end{align*}
By interpolating the previous inequalities, we obtain the estimate, for any $\nu\in\RR^+$,
\begin{align*}
\left\|\EE\left[\phi(r,\cdot)\Delta_j f (\varphi(r,\cdot))\big| \cF_s \right]\right\|_{L^p(\RR^d)}\lesssim  2^{-j\nu}|t-s|^{-\nu H} \|\Delta_j f\|_{L^p(\RR^d)}P_{s,p},
\end{align*}
which gives the desired result by taking $\nu = \frac{1-\eta}H$.\end{proof}

\begin{proof}[Proof of Theorem \ref{theorem:kolmo}]

We split the proof in two parts.

\paragraph{The $p<+\infty$ case.} Let $0\leq s<t\leq T$ be fixed. We denote $\mathfrak{j} = \min\{j\in\NN;\;2^{-\frac jH} \leq (t-s)\} $. First, remark that 

\begin{align}
\| (\cT^{\phi,\varphi} \Delta_j f)_{s,t} \|_{L^p(\RR^d)}& \le \int_s^t \|\phi(r,\cdot)\Delta_j f(r,\varphi(r,\cdot)) \|_{L^p(\RR^d)} \dd r \nonumber
\\ &\le (t-s) \|\phi\|_{L^\infty([0,T]\times\RR^d)}\|\det(J_{\varphi^{-1}})\|_{L^\infty([0,T]\times\RR^d)} \|\Delta_j f\|_{L^p(\RR^d)}.\label{eq:basic_bound}
\end{align}
For any $-1\leq j < \mathfrak{j}$ and $\varepsilon\in[0,1]$, this yields, since $(t-s)\leq 2^{-\frac jH}$,
\begin{equation}
\| (\cT^{\phi,\varphi} \Delta_j f)_{s,t} \|_{L^p(\RR^d)} \le (t-s)^{\frac{1+\varepsilon}2} 2^{-\frac{1-\varepsilon}{2H}j} \|\phi\|_{L^\infty([0,T]\times\RR^d)}\|\det(J_{\varphi^{-1}})\|_{L^\infty([0,T]\times\RR^d)} \|\Delta_j f\|_{L^p(\RR^d)}.\label{eq:basic_bound_bis}
\end{equation}
Furthermore, thanks to Lemma \ref{ref:lemma_interpolation}, we also deduce that, for any $\eta\in[0,1]$,
\begin{align}
\left\|\EE[(\cT^{\phi,\varphi}\Delta_j f)_{s,t}| \cF_s ]\right\|_{L^p(\RR^d)} \hspace{5em}&\nonumber
\\\lesssim (t-s)^\eta \|\Delta_j f\|_{L^p(\RR^d)} 2^{-\frac{1-\eta}{H}j}   G_{p,s}&\left(1+ \EE\left[\|\det(J_{\varphi^{-1}})\|_{{L^\infty([0,T]\times\RR^d)}} | \cF_s\right]^{\frac1p}\right).\label{eq:improved_bound}
\end{align}
We now assume that $j\geq \mathfrak{j}$. Let $n_j\ge 1$ and let us define, for any $k\in\{0,\ldots,n_j\}$, the following quantities :
\[t_k^j : = \frac{k}{n_j}(t-s) + s\quad\mathrm{and}\quad M^j_k : = \EE[(\cT^{\phi,\varphi}\Delta_j f)_{s,t} |\cF_{t_k^j}].\]
Hence, $(M^j_k)_{0\leq k \leq n_j}$ is a martingale in $L^p(\RR^d)$ with respect to the filtration $(\cF_{t^j_k})_{0\leq k\leq n_j}$. Furthermore, it follows from \eqref{eq:improved_bound} that
\[\|M^j_0\|_{L^p(\RR^d)} \lesssim (t-s)^\eta \|\Delta_j f\|_{L^p(\RR^d)} 2^{-\frac{1-\eta}{H}j}   G_{p,s}\left(1+ \EE\left[\|\det(J_{\varphi^{-1}})\|_{L^\infty([0,T]\times\RR^d)} | \cF_s\right]^{\frac1p}\right),\]
and, in particular, by setting $\eta = (1+\varepsilon)/2$, where $\varepsilon\in[0,1]$, and for $q\ge 1$, we have
\begin{equation}\label{eq:estmM0}
\EE[\|M^j_0\|_{L^p(\RR^d)}^q]^{\frac1q} \lesssim (t-s)^{\frac{1+\eps}{2}} \|\Delta_j f\|_{L^p(\RR^d)} 2^{-\frac{1-\eps}{2H}j}.
\end{equation}

Since we are working in $L^p(\RR^d)$, $p\in[2,+\infty)$, which is a UMD space of type $2$ (see for example \cite{hytonenAnalysisBanachSpaces2016, hytonenAnalysisBanachSpaces2018} and Appendix \ref{appendix:UMD}), we know that the Burkolder-Davis-Gundy inequality holds, and we have,
\[\EE[\|M^j_{n_j} - M^j_{0}\|_{L^p(\RR^d)}^q] \lesssim \EE\left[\left(\sum_{k=0}^{n_j-1}\|M^j_{k+1}-M^j_k\|^2_{L^p(\RR^d)} \right)^{\frac{q}{2}}\right].\]
Furthermore, we obtain that, for all $0\le k\le n_{j}-1$, 
\[M^j_{k+1}-M^j_k = (\cT^{\phi,\varphi}\Delta_j f)_{t^j_k,t^j_{k+1}} + \EE\left[(\cT^{\phi,\varphi}\Delta_j f)_{t^j_{k+1},t}\Big| \cF_{t^j_{k+1}}\right] - \EE\left[(\cT^{\phi,\varphi}\Delta_j f)_{t^j_{k},t}\Big| \cF_{t^j_{k}}\right].\]
Using again \eqref{eq:basic_bound} to estimate the first term on the right-hand-side and  \eqref{eq:improved_bound} for the second and third terms, we deduce, for any $\varepsilon\in[0,1]$,
\begin{equation*}
\|M^{j}_{k+1}-M^{j}_{k}\|_{L^p(\RR^d)} 
	\lesssim 
\|\Delta_j f\|_{L^p(\RR^d)} (t-s)^\varepsilon \left( \frac{(t-s)^{1-\varepsilon}}{n_j}  + 2^{-\frac{1-\varepsilon}{H}j}\right) \kappa^j_{k},
\end{equation*}
where
\begin{align*}
\kappa^j_{k} &: =\|\phi\|_{L^\infty([0,T]\times\RR^d)}\|\det(J_{\varphi^{-1}})\|_{L^\infty([0,T]\times\RR^d)} 
  \\ &\hspace{1em}+ G_{p,t^j_k}\left(1+ \EE\left[\|\det(J_{\varphi^{-1}})\|_{L^\infty([0,T]\times\RR^d)} | \cF_{t^j_k}\right]^{\frac1p}\right)
 \\ &\hspace{1em}+ G_{p,t^j_{k+1}}\left(1+ \EE\left[\|\det(J_{\varphi^{-1}})\|_{L^\infty([0,T]\times\RR^d)} | \cF_{t^j_{k+1}}\right]^{\frac1p}\right). 
\end{align*}
Note that thanks to the hypothesis and Cauchy-Schwarz inequality, 
\[\sup_{j\ge 0} \sup_{k\in\{0,\ldots,n_j\}} \EE[(\kappa^j_k)^q] <+\infty.\]
Hence, we have,
\begin{align*}
\EE[\|M^j_{n_j} - M^j_{0}\|_{L^p(\RR^d)}^q] 
	&\lesssim 
	 	\EE\left[\left(\sum_{k=0}^{n_j-1}\|M^j_{k+1}-M^j_k\|^2_{L^p(\RR^d)} \right)^{\frac{q}{2}}\right] \\
	&\lesssim
	(t-s)^{\varepsilon q} \|\Delta_j f\|^q_{L^{p}(\RR^d) }\left( \frac{(t-s)^{1-\varepsilon}}{n_j}  + 2^{-\frac{1-\varepsilon}{H}j}\right)^q \EE\left[\left( \sum_{k=0}^{n_j-1} (\kappa_k^j)^2 \right)^{\frac{q}{2}} \right]
	\\ &\lesssim
	(t-s)^{\varepsilon q} \|\Delta_j f\|^q_{L^{p}(\RR^d)}\left( \frac{(t-s)^{1-\varepsilon}}{n_j}  + 2^{-\frac{1-\varepsilon}{H}j}\right)^q  \left(\sum_{k=0}^{n_j-1} \EE[(\kappa^j_k)^q]^{\frac{2}{q}}\right)^{\frac{q}{2}} 
	\\ &\lesssim
(t-s)^{\varepsilon q} \|\Delta_j f\|^q_{L^{p}(\RR^d)}\left( \frac{(t-s)^{1-\varepsilon}}{\sqrt{n_j}}  + \sqrt{n_j}2^{-\frac{1-\varepsilon}{H}j}\right)^q.
\end{align*}
We now optimize the estimate on $n_j\geq 1$. We choose
\[ n_j = \frac{(t-s)^{1-\varepsilon}}{2^{-\frac{1-\varepsilon}{H}j}} + \iota,\]
for a certain $\iota\in[0,1]$. This yields, since $2^{-\frac jH}\leq (t-s)$,
\begin{equation*}
\sqrt{n_j}2^{-\frac{1-\varepsilon}{H}j}  = (t-s)^{\frac{1-\varepsilon}2}2^{-\frac{1-\varepsilon}{2H}j}\sqrt{1 + \iota \frac{2^{-\frac{1-\varepsilon}{H}j}}{(t-s)^{1-\varepsilon}}} \leq \sqrt{2} (t-s)^{\frac{1-\varepsilon}2}2^{-\frac{1-\varepsilon}{2H}j},
\end{equation*}
and, also,
\begin{equation*}
 \frac{(t-s)^{1-\varepsilon}}{\sqrt{n_j}} \leq (t-s)^{\frac{1-\varepsilon}2}2^{-\frac{1-\varepsilon}{2H}j}.
\end{equation*}
Thus, we deduce that
\[\EE\left[\|M^j_{n_j} - M^j_{0}\|_{L^p(\RR^d)}^q\right]^{\frac1q} \lesssim (t-s)^{\frac{1+\eps}{2}} \| \Delta_j f \|_{L^p(\RR^d)} 2^{-\frac{(1-\eps)}{2H} j}.\]
Using the fact that $(\cT^{\phi,\varphi}\Delta_j f)_{s,t} = M^j_{n_j}$ as well as the estimate \eqref{eq:estmM0}, we obtain for all $j\ge \mathfrak{j}$,
\begin{equation}\label{eq:estmThm31}
\EE\left[\left\|(\cT^{\phi,\varphi}\Delta_j f)_{s,t}\right\|_{L^p(\RR^d)}^q\right]^{\frac1q} \lesssim (t-s)^{\frac{1+\eps}{2}} \|\Delta_j f\|_{L^p(\RR^d)} 2^{-j\frac{1-\eps}{2H}}.
\end{equation}
It follows from \eqref{eq:basic_bound_bis} that the previous estimate holds for all $\varepsilon\in[0,1]$ and any $j\geq -1$. Thus, we deduce that for $\varepsilon\in[0,1]$,
\begin{align*}
\EE\left[\left\|(\cT^{\phi,\varphi}f)_{s,t}\right\|_{L^p(\RR^d)}^q\right]^{\frac1q} &\leq \sum_{j\geq -1} \EE\left[\left\|(\cT^{\phi,\varphi}\Delta_j f)_{s,t}\right\|_{L^p(\RR^d)}^q\right]^{\frac1q}
\\ &\lesssim
(t-s)^{\frac{1+\eps}{2}} \sum_{j=-1}^{+\infty} 2^{-j\frac{1-\eps}{2H}} \|\Delta_j f\|_{L^p(\RR^d)}  = 
(t-s)^{\frac{1+\eps}{2}}  \|f\|_{B^{-\frac{1-\eps}{2H}}_{p,1}}.
\end{align*}
It follows from Theorem \ref{thm:GRRstd} that, for all $0<\eps'<\eps$ and $f\in B^{-\frac{1-\eps}{2H}}_{p,1}$, we have
\begin{align*}
\|\cT^{\phi,\varphi}f\|_{L^q(\Omega;C^{\nu}([0,T];L^p(\RR^d)))} &\lesssim \EE\left[ \int_{[0,T]^2} \frac{\|(\cT^{\phi,\varphi}f)_{s,t}\|_{L^p(\RR^d)}^q}{|t-s|^{q\frac{1+\eps'}2+1}} \dd s \dd t \right]^{1/q}
\\ &\lesssim \|f\|_{B^{-\frac{1-\eps}{2H}}_{p,1}} \int_{[0,T]^2}|t-s|^{q\frac{\varepsilon-\varepsilon'}2-1}\dd s \dd t \lesssim \|f\|_{B^{-\frac{1-\eps}{2H}}_{p,1}},
\end{align*}
with $\nu = \frac{1+\eps'}{2}-\frac{1}{q}>0$, which is exactly the theorem in the case $p<+\infty$ when we remind that  whenever $f\in B^{\alpha}_{p,p}$, one has for $\eps''>0$, thanks to H\"older inequality,
\begin{multline*}
\|f\|_{B^{\alpha-\eps''}_{p,1}} = \sum_{j\ge 1} 2^{j(\alpha-\eps'')j} \|\Delta_j f\|_{L^p(\RR^d)} \le  \left(\sum_{j\ge -1}2^{-\eps'' \frac{r}{r-1}j}\right)^{1-\frac1r}\left(\sum_{j\ge -1 } 2^{\alpha r j} \|\Delta_j f\|^r_{L^p(\RR^d)}\right)^{\frac1r}  \\ \lesssim_{\eps''} \|f\|_{B^\alpha_{p,r}}.
\end{multline*}
\paragraph{The $p=+\infty$ case.}
The proof is pretty much the same as the proof in the previous case. We provide the main arguments for completeness. Let $0\leq s<t\leq T$ be fixed. We denote $\mathfrak{j} = \min\{j\in\NN;\;2^{-\frac jH} \leq (t-s)\} $. First, we can see that, for any $-1\leq j < \mathfrak{j}$ and $\varepsilon\in[0,1]$, this yields, since $(t-s)\leq 2^{-\frac jH}$, for any $x\in\RR^d$,
\begin{equation}
\left| (\cT^{\phi,\varphi} \Delta_j f)_{s,t}(x) \right| \le (t-s)^{\frac{1+\varepsilon}2} 2^{-\frac{1-\varepsilon}{2H}j} \|\phi\|_{L^\infty([0,T]\times\RR^d)} \|\Delta_j f\|_{L^{\infty}(\RR^d)}.\label{eq:basic_bound_inf}
\end{equation}
Moreover, it follows from Lemma \ref{ref:lemma_interpolation} that, for any $\eta\in[0,1]$,
\begin{align}
\left|\EE[(\cT^{\phi,\varphi}\Delta_j f)_{s,t}(x)| \cF_s ]\right|\lesssim (t-s)^\eta \|\Delta_j f\|_{L^{\infty}(\RR^d)} 2^{-\frac{1-\eta}{H}j}   G_{\infty,s}.\label{eq:improved_bound_inf}
\end{align}
We now assume that $j\geq \mathfrak{j}$. Let $x\in\RR^d$, $n_j\ge 1$ and let us define, for any $k\in\{0,\ldots,n_j\}$, the following quantities :
\[t_k^j : = \frac{k}{n_j}(t-s) + s\quad\mathrm{and}\quad M^j_k(x) : = \EE[(\cT^{\phi,\varphi}\Delta_j f)_{s,t}(x) |\cF_{t_k^j}].\]
We can see that, $(M^j_k(x))_{0\leq k \leq n_j}$ is a martingale with respect to the filtration $(\cF_{t^j_k})_{0\leq k\leq n_j}$. Thanks to \eqref{eq:improved_bound_inf}, we obtain
\[|M^j_0(x)| \lesssim (t-s)^\eta \|\Delta_j f\|_{L^{\infty}(\RR^d)} 2^{-\frac{1-\eta}{H}j}   G_{\infty,s},\]
which yields, by setting $\eta = (1+\varepsilon)/2$, where $\varepsilon\in[0,1]$,
\begin{equation}\label{eq:estmM0_inf}
\EE[|M^j_0(x)|^q] \lesssim (t-s)^{\frac{1+\eps}{2}} \|\Delta_j f\|_{L^{\infty}(\RR^d)}^q 2^{-\frac{1-\eps}{2H}j}.
\end{equation}
It follows from \eqref{eq:basic_bound_inf} and \eqref{eq:improved_bound_inf}  that, for any $\varepsilon\in[0,1]$ and $1\leq k\leq n_j$,
\begin{equation*}
|M^{j}_{k+1}(x)-M^{j}_{k}(x)| 
	\lesssim 
\|\Delta_j f\|_{L^{\infty}(\RR^d)} (t-s)^\varepsilon \left( \frac{(t-s)^{1-\varepsilon}}{n_j}  + 2^{-\frac{1-\varepsilon}{H}j}\right) \kappa^j_{k},
\end{equation*}
where
\begin{equation*}
\kappa^j_{k} : =\|\phi\|_{L^\infty([0,T]\times\RR^d)}+ G_{\infty,t^j_k}+ G_{\infty,t^j_{k+1}}.
\end{equation*}
The assumptions gives the bound 
\[\sup_{j\ge 0} \sup_{k\in\{0,\ldots,n_j\}} \EE[(\kappa^j_k)^q] <+\infty.\]
From here, by following the same arguments as in the $p<+\infty$ case, we deduce, thanks to the BDG inequality and some optimization on $n_j$, that for any $\varepsilon\in[0,1]$ and for all $j\ge -1$,
\begin{equation}\label{eq:estmThm31_inf}
\EE\left[|(\cT^{\phi,\varphi}\Delta_j f)_{s,t}(x)|^q\right]^{\frac1q} \lesssim (t-s)^{\frac{1+\eps}{2}} \|\Delta_j f\|_{L^{\infty}(\RR^d)} 2^{-j\frac{1-\eps}{2H}},
\end{equation}
and we obtain the estimate, for any $x\in\RR^d$,
\[\EE[|(\cT^{\phi,\varphi} f)_{s,t}(x)|^q]^{\frac1q} \lesssim |t-s|^{\frac{1+\eps}{2}} \|f\|_{B^{-\frac{1-\eps}{2H}}_{\infty,1}}.\] Finally, Theorem \ref{thm:Kolmogo_std} gives the desired result.
\end{proof}
\begin{remark}
The dependence on $\phi$, $\varphi$ and $(G_{p,s})_s$ in the bound of Theorem \ref{theorem:kolmo} is not given explicitly but can be tracked thanks to the variables $\kappa_k^j$ used in the proof.
\end{remark}

We can extend the previous result with the help of the following Lemma.

\begin{lemma}\label{lemma:derivation}
Let $\beta\in\NN^d$, $\phi,\varphi \in L^q(\Omega; L^{\infty}([0,T];C^{|\beta|}(\RR^d)))$ and $f\in\cS$. Then, we have, for any $(t,x)\in[0,T]\times\RR^d$,
\[\partial^\beta (\cT^{\phi,\varphi})_t f(x) 
= 
\sum_{\gamma\leq\beta} \sum_{\substack{1\leq |\mu|\leq |\gamma|\\ \upsilon\in\mathcal{N}_{\gamma,\mu}}} C_{\beta,\gamma,\mu,\upsilon} \left(\cT^{\Lambda_{\gamma,\upsilon}(\varphi)\partial^{\beta-\gamma}\phi, \varphi} \big(\partial^{\mu} f\big)\right)_t(x),\]
where $\Lambda_{\gamma,\upsilon}(\varphi) : = \prod_{\substack{1\leq |\delta|\leq |\gamma|\\1\leq j \leq d}} (\partial^{\delta} \varphi_{j}(r,x))^{\upsilon_{\delta_j}}$ and $C_{\beta,\gamma,\mu,\upsilon}>0$ and
\begin{equation*}
\mathcal{N}_{\gamma,\mu} : = \left\{ \upsilon\in\NN^d:\;\sum_{1\leq |\delta|\leq |\gamma|} \upsilon_{\delta_j} = \mu_j,\;\textrm{for any}\; 1\leq j\leq d,\;\mathrm{and}\;\sum_{\substack{1\leq |\delta|\leq |\gamma|\\1\leq j \leq d}} \delta \upsilon_{\delta_j} = \gamma \right\},
\end{equation*}
\end{lemma}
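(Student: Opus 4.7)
The plan is to differentiate under the integral and then expand by combining the Leibniz rule with a multivariate Faà di Bruno formula.

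First, since $\phi,\varphi\in L^{q}(\Omega;L^{\infty}([0,T];C^{|\beta|}(\RR^d)))$ and $f\in\cS$, standard dominated convergence justifies commuting $\partial^{\beta}$ with the time integral, so that
\[
\partial^{\beta}(T^{\phi,\varphi}f)_{t}(x)=\int_{0}^{t}\partial_{x}^{\beta}\bigl(\phi(u,x)\,f(\varphi(u,x))\bigr)\dd u.
\]
Next, I would apply the multivariate Leibniz rule to split
\[
\partial_{x}^{\beta}\bigl(\phi(u,x)f(\varphi(u,x))\bigr)=\sum_{\gamma\le\beta}\binom{\beta}{\gamma}\bigl(\partial^{\beta-\gamma}\phi\bigr)(u,x)\,\partial_{x}^{\gamma}\bigl(f(\varphi(u,x))\bigr),
\]
reducing the problem to computing $\partial_{x}^{\gamma}(f\circ\varphi)$.

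The second step is the combinatorial core of the proof and the main obstacle. For the composition of a scalar-valued $f:\RR^{d}\to\RR$ with a vector-valued $\varphi:\RR^{d}\to\RR^{d}$, the multivariate Faà di Bruno formula (in the form established by Constantine--Savits) gives
\[
\partial_{x}^{\gamma}\bigl(f(\varphi(u,x))\bigr)=\sum_{1\le|\mu|\le|\gamma|}\bigl(\partial^{\mu}f\bigr)(\varphi(u,x))\!\!\sum_{\upsilon\in\mathcal{N}_{\gamma,\mu}}\!\!c_{\gamma,\mu,\upsilon}\prod_{\substack{1\le|\delta|\le|\gamma|\\1\le j\le d}}\!\!\bigl(\partial^{\delta}\varphi_{j}(u,x)\bigr)^{\upsilon_{\delta_{j}}},
\]
where the constraints $\sum_{\delta}\upsilon_{\delta_{j}}=\mu_{j}$ and $\sum_{\delta,j}\delta\,\upsilon_{\delta_{j}}=\gamma$ (encoded in $\mathcal{N}_{\gamma,\mu}$) track, respectively, the number of partials hitting each coordinate of $\varphi$ (which must match the order $\mu_{j}$ of derivative of $f$ in direction $j$) and the total order of differentiation (which must match $\gamma$). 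The simplest way to prove this is by induction on $|\gamma|$: for $|\gamma|=1$ it is just the chain rule $\partial_{i}(f\circ\varphi)=\sum_{j}\partial_{j}f(\varphi)\,\partial_{i}\varphi_{j}$, and the inductive step consists of applying one more partial and then bookkeeping how $\upsilon$ is updated when $\partial^{\delta}\varphi_{j}$ is differentiated (it may become $\partial^{\delta+e_{i}}\varphi_{j}$, lowering one entry of $\upsilon$ and raising another). The bookkeeping reorganizes the new sum into the same form with $|\gamma|+1$, absorbing the relabelling into the universal constants $c_{\gamma,\mu,\upsilon}$.

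Finally, substituting this expansion back, we recognize the factor $\Lambda_{\gamma,\upsilon}(\varphi)(u,x)=\prod_{\delta,j}(\partial^{\delta}\varphi_{j}(u,x))^{\upsilon_{\delta_{j}}}$ and bring the product $\Lambda_{\gamma,\upsilon}(\varphi)\cdot\partial^{\beta-\gamma}\phi$ into the role of the ``new'' random coefficient inside the averaging operator, while $\partial^{\mu}f$ plays the role of the integrand composed with the flow. Collecting the binomial $\binom{\beta}{\gamma}$ and the combinatorial constants $c_{\gamma,\mu,\upsilon}$ into a single positive constant $C_{\beta,\gamma,\mu,\upsilon}$ yields exactly
\[
\partial^{\beta}(T^{\phi,\varphi}f)_{t}(x)=\sum_{\gamma\le\beta}\sum_{\substack{1\le|\mu|\le|\gamma|\\\upsilon\in\mathcal{N}_{\gamma,\mu}}}C_{\beta,\gamma,\mu,\upsilon}\bigl(T^{\Lambda_{\gamma,\upsilon}(\varphi)\,\partial^{\beta-\gamma}\phi,\,\varphi}(\partial^{\mu}f)\bigr)_{t}(x),
\]
which is the claimed identity. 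The only genuine difficulty is the notational verification of the constraint set $\mathcal{N}_{\gamma,\mu}$; everything else is routine Leibniz and chain-rule manipulations.
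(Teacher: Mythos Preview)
Your proof is correct and follows essentially the same approach as the paper: differentiate under the integral, apply the multivariate Leibniz rule to split $\phi$ from $f\circ\varphi$, then expand $\partial^{\gamma}(f\circ\varphi)$ via the multivariate Fa\`a di Bruno formula and collect constants. In fact you give considerably more detail than the paper, which simply cites Leibniz and Fa\`a di Bruno without spelling out the constraint set $\mathcal{N}_{\gamma,\mu}$ or the justification for commuting $\partial^{\beta}$ with the time integral.
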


\begin{proof}
The proof of the lemma is a straightforward consequence of Leibniz's rule of differentiation, which yields
\begin{equation*}
\partial^{\beta} \phi(r,x) b(\varphi(r,x)) = \sum_{\gamma\leq\beta} C^1_{\beta,\gamma} \partial^{\beta-\gamma}\phi(r,x) \partial^{\gamma} (b(\varphi(r,x))),
\end{equation*}
for some constants $\{C^1_{\beta,\gamma}\}_{\gamma\leq \beta}\subset\RR^+$ and Fa\`a di Bruno's formula, which gives
\begin{equation*}
 \partial^{\gamma} (b(\varphi(r,x))) = \sum_{\substack{1\leq|\mu|\leq |\gamma|\\ \upsilon\in \mathcal{N}_{\gamma,\mu}}} C^2_{\mu,\upsilon} \partial^{\mu} b (\varphi(r,x)) \prod_{\substack{1\leq |\delta|\leq |\gamma|\\1\leq j \leq d}} (\partial^{\delta} \varphi_{j}(r,x))^{\upsilon_{\delta_j}},
\end{equation*}
for some constants $\{C^2_{\upsilon,\mu}\}_{\substack{1\leq|\mu|\leq |\gamma|\\ \upsilon\in \mathcal{N}_{\gamma,\mu}}}\subset\RR^+$.
\end{proof}

We can now state a by-product of Theorem \ref{theorem:kolmo}.

\begin{corollary}\label{corollary:averaged_holder}
Let $r\in[1,+\infty]$, $n\geq 1$ and $p\in[2,+\infty]$, $q\ge2$ and $(\phi,\varphi)$ such that, for any $\beta,\gamma,\mu,\upsilon \in\NN^d$ that verify $|\beta|\leq n$, $\gamma\leq \beta$, $1\leq |\mu|\leq |\gamma|$ and $\upsilon\in\cN_{\gamma,\mu}$, $p,q$, and $(\Lambda_{\gamma,\upsilon}(\varphi)\partial^{\beta-\gamma}\phi,\varphi)$ satisfies Assumption \ref{asm:phipsi}. 
Then, for all $\varepsilon\in(0,1]$, $\varepsilon'<\varepsilon<\eps''$, $\kappa\in(0,n]$, $f\in B^{\kappa-\frac{1-\eps}{2H}}_{p,r}$, $\eta > d/q$ and  such that, $\nu = \frac{1+\eps'}{2}-\frac{1}{q}$, the following estimate holds
\begin{equation*}
\|\cT^{\phi,\varphi} f\|_{L^q(\Omega; \cC^{\nu}([0,T];E_{p,\kappa}))} \lesssim \|f\|_{B^{\kappa-\frac{1-\eps''}{2H}}_{p,r}},
\end{equation*}
with $E_{p,\kappa} = B^{\kappa}_{p,p}(\RR^d)$ if $p<+\infty$ and $E_{\infty,\kappa}=\cC^{\kappa}_w(\RR^d)$ if $p = +\infty$, where $w$ is a weight with $\eta$-polynomial growth. When $p<+\infty$ and $r=1$, one can take $\eps''=\eps$.
\end{corollary}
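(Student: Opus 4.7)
The plan is to reduce the higher-regularity estimate to Theorem \ref{theorem:kolmo} via Lemma \ref{lemma:derivation}, and then interpolate to reach non-integer values of $\kappa$.

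For any multi-index $\beta$ with $|\beta|\le n$, Lemma \ref{lemma:derivation} writes $\partial^\beta T^{\phi,\varphi}f$ as a finite sum of terms $T^{\Lambda_{\gamma,\upsilon}(\varphi)\partial^{\beta-\gamma}\phi,\varphi}(\partial^\mu f)$ with $|\mu|\le|\gamma|\le|\beta|$. By hypothesis, each such pair $(\Lambda_{\gamma,\upsilon}(\varphi)\partial^{\beta-\gamma}\phi,\varphi)$ satisfies Assumption \ref{asm:phipsi}, so Theorem \ref{theorem:kolmo} bounds each summand in $L^q(\Omega;\cC^\nu([0,T];E_p))$ by $\|\partial^\mu f\|_{B^{-\frac{1-\eps''}{2H}}_{p,r}}\lesssim\|f\|_{B^{|\mu|-\frac{1-\eps''}{2H}}_{p,r}}$ using the continuity of $\partial^\mu:B^s_{p,r}\to B^{s-|\mu|}_{p,r}$. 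Summing over the finitely many contributions gives, for every integer $m\le n$,
\[\|T^{\phi,\varphi}f\|_{L^q(\Omega;\cC^\nu([0,T];W^{m,p}))}\lesssim\|f\|_{B^{m-\frac{1-\eps''}{2H}}_{p,r}},\]
with $\cC^m_w$ replacing $W^{m,p}$ when $p=+\infty$.

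To reach arbitrary $\kappa\in(0,n]$, I would apply real interpolation between the $m=0$ endpoint (Theorem \ref{theorem:kolmo}) and the $m=\lceil\kappa\rceil$ endpoint just established. Using the identity $(L^p,W^{m,p})_{\kappa/m,p}=B^\kappa_{p,p}$ for non-integer $\kappa$, together with the commutation of real interpolation with the Bochner functor $L^q(\Omega;\cC^\nu([0,T];\cdot))$, yields $T^{\phi,\varphi}:B^{\kappa-\frac{1-\eps''}{2H}}_{p,p}\to L^q(\Omega;\cC^\nu([0,T];B^\kappa_{p,p}))$; the embeddings of $B^s_{p,r}$ into $B^s_{p,p}$ (plus the $\eps''$-loss trick from the end of the proof of Theorem \ref{theorem:kolmo} when $r>p$) recover the statement for general $r$. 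The $p=+\infty$ weighted Hölder case follows analogously from $(L^\infty_w,\cC^m_w)_{\kappa/m,\infty}=\cC^\kappa_w$.

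Alternatively, when $r=1$ and $p<+\infty$ the sharper bound (with $\eps''$ replaced by $\eps$) can be obtained without interpolation: applying Lemma \ref{lemma:derivation} and the integer case of Theorem \ref{theorem:kolmo} to each Littlewood--Paley block $\Delta_j f$, together with Bernstein's inequality $\|\partial^\mu\Delta_j f\|_{L^p}\lesssim 2^{j|\mu|}\|\Delta_j f\|_{L^p}$, produces $\|T^{\phi,\varphi}\Delta_j f\|_{L^q(\Omega;\cC^\nu([0,T];B^\kappa_{p,p}))}\lesssim 2^{j(\kappa-\frac{1-\eps}{2H})}\|\Delta_j f\|_{L^p}$, and summing over $j$ by triangle inequality gives exactly $\|f\|_{B^{\kappa-\frac{1-\eps}{2H}}_{p,1}}$. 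The main subtlety I anticipate is justifying that real interpolation commutes with the probabilistic/time Bochner wrapper with uniform constants, and cleanly handling the integer endpoints of $\kappa$ where the identification of the interpolation space with a Besov space degenerates.
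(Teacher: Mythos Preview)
Your overall strategy---reduce to Theorem~\ref{theorem:kolmo} via Lemma~\ref{lemma:derivation}, then interpolate---matches the paper's, but the execution differs in one important way that resolves the very subtlety you flag at the end.

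The paper does \emph{not} apply the final statement of Theorem~\ref{theorem:kolmo} and then interpolate the operator between $L^q(\Omega;\cC^\nu([0,T];L^p))$ and $L^q(\Omega;\cC^\nu([0,T];W^{m,p}))$. Instead it reaches \emph{inside} the proof of Theorem~\ref{theorem:kolmo} and reuses the block-level estimate \eqref{eq:estmThm31} (respectively \eqref{eq:estmThm31_inf}). Combining that with Lemma~\ref{lemma:derivation} and Bernstein gives, for fixed $s,t$ and each $j\ge -1$,
\[
\EE\big[\|\partial^\beta (T^{\phi,\varphi}\Delta_j f)_{s,t}\|_{L^p}^q\big]^{1/q}\lesssim (t-s)^{\frac{1+\eps}{2}}2^{(|\beta|-\frac{1-\eps}{2H})j}\|\Delta_j f\|_{L^p}.
\]
From this one gets the same bound with $B^{2k}_{p,p}$ in place of $L^p$ via $(1-\Delta)^{2k}$, and then ``interpolates'' to any $\varrho$; but this interpolation is now between bounds on a \emph{fixed} random element, not between Bochner spaces, so the commutation problem never arises. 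Only after that does one sum over $j$ and apply Kolmogorov. Your alternative route (the $r=1$ paragraph) is essentially this argument, but note it requires the pre-Kolmogorov block estimate, not the theorem statement itself.

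For $p=\infty$ the paper does not invoke abstract interpolation of weighted H\"older spaces at all. It uses the elementary pointwise inequality
\[
|\partial^\beta g(x)-\partial^\beta g(y)|\le |\nabla\partial^\beta g(z)|^\theta\big(|\partial^\beta g(x)|+|\partial^\beta g(y)|\big)^{1-\theta}|x-y|^\theta
\]
together with the block-level estimate \eqref{eq:estmThm31_inf}, and then the mixed Kolmogorov--GRR theorems (Theorems~\ref{thm:Kolmogo} and~\ref{thm:Kolmogo_std}). Your proposed identity $(L^\infty_w,\cC^m_w)_{\kappa/m,\infty}=\cC^\kappa_w$ is not standard and would need separate justification, so the paper's hands-on argument is both simpler and safer here.

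In short: your first route has a genuine gap (the Bochner-interpolation commutation), which you correctly identified; the paper sidesteps it by interpolating block-wise before summing and before Kolmogorov, which is precisely your second route done carefully with the intermediate estimate rather than the packaged theorem.
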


\begin{proof} As usual, one needs to distinguish between $p=+\infty$ and $2\le p<+\infty$. 

\paragraph{The $p<+\infty$ case.} Thanks to Lemma \ref{lemma:derivation} and \eqref{eq:estmThm31}, we obtain that, for any $\beta\in\NN^d$, $\varepsilon\in[0,1]$, $q\geq 2$ and $j\geq-1$,
\begin{align*}
\EE\left[\left\|\partial^{\beta}(\cT^{\phi,\varphi}\Delta_j f)_{s,t}\right\|_{L^p(\RR^d)}^q\right]^{\frac1q}&\leq \sum_{\gamma\leq\beta} \sum_{\substack{1\leq |\mu|\leq |\gamma|\\ \upsilon\in\mathcal{N}_{\gamma,\mu}}} C_{\beta,\gamma,\mu,\upsilon} 
\EE\left[\left\|\cT^{\Lambda_{\gamma,\upsilon}(\varphi)\partial^{\beta-\gamma}\phi , \varphi} \big(\partial^{\mu} \Delta_j f\big))\right\|_{L^p(\RR^d)}^q\right]^{\frac1q}
\\ &\lesssim (t-s)^{\frac{1+\eps}{2}} 2^{-j\frac{1-\eps}{2H}} \sum_{\gamma\leq \beta} \|\partial^{\gamma}\Delta_j f\|_{L^p(\RR^d)}
\\ &\lesssim (t-s)^{\frac{1+\eps}{2}} 2^{-j\frac{1-\eps}{2H}+|\beta|j}  \|\Delta_j f\|_{L^p(\RR^d)}.
\end{align*}
In particular, we deduce that, for any $k\in\NN$, we have
\begin{align*}
\EE\left[\left\|(\cT^{\phi,\varphi}\Delta_j f)_{s,t}\right\|_{B_{p,p}^{2k}}^q\right]^{\frac1q} &\simeq \EE\left[\left\|(1-\Delta)^{k}(\cT^{\phi,\varphi}\Delta_j f)_{s,t}\right\|_{L^{p}(\RR^d)}^q\right]^{\frac1q}
\\ &\lesssim (t-s)^{\frac{1+\eps}{2}} 2^{-j\frac{1-\eps}{2H}+2kj}  \|\Delta_j f\|_{L^p(\RR^d)},
\end{align*}
and, thus, by interpolation, we deduce that, for any $\varrho>0$,
\begin{equation*}
\EE\left[\left\|(\cT^{\phi,\varphi}\Delta_j f)_{s,t}\right\|_{B_{p,p}^{\varrho}}^q\right]^{\frac1q}\lesssim (t-s)^{\frac{1+\eps}{2}} 2^{-j\frac{1-\eps}{2H}+\varrho j}  \|\Delta_j f\|_{L^p(\RR^d)}.
\end{equation*}
In particular, this yields
\begin{equation*}
\EE\left[\left\|(\cT^{\phi,\varphi}f)_{s,t}\right\|_{B^\varrho_{p,p}(\RR^d)}^q\right]^{\frac1q}\lesssim (t-s)^{\frac{1+\eps}{2}} \|f\|_{B^{-\frac{1-\eps}{2H}+\varrho}_{p,1}},
\end{equation*}
which gives the desired result by using Kolmogorov's continuity theorem.

\paragraph{The $p=+\infty$ case.} For any $\beta\in\NN^d$, any $x,y\in\RR^d$, we have, for any $\theta\in[0,1]$ and $j\geq -1$,
\begin{align*}
&\left|\partial^{\beta}(\cT^{\phi,\varphi}\Delta_j f)_{s,t}(x) - \partial^{\beta}(\cT^{\phi,\varphi}\Delta_jf)_{s,t}(y)\right|
\\ &\hspace{3em} \leq |\nabla \partial^{\beta}(\cT^{\phi,\varphi}\Delta_j f)_{s,t}(z)|^{\theta} \left(\left|\partial^{\beta}(\cT^{\phi,\varphi}\Delta_j f)_{s,t}(x)\right| + \left|\partial^{\beta}(\cT^{\phi,\varphi}\Delta_j f)_{s,t}(y)\right|\right)^{1-\theta} |x-y|^{\theta},
\end{align*}
for some $z\in \{(1-\zeta)x + \zeta y:\zeta\in[0,1]\}$. It follows from Lemma \ref{lemma:derivation} and \eqref{eq:estmThm31_inf} that, for any $q\geq 2$,
\begin{align*}
\EE\left[\left|\partial^{\beta}(\cT^{\phi,\varphi}\Delta_j f)_{s,t}(x) - \partial^{\beta}(\cT^{\phi,\varphi} \Delta_jf)_{s,t}(y)\right|^q\right]^{1/q} \leq |x-y|^{\theta} \EE\left[\left|\nabla \partial^{\beta}(\cT^{\phi,\varphi}\Delta_j f)_{s,t}(z)\right|^{2\theta q}\right]^{1/(2q)}\\ \times\left(\EE\left[\left| \partial^{\beta}(\cT^{\phi,\varphi}\Delta_j f)_{s,t}(x)\right|^{2(1-\theta)q}\right]^{1/(2(1-\theta)q)}+\EE\left[\left| \partial^{\beta}(\cT^{\phi,\varphi}\Delta_j f)_{s,t}(y)\right|^{2(1-\theta)q}\right]^{1/(2(1-\theta)q)} \right)^{1-\theta}
\\ \lesssim |x-y|^{\theta} (t-s)^{\frac{1+\varepsilon}2}2^{-j\frac{1-\varepsilon}{2H}}\left(\sum_{\gamma\leq \beta} \|\nabla \partial^{\gamma}\Delta_j f\|_{L^{\infty}(\RR^d)}\right)^{1-\theta} \left(\sum_{\gamma\leq \beta} \|\partial^{\gamma}\Delta_j f\|_{L^{\infty}(\RR^d)}\right)^{1-\theta}.
\\ \lesssim |x-y|^{\theta} (t-s)^{\frac{1+\varepsilon}2}2^{-j\frac{1-\varepsilon}{2H}}2^{(|\beta| + \theta)j}\|\Delta_j f\|_{L^{\infty}(\RR^d)}.
\end{align*}
We also have, by Lemma \ref{lemma:derivation} and \eqref{eq:estmThm31_inf},
\begin{equation*}
\EE\left[\left|\partial^{\beta}(\cT^{\phi,\varphi}\Delta_j f)_{s,t}(x)\right|^q\right]^{1/q} \lesssim (t-s)^{\frac{1+\varepsilon}2}2^{-j\frac{1-\varepsilon}{2H}}2^{(|\beta| + \theta)j}\|\Delta_j f\|_{L^{\infty}(\RR^d)}.
\end{equation*}
From here, we use Theorem \ref{thm:Kolmogo} and Theorem \ref{thm:Kolmogo_std} to deduce the desired result.
\end{proof}

\section{Malliavin Calculus}
In order to be self-contained, and in the spirit of Section \ref{section:rough_paths} we recall in this section some facts about Malliavin calculus. Most of them are well-known facts, but Subsection \ref{subsec:conditional} yields some non-standard estimates that we will use in the following. In particular, we will derived a (not so surprising) conditional integration by part formula, which will allow us to check whenever a flow generated by a RDE driven by a Gaussian rough path satisfy the conditions of Theorem \ref{theorem:kolmo} or Corollary \ref{corollary:averaged_holder}.
One may consult 
 \cite{nualart2006malliavin,nualart2009lectures} for more details. \label{section:Malliavin}

\subsection{Isonormal Gaussian processes}
\begin{definition}(Isonormal Gaussian process) An Isonormal Gaussian process is the set of:
\begin{enumerate}
\item a real and separable Hilbert space $\cH$ whose scalar product is denoted as $\langle\cdot,\cdot\rangle_{\cH}$ and its norm as $\|\cdot\|_\cH$,
\item a complete probability space $(\Omega,\cF,\PP)$,
\item a real-valued Gaussian process $W:h\in \cH\to W(h)$, \textit{i.e.} $(W(h))_{h\in \cH}$ is a family of centered Gaussian random variables such that $\EE[W(h)W(g)] = \langle h,g\rangle_\cH$, for any $h,g\in \cH$.
\end{enumerate}
\end{definition}
\begin{remark}\leavevmode
\begin{enumerate}
\item By Kolmogorov's theorem, given only $\cH$, we can construct $(\Omega,\cF,\PP)$ and $W$ satisfying the above conditions.
\item The mapping $h\to W(h)$ is linear.
\end{enumerate}
\end{remark}

The two following examples will be of interest below. The first one is a classical construction linked to the standard Brownian motion.
\begin{example}(Isonormal Gaussian process associated to the Brownian motion) Let $(B_t)_{t\geq 0}$ be a $d$-dimensional Brownian motion defined on its canonical probability space $(\Omega,\cF,\PP)$. Let $\cH = L^2(\RR^+;\RR^d)$ and, for any $h\in \cH$, let us define $W(h)$ as the Wiener integral
\begin{equation*}
W(h) = \sum_{k = 1}^d \int_0^{+\infty} h_k(s) \dd B^{k}_s.
\end{equation*}
\end{example}
The second example deals with a generic construction of an isonormal Gaussian process associated to a generic Gaussian process. This example will be of primer importance in the following, as we will always work in this specific setting. 
\begin{example}
Let us consider any vector-valued continuous centered Gaussian process $W = (W^1,\cdots,W^d)$ with iid components and let us construct a corresponding (vector-valued) isonormal process. Fix $j\in\{1,\ldots,d\}$, $d\geq 1$. We denote the covariance function of $W$ as
\begin{equation*}
R_W(t,s) : = \EE\left[W_t^{j}W_s^{j} \right].
\end{equation*}
Fix $T>0$. We now consider $\cE$ the set of step functions on $[0,T]$
\begin{equation*}
\cE : = \left\{f = \sum_{k = 1}^{m} a_k\ind_{[t_{k},t_{k+1}]}:\; m\in\NN^*,\, (a_k)_{1\leq k\leq m}\in\RR^{m-1},\, (t_k)_{1\leq k\leq m+1}\in \Pi([0,T]) \right\}.
\end{equation*}
We define $\cH$ as the closure of $\cE$ with respect to the scalar product
\begin{equation*}
\langle \ind_{[0,t]},\ind_{[0,s]}\rangle_\cH = R_W(t,s),
\end{equation*}
which leads to the norm defined on $\cE$ by 
\begin{equation*}
\|f\|_{\cH}^2 = \sum_{k,\ell = 1}^m a_k a_{\ell} \langle \ind_{[0,t_k]}, \ind_{[0,t_{\ell}]}\rangle_{\cH} = \sum_{k,\ell = 1}^m a_k a_{\ell}R_W(t_{k},t_{\ell}).
\end{equation*}
We can extend the linear  mapping $\cK:\ind_{[0,t]}\in\cE\to W^{j}(t)\in L^2(\Omega)$ to an isometry $\cK:h\in\cH\to W^{j}(h)$ since
\begin{equation*}
\EE\left[ \cK(\ind_{[0,t]})  \cK(\ind_{[0,s]})\right]= \EE\left[W^{j}_t  W^{j}_s\right] =  R_W(t,s) =  \langle \ind_{[0,t]}, \ind_{[0,s]}\rangle_{\cH}.
\end{equation*}
Then, $\cK:h\in\cH\to W^{j}(h)$ is an isonormal Gaussian process. Let us construct the vector-valued isonormal Gaussian process $\cK:h\in\cH^{\oplus d} \to W(h) = (W^{1}(h_1),\cdots,W^{d}(h_d))\in L^2(\Omega;\RR^d)$. In the following, we will treat the case $n = 1$ without loss of generality.
\end{example}

\begin{remark}\leavevmode
\begin{enumerate}
\item An interesting instance is, for example, the case where $W = B^H$ the fractional Brownian motion with Hurst parameter $H\in(0,1)$ and covariance
\begin{equation*}
R_{B^H}(t,s) = \EE[B^H_tB^H_s] = \frac12\left( t^{2H}+s^{2H} - |t-s|^{2H}\right).
\end{equation*}
\item For $W_0 =0$ (and $R_W(0,0) = 0$), we have, by \cite[Proposition 4]{cass2009non},
\begin{equation}\label{eq:calHscalarprod}
\langle h_1,h_2\rangle_{\cH} = \int_0^1\int_0^1 h_1(t)h_2(s)\dd R_W(t,s),
\end{equation}
whenever the $2$d Young integral on the right-hand-side is well-defined.
\end{enumerate}
\end{remark}

\textcolor{black}{Fix $[a,b]\subset [0,T]$ for the rest of the section. }Since $\cH$ is the closure of indicator functions, for any $[a,b]\subset [0,T]$, we can also define $\cH([a,b])$ the restriction of $\cH = \cH([0,T])$ to $[a,b]$. Thus, for almost every $s\in[a,b]^c$, $h(s) = 0$ for $h\in \cH([a,b])$. Furthermore, we directly deduce that, for any \textcolor{black}{$h,g\in\cH([a,b])$},
\begin{equation*}
\langle h, g\rangle_{\cH([a,b])}  = \langle h\ind_{[a,b]}, g\ind_{[a,b]}\rangle_{\cH}.
\end{equation*}
\textcolor{black}{In particular, if $h\in\cH([a,b])$ then $h = h\ind_{[a,b]}\in\cH$. For any set $B$ which is a finite union of intervals of $[0,T]$, we denote $\cF_{B}$ the $\sigma$-algebra generated by $\{W(\ind_{[u,v]}),\, [u,v]\subset B\}$.}

\subsection{The Malliavin derivative}

We consider the set of smooth cylindrical fields $S$ given by
\textcolor{black}{\begin{equation*}
 S = \{f(W(\ind_{[u_1,v_1]}),\ldots,W(\ind_{[u_n,v_n]})):\;f\in C^{\infty}_p(\RR^n),\; [u_k,v_k]\subset [0,1],\; \forall k\in\{1,\ldots,n\},\; n\geq 0\},
\end{equation*}}
where $\cC^{\infty}_p(\RR^n)$ denotes the set of functions from $\RR^n$ to $\RR$ that are infinitely differentiable and with all their partial derivatives having polynomial growth. Furthermore, we denote $C^{\infty}_b(\RR^n)$ the subset of $C^{\infty}_p(\RR^n)$ where all the partial derivatives are bounded. The smooth random variables associated to this space $C^\infty_b$ is denoted by $ S_b$. \textcolor{black}{For any set $B$ which is a finite union of intervals of $[0,T]$, we also denote $S_b(B)$ (resp. $S(B)$)  the natural restriction of $S_b = S_b(B)$ (resp. $S = S(B)$) to $B$.} We note that \textcolor{black}{$S_b(B)$ (and $S(B)$) is dense in \textcolor{black}{$L^2(\Omega,\mathcal{F}_{B},\mathbb{P})$}}.

\textcolor{black}{\begin{definition}
For any $F=f\big(W(\ind_{[u_1,v_1]})\cdots W(\ind_{[u_n,v_n]})\big)\in S$ with $f\in C^\infty_p$, we define the Malliavin derivative of $F$ restricted on $[a,b]\subset[0,1]$ as
\begin{equation*}
D_{[a,b]}F = \sum_{k = 1}^n \partial_k f(W(\ind_{[u_1,v_1]}),\ldots,W(\ind_{[u_n,v_n]})) \ind_{[u_k,v_k]}\ind_{[a,b]},
\end{equation*}
which takes values in $\cH([a,b])$.
\end{definition}}

\textcolor{black}{We can directly check that, for any $F,G\in S$, we have the "chain rule" relation
\begin{equation*}
D_{[a,b]}(FG) = G D_{[a,b]}F + F D_{[a,b]}G.
\end{equation*}}

\textcolor{black}{Let us also introduce} the so-called \textit{Cameron-Martin space} denoted $\cH_1$ which is defined as the completion of
\begin{equation*}
\tilde{\cE} = \left\{ t\to \sum_{k = 1}^n a_kR_W(t_k,t):\; n\in\NN,\, (a_k)_{1\leq k\leq n},\, (t_k)_{1\leq k\leq n}\subset[0,T] \right\},
\end{equation*}
with respect to the scalar product
\begin{equation*}
\langle R_W(t,\cdot), R_W(s,\cdot)\rangle_{\cH_1} = R_W(t,s).
\end{equation*}
Then, the mapping defined by 
\begin{equation}\label{eq:Rmapp}
\cR : \ind_{[0,t]}\in\cE \to R_W(t,\cdot) \in\tilde{\cE}
\end{equation}
can be extended to an isometry $\cR:\cH\to\cH_1$ since
\begin{equation*}
\langle \cR(\ind_{[0,t]}),\cR(\ind_{[0,s]}) \rangle_{\cH_1} = R_W(t,s) = \langle\ind_{[0,t]},\ind_{[0,s]} \rangle_{\cH}.
\end{equation*}
\textcolor{black}{In the same fashion as for $\mathcal{H}$, we define $\cH_1([a,b])$ the restriction of $\cH_1 = \cH_1([0,T])$ to $[a,b]$.}

We have the following integration by parts formula.
\textcolor{black}{\begin{lemma}
For any $F,G\in S$ and $h\in\cH([a,b])$, we have
\begin{equation*}
\EE[G \langle D_{[a,b]}F,h\rangle_{\cH([a,b])}] = -\EE[F \langle D_{[a,b]}G,h\rangle_{\cH([a,b])}]  + \EE[F GW(h)].
\end{equation*}
\end{lemma}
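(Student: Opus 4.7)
The plan is to reduce the two-variable statement to a one-variable integration by parts and then to finite-dimensional Gaussian integration by parts. First, observe that $S$ is a (unital) algebra: if $F=f(W(h_1),\dots,W(h_n))$ and $G=g(W(h'_1),\dots,W(h'_m))$ with $f,g\in C^\infty_p$, then $FG=(fg)(W(h_1),\dots,W(h_n),W(h'_1),\dots,W(h'_m))$ with $fg\in C^\infty_p$, so $FG\in S$. By the chain rule for $D$ stated just before Definition, we have $\langle D(FG),h\rangle_\cH=G\langle DF,h\rangle_\cH+F\langle DG,h\rangle_\cH$. Consequently the identity to be proved is equivalent to
\[\EE[\langle D(FG),h\rangle_\cH]=\EE[FG\,W(h)],\]
and it therefore suffices to establish the following one-variable version: for any $H\in S$ and any $h\in\cH$,
\begin{equation}\label{eq:onevar}
\EE[\langle DH,h\rangle_\cH]=\EE[H\,W(h)].
\end{equation}

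Next I would reduce \eqref{eq:onevar} to a calculation on $\RR^n$. Write $H=f(W(h_1),\dots,W(h_n))$ with $f\in C^\infty_p$. Applying Gram--Schmidt to $\{h,h_1,\dots,h_n\}$ inside $\cH$, we may enlarge the list of $h_i$'s if necessary and assume that $(h_1,\dots,h_n)$ is an orthonormal family in $\cH$ and that $h=\sum_{j=1}^n a_j h_j$ for some real numbers $a_j$. Then $(W(h_1),\dots,W(h_n))$ is a standard Gaussian vector in $\RR^n$ by the defining covariance of $W$, $W(h)=\sum_j a_j W(h_j)$ by linearity, and, directly from the definition of $D$,
\[\langle DH,h\rangle_\cH=\sum_{j=1}^n\partial_j f(W(h_1),\dots,W(h_n))\,\langle h_j,h\rangle_\cH=\sum_{j=1}^n a_j\,\partial_j f(W(h_1),\dots,W(h_n)).\]

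The last step is the finite-dimensional Gaussian integration by parts. If $X=(X_1,\dots,X_n)$ is a standard Gaussian vector in $\RR^n$ and $f\in C^\infty_p$, then integrating by parts against the density $(2\pi)^{-n/2}e^{-|x|^2/2}$, whose derivative in $x_j$ is $-x_j$ times the density, gives $\EE[\partial_j f(X)]=\EE[X_j f(X)]$; the polynomial growth of $f$ and of its derivatives justifies the boundary terms and the use of Fubini. Summing against $a_j$ yields $\EE[\langle DH,h\rangle_\cH]=\sum_j a_j\EE[X_j f(X)]=\EE[W(h)H]$, which is \eqref{eq:onevar}. Combined with the chain-rule reduction from the first step, this proves the lemma. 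The only mild subtlety is keeping track of polynomial growth when multiplying $F$ and $G$ and when justifying the boundary-free integration by parts on $\RR^n$; both are immediate from the definition of $C^\infty_p$.
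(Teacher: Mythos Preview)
Your proof is correct and follows the standard argument (as in Nualart, \emph{The Malliavin Calculus and Related Topics}, Lemma~1.2.1, which the paper cites as \cite{nualart2006malliavin}). Note that the paper itself does not supply a proof of this lemma: it is stated as a known fact in the recap of Malliavin calculus, so there is no ``paper's own proof'' to compare against. Your reduction via the product rule $D(FG)=G\,DF+F\,DG$ to the one-variable identity $\EE[\langle DH,h\rangle_\cH]=\EE[H\,W(h)]$, followed by Gram--Schmidt to obtain an orthonormal system containing $h$ in its span, and then the elementary Gaussian integration by parts $\EE[\partial_j f(X)]=\EE[X_j f(X)]$ for a standard Gaussian vector $X$, is exactly the canonical route and is fully justified by the polynomial-growth assumption on $f\in C^\infty_p$.
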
}

Furthermore, we have the following result.
\textcolor{black}{\begin{proposition}\label{prop:closeD}
For any $p\geq 1$, the linear operator $D_{[a,b]}$ is closable from $L^p(\Omega,\mathcal{F}_{[0,1]},\mathbb{P})$ to $L^p(\Omega,\mathcal{F}_{[0,1]},\mathbb{P}; \cH([a,b]))$.
\end{proposition}}

We can iteratively define, for any $m\geq 1$,
\textcolor{black}{\begin{equation*}
D_{[a,b]}^m F = \sum_{k_1,k_2,\ldots,k_m = 1}^n \partial_{k_1,k_2,\ldots,k_m}f(W(\ind_{[u_1,v_1]}),\ldots,W(\ind_{[u_n,v_n]})) \left(\otimes_{\ell = 1}^m(\ind_{[u_{k_\ell},v_{k_\ell}]}\ind_{[a,b]})\right)
\end{equation*}}
which takes values in \textcolor{black}{$\cH([a,b])^{\otimes m}$}. Moreover, following Proposition \ref{prop:closeD}, we can close \textcolor{black}{$D^m_{[a,b]}$}.
\begin{proposition}
For any $m\geq 1$ and $p\geq 1$, the linear operator \textcolor{black}{$D^m_{[a,b]}$} is closable from $ S$ to \textcolor{black}{$L^p(\Omega;\cH([a,b])^{\otimes m})$}.
\end{proposition}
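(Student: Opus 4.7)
The plan is to reduce the closability of $D^m$ to an iterated integration by parts formula, generalizing the argument that gives the closability of $D$ (Proposition~\ref{prop:closeD}). Concretely, I would take a sequence $(F_n)_{n\ge 1}\subset S$ with $F_n\to 0$ in $L^p(\Omega)$ and $D^m F_n\to \eta$ in $L^p(\Omega;\cH^{\otimes m})$, and show that the limit $\eta$ must vanish almost surely.

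The first step is to derive, for $F\in S$, $G\in S_b$ and fixed $h_1,\ldots,h_m\in\cH$, an identity of the form
\[
\EE\bigl[G\,\langle D^m F,\,h_1\otimes\cdots\otimes h_m\rangle_{\cH^{\otimes m}}\bigr] = \EE\bigl[F\,\Psi_G^{h_1,\ldots,h_m}\bigr],
\]
where $\Psi_G^{h_1,\ldots,h_m}$ is a finite sum of terms built from iterated Malliavin derivatives of $G$ of order at most $m$, paired with the $h_i$'s, and multiplied by products of Wiener integrals $W(h_i)$ indexed by subsets of $\{1,\ldots,m\}$. This is obtained by iterating the integration by parts lemma stated just before Proposition~\ref{prop:closeD}: at each step a derivative hops from $F$ onto $G$, at the cost of producing an extra $W(h_i)$ factor, with signs organized by whether the chain term or the boundary term is taken. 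A straightforward induction on $m$ gives the explicit combinatorial expression, but we will only need its qualitative structure.

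The second step is to observe that $\Psi_G^{h_1,\ldots,h_m}\in\bigcap_{q\ge 1}L^q(\Omega)$ whenever $G\in S_b$. Indeed, iterated Malliavin derivatives of such a $G$ are finite sums of terms of the form $\partial^{\alpha}f(W(g_1),\ldots,W(g_n))\,g_{i_1}\otimes\cdots\otimes g_{i_{|\alpha|}}$ with $\partial^{\alpha}f$ bounded, and the $W(h_i)$, $W(g_j)$ are Gaussian, hence have all moments. Letting $q$ denote the H\"older conjugate of $p$, this yields
\[
\EE\bigl[G\,\langle D^m F_n,h_1\otimes\cdots\otimes h_m\rangle_{\cH^{\otimes m}}\bigr]
= \EE\bigl[F_n\,\Psi_G^{h_1,\ldots,h_m}\bigr] \longrightarrow 0,
\]
by H\"older's inequality and $F_n\to 0$ in $L^p(\Omega)$, while the left-hand side converges to $\EE\bigl[G\,\langle\eta,h_1\otimes\cdots\otimes h_m\rangle_{\cH^{\otimes m}}\bigr]$. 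Hence this expectation vanishes for every $G\in S_b$ and every $(h_1,\ldots,h_m)\in\cH^m$.

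The final step is a separability argument. By density of $S_b$ in $L^q(\Omega)$, one gets $\langle\eta,h_1\otimes\cdots\otimes h_m\rangle_{\cH^{\otimes m}}=0$ almost surely for every fixed $(h_1,\ldots,h_m)$. Picking a countable dense family $(e_k)_{k\ge 1}\subset\cH$ and a common $\PP$-null set, the vanishing extends simultaneously to all elementary tensors $e_{k_1}\otimes\cdots\otimes e_{k_m}$, whose linear span is dense in $\cH^{\otimes m}$; therefore $\eta=0$ a.s. in $L^p(\Omega;\cH^{\otimes m})$. The only mildly delicate point is the bookkeeping in step one; since we only need $L^q$-integrability of $\Psi_G^{h_1,\ldots,h_m}$ and not a sharp expression, this is a routine induction and presents no real obstacle.
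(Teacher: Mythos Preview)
The paper does not give a proof of this proposition; it is stated as a standard fact from Malliavin calculus (see \cite{nualart2006malliavin}), in the same spirit as Proposition~\ref{prop:closeD} for $m=1$. Your argument is exactly the standard one: iterate the integration by parts lemma to transfer all $m$ derivatives from $F_n$ onto a test variable $G\in S_b$, then pass to the limit. The structure and the separability conclusion are correct.

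There is, however, a small gap at the endpoint $p=1$. You claim $\Psi_G^{h_1,\ldots,h_m}\in\bigcap_{q\ge 1}L^q(\Omega)$ and then apply H\"older with the conjugate exponent $q$. For $p=1$ the conjugate is $q=\infty$, and $\Psi_G^{h_1,\ldots,h_m}$ is \emph{not} bounded in general: it contains products of Wiener integrals $W(h_i)$, which are unbounded Gaussians, even when $G\in S_b$. So the passage $\EE[F_n\,\Psi_G^{h_1,\ldots,h_m}]\to 0$ is not justified by H\"older alone in that case. The classical fix (cf.\ Nualart, proof of Proposition~1.2.1) is to replace $G$ by $G_\varepsilon:=G\exp\bigl(-\varepsilon\sum_{i=1}^m W(h_i)^2\bigr)\in S_b$, for which every product $G_\varepsilon\prod_{i\in A}W(h_i)$ with $A\subset\{1,\ldots,m\}$ is bounded; run your argument with $G_\varepsilon$ to get $\EE\bigl[G_\varepsilon\langle\eta,h_1\otimes\cdots\otimes h_m\rangle\bigr]=0$, and then let $\varepsilon\downarrow 0$ by dominated convergence. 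With this adjustment your proof covers all $p\ge 1$.
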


By using the same notation for its extension, the domain of the operator \textcolor{black}{$D^m_{[a,b]}$ is the space $\DD^{m,p}_{[a,b]}$ which is the completion of $ S$ with respect to the norm
\begin{equation*}
\|F\|_{\DD^{m,p}_{[a,b]}} : = \left( \EE[|F|^p] + \sum_{k = 1}^m \EE[\|D^k_{[a,b]} F\|_{\cH([a,b])^{\otimes k}}^p]\right)^{1/p}.
\end{equation*}}
In the previous norm, in the case of multidimensional processes, we remark that \textcolor{black}{$\|\cdot\|_{\cH([a,b])^{\otimes k}}$} is not an operator norm but the Hilbert-Schmidt norm. \textcolor{black}{For $E$ a Banach space,} we also denote by \textcolor{black}{$\DD_{[a,b]}^{m,p}(E)$ the $E$-valued random variables that belong in $\DD_{[a,b]}^{m,p}$}. Furthermore, for any $k,p\geq1$, we have H\"older's inequality
\textcolor{black}{\begin{equation*}
\|FG\|_{\DD_{[a,b]}^{k,p}}\leq \|F\|_{\DD_{[a,b]}^{k,r}}\|G\|_{\DD_{[a,b]}^{k,q}},
\end{equation*}}
for any $r,q\geq 1$ such that $1/r+1/q = 1/p$.
We now state a chain rule with respect to the Malliavin derivative.

\begin{proposition}\label{prop:chainrule}
Let $g\in C^1(\RR^d;\RR)$ with bounded derivatives, $p\geq 1$ and $F = (F^{1},\ldots,F^{d})$ be a random vector such that \textcolor{black}{$F^k\in \DD_{[a,b]}^{1,p}$} for any $k\in\{1,\ldots,d\}$. Then, \textcolor{black}{$g(F)\in\DD_{[a,b]}^{1,p}$} and
\textcolor{black}{\begin{equation*}
D_{[a,b]}(g(F)) = \sum_{k = 1}^d \partial_k g(F) D_{[a,b]}F^{k}.
\end{equation*}}
\end{proposition}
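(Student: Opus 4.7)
The plan is to first verify the formula on a dense subclass where everything is manifestly smooth, then pass to the general case in two stages (first approximating $g$, then approximating $F$), using Proposition \ref{prop:closeD} (closedness of $D$) to bootstrap each limit.

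First, I would observe that if each $F^k \in S$ can be written (after enlarging the family of test elements $(h_i)$ to be common) as $F^k = f_k(W(h_1),\dots,W(h_m))$ with $f_k\in C^\infty_p$, and if additionally $g\in C^\infty_b$ (smooth with all derivatives bounded, hence in particular Lipschitz with polynomial-growth derivatives of every order), then $g(F) = (g\circ f)(W(h_1),\dots,W(h_m))$ lies again in $S$ because $g\circ f\in C^\infty_p$. The identity
\[D(g(F)) = \sum_{k=1}^n \partial_k g(F)\, DF^k\]
is then just the usual chain rule applied to the $C^\infty_p$ composition followed by the definition of the Malliavin derivative on $S$.

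Next, I would remove the smoothness assumption on $g$ by a standard mollification: set $g_\varepsilon := g \ast \rho_\varepsilon$ for a smooth compactly supported mollifier $\rho_\varepsilon$. Then $g_\varepsilon \in C^\infty_b$ with $\|\nabla g_\varepsilon\|_\infty \le \|\nabla g\|_\infty$, so $g_\varepsilon\to g$ locally uniformly and $\partial_k g_\varepsilon \to \partial_k g$ pointwise (by continuity of $\partial_k g$) with a uniform sup-norm bound. For $F^k\in S$ fixed, the previous paragraph gives
\[D\bigl(g_\varepsilon(F)\bigr) = \sum_{k=1}^n \partial_k g_\varepsilon(F)\, DF^k.\]
Dominated convergence (in $\Omega$, resp.\ in $\Omega\times\cH$) sends $g_\varepsilon(F)\to g(F)$ in $L^p(\Omega)$ and the right-hand side to $\sum_k \partial_k g(F) DF^k$ in $L^p(\Omega;\cH)$. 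By Proposition \ref{prop:closeD}, $g(F)\in\DD^{1,p}$ and the chain rule holds for $F\in S^n$ and any $g\in C^1$ with bounded derivatives.

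Finally, for general $F=(F^1,\dots,F^n)$ with $F^k\in\DD^{1,p}$, pick approximating sequences $F^{k,N}\in S$ such that $F^{k,N}\to F^k$ in $\DD^{1,p}$. The previous step yields
\[D\bigl(g(F^N)\bigr) = \sum_{k=1}^n \partial_k g(F^N)\, DF^{k,N},\qquad F^N:=(F^{1,N},\dots,F^{n,N}).\]
Since $g$ is globally Lipschitz, $g(F^N)\to g(F)$ in $L^p(\Omega)$. For the derivatives I would split
\[\partial_k g(F^N)\, DF^{k,N} - \partial_k g(F)\, DF^k = \partial_k g(F^N)\bigl(DF^{k,N}-DF^k\bigr) + \bigl(\partial_k g(F^N)-\partial_k g(F)\bigr)DF^k;\]
the first term is controlled in $L^p(\Omega;\cH)$ by $\|\partial_k g\|_\infty\,\|DF^{k,N}-DF^k\|_{L^p(\Omega;\cH)}\to 0$, and the second is handled by dominated convergence, using that $\partial_k g$ is continuous and bounded, $F^N\to F$ in probability, and $\|DF^k\|_\cH^p\in L^1(\Omega)$. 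Closedness of $D$ (Proposition \ref{prop:closeD}) then yields $g(F)\in\DD^{1,p}$ together with the claimed formula.

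The only delicate point is this last dominated convergence step: unlike the Lipschitz estimate that handles $g(F^N)-g(F)$, we cannot estimate $\partial_k g(F^N)-\partial_k g(F)$ by $|F^N-F|$ since $g$ is only $C^1$; it is crucial here that we only need convergence in measure combined with the integrable dominating function $\|DF^k\|_\cH^p$, so the boundedness of $\partial_k g$ suffices.
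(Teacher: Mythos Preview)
Your argument is correct and follows the standard route (mollify $g$, then approximate $F$ by smooth cylindrical vectors, invoking closability of $D$ at each stage); this is essentially the proof found in the references the paper cites, e.g.\ Nualart. The paper itself does not supply a proof of this proposition---it is stated as a known fact from Malliavin calculus---so there is nothing to compare against beyond noting that your write-up matches the textbook treatment.
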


\subsection{The divergence operator}

The divergence operator \textcolor{black}{$\delta_{[a,b]}$ is the adjoint of the derivative $D_{[a,b]}$}. In fact, \textcolor{black}{$\delta_{[a,b]}$} is an unbounded operator from \textcolor{black}{$L^2(\Omega;\cH([a,b])) = \DD^{0,2}_{[a,b]}(\cH([a,b]))$ to $L^2(\Omega)$} such that:
\begin{enumerate}
\item its domain \textcolor{black}{$\textrm{Dom}(\delta_{[a,b]})$ is the set of random variables $u\in L^2(\Omega;\cH([a,b]))$ such that, for all $F\in\DD_{[a,b]}^{1,2}$,
\begin{equation*}
\left|\EE\left[\langle D_{[a,b]}F, u \rangle_{\cH([a,b])}\right] \right| \leq c_u\|F\|_{L^2(\Omega)},
\end{equation*}}
\item for any \textcolor{black}{$u\in\textrm{Dom}(\delta_{[a,b]})$, we have $\delta_{[a,b]}(u)\in L^2(\Omega)$ and the following duality relation holds, for all $F\in\DD_{[a,b]}^{1,2}$,
\begin{equation*}
\EE\left[\langle D_{[a,b]}F, u \rangle_{\cH([a,b])}\right]  = \EE\left[F \delta_{[a,b]}(u)\right].
\end{equation*}}
\end{enumerate}

We have the following result concerning the continuity of $\delta$.
\begin{theorem}\label{thm:condelta}
For any $p>1$ and $m\geq 1$, the operator \textcolor{black}{$\delta_{[a,b]}$ is continuous from $\DD_{[a,b]}^{m-1,p}(\cH([a,b]))$ to $\DD^{m,p}_{[a,b]}$. That is, the following inequality holds for any $u\in\DD_{[a,b]}^{m-1,p}(\cH([a,b]))$,
\begin{equation*}
\|\delta(u)\|_{\DD^{m-1,p}_{[a,b]}} \leq c_{m,p} \|u\|_{\DD^{m,p}_{[a,b]}(\cH([a,b]))}.
\end{equation*}}
\end{theorem}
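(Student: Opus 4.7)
The plan is to prove the bound by induction on $m$, using as a black box the classical Meyer inequality (which will serve as the base case $m=1$) together with the commutation relation between $D$ and $\delta$.

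\textbf{Base case.} For $m=1$ the statement reads
\[
\|\delta(u)\|_{L^p(\Omega)} \leq c_p \|u\|_{\mathbb{D}^{1,p}(\mathcal{H})}, \quad p>1,
\]
which is precisely Meyer's inequality for the divergence (see, e.g., \cite{nualart2006malliavin}, Proposition~1.5.4). This is the deep input: it is proved via $L^p$ estimates for the Ornstein--Uhlenbeck semigroup and Littlewood--Paley--Stein square function theory on Wiener space. I would simply quote it.

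\textbf{Commutation relation.} The next ingredient is the identity, for $u \in \mathbb{D}^{2,p}(\mathcal{H})$ and $h \in \mathcal{H}$,
\[
\langle D\delta(u), h\rangle_{\mathcal{H}} = \langle u,h\rangle_{\mathcal{H}} + \delta(\langle Du, h\rangle_{\mathcal{H}}),
\]
first verified on smooth cylindrical $\mathcal{H}$-valued random variables of the form $u = \sum_j F_j h_j$ with $F_j \in S$, $h_j \in \mathcal{H}$ (where it follows from the duality $\mathbb{E}[\delta(u)G] = \mathbb{E}[\langle u, DG\rangle_{\mathcal{H}}]$ and the product rule) and then extended by density using Meyer's inequality. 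In more intrinsic form, $D\delta(u) = u + \bar{\delta}(Du)$, where $\bar{\delta}$ denotes the divergence applied to the second $\mathcal{H}$-factor of $Du \in \mathcal{H}\otimes \mathcal{H}$.

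\textbf{Induction.} Assume the estimate proved at level $m-1$. Apply $D^{m-1}$ to $\delta(u)$: iterating the commutation relation yields a finite expansion
\[
D^{m-1}\delta(u) = \sum_{k=0}^{m-1} c_{m,k}\, \mathrm{Sym}\bigl(\bar{\delta}_k(D^k u)\bigr) + (m-1)\,D^{m-2} u,
\]
where $\bar{\delta}_k$ applies $\delta$ to one tensor slot of the $\mathcal{H}^{\otimes (k+1)}$-valued variable $D^k u$ and $\mathrm{Sym}$ is an appropriate symmetrization over slots. Taking $L^p(\Omega; \mathcal{H}^{\otimes(m-1)})$ norms and applying Meyer's inequality coordinatewise in the free Hilbert factors (together with the induction hypothesis for the lower-order $\bar{\delta}_k$ terms) bounds each summand by $\|D^j u\|_{L^p(\Omega; \mathcal{H}^{\otimes (j+1)})}$ for some $j \leq m$. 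Summing over $k$ gives
\[
\|D^{m-1}\delta(u)\|_{L^p(\Omega; \mathcal{H}^{\otimes(m-1)})} \lesssim \sum_{j=0}^{m} \|D^j u\|_{L^p(\Omega; \mathcal{H}^{\otimes (j+1)})} \simeq \|u\|_{\mathbb{D}^{m,p}(\mathcal{H})}.
\]
Combining this with the estimates at all intermediate orders (obtained by applying the same argument with $m-1$ replaced by $\ell \leq m-1$) yields $\|\delta(u)\|_{\mathbb{D}^{m-1,p}} \leq c_{m,p} \|u\|_{\mathbb{D}^{m,p}(\mathcal{H})}$.

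\textbf{Main obstacle.} The genuinely nontrivial part is Meyer's inequality itself; since we cite it, the remaining work is purely algebraic bookkeeping of the commutation relation and a straightforward induction. The only subtle point worth checking carefully is the precise combinatorial form of the iterated commutation identity and the fact that the tensor slots in $D^k u$ interact correctly with $\delta$ under Meyer's inequality applied slot-by-slot.
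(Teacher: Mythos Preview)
The paper does not actually prove this theorem: it is stated without proof as a standard fact from Malliavin calculus, with the reader referred to \cite{nualart2006malliavin,nualart2009lectures}. Your sketch is precisely the standard argument one finds in those references (Meyer's inequality as the $m=1$ base case, then induction via the commutation relation $D\delta(u)=u+\bar{\delta}(Du)$), so there is nothing to compare against --- your approach is correct and is exactly what the cited sources do.
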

\noindent This yields in particular that \textcolor{black}{$\DD^{m,p}_{[a,b]}(\cH([a,b]))\subset \textrm{Dom}(\delta_{[a,b]})$} for any $p>1$ and $m\geq 1$. 

For any random variable \textcolor{black}{$F\in\DD^{1,2}_{[a,b]}$, we know that $D_{[a,b]}F$ is a stochastic process in $\cH([a,b])$ that we can denote $(D_{[a,b],t}F)_{t\in[a,b]}$} which is defined almost surely with respect to the measure $\lambda\times\PP$ (where $\lambda$ is the usual Lebesgue measure). With this, we can deduce a \textit{local property} of the derivative operator.

\begin{lemma}\label{lem:localD}
Let \textcolor{black}{$[u,v]\subset[0,T]$ and $F \in\DD_{[a,b]}^{1,2}\cap L^2(\Omega,\cF_{[u,v]},\PP)$}. Then we have
\textcolor{black}{\begin{equation*}
D_{[a,b],s} F(\omega) = 0,
\end{equation*}}
for $(\lambda\times\PP)$-almost every \textcolor{black}{$(s,\omega)\in ([a,b]\backslash[u,v])\times \Omega$}.
\end{lemma}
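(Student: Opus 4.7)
The plan is to prove the statement by density: first verify it on a special class of smooth cylindrical random variables built only from elements of $\cH([a,b])$, then approximate arbitrary $F\in\DD^{1,2}\cap L^2(\Omega,\cF_{[a,b]},\PP)$ in the $\DD^{1,2}$-norm by elements of this class, and finally invoke the closedness of $D$.

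Concretely, let $\cS_{[a,b]}$ denote the set of cylindrical functionals $F = f(W(h_1),\ldots,W(h_n))$ with $f\in C^\infty_p(\RR^n)$ and $h_1,\ldots,h_n\in\cH([a,b])$. For such $F$, the definition of the Malliavin derivative gives
\[
DF = \sum_{k=1}^n \partial_k f\big(W(h_1),\ldots,W(h_n)\big)\,h_k,
\]
which is $\cH([a,b])$-valued pointwise in $\omega$; since each $h_k$ vanishes $\lambda$-a.e.\ on $[a,b]^c$, the conclusion $(D_sF)(\omega)=0$ for $(\lambda\times\PP)$-a.e.\ $(s,\omega)\in[a,b]^c\times\Omega$ is immediate.

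The heart of the argument is therefore to show that $\cS_{[a,b]}$ is $\DD^{1,2}$-dense in $\DD^{1,2}\cap L^2(\Omega,\cF_{[a,b]},\PP)$. I would proceed via the Wiener chaos decomposition $L^2(\Omega)=\bigoplus_{n\ge 0}\cH_n$: any $F\in L^2(\Omega,\cF_{[a,b]},\PP)$ admits a chaos expansion $F=\sum_{n\ge 0} I_n(f_n)$ with symmetric kernels $f_n\in\cH([a,b])^{\otimes n}$, the restriction to $[a,b]^n$ being forced by the $\cF_{[a,b]}$-measurability of $F$ (one uses that $\cH_n$ with $n$-th kernels supported in $[a,b]^n$ is closed in $L^2$ and already contains all polynomials in $W(h)$ with $h\in\cH([a,b])$). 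The classical identity $\|DF\|_{L^2(\Omega;\cH)}^2=\sum_{n\ge 1} n\, n!\,\|f_n\|_{\cH^{\otimes n}}^2$ for $F\in\DD^{1,2}$ then shows that the partial sums $\sum_{n\le N}I_n(f_n)$ converge to $F$ in $\DD^{1,2}$, not merely in $L^2(\Omega)$. Each partial sum can in turn be approximated in $\DD^{1,2}$ by elements of $\cS_{[a,b]}$, by writing every symmetric tensor $f_n\in\cH([a,b])^{\otimes n}$ as a limit of finite linear combinations of elementary tensors $h^{(1)}\otimes\cdots\otimes h^{(n)}$ with $h^{(j)}\in\cH([a,b])$, and replacing each iterated integral by the corresponding Hermite polynomial in the $W(h^{(j)})$'s.

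Given such an approximating sequence $F_n\in\cS_{[a,b]}$ with $F_n\to F$ in $\DD^{1,2}$, we have $DF_n\to DF$ in $L^2(\Omega;\cH)$, so up to extracting a subsequence we may assume $DF_n\to DF$ pointwise $(\lambda\times\PP)$-a.e.\ on $[0,1]\times\Omega$. Since $D_sF_n(\omega)=0$ for $(\lambda\times\PP)$-a.e.\ $(s,\omega)\in[a,b]^c\times\Omega$ for every $n$, the same holds in the limit. The main obstacle is the $\DD^{1,2}$-density in the second step; everything else reduces to linearity, the explicit formula for $DF$ on cylindrical variables, and the closability of $D$ (Proposition~\ref{prop:closeD}). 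An alternative to the chaos argument, if one wishes to avoid it, is to use continuity on $\DD^{1,2}$ of the conditional expectation with respect to $\sigma$-algebras generated by finitely many Gaussian functionals $W(h)$ with $h\in\cH([a,b])$, and pass to the limit along an increasing filtration of such sub-$\sigma$-algebras.
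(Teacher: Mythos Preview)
Your proof is correct and follows essentially the same density strategy as the paper: the paper introduces $S_b([a,b])$ (your $\cS_{[a,b]}$), asserts that it is dense in $\DD^{1,2}\cap L^2(\Omega,\cF_{[a,b]},\PP)$, and notes that for such cylindrical $F$ the derivative $D_sF$ vanishes on $[a,b]^c$ because each $h_k\in\cH([a,b])$ does. Your chaos-expansion argument supplies a justification of the density claim that the paper leaves implicit; for the passage to the limit, a slightly cleaner alternative to extracting a $(\lambda\times\PP)$-a.e.\ convergent subsequence is to observe that $L^2(\Omega;\cH([a,b]))$ is closed in $L^2(\Omega;\cH)$, so $DF$ lands there automatically and then inherits the vanishing on $[a,b]^c$ from the defining property of $\cH([a,b])$.
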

\begin{proof}
\textcolor{black}{We have, for any $F\in  S_b([u,v])$,
\begin{equation*}
D_{[a,b],s}F = \sum_{k = 1}^n \partial_k f(W(\ind_{[u_1,v_1]}),\ldots,W(\ind_{[u_n,v_n]}))\ind_{[u_k,v_k]}(s)\ind_{[a,b]}(s),
\end{equation*}
which yields the desired result since, for $1\leq k\leq n$, $[u_k,v_k]\subset[u,v]$.}
\end{proof}

\subsection{Conditional Integration by parts formula}\label{subsec:conditional}

\textcolor{black}{In the previous section, we have seen that $\delta_{[a,b]}$ is the adjoint operator of $D_{[a,b]}$ and, as such, it satisfies a duality relation. It turns out that this duality relation also holds under a conditional expectation. That is,} for any \textcolor{black}{$F\in S$ and $G_{[a,b]^c}\in S([a,b]^c)$, with $[a,b]^c = [0,T]\backslash[a,b]$}, we can see that, thanks to Lemma \ref{lem:localD},
\textcolor{black}{\begin{align*}
\EE\left[  \EE\left[\langle D_{[a,b]}F, u \rangle_{\cH([a,b])} \middle|\cF_{[a,b]^c}\right]G_{[a,b]^c}\right] & = 
 \EE\left[\langle D_{[a,b]}F,u\rangle_{\cH([a,b])} G_{[a,b]^c}\right] 
 \\&= -\EE\left[F \langle DG_{[a,b]^c},u\rangle_{\cH([a,b])} \right] + \EE\left[\langle D_{[a,b]}(FG_{[a,b]^c}),u\rangle_{\cH([a,b])}\right]
\\&= \EE\left[FG_{[a,b]^c}\delta_{[a,b]}(u)\right] = \EE\left[\EE\left[F\delta_{[a,b]}(u)\middle|\cF_{[a,b]^c}\right]G_{[a,b]^c}\right].
\end{align*}}
This yields in fact the following duality relation under a conditional expectation since \textcolor{black}{$S$ (resp. $S([a,b]^c)$) is dense in $L^2(\Omega)$ (resp. $L^2(\Omega,\mathcal{F}_{[a,b]^c},\mathbb{P})$)}
\begin{equation}\label{eq:Dualcond}
\EE\left[\langle DF, u \rangle_{\cH([a,b])} \middle|\cF_{[a,b]^c}\right] = \EE\left[F\delta_{[a,b]}(u)\middle|\cF_{[a,b]^c}\right].
\end{equation}
We can now proceed to prove a conditional integration by parts formula.
\begin{proposition}
Let \textcolor{black}{$F\in\DD^{1,2}_{[a,b]}$}, $G$ be a random variable and $u$ be an $\cH([a,b])$-valued random variable such that
\textcolor{black}{\begin{equation*}
\langle D_{[a,b]}F,u\rangle_{\cH([a,b])} \neq 0\quad\textrm{and}\quad G \frac{u}{ \langle D_{[a,b]}F,u\rangle_{\cH([a,b])}} \in\textrm{Dom}(\delta_{[a,b]}).
\end{equation*}}
Then, for any function $f\in C^1$ with bounded derivatives, we have that
\begin{equation*}
\EE\left[f'(F)G \middle| \cF_{[a,b]^c}\right] = \EE\left[f(F)H_{[a,b]}(F,G)\middle| \cF_{[a,b]^c}\right],
\end{equation*}
where
\begin{equation*}
H_{[a,b]}(F,G) : = \delta_{[a,b]}\left(G \frac{u}{ \langle D_{[a,b]}F,u\rangle_{\cH([a,b])}}\right).
\end{equation*}
\end{proposition}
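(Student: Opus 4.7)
The proof is a direct application of the chain rule together with the conditional duality relation \eqref{eq:Dualcond}, so the plan is essentially to reverse-engineer the right-hand side.

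First I would observe that since $f\in C^1$ has a bounded derivative and $F\in\DD^{1,2}$, Proposition \ref{prop:chainrule} applies and gives $f(F)\in\DD^{1,2}$ with
\[
D\bigl(f(F)\bigr) = f'(F)\,DF.
\]
Taking the inner product in $\cH([a,b])$ against the process $u$, one obtains pointwise
\[
\bigl\langle D(f(F)),u\bigr\rangle_{\cH([a,b])} = f'(F)\,\bigl\langle DF,u\bigr\rangle_{\cH([a,b])},
\]
and hence, on the event where $\langle DF,u\rangle_{\cH([a,b])}\neq 0$ (which is assumed almost sure),
\[
f'(F)\,G = \bigl\langle D(f(F)),u\bigr\rangle_{\cH([a,b])}\cdot\frac{G}{\langle DF,u\rangle_{\cH([a,b])}} = \left\langle D(f(F)),\,G\frac{u}{\langle DF,u\rangle_{\cH([a,b])}}\right\rangle_{\cH([a,b])}.
\]

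Next I would take conditional expectation with respect to $\cF_{[a,b]^c}$ and apply the conditional duality relation \eqref{eq:Dualcond}, with test random variable $f(F)$ (which is in $\DD^{1,2}$) and Malliavin-field $Gu/\langle DF,u\rangle_{\cH([a,b])}$ (which is in $\mathrm{Dom}(\delta)$ by assumption):
\[
\EE\bigl[f'(F)\,G\,\big|\,\cF_{[a,b]^c}\bigr] = \EE\!\left[\left\langle D(f(F)),\,G\frac{u}{\langle DF,u\rangle_{\cH([a,b])}}\right\rangle_{\cH([a,b])}\!\bigg|\,\cF_{[a,b]^c}\right] = \EE\bigl[f(F)\,H_{[a,b]}(F,G)\,\big|\,\cF_{[a,b]^c}\bigr],
\]
which is exactly the claimed identity.

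The only genuine technical point — and the place where I would expect to spend most of the care — is the integrability needed to legitimately invoke \eqref{eq:Dualcond}. The duality was stated for pairs $(F,u)$ with $F\in\DD^{1,2}$ and $u\in\mathrm{Dom}(\delta)$, so one must check that $f(F)\in\DD^{1,2}$ (which follows from boundedness of $f'$ and $F\in\DD^{1,2}$, giving $f(F)\in L^2$ by linear growth of $f$ plus Proposition \ref{prop:chainrule}) and that the divergence $H_{[a,b]}(F,G)$ makes sense, i.e.\ $Gu/\langle DF,u\rangle_{\cH([a,b])}\in\mathrm{Dom}(\delta)$, which is a standing hypothesis. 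No localization argument is required beyond this, since all the remaining steps are pointwise algebraic manipulations in $\cH([a,b])$.
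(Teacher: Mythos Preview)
Your proof is correct and follows essentially the same route as the paper: apply the chain rule to write $f'(F)$ as an $\cH([a,b])$-pairing of $D(f(F))$ against $u/\langle DF,u\rangle_{\cH([a,b])}$, absorb the scalar $G$ into the second slot, and then invoke the conditional duality relation \eqref{eq:Dualcond}. Your additional remarks on the integrability hypotheses needed to justify the duality step are accurate and, if anything, slightly more careful than the paper's own presentation.
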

\begin{proof}
We apply Proposition \ref{prop:chainrule} to deduce that
\begin{equation*}
f'(F) = \left\langle D_{[a,b]}(f(F)) ,\frac{u}{ \langle D_{[a,b]}F , u \rangle_{\cH([a,b])} }\right\rangle_{\cH([a,b])}.
\end{equation*}
Then, it follows from \eqref{eq:Dualcond} that
\begin{align*}
\EE\left[ f'(F) G\middle|\cF_{[a,b]^c}\right] &= \EE\left[ \langle D_{[a,b]}(f(F)),u\rangle_{\cH([a,b])} (\langle D_{[a,b]} F,u\rangle_{\cH([a,b])})^{-1}G\middle|\cF_{[a,b]^c}\right]
\\ &= \EE\left[ \langle D_{[a,b]}(f(F)),Gu(\langle D_{[a,b]} F,u\rangle_{\cH([a,b])})^{-1} \rangle_{\cH([a,b])} \middle|\cF_{[a,b]^c}\right] 
\\ &= \EE\left[ f(F)\delta_{[a,b]}\left(Gu(\langle D_{[a,b]} F,u\rangle_{\cH([a,b])})^{-1} \right)\middle|\cF_{[a,b]^c}\right],
\end{align*}
which is the desired result.
\end{proof}

We may now introduce the Malliavin matrix associated to a random vector.
\begin{definition}\label{defi:covmat}
Let $p>1$ and $F = (F^1,\ldots,F^d)$ be a random vector such that $F^k\in\DD_{[a,b]}^{1,p}$ for all $k\in\{1,\ldots,d\}$. The covariance matrix $\gamma_F$ associated to $F$ is defined as
\begin{equation*}
\gamma_{F,[a,b]} : = (\langle D_{[a,b]}F^i,D_{[a,b]}F^j \rangle_{\cH([a,b])})_{(i,j)\in\{1,\ldots,d\}^2}.
\end{equation*}
\end{definition}

We then have the following Lemma \cite[Lemma 7.2.3]{nualartIntroductionMalliavinCalculus2018} :

\begin{lemma}\label{lem:diffgamma}
Let $\gamma$ be a $d\times d$ random matrix such that $\textrm{det}(\gamma)>0$ almost surely and $\textrm{det}(\gamma)^{-1}\in \cap_{p\geq 1}L^p(\Omega)$. We assume that $(\gamma)_{i,j}\in\DD_{[a,b]}^{\infty} : = \cap_{p\geq 1}\cap_{k\geq 1} \DD_{[a,b]}^{k,p}$ for any $(i,j)\in\{1,\ldots,d\}^2$. Then, $(\gamma^{-1})_{i,j}\in\DD_{[a,b]}^{\infty}$, for any $(i,j)\in\{1,\ldots,d\}^2$, and
\begin{equation*}
D_{[a,b]}(\gamma^{-1})_{i,j} = -\sum_{k,\ell = 1}^m (\gamma^{-1})_{i,k}(\gamma^{-1})_{\ell,j} D_{[a,b]}\gamma_{k,\ell}.
\end{equation*}
\end{lemma}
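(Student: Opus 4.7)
The formula itself is what one would obtain formally by differentiating the identity $\gamma \gamma^{-1} = I_m$ and solving, so the whole content of the lemma is to justify that the manipulation can be made rigorous, i.e.\ that the entries of $\gamma^{-1}$ really lie in $\DD^\infty$ so that one may apply $D$ to them and use the Leibniz/chain rule proved in Proposition~\ref{prop:chainrule}.

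\textbf{Step 1: $(\gamma^{-1})_{i,j}\in\DD^\infty$.} Using the cofactor expansion we write
\[
(\gamma^{-1})_{i,j} = \frac{C_{j,i}(\gamma)}{\det(\gamma)},
\]
where $C_{j,i}(\gamma)$ is a polynomial in the entries of $\gamma$, hence belongs to $\DD^\infty$ by the chain rule (Proposition~\ref{prop:chainrule}) and the H\"older-type inequality $\|FG\|_{\DD^{k,p}}\le\|F\|_{\DD^{k,r}}\|G\|_{\DD^{k,q}}$. The delicate point is thus to show that $1/\det(\gamma)\in\DD^\infty$. Pick a sequence $\psi_\varepsilon\in C^\infty_b(\mathbb{R})$ with uniformly bounded derivatives of every order and such that $\psi_\varepsilon(x) = 1/x$ on $\{x\ge\varepsilon\}$, with $|\psi_\varepsilon(x)|\le 2/x$ on $\{x>0\}$. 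The random variable $\psi_\varepsilon(\det\gamma)$ lies in $\DD^\infty$, and by the chain rule its Malliavin derivatives of order $k$ are polynomials in the derivatives of $\det\gamma$ (which are in $\DD^\infty$) multiplied by derivatives $\psi_\varepsilon^{(j)}(\det\gamma)$. Using the integrability assumption $\det(\gamma)^{-1}\in\cap_p L^p(\Omega)$ together with the Fa\`a di Bruno formula applied to $x\mapsto 1/x$, one checks by dominated convergence that $\psi_\varepsilon(\det\gamma)$ is Cauchy in every $\DD^{k,p}$ and that the limit is $1/\det(\gamma)$. Since each $\DD^{k,p}$ is a Banach space on which $D$ is closed, this yields $1/\det(\gamma)\in\DD^\infty$ and hence $(\gamma^{-1})_{i,j}\in\DD^\infty$.

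\textbf{Step 2: the identity.} Once membership in $\DD^\infty$ is established for both $\gamma_{k,\ell}$ and $(\gamma^{-1})_{k,\ell}$, we may apply the Malliavin derivative to the algebraic identity
\[
\sum_{k=1}^m \gamma_{i,k}\,(\gamma^{-1})_{k,j} \;=\; \delta_{i,j},
\]
and use the chain rule $D(FG) = F\,DG + G\,DF$. This gives
\[
\sum_{k=1}^m \big( D\gamma_{i,k}\big)(\gamma^{-1})_{k,j} \;+\; \sum_{k=1}^m \gamma_{i,k}\,D(\gamma^{-1})_{k,j} \;=\; 0.
\]
Multiplying on the left by $(\gamma^{-1})_{\ell,i}$ and summing over $i$ then yields the stated expression
\[
D(\gamma^{-1})_{\ell,j} \;=\; -\sum_{i,k=1}^m (\gamma^{-1})_{\ell,i}(\gamma^{-1})_{k,j}\,D\gamma_{i,k},
\]
which is the claim after relabelling indices.

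\textbf{Main obstacle.} The only nontrivial part of the argument is the regularisation step for $1/\det(\gamma)$: one has to verify that the approximants $\psi_\varepsilon(\det\gamma)$ converge in every $\DD^{k,p}$, which requires controlling iterated derivatives of $\psi_\varepsilon$ via Fa\`a di Bruno combined with the moment hypothesis on $\det(\gamma)^{-1}$. Once this technical step is carried out, Step~2 is a purely algebraic manipulation.
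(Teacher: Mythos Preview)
Your argument is correct and is essentially the standard proof of this result. Note, however, that the paper does not actually supply a proof of this lemma: it simply quotes it from Nualart's book (\cite[Lemma~7.2.3]{nualartIntroductionMalliavinCalculus2018}). Your Step~1 (cofactor expansion plus a smooth truncation of $x\mapsto 1/x$ to place $1/\det\gamma$ in $\DD^\infty$ via closability of $D$) and Step~2 (differentiating $\gamma\gamma^{-1}=I_m$) are exactly the approach taken in that reference. One small point worth tightening: for the Cauchy-in-$\DD^{k,p}$ argument you need not only $|\psi_\varepsilon(x)|\le C/x$ but the analogous bound $|\psi_\varepsilon^{(j)}(x)|\le C_j/x^{j+1}$ on $\{x>0\}$, uniformly in $\varepsilon$, for every $j$; this is what lets the Fa\`a di Bruno terms be dominated by powers of $1/\det\gamma$, and it holds for the natural choice $\psi_\varepsilon(x)=\chi(x/\varepsilon)/x$ with $\chi$ a fixed cutoff.
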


Thanks to the Malliavin matrix, we define a \textit{nondegeneracy condition} for random vectors.
\begin{definition}
We say that a random vector $F = (F^1,\ldots,F^d)$, whose components are in $\DD^{\infty}_{[a,b]}$, is nondegenerate if the Malliavin matrix $\gamma_{F,[a,b]}$ is invertible almost surely and
\begin{equation*}
\textrm{det}(\gamma_{F,[a,b]})^{-1}\in \cap_{p\geq 1}L^p(\Omega).
\end{equation*}
\end{definition}

We now state a more general conditional integration by parts result for nondegenerate random vectors.

\begin{proposition}\label{prop:integration_by_part}
Let $F = (F^1,\ldots,F^d)$ be a nondegenerate  random vector, $G\in\DD_{[a,b]}^{\infty}$ and $g\in C^{\infty}_p(\RR^d)$. Then, for any multiindex $\alpha \in \{1,\ldots,d\}^k$, $k\geq 1$, there exists an element $H_{\alpha,[a,b]}(F,G)\in\DD^{\infty}_{[a,b]}$ such that
\begin{equation*}
\EE\left[ \partial^{\alpha}g(F) G\middle|\cF_{[a,b]^c}\right] = \EE\left[ g(F) H_{\alpha,[a,b]}(F,G)\middle|\cF_{[a,b]^c}\right].
\end{equation*}
The variable $H_{\alpha,[a,b]}(F,G)$ is given recursively by
\begin{equation*}
H_{\alpha,[a,b]}(F,G) = H_{\alpha_k,[a,b]}(F,H_{(\alpha_1,\ldots,\alpha_{k-1}),[a,b]}(F,G))
\end{equation*}
where, for any $i\in\{1,\ldots,d\}$,
\begin{equation*}
H_{i,[a,b]}(F,G) = \sum_{j = 1}^d\delta_{[a,b]}\left(G (\gamma_{F,[a,b]}^{-1})_{i,j} DF^j \right).
\end{equation*}
\end{proposition}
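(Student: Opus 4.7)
The plan is to first establish the base case $k=1$ and then iterate, using the tower property of the conditional expectation together with the conditional duality relation \eqref{eq:Dualcond}. The algebraic backbone is the chain rule (Proposition \ref{prop:chainrule}), which expresses the Euclidean gradient $\nabla g(F)$ as a Malliavin gradient pulled back through the Malliavin matrix.

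For the base case, I would start from $D(g(F)) = \sum_{j=1}^m \partial_j g(F)\, DF^j$, take the $\cH([a,b])$-inner product against $DF^\ell$ and read off
\begin{equation*}
\langle D(g(F)), DF^\ell\rangle_{\cH([a,b])} = \sum_{j=1}^m \partial_j g(F)\, (\gamma_{F,[a,b]})_{j,\ell}.
\end{equation*}
Inverting $\gamma_{F,[a,b]}$ (which is licit because nondegeneracy and Lemma \ref{lem:diffgamma} yield $(\gamma_{F,[a,b]}^{-1})_{i,j}\in\DD^\infty$) gives the pointwise identity
\begin{equation*}
\partial_i g(F) = \sum_{\ell=1}^m (\gamma_{F,[a,b]}^{-1})_{i,\ell}\, \langle D(g(F)), DF^\ell\rangle_{\cH([a,b])}.
\end{equation*}
Multiplying by $G$, pulling $G(\gamma_{F,[a,b]}^{-1})_{i,\ell}$ into the inner product and invoking the conditional duality \eqref{eq:Dualcond} yields $\EE[\partial_i g(F)\,G \mid \cF_{[a,b]^c}] = \EE[g(F)\,H_{i,[a,b]}(F,G) \mid \cF_{[a,b]^c}]$ with precisely the $H_{i,[a,b]}(F,G)$ from the statement. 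To be allowed to apply \eqref{eq:Dualcond} I must first justify that $G(\gamma_{F,[a,b]}^{-1})_{i,\ell}DF^\ell\in\mathrm{Dom}(\delta)$; in fact I would show more, namely that it lies in $\DD^{\infty}(\cH)$, using that $G\in\DD^\infty$, $DF^\ell\in\DD^\infty(\cH)$ (since $F^\ell\in\DD^\infty$), Lemma \ref{lem:diffgamma} and the algebra/Hölder inequalities on $\DD^{k,p}$. Theorem \ref{thm:condelta} then gives $H_{i,[a,b]}(F,G)\in\DD^\infty$, which is the crucial regularity that allows the recursion to close.

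The inductive step is then immediate: assuming the formula holds for multi-indices of length $k-1$, apply it with $G$ replaced by $H_{(\alpha_1,\ldots,\alpha_{k-1}),[a,b]}(F,G)\in\DD^\infty$ and with the test function $\partial_{\alpha_k} g$ in place of $g$. The tower property of the conditional expectation and the definition $H_{\alpha,[a,b]}(F,G)=H_{\alpha_k,[a,b]}(F,H_{(\alpha_1,\ldots,\alpha_{k-1}),[a,b]}(F,G))$ deliver the conclusion; the polynomial growth of $g$ and its derivatives together with the finite moments of $F$ and of all the Malliavin weights keep everything integrable.

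The main obstacle is the bookkeeping needed to keep the weights $G(\gamma_{F,[a,b]}^{-1})_{i,\ell}DF^\ell$ in $\DD^\infty(\cH)$ at each stage of the recursion, because applying $\delta_{[a,b]}$ costs one derivative (Theorem \ref{thm:condelta}) and the differentiation of $\gamma_{F,[a,b]}^{-1}$ (via Lemma \ref{lem:diffgamma}) produces products involving $D\gamma_{F,[a,b]}$ whose moments must be controlled. This is where the nondegeneracy condition $\det(\gamma_{F,[a,b]})^{-1}\in\bigcap_{p\ge 1}L^p(\Omega)$ is used repeatedly, combined with Hölder's inequality in $\DD^{k,p}$, to propagate the $\DD^\infty$-regularity through the recursion. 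Once this is in place, the proof is a clean induction on $|\alpha|$.
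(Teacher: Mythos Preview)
Your proposal is correct and follows essentially the same route as the paper's proof: chain rule to express $\partial_i g(F)$ via $\gamma_{F,[a,b]}^{-1}$ and $D(g(F))$, then the conditional duality \eqref{eq:Dualcond}, then induction on $|\alpha|$ using Lemma \ref{lem:diffgamma} and Theorem \ref{thm:condelta} to keep the weights in $\DD^\infty$. If anything, you are more explicit than the paper about why $G(\gamma_{F,[a,b]}^{-1})_{i,j}DF^j\in\DD^\infty(\cH)\subset\mathrm{Dom}(\delta)$; your phrasing of the inductive step is slightly tangled (you first apply the length-$(k-1)$ hypothesis to $\partial_{\alpha_k}g$ with weight $G$, and \emph{then} the base case with weight $H_{(\alpha_1,\ldots,\alpha_{k-1}),[a,b]}(F,G)$), but the intended argument is the standard one and matches the paper.
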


\begin{proof}
It follows from the chain rule that, for any $j\in\{1,\ldots,m\}$,
\begin{equation*}
\langle D_{[a,b]}(g(F)),D_{[a,b]}F^j \rangle_{\cH([a,b])} = \sum_{i = 1}^d \partial_i g(F) \langle D_{[a,b]}F^i,D_{[a,b]}F^j\rangle_{\cH([a,b])} = \sum_{i = 1}^d \partial_i g(F) (\gamma_{F,[a,b]})_{i,j}.
\end{equation*}
Thus, we obtain that
\begin{equation*}
\partial_i g(F) = \sum_{j = 1}^d (\gamma_{F,[a,b]}^{-1})_{i,j} \langle D_{[a,b]}(g(F)),DF^j \rangle_{\cH([a,b])}.
\end{equation*}
By exploiting the duality relation \eqref{eq:Dualcond}, this gives
\begin{align*}
\EE\left[ \partial_ig(F) G\middle|\cF_{[a,b]^c}\right] &= \EE\left[   \left\langle D_{[a,b]}(g(F)),\sum_{j = 1}^d G(\gamma_{F,[a,b]}^{-1})_{i,j} D_{[a,b]}F^j \right\rangle_{\cH([a,b])}\middle|\cF_{[a,b]^c}\right] 
\\&= \EE\left[  g(F) H_{i,[a,b]}(F,G)\middle|\cF_{[a,b]^c}\right].
\end{align*}
Thanks to Lemma \ref{lem:diffgamma}, we can deduce that $\gamma_{F,[a,b]}\in\DD^{\infty}_{[a,b]}$. Furthermore, by Theorem \ref{thm:condelta}, this yields that $H_{i,[a,b]}(F,G)\in\DD^{\infty}_{[a,b]}$ if $G\in\DD^{\infty}_{[a,b]}$. From here, we can proceed by induction and deduce the result.
\end{proof}

We finally give an estimate for the random variables $H_{\alpha,[a,b]}$.
\begin{lemma}\label{lemma:boundeH}
Let $k\geq 1$ and $p>1$. For any $\alpha\in\{1,\ldots,d\}^k$, we have
\begin{equation*}
\|H_{\alpha,[a,b]}(F,G)\|_{L^p(\Omega)} \lesssim_{\alpha,d,p} \|G\|_{\DD^{k,q}_{[a,b]}} \prod_{\ell = 1}^{k} \left\|\sum_{j =1}^d (\gamma_{F,[a,b]}^{-1})_{\alpha_\ell,j}D_{[a,b]}F^j \right\|_{\DD_{[a,b]}^{k-\ell+1,r_\ell}}
\end{equation*}
where $1/p = 1/q + \sum_{\ell = 1}^d 1/r_\ell$.
\end{lemma}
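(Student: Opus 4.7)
The plan is to proceed by induction on the length $k$ of the multi-index $\alpha$, using the recursive structure $H_{\alpha,[a,b]}(F,G) = H_{\alpha_k,[a,b]}(F, H_{\alpha',[a,b]}(F,G))$ with $\alpha' = (\alpha_1,\ldots,\alpha_{k-1})$. The naive induction on $L^p$-norms will not close, because applying the continuity of $\delta_{[a,b]}$ at the outer step costs one Malliavin derivative on the inner quantity. I would therefore strengthen the induction hypothesis to: for every $n \geq 0$, every $p>1$, and every compatible exponents $q,r_1,\ldots,r_k$ with $1/p = 1/q + \sum_{\ell=1}^k 1/r_\ell$,
\[
\|H_{\alpha,[a,b]}(F,G)\|_{\DD^{n,p}} \leq \|G\|_{\DD^{n+k,q}} \prod_{\ell=1}^k \left\|\sum_{j=1}^m (\gamma_{F,[a,b]}^{-1})_{\alpha_\ell,j} DF^j\right\|_{\DD^{n+k-\ell+1, r_\ell}(\cH)}.
\]
The lemma then corresponds to the specialization $n=0$.

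For the base case $k=1$, observe that $H_{i,[a,b]}(F,G) = \delta_{[a,b]}(G u_i)$ where $u_i := \sum_{j=1}^m (\gamma_{F,[a,b]}^{-1})_{i,j} DF^j \in \cH$. Theorem \ref{thm:condelta} then gives the bound $\|\delta_{[a,b]}(Gu_i)\|_{\DD^{n,p}} \lesssim \|G u_i\|_{\DD^{n+1,p}(\cH)}$, and applying the $\cH$-valued analogue of the stated H\"older inequality $\|FG\|_{\DD^{k,p}}\leq \|F\|_{\DD^{k,r}}\|G\|_{\DD^{k,q}}$ with $1/p = 1/q + 1/r_1$ yields the desired estimate at level $k=1$.

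For the inductive step, apply the $k=1$ estimate to the outer $H_{\alpha_k,[a,b]}$, viewing $H_{\alpha',[a,b]}(F,G)$ as the new integrand. Choosing an auxiliary exponent $s$ with $1/p = 1/s + 1/r_k$, this gives
\[
\|H_{\alpha,[a,b]}(F,G)\|_{\DD^{n,p}} \lesssim \|H_{\alpha',[a,b]}(F,G)\|_{\DD^{n+1,s}}\,\|u_{\alpha_k}\|_{\DD^{n+1,r_k}(\cH)}.
\]
Now feed this into the induction hypothesis at level $n+1$ for the multi-index $\alpha'$ of length $k-1$, with exponents $q, r_1, \ldots, r_{k-1}$ satisfying $1/s = 1/q + \sum_{\ell=1}^{k-1} 1/r_\ell$. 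The resulting estimate controls $\|H_{\alpha',[a,b]}(F,G)\|_{\DD^{n+1,s}}$ by $\|G\|_{\DD^{n+k,q}}$ times the product of $\|u_{\alpha_\ell}\|_{\DD^{n+k-\ell+1,r_\ell}(\cH)}$ for $\ell=1,\ldots,k-1$. The two inequalities combine cleanly because the derivative orders fit: the induction produces $n+k-\ell+1$ for $\ell \leq k-1$, and the outer application produces the missing $n+1$ factor for $\ell=k$.

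The only step that is not entirely formal is the H\"older inequality in the Hilbert-valued Sobolev spaces $\DD^{m,p}(\cH)$; this is the main (mild) obstacle. It follows from the scalar version recalled in the paper by a standard argument: differentiating $Gu_i$ $m$ times via Leibniz's rule distributes derivatives across the two factors, and each resulting term is bounded via scalar H\"older in $L^p(\Omega)$ combined with the Hilbert-Schmidt structure of the $\cH^{\otimes k}$-norms. Once this Hilbert-valued H\"older bound is in place, the induction closes and setting $n=0$ gives the lemma.
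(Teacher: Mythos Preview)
Your proposal is correct and follows essentially the same route as the paper: apply the continuity of $\delta_{[a,b]}$ (Theorem \ref{thm:condelta}) to pass from $\|H_{\alpha}\|_{L^p}$ to $\|H_{\alpha'}\, u_{\alpha_k}\|_{\DD^{1,p}(\cH)}$, then H\"older in the Malliavin--Sobolev spaces, then induct. The paper's proof is terser and simply writes ``which yields the desired result by induction'' after the first step; your observation that a naive induction on the $L^p$-statement alone does not close, and that one must strengthen the hypothesis to $\DD^{n,p}$-norms, is exactly what makes the paper's ``by induction'' rigorous.
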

\begin{proof}
By Theorem \ref{thm:condelta} and H\"older's inequality, we have
\begin{align*}
\|H_{\alpha,[a,b]}(F,G)\|_{L^p(\Omega)} &\leq c_{p,d}\left\|H_{(\alpha_1,\ldots,\alpha_{k-1}),[a,b]}(F,G) \sum_{j =1}^d (\gamma_{F,[a,b]}^{-1})_{\alpha_k,j}D_{[a,b]}F^j \right\|_{\DD^{1,p}_{[a,b]}}
\\ &\leq c_{p,d}\left\|H_{(\alpha_1,\ldots,\alpha_{k-1}),[a,b]}(F,G)\right\|_{\DD^{1,q}_{[a,b]}}\left\|\sum_{j =1}^d (\gamma_{F,[a,b]}^{-1})_{\alpha_k,j}D_{[a,b]}F^j \right\|_{\DD^{1,r}_{[a,b]}},
\end{align*}
where $1/p = 1/q + 1/r$, which yields the desired result by induction.
\end{proof}

\section{Malliavin calculus and rough paths}\label{section:Malliavin2}

The idea of this Section is to use Malliavin calculus (and especially the conditional bounds proved in the previous subsection)  to derive some nice bounds on flow generated by rough differential equations. In order to do so, we will need some properties of the Cameron-Martin spaces $\cH_1$.

\subsection{Embedding of the spaces \texorpdfstring{$\cH$}{H} and \texorpdfstring{$\cH_1$}{H1}}

In this section, we give some embedding of $\cH$ and $\cH_1$ into the space of continuous functions with finite $p$-variation, with $p>0$.

\begin{definition}
Let $f:[0,T]\to \RR^d$ be a continuous function. For any $p>0$, we define its $p$-variation on $[a,b]\subset[0,T]$ as
\begin{equation*}
\|f\|_{p-var ;[a,b]} = \sup_{\pi\in\Pi([a,b])}\left( \sum_{[u,v]\in\pi} |f(v) - f(u)|^p\right)^{1/p},
\end{equation*}
where we remind that $\Pi([a,b])$ is the set of all subdivisions of $[a,b]$. The set of continuous functions with finite $p$-variation on $[a,b]$ is denoted $\cC^{p-var }([a,b];\RR^d) = \cC^{p-var }([a,b])$.
\end{definition}

A similar, mixed $(p,q)$-variation notion also exists for continuous functions on $[0,T]^2$.

\begin{definition}
Let $R:[0,T]^2\to \RR^d$ be a continuous function. For any $p,q>0$, we define its $(p,q)$-variation on the square $[a,b]\times[c,d]$ as
\begin{equation*}
\|R\|_{(p,q)-var ;[a,b]\times[c,d]} = \sup_{\substack{\pi_1\in\Pi([a,b])\\\pi_2\in\Pi([c,d])}}\left( \sum_{[u_2,v_2]\in\pi_1} \left(\sum_{[u_1,v_1]\in\pi_2}|\square_{[u_1,v_1]\times[u_2,v_2]} R|^p\right)^{q/p}\right)^{1/q},
\end{equation*}
where $\square_{[u_1,v_1]\times[u_2,v_2]} R = R(v_1,v_2) - R(v_1,u_2) - R(u_1,v_2) + R(u_1,u_2)$. The set of continuous functions with finite $(p,q)$-variation on $[a,b]\times[c,d]$ is denoted $\cC^{(p,q)-var }([a,b]\times[c,d];\RR^d) = \cC^{(p,q)-var }([a,b]\times[c,d])$.
\end{definition}

We also denote $\|R\|_{p-var ;[a,b]^2} = \|R\|_{(p,p)-var ;[a,b]^2}$ as well as $ \cC^{p-var }([a,b]^2) : =  \cC^{(p,p)-var }([a,b]^2)$. 

\begin{remark}\label{rem:bndmix}
We can see that
\begin{equation*}
\|R\|_{p\vee q-var ;[a,b]^2} \leq \|R\|_{(p,q)-var ;[a,b]^2} \leq \|R\|_{p\wedge q-var ;[a,b]^2}.
\end{equation*}
\end{remark}

Furthermore, we have the following definition.

\begin{definition}
Let $\rho\in[1,2)$ and $[a,b]\subset[0,T]$. We say that $R$ has finite H\"older-controlled $\rho$-variation on $[a,b]$ if $\|R\|_{\rho-var ;[a,b]^2}<+\infty$ and if the following estimate holds, for any $[s,t]\subset[a,b]$,
\begin{equation*}
\|R\|_{\rho-var ;[s,t]^2} \lesssim (t-s)^{1/\rho}.
\end{equation*}
\end{definition}

We are now in position to state a first assumption on the process $W$ that we will consider on the rest of the section.

\begin{assumption}\label{asm:1rhovar}
The process $W$ is a $\RR^d$-valued continuous centered Gaussian process starting at $0$ with iid components and covariance $R_W$ which belongs in $\cC^{(1,\rho)-var }([0,T]^2)$ for some $\rho\in[1,2)$. Furthermore, the following estimate holds, for any $[a,b]\subset[0,T]$,
\begin{equation}\label{eq:1rhovar}
\|R_W\|_{(1,\rho)-var ;[a,b]^2} \lesssim (b-a)^{1/\rho}.
\end{equation}
\end{assumption}

In view of Remark \ref{rem:bndmix}, we immediately see that \eqref{eq:1rhovar} implies that the covariance has finite H\"older-controlled $\rho$-variation on $[0,T]$. 



In the following, we denote
\begin{equation*}
\kappa_{a,b} : = \|R_W\|_{(1,\rho)-var ;[a,b]^2}^{1/2}
\end{equation*}
and
\begin{equation*}
\sigma_{a,b} : = \EE\left[\left(W^{j}_b - W^{j}_a \right)^2 \right]^{1/2} = (\square_{[a,b]^2}R_W)^{1/2},
\end{equation*}
for any $j\in\{1,\ldots,d\}$.

We can now proceed to state our first embedding :

\begin{theorem}[\cite{friz2016jain}, Theorem 1.1]
Let $W$ be a centered Gaussian process satisfying Assumption \ref{asm:1rhovar}. Then, for any $h\in\cH_1([a,b])$, $[a,b]\subset[0,T]$, we have
\begin{equation*}
\|h\|_{q-var ;[a,b]} \leq \kappa_{a,b}\|h\|_{\cH_1([a,b])}, 
\end{equation*}
where  $q = 1/(1/2\rho+1/2)<2$. In particular, the embedding $\cH_1([a,b])\hookrightarrow \cC^{q-var }([a,b])$ is continuous.
\end{theorem}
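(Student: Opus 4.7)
The strategy is to exploit the reproducing-kernel structure of $\cH_1$ and reduce the $q$-variation estimate to a two-dimensional regularity bound on $R_W$. For $h\in\tilde\cE$ of the form $\sum_k a_k R_W(t_k,\cdot)$, the identity
\[
h(v)-h(u) = \bigl\langle h,\, R_W(\cdot,v)-R_W(\cdot,u)\bigr\rangle_{\cH_1}
\]
is immediate from the definition of the inner product on $\tilde\cE$. Writing $e_{u,v}:=R_W(\cdot,v)-R_W(\cdot,u)\in\cH_1$, one has $\|e_{u,v}\|_{\cH_1}^2=\square_{[u,v]^2}R_W$, and the representation extends to all $h\in\cH_1$ by density.

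Next, I would fix a partition $\pi=(s_i)_{0\le i\le N}$ of $[a,b]$ and compute the $\ell^q$-norm of the increments by duality with $\ell^{q'}$, where $1/q+1/q'=1$. For any sequence $c=(c_i)$ with $\|c\|_{\ell^{q'}}\le 1$, Cauchy--Schwarz in $\cH_1$ gives
\[
\sum_i c_i\bigl(h(s_{i+1})-h(s_i)\bigr) = \Bigl\langle h,\; \sum_i c_i\, e_{s_i,s_{i+1}}\Bigr\rangle_{\cH_1} \le \|h\|_{\cH_1}\,\Bigl\|\sum_i c_i\, e_{s_i,s_{i+1}}\Bigr\|_{\cH_1},
\]
and the squared norm on the right expands as $\sum_{i,j} c_i c_j\,\square_{[s_i,s_{i+1}]\times[s_j,s_{j+1}]}R_W$. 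The whole statement therefore reduces to the bilinear estimate
\[
\Bigl|\sum_{i,j} c_i c_j\,\square_{[s_i,s_{i+1}]\times[s_j,s_{j+1}]}R_W\Bigr| \le \|R_W\|_{(1,\rho)\text{-var};[a,b]^2}\,\|c\|_{\ell^{q'}}^{\,2},
\]
uniform in $\pi$.

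This bilinear estimate is the analytic heart of the result and where Assumption \ref{asm:1rhovar} is used essentially. I would establish it by a two-dimensional Young--Lo\`eve type argument: apply H\"older's inequality in the row index using the $\ell^\rho$ control on row-slices $\sum_i |\square_{I_i\times I_j}R_W|$ encoded in the definition of $\|\cdot\|_{(1,\rho)\text{-var}}$, and then apply H\"older's inequality in the column index against $\|c\|_{\ell^{q'}}$. The exponents match exactly because $1/q' = (\rho-1)/(2\rho)$, equivalently $2/q'+1/\rho=1$, which is the arithmetic counterpart of the defining relation $1/q=1/(2\rho)+1/2$. I expect this two-dimensional H\"older bookkeeping to be the main technical obstacle.

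Finally, combining the duality with the bilinear estimate, taking the supremum over $\|c\|_{\ell^{q'}}\le 1$ and then over partitions $\pi$ of $[a,b]$ yields $\|h\|_{q\text{-var};[a,b]}\le \kappa_{a,b}\|h\|_{\cH_1}$ for every $h\in\tilde\cE$. Since the $q$-variation semi-norm is lower semi-continuous with respect to pointwise convergence, which is itself controlled by the $\cH_1$-norm through the reproducing-kernel bound $|h(t)|\le R_W(t,t)^{1/2}\|h\|_{\cH_1}$, density of $\tilde\cE$ in $\cH_1$ extends the estimate to all of $\cH_1$, giving the continuous embedding $\cH_1\hookrightarrow\cC^{q\text{-var}}([a,b])$.
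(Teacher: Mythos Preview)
The paper does not supply a proof of this theorem; it is quoted verbatim from \cite{friz2016jain}, Theorem~1.1, and used as a black box. There is therefore nothing in the present paper to compare against.

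That said, your outline is correct and matches the argument in the cited reference. The reproducing-kernel identity, the $\ell^q$--$\ell^{q'}$ duality, and the Cauchy--Schwarz step in $\cH_1$ are all exactly as in Friz--Gess--Gulisashvili--Riedel. The bilinear estimate you isolate as the ``main technical obstacle'' is also correct, and the mechanism you anticipate is the right one: writing $|c_ic_jR_{ij}|=(|c_i||R_{ij}|^{1/2})(|c_j||R_{ij}|^{1/2})$ and applying Cauchy--Schwarz over $(i,j)$ gives $\sum_{i,j}|c_i||c_j||R_{ij}|\le\sum_i|c_i|^2 S_i$ with $S_i=\sum_j|R_{ij}|$; H\"older with exponents $(\rho',\rho)$ then yields $\sum_i|c_i|^2S_i\le\|c\|_{\ell^{2\rho'}}^2\|S\|_{\ell^\rho}$, and the identity $2\rho'=q'$ (your relation $2/q'+1/\rho=1$) closes the estimate with $\|S\|_{\ell^\rho}\le\|R_W\|_{(1,\rho)\text{-var};[a,b]^2}=\kappa_{a,b}^2$. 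Your density and lower-semicontinuity argument for the extension from $\tilde\cE$ to $\cH_1$ is also fine.
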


Furthermore, we also have the following results \cite[Remark 2.16]{gess2020density}.
\begin{proposition}
Let $W$ be a centered Gaussian process satisfying Assumption \ref{asm:1rhovar} and $[a,b]\subset[0,T]$.
\begin{enumerate}
\item Let $f\in\cC^{p-var }([a,b])$ with $1/p+1/\rho>1$. Then, $f\in\cH([a,b])$ and
\begin{equation*}
\|f\|_{\cH([a,b])}^2 = \int_a^b\int_a^b f(s)f(t) \dd R_W(s,t),
\end{equation*}
where the right-hand side is well defined as a $2$d Young integral. In particular, we have the continuous embedding $\cC^{p-var }([a,b])\hookrightarrow \cH([a,b])$ since
\begin{equation*}
\|f\|_{\cH([a,b])}^2 \lesssim \|f\|_{p-var ;[a,b]}^2 \|R_W\|_{\rho-var ;[a,b]^2}.
\end{equation*}
\item Let $f_1\in\cC^{p-var }([a,b])$ with $1/p+1/\rho>1$ and $f_2\in\cH([a,b])$. Then,
\begin{equation*}
\langle f_1,f_2 \rangle_{\cH([a,b])} = \int_a^b f_1\dd\cR f_2,
\end{equation*}
where the right-hand side is well defined as a Young integral and $\cR : \cH([a,b])\to\cH_1([a,b])$ is the isomorphism defined by \eqref{eq:Rmapp}.
\end{enumerate}
\end{proposition}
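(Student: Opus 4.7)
The plan is to prove both items by a single approximation scheme: step functions from $\cE$ are used to approach $f$ (respectively $f_1$), on step functions both identities follow at once from the definition of the scalar product on $\cE$ and from the construction of the isometry $\cR$, and the passage to the limit is controlled by a standard $2$d Young estimate. The key input is the Young–Loève-type inequality: if $g_1, g_2 \in \cC^{p\textrm{-var}}([a,b])$ and $R$ has finite $(1,\rho)$-variation on $[a,b]^2$ with $1/p + 1/\rho > 1$, the Riemann–Stieltjes sums over refining partitions converge to a well-defined limit satisfying
\[
\Big| \int_a^b \int_a^b g_1(s) g_2(t) \dd R(s,t) \Big| \lesssim \|g_1\|_{p\textrm{-var};[a,b]} \, \|g_2\|_{p\textrm{-var};[a,b]} \, \|R\|_{(1,\rho)\textrm{-var};[a,b]^2}.
\]
This is classical in rough-path analysis (see e.g.\ the Friz–Victoir monograph already cited).

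For item (1), I would fix partitions $\pi_n$ of $[a,b]$ with mesh tending to $0$ and set $f_n := \sum_{[t_k, t_{k+1}] \in \pi_n} f(t_k) \ind_{[t_k, t_{k+1})} \in \cE$. Bilinearity of the scalar product on $\cE$ yields immediately
\[
\|f_n\|_\cH^2 = \sum_{k, \ell} f(t_k) f(t_\ell) \, \square_{[t_k, t_{k+1}] \times [t_\ell, t_{\ell+1}]} R_W,
\]
which is exactly the Riemann sum for the $2$d Young integral $\int\int f_n(s) f_n(t) \dd R_W(s,t)$. Applied to $f_n - f_m$, the Young–Loève estimate above shows $(f_n)$ is Cauchy in $\cH$ as soon as it is Cauchy in some $p'$-variation with $p' > p$ satisfying $1/p' + 1/\rho > 1$; such a $p'$ exists thanks to the strict inequality $1/p + 1/\rho > 1$, and the standard density result for continuous $p$-variation functions guarantees convergence of $f_n$ to $f$ in $p'$-variation. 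Taking the limit in the identity for $\|f_n\|_\cH^2$ then gives both the integral representation of the norm and the continuous embedding $\cC^{p\textrm{-var}} \hookrightarrow \cH$.

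For item (2), the same approximation applied to $f_1$ and linearity reduce the identity to the case of a single indicator $\ind_{[s,t]}$. Since $\cR$ is by definition the isometric extension of $\ind_{[0,u]} \mapsto R_W(u, \cdot)$, the element $\cR f_2 \in \cH_1$ admits the pointwise representation $(\cR f_2)(u) = \langle \ind_{[0,u]}, f_2 \rangle_\cH$ (immediate on step functions and extended by continuity of the duality), so that
\[
\int_a^b \ind_{[s,t]} \dd \cR f_2 = (\cR f_2)(t) - (\cR f_2)(s) = \langle \ind_{[s,t]}, f_2 \rangle_\cH,
\]
which matches the right-hand side of (2). Passing to the limit on $f_1$ is then a one-dimensional Young convergence argument: the Friz–Jain embedding proved in the previous theorem gives $\cR f_2 \in \cC^{q\textrm{-var}}$ with $q = 2\rho/(\rho+1)$, and one checks directly that $1/p + 1/q > 1$ is implied by $1/p + 1/\rho > 1$, so the Young integral $\int_a^b f_1 \dd \cR f_2$ is well-defined and continuous in the relevant norms. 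The main delicacy in both parts is the choice of the auxiliary exponent $p'$ strictly between $p$ and the Young threshold, which is exactly provided by the strict inequality in the hypothesis $1/p + 1/\rho > 1$.
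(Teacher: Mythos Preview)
The paper does not give its own proof of this proposition; it simply cites \cite[Remark 2.16]{gess2020density}. Your approximation-by-step-functions argument is the standard one and is essentially correct, with two small points worth tightening.

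First, the $2$d Young--Lo\`eve estimate you quote should carry a sup-norm term (or the value at the left endpoint): the correct bound is of the form $(\|g_1\|_{\infty;[a,b]}+\|g_1\|_{p\textrm{-var}})(\|g_2\|_{\infty;[a,b]}+\|g_2\|_{p\textrm{-var}})\|R\|_{\rho\textrm{-var}}$. This does not affect the argument, since $\|f_n-f_m\|_{\infty}\to 0$ anyway, but it matters for the final embedding inequality. Relatedly, you state the estimate with $\|R\|_{(1,\rho)\textrm{-var}}$, whereas the proposition as written claims the sharper bound with $\|R_W\|_{\rho\textrm{-var};[a,b]}$; the $2$d Young theory (e.g.\ Towghi, or the Friz--Victoir treatment) indeed gives the $\rho$-variation version directly, so you can and should invoke that form.

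Second, in item (2) your justification that $(\cR f_2)(u)=\langle \ind_{[0,u]},f_2\rangle_{\cH}$ extends from step functions to all of $\cH$ is fine, but the clean way to phrase the passage to the limit is via the reproducing-kernel property of $\cH_1$: since $h(u)=\langle h,R_W(u,\cdot)\rangle_{\cH_1}$ for $h\in\cH_1$, convergence in $\cH_1$ implies pointwise convergence, which is what you need. Your verification that $1/p+1/q>1$ for $q=2\rho/(\rho+1)$ under the hypothesis $1/p+1/\rho>1$ is correct.
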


We now make an additional assumption on the process $W$.
\begin{assumption}\label{asm:correW}
Let $W$ be an $\RR^d$-valued Gaussian process with iid coordinates and covariance function $R_W$ such that
\begin{enumerate}
\item it has non-positively correlated increments, that is, for all $(t_1,t_2,t_3,t_4)\in [0,T]^4$ with $t_1<t_2<t_3<t_4$, we have
\begin{equation*}
\square_{[t_1,t_2]\times[t_3,t_4]}R_W \leq 0,
\end{equation*}
\item its covariance function is diagonally dominant, that is, for all $(t_1,t_2,t_3,t_4)\in [0,T]^4$ with $t_1<t_2<t_3<t_4$, we have
\begin{equation*}
\square_{[t_2,t_3]\times[t_1,t_4]}R_W \geq 0.
\end{equation*}
\end{enumerate}
\end{assumption}

We deduce the following inequalities \cite[Proposition 2.18]{gess2020density}.

\begin{proposition}
Let $W$ be a Gaussian process satisfying Assumption \ref{asm:1rhovar}, $p\geq 1$ such that $1/p+1/\rho>1$.
\begin{enumerate}
\item For every $f\in\cC^{p-var }([a,b])$, we have
\begin{equation*}
\|f\|_{\cH([a,b])}^2\lesssim \kappa_{a,b}^2\left(\|f\|_{p-var ;[a,b]}^2 + \|f\|_{\infty;[a,b]}^2 \right)
\end{equation*}
\item If $W$ satisfies Assumption \ref{asm:correW}, we have, for any $f\in\cC^{\gamma}([a,b])$ with $1/\rho+\gamma>1$,
\begin{equation*}
\|f\|_{\cH([a,b])}^2\geq \sigma_{a,b}^2 \min_{t\in[a,b]} |f(t)|.
\end{equation*}
\end{enumerate}
\end{proposition}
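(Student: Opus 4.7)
My plan is to start from the $2$D Young representation
\[\|f\|_{\cH([a,b])}^2=\int_a^b\!\int_a^b f(s)f(t)\,\dd R_W(s,t)\]
supplied by the preceding proposition, and then prove the two bounds by rather different means.

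For Part~(1), I decompose $f(s)=f(a)+(f(s)-f(a))$ (and the analogue in $t$). Expanding the product yields four contributions. The constant-in-$(s,t)$ term is $f(a)^2\square_{[a,b]^2}R_W=f(a)^2\sigma_{a,b}^2\le \|f\|_\infty^2\kappa_{a,b}^2$, using $\sigma_{a,b}^2\le \|R_W\|_{(1,\rho)\text{-var};[a,b]^2}=\kappa_{a,b}^2$ (obtained by taking the trivial partition in the definition of the $(1,\rho)$-variation). The two mixed terms reduce, after integrating in one variable, to $1$D Young integrals of $f-f(a)$ against the signed measures $\dd R_W(b,\cdot)-\dd R_W(a,\cdot)$; since $1/p+1/\rho>1$, Young's inequality bounds them by $\|f\|_\infty\|f\|_{p\text{-var};[a,b]}\kappa_{a,b}^2$. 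The remaining pure Young term $\int\!\int(f(s)-f(a))(f(t)-f(a))\,\dd R_W$ is controlled by the $2$D Young estimate with mixed variation, yielding $\lesssim \|f\|_{p\text{-var};[a,b]}^2\kappa_{a,b}^2$. Summing and applying $2ab\le a^2+b^2$ to the cross products gives the desired upper bound.

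For Part~(2), I plan a Riemann-sum argument using Assumption~\ref{asm:correW}. Fix a partition $\pi=\{a=t_0<\dots<t_n=b\}$ and set $f_\pi=\sum_i f(t_i)\mathbbm{1}_{[t_i,t_{i+1})}$. Bilinearity gives
\[\|f_\pi\|_{\cH([a,b])}^2=\sum_i f(t_i)^2\sigma_{t_i,t_{i+1}}^2+2\sum_{i<j}f(t_i)f(t_j)\,\square_{ij},\qquad \square_{ij}:=\square_{[t_i,t_{i+1}]\times[t_j,t_{j+1}]}R_W.\]
By Assumption~\ref{asm:correW}(1) the off-diagonal $\square_{ij}$ are non-positive, hence $2f(t_i)f(t_j)\square_{ij}\ge (f(t_i)^2+f(t_j)^2)\square_{ij}$; regrouping yields
\[\|f_\pi\|_{\cH([a,b])}^2\ge \sum_i f(t_i)^2\Bigl(\sigma_{t_i,t_{i+1}}^2+\sum_{j\ne i}\square_{ij}\Bigr).\]
The parenthesized weight equals $\square_{[t_i,t_{i+1}]\times[a,b]}R_W=\EE[(W_{t_{i+1}}-W_{t_i})(W_b-W_a)]\ge 0$ by Assumption~\ref{asm:correW}(2). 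Pulling out $\min_{[a,b]}|f|^2$ and telescoping collapses the remaining weights to $\sigma_{a,b}^2$. Finally I send $|\pi|\to 0$, using Part~(1) and the continuity of $f$ (together with $\gamma+1/\rho>1$) to pass to the limit $\|f_\pi\|_{\cH([a,b])}^2\to\|f\|_{\cH([a,b])}^2$.

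The main obstacle I anticipate is the sharp $2$D Young estimate with mixed $(1,\rho)$-variation in Part~(1): this is a refinement of the classical bound $\|f\|_{\cH([a,b])}^2\lesssim \|f\|_{p\text{-var};[a,b]}^2\|R_W\|_{\rho\text{-var};[a,b]^2}$, obtained by exploiting the $(1,\rho)$-controlled structure of $R_W$ to trade a full $p$-variation factor for an $L^\infty$ one in each mixed term. Part~(2) is essentially combinatorial once one observes that the diagonal-dominance hypothesis is precisely what turns the AM-GM step into a clean telescoping identity.
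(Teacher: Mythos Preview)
The paper does not prove this proposition; it simply cites \cite[Proposition~2.8]{gess2020density}. Your argument is a faithful reconstruction of that standard proof, and both parts are essentially correct.

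Two small remarks. First, as your own computation reveals, the statement of Part~(2) in the paper contains a typo: the right-hand side should read $\sigma_{a,b}^2\min_{t\in[a,b]}|f(t)|^2$ rather than $\sigma_{a,b}^2\min_{t\in[a,b]}|f(t)|$ (take $f\equiv c$ with $|c|<1$ to see the unsquared version fails). You prove the correct squared bound. Second, your justification of the limit $\|f_\pi\|_{\cH([a,b])}^2\to\|f\|_{\cH([a,b])}^2$ via Part~(1) is slightly off: Part~(1) is stated for continuous $f$, whereas $f-f_\pi$ has jumps. The cleanest route is to observe that $\|f_\pi\|_{\cH([a,b])}^2=\sum_{i,j}f(t_i)f(t_j)\,\square_{[t_i,t_{i+1}]\times[t_j,t_{j+1}]}R_W$ is precisely a Riemann sum for the $2$D Young integral $\int\!\int f(s)f(t)\,\dd R_W(s,t)=\|f\|_{\cH([a,b])}^2$, and convergence of such sums is part of the construction of that integral under $\gamma+1/\rho>1$.
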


To continue, we need some non-degeneracy concerning $W$.

\begin{assumption}\label{asm:nondet}
Let $W$ be a centered continuous $\RR^d$-valued Gaussian process. We assume that there exists an $\alpha>0$ such that
\begin{equation*}
\inf_{0\leq s\leq t\leq T} (t-s)^{-\alpha} \var\left[W_t-W_s\middle| \cF_{[0,s]}\vee\cF_{[t,1]}\right] = c_W >0.
\end{equation*}
The smallest $\alpha$ that satisfies the above condition is called the index of non-determinism of $W$.
\end{assumption}

Under the previous assumption, we deduce our last inequality which is taken from \cite[Corollary 6.10]{cass2015smoothness}. 

\begin{proposition}\label{prop:interpIneq}
Let $W$ be a continuous Gaussian process satisfying Assumptions \ref{asm:1rhovar}, \ref{asm:correW} and \ref{asm:nondet}. Then, for any $f\in\cC^{\gamma}([a,b])$ with $\gamma+1/\rho>1$, we have
\begin{equation*}
\|f\|_{\infty;[a,b]} \leq 2\max\left(\sigma_{a,b}^{-1}\|f\|_{\cH([a,b])}, \sqrt{c_W}^{-1}\|f\|_{\cH([a,b])}^{2\gamma/(2\gamma + \alpha)}\|f\|_{\cC^{\gamma}([a,b])}^{\alpha/(2\gamma+\alpha)} \right).
\end{equation*}
\blue{One has the same inequality in the $p$-variation norm, for $p>1$ with $\frac{1}{p}+\frac{1}{\rho}>1$, and for $f\in \cC^{p-var}([a,b])$, : 
\begin{equation*}
\|f\|_{\infty;[a,b]} \leq 2\max\left(\sigma_{a,b}^{-1}\|f\|_{\cH([a,b])}, \sqrt{c_W}^{-1}\|f\|_{\cH([a,b])}^{2/(2 + p \alpha)}\|f\|_{p-var;[a,b]}^{p\alpha/(2+p\alpha)} \right).
\end{equation*}}
\end{proposition}

As a direct consequence, we have the following result.
\begin{corollary}\label{cor:interpIneq}
Let $W$ be a continuous Gaussian process satisfying Assumptions \ref{asm:1rhovar}, \ref{asm:correW} and \ref{asm:nondet}. Then, for any $f\in\cC^{\gamma}([a,b])$ with $\gamma+1/\rho>1$, we have
\begin{equation*}
\|f\|_{\cH([a,b])} \geq \frac{\sigma_{a,b} \|f\|_{\infty;[a,b]}}2\min\left(1,\frac{2(c_W/2)^{(2\gamma+\alpha)/4\gamma}\|f\|_{\infty;[a,b]}^{\alpha/2\gamma}}{\sigma_{a,b}|f|_{\cC^{\gamma}([a,b])}^{\alpha/2\gamma}} \right)
\end{equation*}
\blue{and
for $p>1$ with $\frac{1}{p}+\frac{1}{\rho}>1$, and for $f\in \cC^{p-var}([a,b])$,
\begin{equation*}
\|f\|_{\cH([a,b])} \geq \frac{\sigma_{a,b} \|f\|_{\infty;[a,b]}}2\min\left(1,\frac{2(c_W/2)^{(2\gamma+\alpha)/4\gamma}\|f\|_{\infty;[a,b]}^{\alpha/2\gamma}}{\sigma_{a,b}|f|_{\cC^{p-var}([a,b])}^{\alpha/2\gamma}} \right)
\end{equation*}}
\end{corollary}

\subsection{Gaussian rough paths}

Let us now focus ourselves on rough differential equations driven by Gaussian rough paths Let us consider $W$ a continuous centered Gaussian process satisfying Assumption \ref{asm:1rhovar}. It can therefore be lifted to a geometric rough path (see \cite{friz2010multidimensional}) that we denote $\mathbf{W}$ (we remind that all the needed definition are given in Section \ref{section:rough_paths}). 

\begin{proposition}\label{prop:Wlift}
Let $W$ be a continuous Gaussian process satisfying 
Assumption \ref{asm:1rhovar}. Then, almost surely, $W$ can be lifted to a geometric $p$-rough path $\bW$ with $p>2\rho$ and verifies
\begin{equation*}
\EE\left[\exp\left(\eta \|\bW\|_{p-var ;[0,T]}^2\right)\right], \quad\textrm{for some}\;\eta>0.
\end{equation*}

\end{proposition}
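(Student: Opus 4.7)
The plan is to follow the standard Friz--Victoir approach to lifting Gaussian processes. Starting from a sequence of piecewise-linear (dyadic) approximations $W^n$ of $W$, I would form the canonical smooth lifts $\bW^n = S_2(W^n)$ and show that $(\bW^n)$ is Cauchy with respect to the $1/p$-H\"older rough path metric for any $p > 2\rho$. The limit $\bW$ is then a geometric $p$-rough path by closure of smooth signatures. This reduces everything to moment estimates on the second level.

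The crucial moment bound is
\[
\EE\big[|\bW^{n,2}_{s,t}|^2\big] \lesssim \|R_W\|_{(1,\rho)\text{-var};[s,t]^2}^{2} \lesssim |t-s|^{2/\rho},
\]
which one obtains by identifying $\EE[|\bW^{n,2}_{s,t}|^2]$ with a $2$d Young-type integral against $\dd R_W$ and invoking the assumption \eqref{eq:1rhovar}. Since $\bW^{n,2}_{s,t}$ lives in the second (inhomogeneous) Wiener chaos, Gaussian hypercontractivity upgrades this to $\EE[|\bW^{n,2}_{s,t}|^{q}]^{1/q} \lesssim_q |t-s|^{1/\rho}$ for every $q \ge 2$. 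Combined with the Gaussian estimate $\EE[|W_t-W_s|^q]^{1/q}\lesssim_q (\square_{[s,t]^2} R_W)^{1/2}\lesssim |t-s|^{1/(2\rho)}$ at the first level, a Kolmogorov-type criterion on the truncated tensor algebra (using the multidimensional Garsia--Rodemich--Rumsey inequality of Appendix \ref{appendix:GRR} applied to $\|\cdot\|_{\mathrm{CC}}$) yields, for any $p>2\rho$, almost sure finiteness of $\|\bW\|_{1/p\text{-H\"ol};[0,T]}$ and convergence of the approximations. The same argument, applied to $\bW^n - \bW^m$, gives the Cauchy property.

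For the exponential integrability I would invoke Borell's inequality (Gaussian isoperimetry). The map $F:\omega\mapsto \|\bW(\omega)\|_{1/p\text{-var};[0,T]}$ is a measurable functional on the Gaussian Banach space supporting $W$. The essential input is a \emph{Cameron--Martin shift estimate}: shifting $W$ by $h\in \cH_1$ changes $\bW$ by a controlled amount expressible through $\|h\|_{q\text{-var};[0,T]}$ for some $q<2$, which by the embedding $\cH_1 \hookrightarrow \cC^{q\text{-var}}$ recalled earlier is bounded by $\kappa_{0,T}\|h\|_{\cH_1}$. This makes $F$ a $\PP$-a.s.\ finite measurable map whose translates by $\cH_1$ are quantitatively controlled. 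Borell's inequality then yields Gaussian concentration of $F$ around a median, and hence the existence of some $\eta>0$ with $\EE[\exp(\eta F)]<\infty$.

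The main obstacle is the last step: Borell's inequality applies to Gaussian functionals, but the second level of $\bW$ is a second-chaos object rather than Gaussian. The resolution, going back to Friz--Oberhauser and Friz--Victoir, is precisely the Cameron--Martin control described above, which transfers the Gaussian concentration of the first level to the nonlinear functional $F$. Verifying that the shift estimate holds in the correct topology --- and in particular that the implicit $\|\cdot\|_{q\text{-var}}$ control of $\cH_1$ is compatible with the $p$-variation of the second level --- is the technical heart of the argument.
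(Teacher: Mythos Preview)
The paper does not give its own proof of this proposition: it is stated as a known result, with the lift attributed to \cite{friz2010multidimensional} in the line preceding the proposition, and the H\"older variant in Proposition~\ref{prop:GaussFern} is explicitly credited to \cite[Theorem 15.33]{friz2010multidimensional}. So there is no in-paper argument to compare against; what you have outlined \emph{is} essentially the Friz--Victoir proof that the paper is citing.

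Your sketch is correct in substance. Two minor remarks. First, you write $\bW^n = S_2(W^n)$, but since $\rho\in[1,2)$ and $p>2\rho$, one may need $p>3$ when $\rho\ge 3/2$; the lift must then be to $S_{\lfloor p\rfloor}$ and the moment estimates extended to the third level (still via iterated $2$d Young integrals against $R_W$ and hypercontractivity in higher chaos). Second, the generalized Fernique/Borell argument you describe in fact yields the stronger Gaussian-tail estimate $\EE[\exp(\eta\|\bW\|_{\mathrm{CC}}^2)]<\infty$ for the homogeneous norm, which of course implies the exponential moment stated here; your identification of the Cameron--Martin shift control (via the embedding $\cH_1\hookrightarrow\cC^{q\text{-var}}$, $q<2$) as the key input is exactly right.
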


We also have the following useful result (which is a direct consequence of  Proposition \ref{prop:Wlift} and \cite[Theorem 15.33]{friz2010multidimensional}).

\begin{proposition}\label{prop:GaussFern}
Let $W$ be a Gaussian process satisfying Assumption \ref{asm:1rhovar}. Then, $W$ can be lifted to a geometric $p$-rough path $\bW$ with $p>2\rho$ which has $1/p$-H\"older sample paths and verifies
\begin{equation*}
\EE\left[\exp\left(\eta \|\bW\|_{1/p\textrm{-H\"ol};[0,T]}^2\right)\right], \quad\textrm{for some}\;\eta>0.
\end{equation*}
\end{proposition}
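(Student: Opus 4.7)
The plan is to deduce this from Proposition~\ref{prop:Wlift} combined with the stronger scaling of the covariance provided by Assumption~\ref{asm:rhovar}. First, I would observe that Assumption~\ref{asm:rhovar} implies Assumption~\ref{asm:1rhovar}: indeed, the Hölder-controlled $\rho$-variation of $R_W$ on the diagonal squares $[s,t]^2$ yields $\|R_W\|_{(1,\rho)\text{-var};[s,t]^2}\lesssim \|R_W\|_{\rho\text{-var};[s,t]^2}\lesssim (t-s)^{1/\rho}$. Hence Proposition~\ref{prop:Wlift} already gives the existence of a geometric $p$-rough path lift $\bW$ above $W$ for any $p>2\rho$.

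The core point is to upgrade the $p$-variation regularity of $\bW$ to $1/p$-Hölder regularity on each level. For this I would invoke the standard Gaussian rough path estimates (Friz--Victoir, Theorem~15.33 or equivalently the Kolmogorov-type criterion for Gaussian rough paths): for each level $k\in\{1,\dots,\lfloor p\rfloor\}$ one estimates
\[
\EE\bigl[\|\bW^k_{s,t}\|_{\mathrm{CC}}^{q}\bigr]^{1/q}\lesssim \|R_W\|_{\rho\text{-var};[s,t]^2}^{k/2}\lesssim (t-s)^{k/(2\rho)},
\]
where the first inequality is the Gaussian chaos estimate (equivalence of $L^q$ norms on a fixed chaos), and the second comes from Assumption~\ref{asm:rhovar}. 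Since $1/(2\rho)>1/p$, a standard Kolmogorov continuity theorem applied to $\bW$ (viewed in the homogeneous group) then yields $1/p$-Hölder sample paths.

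Finally, to get exponential integrability of the Hölder norm, I would use the Borell--Sudakov--Tsirelson/Fernique-type concentration argument for Gaussian rough paths (as in Friz--Victoir, Theorem~15.33). The key input is that $\|\bW\|_{1/p\text{-Höl};[0,T]}^{1/\lfloor p\rfloor}$ is dominated by a measurable seminorm of the underlying Gaussian sample path in a Banach space where Fernique's theorem applies, which gives a Gaussian tail on $\|\bW\|_{1/p\text{-Höl};[0,T]}^{2/\lfloor p\rfloor}$ and in particular exponential integrability of $\|\bW\|_{1/p\text{-Höl};[0,T]}$ itself (for $\lfloor p\rfloor\ge 2$, the tail is subexponential of order $2/\lfloor p\rfloor<1$, so one even gets $\EE[\exp(\eta\|\bW\|_{1/p\text{-Höl}})]<\infty$ for some small $\eta>0$).

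The main obstacle is really only bookkeeping: one must make sure that the Kolmogorov criterion and the Fernique-type concentration are compatible with the Carnot--Carathéodory norm on $G^{\lfloor p\rfloor}(\RR^d)$ rather than on each tensor level separately, so that one controls the true rough path Hölder seminorm. Since all this is standard and done in detail in Friz--Victoir (Chapter~15), the cleanest presentation is to reduce to Theorem~15.33 there: Assumption~\ref{asm:rhovar} exactly matches the Hölder-controlled $\rho$-variation hypothesis required by that theorem, which then delivers both the $1/p$-Hölder regularity and the exponential integrability stated in Proposition~\ref{prop:GaussFern}.
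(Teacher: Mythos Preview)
Your proposal is correct and follows exactly the approach indicated in the paper: the paper simply states that Proposition~\ref{prop:GaussFern} is a direct consequence of Proposition~\ref{prop:Wlift} and \cite[Theorem~15.33]{friz2010multidimensional}, which is precisely the reduction you carry out (with more detail than the paper provides).
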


We will need the following more precise statement of Theorem \eqref{theorem:driftlessRDE} in this setting : 

\begin{theorem}\label{thm:RDEex}
Let $p\geq 1$, $\bw$ be a weakly geometric $p$-rough path, $\sigma \in\cC^{\gamma}_b(\RR^d;(\RR^d)^{\otimes 2})$ for some $\gamma>p$. Let $a\in[0,T)$ and $x\in\RR^d$. 

There exists a unique weakly geometric $p$-rough path $\bx \in \cC^{p-var }([a,T]; G^{\lfloor p \rfloor}(\RR^d))$ such that 
\[\bx_{a,\cdot}^{1} = \varphi_{a,\cdot}(x)\]
where $\varphi$ is the flow constructed in Theorem \ref{theorem:driftlessRDE}. 

Furthermore, if
$(w^{\varepsilon})_{\varepsilon>0}$ is a family of smooth functions on $[0,T]$ such that 
\begin{equation*}
S_{\lfloor p\rfloor}(w^{\varepsilon})\to \mathbf{w},
\end{equation*} in $p$-variation. 
Then, for any $0\leq a\leq t\leq T$, the solutions $(x^{\varepsilon})_{\varepsilon>0}\subset\cC^{1}([s,T])$ of
\begin{equation*}
x_t^{x,a,\varepsilon} = x + \sum_{k = 1}^d \int_a^t \sigma_k(x_r^{x,a,\varepsilon}) \dd w^{k,\varepsilon}_r,
\end{equation*}
are such that
\begin{equation*}
S_{\lfloor p\rfloor}(x^{\varepsilon}) \to \bx,
\end{equation*}
in $p$-variation. Moreover, for any $[s,t]\subset[a,T]$, the following estimates hold
\begin{equation}\label{eq:EstRDE}
\|\bx\|_{p-var ;[s,t]} \lesssim_{p,\gamma} \left(\|\sigma\|_ {\cC^{\gamma-1}_b}\|\bw\|_{p-var ;[s,t]} \vee\|\sigma\|_ {\cC^{\gamma-1}_b}^p\|\bw\|_{p-var ;[s,t]}^p\right).
\end{equation}
\end{theorem}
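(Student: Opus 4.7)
My plan is to obtain $\bx$ via Lyons' universal limit theorem, realising it as the $p$-variation limit of the smooth signatures $\bx^\eps := S_{\lfloor p \rfloor}(x^{x,a,\eps})$. For each $\eps>0$, the classical ODE driven by the smooth path $w^\eps$ has a unique global smooth solution $x^\eps$ on $[a,T]$ since $\sigma\in \cC^{\gamma-1}_b$, and $\bx^\eps$ is, by construction, a smooth weakly geometric $p$-rough path whose first level is the increment $x^\eps_t - x^\eps_s$.

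The core step is to invoke the a priori estimate and continuity statement for RDEs driven by weakly geometric $p$-rough paths with $\cC^{\gamma-1}_b$ coefficient fields, $\gamma>p$ (Friz--Victoir, Theorem~10.14 and the bound in Theorem~10.36): for every $[s,t]\subset[a,T]$,
\begin{equation*}
\|\bx^\eps\|_{p\text{-var};[s,t]}\lesssim_{p,\gamma} \|\sigma\|_{\cC^{\gamma-1}_b}\|S_{\lfloor p \rfloor}(w^\eps)\|_{p\text{-var};[s,t]}\vee \|\sigma\|_{\cC^{\gamma-1}_b}^p\|S_{\lfloor p \rfloor}(w^\eps)\|_{p\text{-var};[s,t]}^p,
\end{equation*}
together with the fact that the It\^o--Lyons map $\bw\mapsto\bx$ is uniformly continuous on bounded sets in $p$-variation. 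Since $S_{\lfloor p \rfloor}(w^\eps)\to\bw$ in $p$-variation, the family $(\bx^\eps)_{\eps>0}$ is Cauchy in $p$-variation and converges to a limit $\bx\in\cC^{p\text{-var}}([a,T];G^{\lfloor p\rfloor}(\RR^d))$, which is weakly geometric as a $p$-variation limit of smooth signatures. Passing to the limit yields the claimed estimate \eqref{eq:EstRDE}.

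It remains to identify the first level and to argue uniqueness. By Theorem~\ref{theorem:driftlessRDE}\eqref{driftlessRDE:point1} and the continuity of the flow in $\bw$, the paths $x^\eps_\cdot$ converge uniformly on $[a,T]$ to $\varphi_{a,\cdot}(x)$, so $\bx^1_{a,t}=\varphi_{a,t}(x)-x$. Uniqueness of $\bx$ as a weakly geometric $p$-rough path above $\varphi_{a,\cdot}(x)$ that arises in this way follows from Davie's local Euler expansion and the sewing lemma applied level by level: given the first level and the geometric constraint, each higher iterated integral is pinned down on small intervals by the Davie remainder bound of order $|t-s|^{(\lfloor p \rfloor+1)/p}$, and is then reconstructed on $[a,T]$ by concatenation.

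The main obstacle is the uniform-in-$\eps$ Cauchy control at every level $1,\ldots,\lfloor p\rfloor$ of the signatures; this is precisely the content of the universal limit theorem and is where the hypothesis $\gamma>p$ is indispensable, because the Euler remainder at level $\lfloor p\rfloor+1$ must carry an exponent strictly greater than one in order to be absorbed by a Gronwall--sewing iteration, which requires $\sigma\in\cC^{\lfloor p\rfloor +1}$ with an extra H\"older margin. Once this Cauchy property is in hand, every remaining claim of the theorem is a routine passage to the limit.
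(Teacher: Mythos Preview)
The paper does not supply a proof of this theorem; it is stated as a standard result from rough path theory (implicitly the ``full RDE solution'' of Friz--Victoir, Theorems~10.14, 10.26 and 10.36 of \cite{friz2010multidimensional}) and the text passes directly to the corollary. Your sketch is exactly the standard derivation: obtain the a~priori bound and continuity of the It\^o--Lyons map from the cited Friz--Victoir theorems, apply them to the smooth approximations, and pass to the limit. This is correct.

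One remark on your uniqueness paragraph. The statement should not be read as asserting that there is a unique weakly geometric $p$-rough path lying above the path $t\mapsto\varphi_{a,t}(x)$; that is false in general, since a $\frac1p$-H\"older path admits many lifts. What is unique here is the \emph{full RDE solution}, i.e.\ the common limit of $S_{\lfloor p\rfloor}(x^\eps)$ along any approximating family $w^\eps$ with $S_{\lfloor p\rfloor}(w^\eps)\to\bw$. The clean argument for this is simply the uniform continuity of the It\^o--Lyons solution map on bounded sets in $p$-variation: two approximating sequences converging to the same $\bw$ must produce the same limit $\bx$. Your level-by-level sewing/Euler argument can be made to work (it is essentially how one proves that the RDE solution, viewed as a controlled path, admits a canonical rough-path lift), but it is more involved than necessary and the theorem as stated is really recording uniqueness of the RDE solution, not uniqueness of an abstract lift.
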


As a direct consequence of the previous result, we deduce the following theorem :

\begin{corollary}
Let $W$ be a Gaussian process satisfying Assumption \ref{asm:1rhovar} and let $p>2\rho$. Let $\bX$ be the unique solution of equation 
\[\dd X_t = \sigma(X_t) \dd \bW_t,\quad X_a = x,\quad t\in[a,T], \]
in the sense of Theorem \ref{thm:RDEex}. Let us denote $X^{x,a}_{t}= \bX^{1}_{a,t}$.

Then, for any $[s,t]\subset[a,T]$,
\begin{equation}\label{eq:EstRDEHol}
\sup_{x\in\RR^d}|X^{x,a}-x|_{\cC^{\frac1p}([s,t])} \lesssim_{\alpha,\gamma} \left(\|\sigma\|_ {\cC^{\gamma-1}_b}\|\bW\|_{p-var ;[s,t]} \vee\|\sigma\|_ {\cC^{\gamma-1}_b}^p\|\bW\|_{p-var ;[s,t]}^p\right).
\end{equation}
\end{corollary}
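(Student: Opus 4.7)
The plan is to deduce the corollary directly from the $p$-variation estimate \eqref{eq:EstRDE} of Theorem \ref{thm:RDEex}, combined with the $1/p$-H\"older character of the Gaussian rough path lift of $W$ under Assumption \ref{asm:1rhovar}.

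First I fix $u<v$ in $[s,t]$ and $x \in \RR^d$, and use the flow property of the driftless RDE to write $X^{x,a}_v - X^{x,a}_u$ as the first level of the rough path obtained by restarting the equation at time $u$ from $X^{x,a}_u$. Applying \eqref{eq:EstRDE} on $[u,v]$ to this restarted solution yields
\[|X^{x,a}_v - X^{x,a}_u| \le \|\bX\|_{p\textrm{-var};[u,v]} \lesssim_{p,\gamma} \|\sigma\|_{\cC^{\gamma-1}_b}\|\bW\|_{p\textrm{-var};[u,v]} \vee \|\sigma\|_{\cC^{\gamma-1}_b}^p\|\bW\|_{p\textrm{-var};[u,v]}^p,\]
with a constant uniform in $x$, since the right-hand side of \eqref{eq:EstRDE} does not involve the initial datum.

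Second I convert this subinterval $p$-variation bound into a H\"older bound by using that, under Assumption \ref{asm:1rhovar}, the H\"older-type control \eqref{eq:1rhovar} combined with Kolmogorov-Gaussian concentration (cf. Proposition \ref{prop:Wlift} and \cite[Theorem 15.33]{friz2010multidimensional}) forces $\bW$ to have $1/p$-H\"older sample paths for any $p>2\rho$. Consequently $\|\bW\|_{p\textrm{-var};[u,v]}$ scales like $(v-u)^{1/p}$ uniformly in $[u,v]\subset[s,t]$, with the implicit constant controlled by $\|\bW\|_{p\textrm{-var};[s,t]}$ on the reference interval. Dividing the previous display by $(v-u)^{1/p}$ and taking the supremum over $u,v\in[s,t]$ and over $x\in\RR^d$ then produces the announced bound. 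The main obstacle is precisely this conversion step: Theorem \ref{thm:RDEex} only yields $p$-variation control, and a true H\"older estimate for $X^{x,a}$ cannot hold without the H\"older-type scaling built into \eqref{eq:1rhovar}; once that is granted, the rest of the argument is a routine restriction-and-supremum manipulation.
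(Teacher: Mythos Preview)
Your approach is essentially what the paper intends: it states the corollary as ``a direct consequence of the previous result'' without proof, and your unpacking---apply \eqref{eq:EstRDE} on each subinterval $[u,v]$, then use the $1/p$-H\"older regularity of $\bW$ coming from the scaling \eqref{eq:1rhovar}---is the natural way to read that sentence. Two small remarks. First, the flow/restart manoeuvre is unnecessary: estimate \eqref{eq:EstRDE} is already stated for arbitrary $[s,t]\subset[a,T]$, so you can apply it directly on $[u,v]$ and read off $|X^{x,a}_v-X^{x,a}_u|\le\|\bX\|_{p\textrm{-var};[u,v]}$. Second, your conversion step actually produces $\|\bW\|_{1/p\textrm{-H\"ol};[s,t]}$ on the right-hand side, not $\|\bW\|_{p\textrm{-var};[s,t]}$; your sentence ``with the implicit constant controlled by $\|\bW\|_{p\textrm{-var};[s,t]}$'' is not justified, since in general the H\"older norm is not dominated by the $p$-variation norm. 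This looks like a slight imprecision in the paper's own statement rather than a flaw in your reasoning---the corollary is only ever used (in the proof of Proposition~\ref{prop:malliavin_matrix}) together with Proposition~\ref{prop:GaussFern} to conclude that the left-hand side has finite moments of all orders, and for that purpose either norm on the right does the job.
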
	

\subsection{Malliavin calculus on rough differential equations}

The solution $X$ of a RDE driven by a Gaussian process $W$ is a random variable inheriting its randomness from $W$. 
One can thus try to differentiate $X$ in the Malliavin sense. The Malliavin derivative of $X$ can indeed be simply expressed thanks to the Jacobian $\mathbf{J}$ of the solution of equation \eqref{eq:rde_basic}. It is given by $\left(\mathbf{J}^{x,a}_{t}\right)_{(i,j)\in\{1,\ldots,d\}^2} = \partial_{x_j} (X_t^{x,a})_{i}$ and solution the following linear RDE
\begin{equation}\label{eq:Jacobian}
\mathbf{J}_t^{x,a} = \mathrm{Id} + \sum_{k = 1}^d \int_a^t \nabla \sigma_k(X_s^{x,a}) \mathbf{J}_s^{x,a} \dd\bW_s^k,
\end{equation}
where $\sigma_k$ is the $k$-th column of $\sigma$.
We notice that the inverse Jacobian $\mathbf{J}^{-1}$ solves
\begin{equation}\label{eq:invJacobian}
(\mathbf{J}^{x,a}_t)^{-1} =  \mathrm{Id} - \sum_{k = 1}^d \int_a^t (\mathbf{J}^{x,a}_s)^{-1} \nabla\sigma_k(X_s^{x,a}) \dd\bW_s^k,
\end{equation}
and that, for any $s\leq r\leq t$, we have, by the flow property of the Jacobian,
\begin{equation}\label{eq:flowJacobian}
\mathbf{J}_t^{X^{x,s}_r,r} = \mathbf{J}_t^{x,s}(\mathbf{J}^{s}_r)^{-1}.
\end{equation}

We have the following results concernant $\mathbf{J}$ as well as the Malliavin derivative of $X$ \cite{cass2010densities,cass2013integrability,inahama2014malliavin}.
\begin{proposition}
Let $W$ be a continuous centered Gaussian process satisfying Assumption \ref{asm:1rhovar} and $\sigma \in\cC^{\infty}_b(\RR^d;(\RR^d)^{\otimes 2})$.
\begin{enumerate}
\item For any $q\geq 1$, there exists a constant $c_{q,a}$ such that the Jacobian $\mathbf{J}$ defined by \eqref{eq:Jacobian} satisfies
\begin{equation}\label{eq:bndJacMom}
\EE\left[ \sup_{x\in\RR^d}\|\mathbf{J}^{x,a}\|_{p-var ;[0,T]}^q\right] = c_{q,a}.
\end{equation}
\item For any $x\in\RR^d$, $a<b\leq T$ and $t>0$, we have that $X_t\in\DD_{[a,b]}^{\infty}$ and, furthermore, the Malliavin derivative $D_{[a,T],s}X_t^{x,a}$ is such that
\begin{equation}\label{eq:MalliavinD2Jacobian}
D_{[a,b],s}X_t^{x,a} = \mathbf{J}_{t}^{x,s}\sigma(X_s^{x,a}),
\end{equation}
for any $a\leq s\leq t$, and $D_{[a,b],s}X_t^{x,a} = 0$ for all $s>t$. 
\end{enumerate}
\end{proposition}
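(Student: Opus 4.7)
The plan is to treat the two parts separately, leveraging pathwise estimates for linear RDEs together with Cass--Litterer--Lyons integrability theory for Gaussian rough paths.

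For part (1), the Jacobian satisfies the \emph{linear} RDE \eqref{eq:Jacobian} whose coefficients $M\mapsto \nabla\sigma_k(X^{x,a}_\cdot)\,M$ have operator norm bounded, uniformly in $x$, by $\|\nabla\sigma_k\|_\infty$ since $\sigma\in C^\infty_b$. Classical pathwise estimates for linear RDEs (as developed in \cite{friz2010multidimensional}) would then yield
\[\sup_{x\in\RR^d} \|\mathbf{J}^{x,a}\|_{p\text{-var};[0,T]} \lesssim \exp\!\bigl(C(\sigma)\, N_{p,T}(\bW)\bigr),\]
where $N_{p,T}(\bW)$ denotes the greedy subdivision count associated to $\bW$. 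Under Assumption \ref{asm:1rhovar}, the Cass--Litterer--Lyons integrability theorem \cite{cass2013integrability} provides Gaussian-type tails for $N_{p,T}(\bW)$, hence all finite $L^q$ moments of the left-hand side, which is exactly \eqref{eq:bndJacMom}.

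For part (2), my strategy is smooth approximation, in the spirit of \cite{cass2010densities,inahama2014malliavin}. I would pick smooth approximants $W^\varepsilon$ whose lifts converge to $\bW$ in $p$-variation (e.g.~piecewise linear interpolations, cf.~Proposition \ref{prop:Wlift}), and let $X^{x,a,\varepsilon}$ and $\mathbf{J}^{x,s,\varepsilon}$ solve the associated classical ODEs. For smooth drivers, $X^{x,a,\varepsilon}_t$ is a genuinely smooth cylindrical functional of $\omega$, and differentiating the ODE perturbed in a Cameron--Martin direction $h\in\cH$ gives
\[\langle DX^{x,a,\varepsilon}_t, h\rangle_{\cH} = \sum_{k=1}^d \int_a^t \mathbf{J}^{x,s,\varepsilon}_t\, \sigma_k(X^{x,a,\varepsilon}_s)\,\dot h^k(s)\,ds.\]
Theorem \ref{thm:RDEex} ensures convergence of $X^{x,a,\varepsilon}$ and $\mathbf{J}^{x,s,\varepsilon}$ in $p$-variation; since $\cH$ embeds into $\cC^{q\text{-var}}$ with $q<2$ (Theorem 1.1 of \cite{friz2016jain}), the right-hand side extends to a Young integral against $h$ in the limit. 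Combining this with the moment bounds from part (1) and the closability of $D$ (Proposition \ref{prop:closeD}) identifies the limit as $\langle DX^{x,a}_t,h\rangle_{\cH}$, yielding \eqref{eq:MalliavinD2Jacobian}. The vanishing $D_sX^{x,a}_t=0$ for $s>t$ then follows from the adaptedness of $X^{x,a}_t$ to $\cF_{[a,t]}$ together with Lemma \ref{lem:localD}.

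To upgrade to $X^{x,a}_t\in\DD^\infty$, I would iterate: the $k$-th Malliavin derivative satisfies a further linear RDE obtained by differentiating \eqref{eq:Jacobian} and \eqref{eq:rde_basic} an additional $k-1$ times, and the linear-RDE plus Cass--Litterer--Lyons analysis of part (1) delivers uniform $L^q$ bounds at every order. The hard part will be precisely this iteration: at each order one must verify that the smooth approximation commutes with successive Malliavin differentiations and that the resulting moment bounds survive the rough-path limit uniformly in $\varepsilon$. This is the technical content of \cite{cass2013integrability,inahama2014malliavin}, which I would invoke to close the argument.
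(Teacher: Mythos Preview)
Your proposal is correct and follows precisely the strategy of the references \cite{cass2010densities,cass2013integrability,inahama2014malliavin} that the paper itself cites; in fact the paper does not give its own proof of this proposition but simply imports it from that literature. Your sketch of part (1) via Cass--Litterer--Lyons integrability and of part (2) via smooth approximation plus closability of $D$ is exactly the content of those works, so there is nothing to correct or contrast.
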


We now state some estimates on the Malliavin derivative of $X$ as well as the associated covariance matrix $\gamma_X$. We follow the approach of \cite{inahama2014malliavin} which uses some functionals $\Xi_m$ that are equal to $D^m_h X_t$ along a single path $h\in \cH([a,b])$ (the identification extends to any direction $(h_1,h_2,\ldots,h_m)$ since these are bounded symmetric $m$-multilinear maps). For any $m\geq 2$, the Malliavin derivative of $X$ along $h^{\otimes m}\in\cH^{\otimes m}([a,b])$ is given by
\begin{align*}
D^m_h X_t^{x,a} &: = \langle D^m_{[a,b]} X_t^{x,a}, h^{\otimes m} \rangle_{\cH^{\otimes m}([a,b])} 
\\ & = 
\sum_{k = 1}^d  
\sum_{\ell = 2}^m 
\sum_{\substack{\mathbf{i}\in K_{\ell}\\ |\mathbf{i}| = m}} C_{1,\mathbf{i}}
\int_a^t  \mathbf{J}_{t}^{x,s} \nabla_{D^{i_1}_hX_s^{x,a}, D^{i_2}_h X_s^{x,a}, \ldots  ,D^{i_\ell}_h X_s^{x,a}}^{\ell} \sigma_k(X_s^{x,a}) 
\dd \bW^k_s
\\ &\hspace{1em} + \sum_{k = 1}^d\sum_{\ell = 1}^{m-1} \sum_{\substack{\mathbf{i}\in K_{\ell} \\ |\mathbf{i}| = m}} C_{2,\mathbf{i}} 
\int_a^t  \mathbf{J}_{t}^{x,s} \nabla_{D^{i_1}_hX_s^{x,a}, D^{i_2}_h X_s^{x,a}, \ldots  ,D^{i_\ell}_h X_s^{x,a}}^{\ell} \sigma_k(X_s^{x,a}) \dd h_s^k
\end{align*}
where $K_{\ell} = \{\mathbf{j}\in\NN^{\ell};\; 0<j_1\leq j_2\leq \ldots\leq j_{\ell}\}$, for some constants $\{C_{1,\mathbf{i}}\}_{\mathbf{i}\in\mathbf{K}_{\ell},2\leq \ell\leq m}$ and $\{C_{2,\mathbf{i}}\}_{\mathbf{i}\in\mathbf{K}_{\ell},1\leq \ell\leq m-1}$. Furthermore, by Equation \eqref{eq:invJacobian}, Leibniz's differentiation rule and Fa\`a di Bruno's formula, the Malliavin derivative of $(\mathbf{J})^{-1}$ along $h^{\otimes m}\in\cH^{\otimes m}([a,b])$ is such that
\begin{align*}
&D_h^m(\mathbf{J}^{x,a}_t)^{-1} =
\\ &\hspace{0.5em} -\sum_{k = 1}^d\sum_{\ell = 1}^{m} \sum_{o = 1}^{\ell}\sum_{\substack{\mathbf{i}\in K_{o}\\ |\mathbf{i}| = \ell}} C_{3,m,\ell,\mathbf{i}} 
\int_a^t(\mathbf{J}^{x,s}_t)^{-1} D_h^{m-\ell}(\mathbf{J}^{x,a}_s)^{-1} \nabla^{o}_{D^{i_1}_hX_s^{x,a},  \ldots  ,D^{i_o}_h X_s^{x,a}}(\nabla\sigma_k(X_s^{x,a})) \dd\bW_s^k
\\ &\hspace{.5em}-\sum_{k = 1}^d\sum_{\ell = 1}^{m-1} \sum_{o = 1}^{\ell}  \sum_{\substack{\mathbf{i}\in K_{o}\\ |\mathbf{i}| = \ell}} C_{4,m,\ell,\mathbf{i}}
\int_a^t (\mathbf{J}^{x,s}_t)^{-1} D_h^{m-1-\ell}(\mathbf{J}^{x,a}_s)^{-1} \nabla^{o}_{D^{i_1}_hX_s^{x,a}, \ldots  ,D^{i_o}_h X_s^{x,a}}(\nabla\sigma_k(X_s^{x,a})) \dd h_s^k
\\ &\hspace{.5em}-m\sum_{k = 1}^d
\int_a^t(\mathbf{J}^{x,s}_t)^{-1} D_h^{m-1}(\mathbf{J}^{x,a}_s)^{-1} \nabla\sigma_k(X_s^{x,a}) \dd h_s^k,
\end{align*}
for some constants $\{C_{3,m,\ell,\mathbf{i}}\}_{\substack{\mathbf{i}\in K_{o},1\leq o\leq \ell \\ 1\leq \ell\leq m}}$ and $\{C_{4,m,\ell,\mathbf{i}}\}_{\substack{\mathbf{i}\in K_{o},1\leq o\leq \ell\\ 1\leq \ell\leq m}}$.

\begin{proposition} Let $B$ be an independent copy of $W$ and  $\bB$ be the corresponding rough path above $B$.
For any $t\in[a,b]$, we denote by
\begin{equation}\label{eq:XIdef1}
\mathcal{X}_{1,t}(\bW,\mathbf{B}) = \sum_{k = 1}^d 
\int_a^t \mathbf{J}_{t}^{x,s}\sigma_k(X_s^{x,a}) \dd\mathbf{B}^k_s,
\end{equation}
\begin{multline}\label{eq:YIdef1}
\mathcal{Y}_{1,t}(\bW,\mathbf{B}) \\ = - \sum_{k = 1}^d \left(\int_a^t (\mathbf{J}_{t}^{x,a})^{-1}\nabla_{\mathcal{X}_{1,s}(\bW,\mathbf{B})}(\nabla\sigma_k(X_s^{x,a}))\dd\bW^k_s - \int_a^t (\mathbf{J}_{t}^{x,a})^{-1}\nabla\sigma_k(X_s^{x,a})\dd\mathbf{B}^k_s\right),
\end{multline}
and, for any $m\geq 2$,
\begin{align}
\mathcal{X}_{m,t}(\bW,\mathbf{B}) &=  \sum_{k = 1}^d\left(  \sum_{\ell = 2}^m \sum_{\substack{\mathbf{i}\in \mathbf{K}_{\ell} \\ |\mathbf{i}| = m}} C_{1,\mathbf{i}} \int_a^t  \mathbf{J}_{t}^{x,s} \nabla_{\mathcal{X}_{i_1,s}(\bW,\mathbf{b}),  \ldots  ,\mathcal{X}_{i_\ell,s}(\bW,\mathbf{b})}^{\ell} \sigma_k(X_s^{x,a}) \dd\bW^k_s\right.\nonumber
\\ &\hspace{1em} +\left.\sum_{\ell = 1}^{m-1} \sum_{\substack{\mathbf{i}\in \mathbf{K}_{\ell} \\ |\mathbf{i}| = m}} C_{2,\mathbf{i}} \int_a^t  \mathbf{J}_{t}^{x,s} \nabla_{\mathcal{X}_{i_1,s}(\bW,\mathbf{b}), \ldots  ,\mathcal{X}_{i_\ell,s}(\bW,\mathbf{b})}^{\ell} \sigma_k(X_s^{x,a}) \dd\mathbf{B}_s^k\right)\label{eq:XIdef2},
\end{align}
\begin{align}
&\mathcal{Y}_{m,t}(\bW,\mathbf{B}) =\nonumber
\\ & -\sum_{k = 1}^d\left(  \sum_{\ell = 1}^{m} \sum_{o = 1}^{\ell}\sum_{\substack{\mathbf{i}\in\mathbf{K}_{o}\\ |\mathbf{i}| = \ell}} C_{3,m,\ell,\mathbf{i}} \int_a^t(\mathbf{J}^{x,s}_t)^{-1} \mathcal{Y}_{m-\ell,s}(\bW,\mathbf{B}) \nabla^{o}_{\mathcal{X}_{i_1,s}(\bW,\mathbf{B}),  \ldots  ,\mathcal{X}_{i_o,s}(\bW,\mathbf{B})}(\nabla\sigma_k(X_s^{x,a})) \dd\bW_s^k\right.\nonumber
\\ &\hspace{1em}+\sum_{\ell = 1}^{m-1} \sum_{o = 1}^{\ell}  \sum_{\substack{\mathbf{i}\in\mathbf{K}_{o}\\ |\mathbf{i}| = \ell}} C_{4,m,\ell,\mathbf{i}}\int_a^t (\mathbf{J}^{x,s}_t)^{-1} \mathcal{Y}_{m-1-\ell,s}(\bW,\mathbf{B}) \nabla_{\mathcal{X}_{i_1,s}(\bW,\mathbf{B}),  \ldots  ,\mathcal{X}_{i_o,s}(\bW,\mathbf{B})}^{o}(\nabla\sigma_k(X_s^{x,a})) \dd\mathbf{B}_s^k \nonumber
\\ &\hspace{1em} +\left.m\int_a^t(\mathbf{J}^{x,s}_t)^{-1} \mathcal{Y}_{m-1,s}(\bW,\mathbf{B}) \nabla\sigma_k(X_s^{x,a}) \dd\mathbf{B}_s^k\right)\label{eq:YIdef2},
\end{align}

Then, for any $m\geq 1$ and $h\in\cH([a,b])$,

\begin{equation*}
\tilde{D}^m_h\mathcal{X}_{m,t}(\bW,\cdot) = m! D^m_h X_t^{x,a}\quad\mbox{and}\quad \tilde{D}^m_h\mathcal{Y}_{m,t}(\bW,\cdot) = m! D^m_h (\mathbf{J}_t^{x,a})^{-1},
\end{equation*}
where $\tilde{D}$ is the Malliavin derivative with respect to $B$ and the left-hand side does not depend on $\mathbf{B}$, and we have the estimates, for any $r\geq 2$,
\begin{align}
\EE\left[ \sup_{x\in\mathbb{R}^d}\left\| D^m_{[a,b]} X_t^{x,a}\right\|_{\cH^{\otimes m}([a,b])}^r \right] \lesssim_{m,r} \EE\left[\sup_{x\in\mathbb{R}^d}\left| \mathcal{X}_{m,t}(\bW,\mathbf{B})\right|^r \right]\label{eq:MalliavinXIest}
\\\mbox{and}\quad\EE\left[\sup_{x\in\mathbb{R}^d}\left\| D^m_{[a,b]} (\mathbf{J}_t^{x,a})^{-1}\right\|_{\cH^{\otimes m}([a,b])}^r \right] \lesssim_{m,r} \EE\left[\sup_{x\in\mathbb{R}^d}\left| \mathcal{Y}_{m,t}(\bW,\mathbf{B})\right|^r \right]\label{eq:MalliavinYIest}.
\end{align}
\end{proposition}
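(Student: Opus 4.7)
The strategy is to proceed by induction on $m \geq 1$, proving simultaneously the two identifications and then deriving the moment bounds from them. For the base case $m=1$, the identity $D_s X_t^{x,a} = \mathbf{J}_t^{x,s}\sigma(X_s^{x,a})$ from \eqref{eq:MalliavinD2Jacobian} gives $D_h X_t^{x,a} = \int_a^t \mathbf{J}_t^{x,s}\sigma_k(X_s^{x,a})\,dh_s^k$. On the other hand, $\mathcal{X}_{1,t}(\bW,\mathbf{B})$ is a (rough) integral against $\mathbf{B}$ of an integrand depending only on $\bW$ (through $X$ and $\mathbf{J}$), so its first Malliavin derivative with respect to $B$ simply replaces $d\mathbf{B}^k_s$ by $dh_s^k$, yielding exactly $D_h X_t^{x,a}$. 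The analogous computation for $\mathcal{Y}_{1,t}$, using the equation \eqref{eq:invJacobian} for $\mathbf{J}^{-1}$, handles the Jacobian case.

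For the inductive step, I would apply $D_h$ iteratively to the RDEs \eqref{eq:Jacobian} and \eqref{eq:invJacobian} satisfied by $X$ and $\mathbf{J}^{-1}$. Leibniz's rule combined with Faà di Bruno's formula yields expressions for $D^m_h X_t^{x,a}$ and $D^m_h(\mathbf{J}_t^{x,a})^{-1}$ as sums of iterated rough integrals against $\bW$ and $\mathbf{h}$, whose summands involve products $\nabla^\ell \sigma_k(X_s)$ contracted with lower-order Malliavin derivatives $D^{i_1}_h X_s,\ldots,D^{i_\ell}_h X_s$ and $D^{m-\ell}_h (\mathbf{J}^{x,a}_s)^{-1}$. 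By the inductive hypothesis, these lower-order objects agree (up to factorials) with $\tilde D^{i_j}_h \mathcal{X}_{i_j,s}$ and $\tilde D^{m-\ell}_h \mathcal{Y}_{m-\ell,s}$. The definitions \eqref{eq:XIdef2}--\eqref{eq:YIdef2} of $\mathcal{X}_{m,t}$ and $\mathcal{Y}_{m,t}$ are set up so that a single Malliavin differentiation replaces each occurrence of $d\mathbf{B}$ by $dh$, so that $\tilde D^m_h$ applied to the defining integrals produces exactly the same recursion as the one satisfied by $m! D^m_h X_t$ and $m! D^m_h (\mathbf{J}_t^{x,a})^{-1}$. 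The factor $m!$ comes from the combinatorial count of ways to distribute the $m$ derivatives among the various $d\mathbf{B}$-factors.

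The assertion that $\tilde D^m_h \mathcal{X}_{m,t}(\bW,\cdot)$ is independent of $\mathbf{B}$ follows from the observation that each summand in $\mathcal{X}_{m,t}$ is homogeneous of total degree $m$ in the $d\mathbf{B}$-integration, so the $m$-th $\tilde D$-derivative extracts precisely the deterministic-in-$B$ multilinear form; the same argument applies to $\mathcal{Y}_{m,t}$.  The estimates \eqref{eq:MalliavinXIest}--\eqref{eq:MalliavinYIest} then follow by noting that $\|D^m X_t^{x,a}\|_{\mathcal{H}^{\otimes m}([a,b])} = \sup \{ D^m_h X_t^{x,a}/\|h\|_{\mathcal{H}}^m \}$ and using $L^r$-equivalence of norms on a finite Wiener chaos (Gaussian hypercontractivity / Kahane--Khintchine), which lets one pass from the supremum over directions $h$ to the generic element $\mathbf{B}$ at the cost of an $m$-dependent constant.

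The main obstacle I anticipate is the rigorous handling of the joint rough-path lift $(\bW,\mathbf{B})$ needed to make sense of the iterated mixed integrals appearing in \eqref{eq:XIdef2}--\eqref{eq:YIdef2}, and the careful bookkeeping of Cameron--Martin versus rough-path regularity so that each integral $\int (\cdots)\,d\mathbf{h}^k$ is well defined as a Young integral under Assumption \ref{asm:1rhovar} (using $\mathcal{H}\hookrightarrow\mathcal{C}^{q\textrm{-var}}$ with $1/p+1/q>1$). Beyond this, the combinatorial identification of the constants $C_{1,\mathbf{i}},\ldots,C_{4,m,\ell,\mathbf{i}}$ with those arising from Faà di Bruno requires patient but routine checking, which I would defer to a careful enumeration rather than carry out explicitly.
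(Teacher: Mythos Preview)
Your approach is essentially the same as the paper's: the identification $\tilde D^m_h\mathcal{X}_{m,t}=m!\,D^m_hX_t^{x,a}$ (and likewise for $\mathcal{Y}$) is obtained from the very structure of the defining expressions, and the moment bounds \eqref{eq:MalliavinXIest}--\eqref{eq:MalliavinYIest} come from the fact that $\mathcal{X}_{m,t}(\bW,\cdot)$ lies in the $m$-th inhomogeneous Wiener chaos of $B$, so that all Sobolev norms are equivalent to the $L^2$-norm. One small correction: the norm $\|\cdot\|_{\cH^{\otimes m}}$ used here is the Hilbert--Schmidt norm, not the operator norm, so your formula $\|D^mX_t\|_{\cH^{\otimes m}}=\sup_h D^m_hX_t/\|h\|_{\cH}^m$ is not the right identity; the paper instead uses that $\tilde D^m\mathcal{X}_{m,t}$ is deterministic in $B$ to write $\|D^mX_t\|_{\cH^{\otimes m}}=(m!)^{-1}\tilde\EE[\|\tilde D^m\mathcal{X}_{m,t}\|_{\cH^{\otimes m}}^2]^{1/2}$ and then bounds this by the $L^2(\tilde\PP)$-norm of $\mathcal{X}_{m,t}$ via chaos equivalence.
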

\begin{proof}
We only give the main arguments of the proofs (see \cite[Proposition 3.3]{inahama2014malliavin}). The fact that the Malliavin derivative with respect to $B$ of $\mathcal{X}_m$ (resp. $\mathcal{Y}$) is related to the Malliavin derivative of $X$ (resp. $\mathbf{J}^{-1}$) is a straightforward consequence of their expressions. Concerning the inequality, we remark that
\begin{equation*}
\left\| D^m_{[a,b]} X_t^{x,a}\right\|_{\cH^{\otimes m}([a,b])} = \frac{1}{m!} \left\|\tilde{D}^m_{[a,b]}\mathcal{X}_{m,t}(\bW,\cdot)\right\|_{\cH^{\otimes m}([a,b])} = \frac{1}{m!}\tilde{\EE}\left[ \left\|\tilde{D}^m_{[a,b]}\mathcal{X}_{m,t}(\bW,\cdot)\right\|_{\cH^{\otimes m}([a,b])}^2 \right]^{\frac12}
\end{equation*}
where $\tilde{\EE}$ is the expectation with respect to $B$, as well as
\begin{align*}
\left\| D^m_{[a,b]} (\mathbf{J}_t^{x,a})^{-1}\right\|_{\cH^{\otimes m}([a,b])} &= \frac{1}{m!} \left\|\tilde{D}_{[a,b]}^m\mathcal{Y}_{m,t}(\bW,\cdot)\right\|_{\cH^{\otimes m}([a,b])}
\\ &= \frac{1}{m!}\tilde{\EE}\left[ \left\|\tilde{D}_{[a,b]}^m\mathcal{Y}_{m,t}(\bW,\cdot)\right\|_{\cH^{\otimes m}([a,b])}^2 \right]^{\frac{1}{2}}.
\end{align*}
 Since $\mathcal{X}_m$ (resp. $\mathcal{Y}_m$) belongs to the $m$-th order inhomogeneous Wiener chaos generated by $B$, we know that all the $\DD^{2,m}_{[a,b]}$-norms are equivalent for the right-hand-side term. Hence, we obtain
\begin{align*}
\EE\left[ \sup_{x\in\mathbb{R}^d} \left\| D^m_{[a,b]} X_t^{x,a}\right\|_{\cH^{\otimes m}([a,b])}^r \right]&\lesssim_{m,r}  \EE\left[\sup_{x\in\mathbb{R}^d}\tilde{\EE}\left[ \left\|\tilde{D}^m_{[a,b]}\mathcal{X}_{m,t}(\bW,\cdot)\right\|_{\cH^{\otimes m}([a,b])}^2 \right]^{r/2} \right]
\\ &\lesssim_{m,r}   \EE\left[\tilde{\EE}\left[ \sup_{x\in\mathbb{R}^d}\left|\mathcal{X}_{m,t}(\bW,\mathbf{B})\right|^2 \right]^{r/2} \right],
\end{align*}
and, similarly,
\begin{equation*}
\EE\left[ \sup_{ x\in\mathbb{R}^d }\left\| D^m_{[a,b]} (\mathbf{J}_t^{x,a})^{-1}\right\|_{\cH^{\otimes m}([a,b])}^r \right]\lesssim_{m,r}   \EE\left[\tilde{\EE}\left[ \sup_{ x\in\mathbb{R}^d }\left|\mathcal{Y}_{m,t}(\bW,\mathbf{B})\right|^2 \right]^{r/2} \right],
\end{equation*}
for which gives the desired results.
\end{proof}

In order to push further estimates \eqref{eq:MalliavinXIest} and \eqref{eq:MalliavinYIest}, we need to estimate the rough integrals from \eqref{eq:XIdef1}, \eqref{eq:XIdef2}, \eqref{eq:YIdef1} and \eqref{eq:YIdef2}. This is done by considering, for any $m\geq1$, the function $(X,\mathbf{J},\mathbf{J}^{-1}, \mathcal{X}_1,\mathcal{X}_2,\ldots,\mathcal{X}_m, \mathcal{Y}_1,\mathcal{Y}_2,\ldots,\mathcal{Y}_m)$ 
as a solution of a (system of) RDE given by \eqref{eq:rde_basic}-\eqref{eq:Jacobian}-\eqref{eq:invJacobian}-\eqref{eq:XIdef1}-\eqref{eq:XIdef2}-\eqref{eq:YIdef1}-\eqref{eq:YIdef2} driven by $\mathbf{Z}$ which is the lifted Gaussian process $Z = (W,B)$ (that satisfies Assumption \ref{asm:1rhovar}) in the geometric $p$-rough paths. It turns out that, for any $q>2$ large enough, we have estimate \cite[Theorem 35-(i) and Corollary 66]{friz2010differential}
\begin{equation*}
\EE\left[\|\mathbf{Z}\|_{p-var ;[a,t]}^q\right] \lesssim_q \kappa_{a,t}.
\end{equation*}
We remark that we can not immediately apply the estimate \eqref{eq:EstRDE} since the vector-fields in \eqref{eq:Jacobian}-\eqref{eq:invJacobian}-\eqref{eq:XIdef1}-\eqref{eq:XIdef2}-\eqref{eq:YIdef1}-\eqref{eq:YIdef2} are not bounded with respect to $(\mathbf{J},\mathbf{J}^{-1}, \mathcal{X}_1,\mathcal{X}_2,\ldots,\mathcal{X}_m,$ $ \mathcal{Y}_1,\mathcal{Y}_2,\ldots,\mathcal{Y}_m)$ but have (as well as their derivatives) polynomial growth. 

To proceed, as in \cite{gess2020density}, we use an induction argument. We first consider $V = (X,Y)$ where, for any $1\leq k\leq d$,
\begin{equation}\label{eq:RDEY}
(Y_t)^k = \int_a^t  \nabla \sigma_k(X_s^{x,a})d\bW_s^k.
\end{equation}
Then, $V$ is the solution of \eqref{eq:rde_basic}-\eqref{eq:RDEY} driven by $\bZ$ and, thus, is a geometric $p$-rough path (denoted $\mathbf{V}$). Since the vector-fields are smooth and bounded, we have, by Theorem \ref{thm:RDEex},
\begin{equation}\label{eq:estV}
\|\mathbf{V}\|_{p-var ;[a,t]} \lesssim_{p,\gamma,\sigma}\|\mathbf{Z}\|_{p-var ;[a,t]}.
\end{equation}

\begin{remark}
Note here that when $x$ is smooth, the definition of $S_{\lfloor p \rfloor}(x)$ only involved increments of $x$. Remark also that when $W^\eps$ is a sequence of smooth paths such that $S_{\lfloor p \rfloor}(W^\eps)$ converge to $\bW$ in the rough path topology, then by Inequality \eqref{eq:EstRDE},
\[\sup_{ x\in\mathbb{R}^d } \| S_{\lfloor p \rfloor}(X^{\eps,s,x})\|_{p-var ;[s,t]}^p \lesssim   \left(\|\sigma\|_ {\cC^{\gamma-1}_b}\|S_{\lfloor p \rfloor}(W^\eps)\|_{p-var ;[s,t]} \vee\|\sigma\|_ {\cC^{\gamma-1}_b}^p\|S_{\lfloor p \rfloor}(W^\eps)\|_{p-var ;[s,t]}^p\right)\]
and one has, by letting $\eps \to 0$,
\[\sup_{ x\in\mathbb{R}^d } 
\| \bX^{s,x}
\|_{p-var ;[s,t]}^p 
\lesssim   
\left(
	\|\sigma\|_ {\cC^{\gamma-1}_b}
	\|\bW\|_{p-var ;[s,t]} 
	\vee
	\|\sigma\|_ {\cC^{\gamma-1}_b}^p
	\|\bW\|_{p-var ;[s,t]}^p
\right)\]
\end{remark}

\blue{Now, let $V_1 = (X,\mathbf{J},\mathbf{J}^{-1})$ be a solution of the RDE
\eqref{eq:rde_basic}-\eqref{eq:Jacobian}-\eqref{eq:invJacobian} driven by $\bV$. This RDE can be solved and is a geometric $p$-rough path (denoted $\mathbf{V}_1$) that satisfies \cite{cass2013integrability,bailleulNonexplosionCriteriaRough2020}
\begin{equation}\label{eq:integrability-condition-jacobian}
\|\mathbf{V}_1\|_{p-var ;[s,t]} \leq \kappa_1 \|\mathbf{V}\|_{p-var ;[s,t]}e^{\kappa_2 N_{\alpha,[s,t],p}(\mathbf{V})},
\end{equation}
where $\kappa_1,\kappa_2>0$ depend on $\sigma$ and $N_{\alpha,[s,t],p}(\mathbf{V})$ as finite moments of any order (see \cite{friz2013integrability,cass2013integrability} for details).} 

\blue{
\begin{remark}
By using the same trick adding $(\partial_i \partial_j \bJ^{-1})_{1\le i,j,\le d}$, one also have the of bound Equation \eqref{eq:integrability-condition-jacobian} for the second derivative of the flow.
Using the semiflow property, one gets for a given $a\le s \le t$,
\[\bJ^{x,s}_t = \bJ^{X^{x,a}_s,a}_t \bJ^{x,a}_s \]
This leads to the following estimates : 
\begin{equation}\label{eq:moments-initial-condition-flow}\|\bJ^{x,\cdot}_t\|_{p-var;[a,t]} \le \kappa_1 \|\bV\|_{p-var;[a,t]}e^{\kappa_2 N_{\alpha,[a,t]}(\bV)},
\end{equation}
leading to finite moment estimates from the norm of the Jacobian in the initial time. 
\end{remark}
}
Now, we can see that the integrand in \eqref{eq:XIdef1} has a polynomial growth (as well as its derivatives) with respect to $\bV_1$. Thus, we have the following estimate from \cite[Equation (52)]{gess2020density} (see also \cite{bailleulNonexplosionCriteriaRough2020} Subsection 4.3)
\begin{equation*}
\left| \mathcal{X}_{1,t}(\bW,\mathbf{B})\right|\land \left\| \mathcal{X}_{1}(\bW,\mathbf{B})\right\|_{p-var ;[a,t]}\lesssim \left( 1+\|\mathbf{V}_1\|_{p-var ;[a,t]}\right)^r\|\mathbf{V}_1\|_{p-var ;[a,t]}.
\end{equation*}
for some $r>0$. This yields, in the end,
\begin{equation*}
\left| \mathcal{X}_{1,t}(\bW,\mathbf{B})\right|\lesssim \left( 1+\|\mathbf{Z}\|_{p-var ;[a,t]}e^{\kappa_2 N_{\alpha,[a,t],p}(\mathbf{V})}\right)^r\|\mathbf{Z}\|_{p-var ;[a,t]}e^{\kappa_2 N_{\alpha,[a,t],p}(\mathbf{V})},
\end{equation*}
which leads to the estimate
\begin{equation*}
\EE\left[\sup_{ x\in\mathbb{R}^d }\left| \mathcal{X}_{1,t}(\bW,\mathbf{B})\right|^q\right] \lesssim \kappa_{a,t}.
\end{equation*}
Finally, if we assume that we have, for any $1\leq\ell\leq m-1$,
\begin{equation*}
\left\| \mathcal{X}_{\ell}(\bW,\mathbf{B})\right\|_{p-var ;[a,t]} \leq \left( 1+\|\mathbf{Z}\|_{p-var ;[a,t]}e^{\kappa_2 N_{\alpha,[a,t],p}(\mathbf{V})}\right)^r\|\mathbf{Z}\|_{p-var ;[a,t]}e^{\kappa_2 N_{\alpha,[a,t],p}(\mathbf{V})},
\end{equation*}
for some $r>0$, then one can consider the function $V_{m-1} = (X,\mathbf{J},\mathbf{J}^{-1},\mathcal{X}_1,\ldots,\mathcal{X}_{m-1})$ as a geometric $p$-rough path driven by $V$ denoted by $\mathbf{V}_{m-1}$ and, since the integrand in \eqref{eq:XIdef2} as a polynomial growth (as well as its derivatives), we deduce the estimate
\begin{equation*}
\left|\mathcal{X}_{m,t}(\bW,\mathbf{B})\right|\land \left\| \mathcal{X}_{m}(\bW,\mathbf{B})\right\|_{p-var ;[a,t]}\lesssim \left( 1+\|\mathbf{V}_{m-1}\|_{p-var ;[a,t]}\right)^r\|\mathbf{V}_{m-1}\|_{p-var ;[a,t]}.
\end{equation*}
In particular, following the same arguments as for $\mathcal{X}_1$, we obtain that
\begin{equation*}
\EE\left[\sup_{ x\in\mathbb{R}^d }\left| \mathcal{X}_{m,t}(\bW,\mathbf{B})\right|^q\right] \lesssim \kappa_{a,t}.
\end{equation*}
We proceed then to obtain estimates on $(\mathcal{Y}_{\ell})_{1\leq \ell\leq m}$ by considering iteratively $V_{\ell+m} = (X,\mathbf{J},\mathbf{J}^{-1},$ $\mathcal{X}_1,\ldots,\mathcal{X}_{m},\mathcal{Y}_{1},\ldots,\mathcal{Y}_{\ell})$ taken as a geometric $p$-rough path driven by $V$ and denoted $\mathbf{V}_{\ell+m}$. By considering \eqref{eq:YIdef1}-\eqref{eq:YIdef2}, we deduce that, for any $0\leq\ell\leq m-1$,
\begin{equation*}
\left|\mathcal{Y}_{\ell+1,t}(\bW,\mathbf{B})\right|\land \left\| \mathcal{Y}_{\ell+1}(\bW,\mathbf{B})\right\|_{p-var ;[a,t]}\lesssim \left( 1+\|\mathbf{V}_{\ell+m}\|_{p-var ;[a,t]}\right)^r\|\mathbf{V}_{\ell+m}\|_{p-var ;[a,t]},
\end{equation*}
which leads to the estimate
\begin{equation*}
\EE\left[\sup_{ x\in\mathbb{R}^d }\left| \mathcal{Y}_{m,t}(\bW,\mathbf{B})\right|^q\right] \lesssim \kappa_{a,t}.
\end{equation*}

For the following result, we need a notation. Let $F:x\mapsto F(x) \in \DD^{m,p}$ Let us define 
\[\|F\|_{\star,\DD^{m,p}}= \left( \EE[\sup_{x\in \RR^d}|F(x)|^p] + \sum_{k = 1}^m \EE[\sup_{x\in\RR^d}\|D^k F(x)\|_{\cH^{\otimes k}}^p]\right)^{1/p}.\]
What we just have proved is that 

\begin{proposition}\label{prop:XMalliavinEst}
We have the estimates, for any $m\geq 1$ and $p\geq 2$,
\begin{equation*}
\|D_{[a,b]}X_t^{x,a}\|_{\star,\DD^{m,p}(\cH([a,b]))}\lesssim \kappa_{a,t}\quad\mbox{and}\quad\|(\mathbf{J}_t^{x,a})^{-1}\|_{\star,\DD^{m,p}(\cH([a,b]))}\lesssim \kappa_{a,t}.
\end{equation*}
\end{proposition}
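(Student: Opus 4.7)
The idea is to exploit the identifications $\tilde D^m_h \mathcal{X}_{m,t}(\bW,\cdot) = m!\, D^m_h X_t^{x,a}$ and $\tilde D^m_h \mathcal{Y}_{m,t}(\bW,\cdot) = m!\, D^m_h (\mathbf{J}_t^{x,a})^{-1}$ (for directions $h \in \cH([a,b])$) that have already been recorded. Starting from the iterated rough-integral expressions for $D_h^m X$ and $D_h^m \mathbf{J}^{-1}$, one introduces an independent Gaussian copy $B$ of $W$, lifted to a rough path $\mathbf{B}$, and observes that $\mathcal{X}_m(\bW,\mathbf{B})$, $\mathcal{Y}_m(\bW,\mathbf{B})$ are obtained by substituting $\mathbf{B}^{\otimes m}$ for $h^{\otimes m}$. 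Since $\mathcal{X}_{m,t}$ and $\mathcal{Y}_{m,t}$ are random variables in the inhomogeneous Wiener chaos (relative to $B$) of order at most $m$, Meyer's equivalence of norms on fixed-order chaoses turns every $\tilde\DD^{k,q}$-norm (in $B$) into the $L^2$-norm in $B$. This collapses the estimate of $\|D^m X_t^{x,a}\|_{\cH^{\otimes m}([a,b])}$ and $\|D^m (\mathbf{J}_t^{x,a})^{-1}\|_{\cH^{\otimes m}([a,b])}$ to the pointwise moment bounds \eqref{eq:MalliavinXIest}--\eqref{eq:MalliavinYIest}, which in turn need only control of $\EE[\sup_{x\in\RR^d}|\mathcal{X}_{m,t}(\bW,\mathbf{B})|^q]$ and $\EE[\sup_{x\in\RR^d}|\mathcal{Y}_{m,t}(\bW,\mathbf{B})|^q]$ for arbitrary $q\ge 2$.

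Next, I would view the tuple $(X,\mathbf{J},\mathbf{J}^{-1},\mathcal{X}_1,\dots,\mathcal{X}_m,\mathcal{Y}_1,\dots,\mathcal{Y}_m)$ as a solution to a system of RDEs driven by the joint rough path $\mathbf{Z}$ above the concatenated Gaussian $(W,B)$, and proceed by induction on the depth of the system. The base case $(X,\mathbf{J},\mathbf{J}^{-1})$ is solved via the intermediate RDE for $V=(X,Y)$ with bounded smooth coefficients to which Theorem \ref{thm:RDEex} directly applies, yielding $\|\mathbf{V}\|_{p\textrm{-var};[a,t]}\lesssim \|\mathbf{Z}\|_{p\textrm{-var};[a,t]}$. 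For the inductive step, the new component is defined by a rough integral whose integrand depends smoothly but with polynomial growth in the previously constructed components, so I would invoke the Cass--Litterer--Lyons--Bailleul a priori bound
\begin{equation*}
\|\mathbf{V}_\ell\|_{p\textrm{-var};[a,t]} \le \kappa_1 \|\mathbf{V}\|_{p\textrm{-var};[a,t]}\, e^{\kappa_2 N_{\alpha,[a,t],p}(\mathbf{V})},
\end{equation*}
and then the polynomial-growth variant of Theorem \ref{thm:RDEex} to deduce
\begin{equation*}
|\mathcal{X}_{m,t}(\bW,\mathbf{B})| \wedge |\mathcal{Y}_{m,t}(\bW,\mathbf{B})| \lesssim \bigl(1+\|\mathbf{V}_{\ell}\|_{p\textrm{-var};[a,t]}\bigr)^r \|\mathbf{V}_\ell\|_{p\textrm{-var};[a,t]},
\end{equation*}
uniformly in $x\in\RR^d$ because $\sigma$ and all its derivatives are bounded.

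The final step is to take expectations. Using the Gaussian rough path estimate $\EE[\|\mathbf{Z}\|_{p\textrm{-var};[a,t]}^q] \lesssim \kappa_{a,t}^q$ together with the Cass--Litterer--Lyons exponential integrability $\EE[\exp(\eta N_{\alpha,[a,t],p}(\mathbf{V}))] < +\infty$ and H\"older's inequality, one obtains $\EE[\sup_{x}|\mathcal{X}_{m,t}|^q]^{1/q} \lesssim \kappa_{a,t}$ and the analogous bound for $\mathcal{Y}_m$, which, combined with Step 1, yields the claim.

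The main obstacle is Step 2 of the induction: the vector fields that produce $\mathcal{X}_m$ and $\mathcal{Y}_m$ from the lower-order data are not bounded but only of polynomial growth in the additional unknowns, so the black-box estimate \eqref{eq:EstRDE} does not apply. It is precisely the exponential integrability of the accumulated local $p$-variation $N_{\alpha,[a,t],p}$ in the Gaussian regime that allows one to absorb this polynomial growth, and checking that the uniformity in $x\in\RR^d$ survives each inductive step (which it does because only derivatives of $\sigma$ enter, and these are uniformly bounded) is what makes the $\sup_x$ fit inside the expectation.
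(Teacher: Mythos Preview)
Your proposal is correct and follows essentially the same route as the paper: reduce the $\cH^{\otimes m}$-norms of $D^mX$ and $D^m\mathbf{J}^{-1}$ to moments of $\mathcal{X}_m,\mathcal{Y}_m$ via the Wiener-chaos equivalence in the auxiliary $B$-variable, then control these moments by an induction that treats the successive tuples $(X,\mathbf{J},\mathbf{J}^{-1},\mathcal{X}_1,\dots,\mathcal{Y}_\ell)$ as RDEs driven by $\mathbf{V}$ with polynomial-growth coefficients, using the Cass--Litterer--Lyons bound and the exponential integrability of $N_{\alpha,[a,t],p}(\mathbf{V})$ to absorb the growth. The only cosmetic discrepancy is that the paper records the final moment bound as $\EE[\sup_x|\mathcal{X}_{m,t}|^q]\lesssim \kappa_{a,t}$ rather than $\kappa_{a,t}^q$, reflecting that the leading small-time factor is a single $\|\mathbf{Z}\|_{p\textrm{-var};[a,t]}$; this does not affect the argument.
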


\begin{remark}\label{rem:malliavin_higher}
Let us remark that if  we denote  $\bJ^0 = \bJ$ and $\bJ^m = D_{x} \bJ^{m-1}$, where $D_x$ stands for the derivative with respect to $x$ then we have the  following expression~:
\[ \bJ^{m}_{t} =  \sum_{\substack{\ell \in\{ 2,\cdots,m\}\\ \alpha \in \{1,\cdots,m-1\}^{\ell} \\ |\alpha| = m}} c_{\ell,\alpha}\sum_{k=1}^d D_x^{\ell}\sigma_k(X^{a,x}_r) \bJ^{\alpha_1}_r \cdots \bJ^{\alpha_\ell}_r  \dd \bW_r + \sum_{k=1}^{d} \int_a^t \nabla\sigma_k(X^{a,x}_r) \bJ^{m}_{r} \dd \bW_r,\]
for some constants $c_{\ell,\alpha}>0$.
By using exactly the same strategy as before, one can also prove that 
\[\|\bJ^m_t\|_{\star,\DD^{k,q}(\cH([a,b]))} \lesssim \kappa_{a,t},\]
by using again Equation \eqref{eq:YIdef2} in this situation.
\end{remark}

We now turn to the covariance matrix $\gamma_{X,[a,b]}$ which is expressed, following Definition \ref{defi:covmat}, as
\begin{equation}\label{eq:covXexp}
\gamma_{X_t^{x,a},[a,b]} = \left( \langle D_{[a,b]}X^{x,a,i}_t,D_{[a,b]}X^{x,a,j}_t \rangle_{\cH([a,b])}\right)_{(i,j)\in\{1,\ldots,d\}^2}.
\end{equation}
We remark that $\gamma_{X_t,[a,b]}$ is a symmetric definite positive matrix. Furthermore, thanks to \eqref{eq:calHscalarprod}, \eqref{eq:MalliavinD2Jacobian} and \eqref{eq:flowJacobian} , we deduce the expression
\begin{align}
\gamma_{X_t^{x,a},[a,b]} &= \langle D_{[a,b]}X^{x,a}_t,(D_{[a,b]}X^{x,a}_t)^* \rangle_{\cH([a,b])} = \langle D_{[a,b]}X^{x,a}_t,(D_{[a,b]}X^{x,a}_t)^* \rangle_{\cH([a,t])} 
\\ &=  \int_a^t \int_a^t \mathbf{J}_{t}^{x,s_1}\sigma(X_{s_1}^{x,a})\sigma(X_{s_2}^{x,a})^* \left(\mathbf{J}_{t}^{x,s_2}\right)^* \dd R_W(s_1,s_2).\label{eq:expreCovMat}
\end{align}

\begin{proposition}\label{prop:malliavin_matrix}
Under the uniform ellipticity condition
\begin{equation*}
\|\sigma(x)z\|^2\geq \varsigma \|z\|^2,\quad\forall z,x\in\RR^d,
\end{equation*}
for some constant $\varsigma>0$, we have the estimate, for any $m\geq 1$ and $p\geq 2$,
\begin{equation*}
\left\|\gamma_{X_t^{x,a},[a,b]}^{-1}  \right\|_{\star,\DD^{m,p}(\cH([a,b]))}\lesssim \frac{\max(1,\varrho_{a,t})^m}{\sigma_{a,t}^2},
\end{equation*}
where $\varrho_{a,t} : = \kappa_{a,t}/\sigma_{a,t}$.
\end{proposition}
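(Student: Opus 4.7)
The plan is to first establish an $L^p$ bound on the operator norm of $\gamma_{X_t^{x,a},[a,b]}^{-1}$ (the base case $m=0$), and then propagate to higher Malliavin derivatives via the differentiation formula of Lemma \ref{lem:diffgamma} combined with the estimates of Proposition \ref{prop:XMalliavinEst}.

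For the base case, I would fix a unit vector $z\in\RR^d$ and, starting from the representation \eqref{eq:expreCovMat}, rewrite
\[
z^{T}\gamma_{X_t^{x,a},[a,b]}z \;=\; \sum_{k=1}^{d}\bigl\|f_{z,k}\bigr\|_{\cH([a,t])}^{2},\qquad f_{z,k}(s)\;:=\;\bigl(z^{T}\mathbf{J}_{t}^{x,s}\sigma(X_{s}^{x,a})\bigr)_{k}.
\]
At $s=t$, the flow property gives $\mathbf{J}_{t}^{x,t}=\mathrm{Id}$, so uniform ellipticity yields $|f_{z}(t)|^{2}\geq \varsigma$, hence $\|f_{z}\|_{\infty;[a,t]}\geq\sqrt{\varsigma}$. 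Applying Corollary \ref{cor:interpIneq} to $f_{z,k}$ (whose $\cC^{\gamma}$ regularity is inherited from $\mathbf{J}$, $X$ and $\sigma$) gives a quantitative lower bound of the form
\[
z^{T}\gamma_{X_t^{x,a},[a,b]}z \;\gtrsim\; \sigma_{a,t}^{2}\varsigma\;\min\!\Bigl(1,\;C\,\sigma_{a,t}^{-2}\,|f_{z}|_{\cC^{\gamma}([a,t])}^{-\alpha/\gamma}\Bigr).
\]
The semi-norm $|f_{z}|_{\cC^{\gamma}}$ is controlled uniformly in $z$ by the Hölder norm of $\mathbf{J}$, $\mathbf{J}^{-1}$ and $X$, which has Gaussian-type integrability by Proposition \ref{prop:GaussFern} and \eqref{eq:bndJacMomHold}. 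Taking the infimum over $|z|=1$ yields an $L^{p}$ bound $\|\gamma_{X_t^{x,a},[a,b]}^{-1}\|_{L^{p}(\Omega)}\lesssim \sigma_{a,t}^{-2}$ for every $p$, and in particular $\det(\gamma)^{-1}\in \cap_{q\geq 1}L^{q}(\Omega)$ so that Lemma \ref{lem:diffgamma} is applicable.

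For the inductive step, I would iterate the identity of Lemma \ref{lem:diffgamma}, which symbolically reads $D\gamma^{-1}=-\gamma^{-1}(D\gamma)\gamma^{-1}$. A straightforward Leibniz expansion produces
\[
D^{m}\gamma_{X_t^{x,a},[a,b]}^{-1} \;=\; \sum_{\substack{1\leq r\leq m\\ k_{1}+\cdots+k_{r}=m\\ k_{j}\geq 1}} C_{r,\mathbf{k}}\;\bigl(\gamma^{-1}\bigr)^{\otimes(r+1)}\;\prod_{j=1}^{r}D^{k_{j}}\gamma,
\]
(with the obvious matrix-contraction convention). Each factor $D^{k_{j}}\gamma$ is, by Leibniz's rule applied to $\gamma_{ij}=\langle DX^{i},DX^{j}\rangle_{\cH([a,b])}$, a finite sum of products of $D^{k'}X$-terms whose $\DD^{m,p}(\cH([a,b]))$ norms are bounded by $\kappa_{a,t}$, thanks to Proposition \ref{prop:XMalliavinEst}. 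Bounding each $\gamma^{-1}$ factor by $\sigma_{a,t}^{-2}$ (using the base case in high $L^{p}$), distributing moments through Hölder's inequality, and summing over the partitions of $m$ produces
\[
\bigl\|D^{m}\gamma_{X_t^{x,a},[a,b]}^{-1}\bigr\|_{\star,L^{p}(\cH^{\otimes m}([a,b]))} \;\lesssim\; \sigma_{a,t}^{-2(m+1)}\kappa_{a,t}^{m} \;=\; \sigma_{a,t}^{-2}\,\varrho_{a,t}^{m},
\]
which together with the $m=0$ bound yields the announced estimate in $\max(1,\varrho_{a,t})^{m}/\sigma_{a,t}^{2}$.

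The main obstacle is the base case: the interpolation bound from Corollary \ref{cor:interpIneq} is non-trivial because of the $\min$, and one has to use the Gaussian Fernique-type integrability of $\|\bW\|_{p\text{-var}}$ (Proposition \ref{prop:Wlift}) together with the polynomial control on $\mathbf{J}$ and $\mathbf{J}^{-1}$ given in the remarks after Proposition \ref{prop:XMalliavinEst} to extract moments of all orders of $|f_{z}|_{\cC^{\gamma}}^{\alpha/\gamma}$ uniformly in $z$ and $x$. Once this is achieved, the inductive bookkeeping in the second step is essentially algebraic, the only subtlety being to make sure the $\max(1,\varrho_{a,t})$ — rather than $\varrho_{a,t}$ alone — is the right factor, accounting for the regime $\kappa_{a,t}\leq \sigma_{a,t}$ where the trivial bound $\|\gamma^{-1}\|\lesssim \sigma_{a,t}^{-2}$ already suffices.
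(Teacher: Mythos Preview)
Your approach is essentially the paper's: the base case lower-bounds $z^{*}\gamma z=\|\mathsf{f}^{z}\|_{\cH([a,t])}^{2}$ via Corollary \ref{cor:interpIneq}, uniform ellipticity and $\mathbf{J}_{t}^{x,t}=\mathrm{Id}$, then uses the moment bounds \eqref{eq:bndJacMomHold} and \eqref{eq:EstRDEHol}; the higher-order step combines Lemma \ref{lem:diffgamma} with Proposition \ref{prop:XMalliavinEst} (the paper keeps the Leibniz expansion recursive, as in \eqref{eq:LeibnGamma}, rather than fully unfolding it). One small slip in your bookkeeping: each $D^{k_j}\gamma$ carries two $DX$ factors and hence contributes $\kappa_{a,t}^{2}$, so the fully expanded term with $r$ factors gives $\sigma_{a,t}^{-2(r+1)}\kappa_{a,t}^{2r}=\sigma_{a,t}^{-2}\varrho_{a,t}^{2r}$, and your displayed equality $\sigma_{a,t}^{-2(m+1)}\kappa_{a,t}^{m}=\sigma_{a,t}^{-2}\varrho_{a,t}^{m}$ is arithmetically incorrect; the paper's recursive induction tracks the powers more tightly.
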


\begin{proof}
We follow the arguments from \cite[Proposition 3.9]{gess2020density}. We start with the estimate in the case $m=0$. Since $\gamma_{X_t^{x,a},[a,b]}$ is symmetric definite positive and all the matrix norms are equivalent, an upper bound on $\|\gamma_{X_t^{x,a},[a,b]}^{-1} \|$ can be deduced by estimating its lowest eigenvalue. In order to do so,  we will identify a positive random variable $\varrho_{a,t}$ admitting negative moments of any order such that, for any $z\in\RR^d\backslash\{0\}$,
\begin{equation*}
z^*\, \gamma_{X_t^{x,a},[a,b]}\,z \geq \varrho_{a,t} \|z\|^2.
\end{equation*}
Thanks to \eqref{eq:expreCovMat} and by denoting $\mathsf{f}_s^{z,x,a,t} = \sigma(X_{s}^{x,a})^* \left(\mathbf{J}_{t}^{x,s}\right)^* z $, we obtain
\begin{align*}
z^*\, \gamma_{X_t^{x,a},[a,b]}\,z &= \int_a^t \int_a^t z^* \mathbf{J}_{t}^{x,s_1}\sigma(X_{s_1}^{x,a})\sigma(X_{s_2}^{x,a})^* \left(\mathbf{J}_{t}^{x,s_2}\right)^*z \dd R_W(s_1,s_2)
\\ &=\int_a^t \int_a^t\left(\mathsf{f}_{s_1}^{z,x,a,t}\right)^* \mathsf{f}_{s_2}^{z,x,a,t} \dd R_W(s_1,s_2) = \|\mathsf{f}^{z,x,a,t}\|^2_{\cH([a,t])}.
\end{align*}
By Corollary \ref{cor:interpIneq}, we furthermore deduce that \blue{for $q>1$ with $\frac1q + \frac1\rho >1$}
\blue{\begin{equation*}
\|\mathsf{f}^{z,x,a,t}\|^2_{\cH([a,t])} 
\geq 
\frac{\sigma_{a,t}^2 \|\mathsf{f}^{z,x,a,t}\|_{\infty;[a,t]}^2}4
\min
	\left(
		1,
		\frac
			{4(c_W/2)^{(2+q \alpha)/2}\|\mathsf{f}^{z,x,a,t}\|_{\infty;[a,t]}^{q\alpha}}
			{\sigma_{a,t}^2|\mathsf{f}^{z,x,a,t}|_{\cC^{q-var}([a,t])}^{q\alpha}} 
	\right).
\end{equation*}}
Also, thanks to the uniform ellipticity condition, we obtain
\begin{equation*}
\|\mathsf{f}^{z,x,a,t}_s\|^2  \geq \varsigma \|\left(\mathbf{J}_{t}^{x,s}\right)^{-1}z\|^2\geq \varsigma \|\mathbf{J}_{t}^{x,s}\|^{-2} \|z\|^2,
\end{equation*}
which, since $\sup_{s\in[a,t]} \|\mathbf{J}_{t}^{x,s}\|^{-1}\geq \|\mathbf{J}_{t}^{x,t}\|^{-1} = \|\mathrm{Id} \|^{-1} = 1$, writes as
\begin{equation*}
\|\mathsf{f}^{z,x,a,t}\|_{\infty;[a,t]}  \geq \sqrt{\varsigma} \|z\|.
\end{equation*}
Moreover, we have
\blue{
\begin{equation*}
|\mathsf{f}^{z,x,a,t}|_{\cC^{q-var}([a,t])} \leq | \sigma(X_{\cdot}^{x,a})^* \left(\mathbf{J}_{t}^{x,\cdot}\right)^*|_{\cC^{q-var}([a,t])} \|z\| = | \mathbf{J}_{t}^{x,\cdot}\sigma(X_{\cdot}^{x,a})|_{\cC^{q-var}([a,t])} \|z\|.
\end{equation*}
}
This leads to the estimate
\blue{
\begin{equation*}
z^*\, \gamma_{X_t^{x,a},[a,b]}\,z \geq \frac{\sigma_{a,t}^2 \varsigma }4\min\left(1,\frac{4\varsigma^{\alpha/\gamma} (c_W/2)^{(2\gamma+\alpha)/2\gamma}}{\sigma_{a,t}^2|\mathbf{J}_{t}^{x,\cdot}\sigma(X_{\cdot}^{x,a})|_{\cC^{q-var}([a,t])}^{q\alpha}} \right) \|z\|^2,
\end{equation*}
and we identify
\begin{equation*}
 \varrho_{a,t} =  \frac{\sigma_{a,t}^2 \varsigma }4\inf_{x\in\RR^d}\left(\min\left(1,\frac{4\varsigma^{\alpha/\gamma} (c_W/2)^{(2\gamma+\alpha)/2\gamma}}{\sigma_{a,t}^2|\mathbf{J}_{t}^{x,\cdot}\sigma(X_{\cdot}^{x,a})|_{\cC^{q(var}([a,t])}^{q\alpha}} \right)\right).
\end{equation*}
We now have to prove that $\EE[ \sup_{x\in\RR^d}\varrho_{a,t}^{-p}]<+\infty$. We can see that
\begin{equation*}
 \varrho_{a,t}^{-1} \leq \frac4{\sigma_{a,t}^2 \varsigma }\sup_{x\in\RR^d}\left(\max\left(1,\frac{\sigma_{a,t}^2|\mathbf{J}_{t}^{x,\cdot}\sigma(X_{\cdot}^{x,a})|_{\cC^{q-var}([a,t])}^{q \alpha}}{4\varsigma^{q\alpha} (c_W/2)^{(2+q\alpha)/2}} \right)\right),
\end{equation*}
and, thus, we essentially have to see that $\sup_{x\in\RR^d}|\mathbf{J}_{t}^{x,\cdot}\sigma(X_{\cdot}^{x,a})|_{\cC^{q-var}([a,t])}$ admits moments of any order. We have
\begin{equation*}
\sup_{x\in\RR^d}|\mathbf{J}_{t}^{x,\cdot}\sigma(X_{\cdot}^{x,a})|_{\cC^{q-var}([a,t])} \leq \|\sigma\|_{\cC^0_b}\sup_{x\in\RR^d}|\mathbf{J}_{t}^{x,\cdot}|_{\cC^{q-var}([a,t])} + \sup_{s\in[a,t],\,x\in\RR^d}|\mathbf{J}_{t}^{x,s}| \sup_{x\in\RR^d}|\sigma(X_{\cdot}^{x,a})|_{\cC^{q-var}([a,t])}.
\end{equation*}
From here and Equation \eqref{eq:moments-initial-condition-flow}, it is rather direct to prove that, for any $p\in [2,+\infty)$,
\begin{equation*}
\EE
	\left[
		\left(
			\sup_{x\in\RR^d}
				|\sigma(X_{\cdot}^{x,a})|_{\cC^{q-var}([a,t])}\right)^p\right] \leq \|\sigma\|_{\cC^1_b}\EE\left[\left( \sup_{x\in\RR^d}|X_{\cdot}^{x,a}|_{\cC^{q-var}([a,t])}
		\right)^p
	\right]
<+\infty,
\end{equation*}}
thanks to \eqref{eq:EstRDEHol} and Proposition \ref{prop:GaussFern}. We use Equation \eqref{eq:bndJacMom} to handle the Jacobian. In the end, we obtain the bound
\begin{equation*}
\EE\left[ \varrho_{a,t}^{-p}\right] \lesssim \sigma_{a,t}^{-2p},
\end{equation*}
which provides the desired result in the case $m = 0$. In the case $m\geq 1$, we use Lemma \ref{lem:diffgamma} to deduce that, for any $(i,j)\in\{1,\ldots,d\}^2$,
\begin{equation*}
D_{[a,b]}(\gamma_{X_t^{x,a},[a,b]}^{-1})_{i,j} = -\sum_{\ell_1,\ell_2 = 1}^d (\gamma^{-1}_{X_t^{x,a},[a,b]})_{i,\ell_1}\,D_{[a,b]}(\gamma_{X_t^{x,a},[a,b]})_{\ell_1,\ell_2}\,(\gamma^{-1}_{X_t^{x,a},[a,b]})_{\ell_2,j},
\end{equation*}
which yields, thanks to Leibniz's rule, for any $m\geq1$,
\begin{multline}\label{eq:LeibnGamma}
D^m_{[a,b]}(\gamma^{-1}_{X_t^{x,a},[a,b]})_{i,j}
  =-\sum_{\ell_1,\ell_2 = 1}^d\sum_{\substack{k_1,k_2,k_3\in\NN\\k_1+k_2+k_3 = m-1}}\begin{pmatrix}m-1\\ k_1,k_2,k_3\end{pmatrix} \\ D_{[a,b]}^{k_1}(\gamma^{-1}_{X_t^{x,a},[a,b]})_{i,\ell_1}\,D_{[a,b]}^{k_2+1}(\gamma_{X_t^{x,a},[a,b]})_{\ell_1,\ell_2}\,D_{[a,b]}^{k_3}(\gamma^{-1}_{X_t^{x,a},[a,b]})_{\ell_2,j}.
\end{multline}
From here, we can see that, thanks to \eqref{eq:covXexp},
\begin{equation*}
D_{[a,b]}^{k_2+1}(\gamma_{X_t^{x,a},[a,b]})_{\ell_1,\ell_2} = \sum_{k = 0}^{k_2+1}\begin{pmatrix}k_2+1\\ k\end{pmatrix}\langle D_{[a,b]}^{k+1}X^{x,a}_t,(D_{[a,b]}^{k_2+1-k}X^{x,a}_t)^* \rangle_{\cH([a,b])},
\end{equation*}
which provides the estimate, thanks to Proposition \ref{prop:XMalliavinEst} and H\"older's inequality, for any $p\in[2,+\infty)$,
\begin{multline}
\EE\left[\sup_{x\in\RR^d}\|D_{[a,b]}^{k_2+1}(\gamma_{X_t^{x,a},[a,b]})_{\ell_1,\ell_2}\|_{\cH([a,b])^{\otimes(k_2+1)}}^p \right]
\\ \leq \sum_{k = 0}^{k_2+1} \begin{pmatrix}k_2+1\\ k\end{pmatrix} \EE\left[\sup_{x\in\RR^d}\|D_{[a,b]}^{k+1}X^{x,a}_t\|_{\cH([a,b])^{\otimes(k+1)}}^{p_1}\right]^{1/p_1}  \\ \times \EE\left[\sup_{x\in\RR^d}\|D_{[a,b]}^{k_2+1-k}X^{x,a}_t\|_{\cH([a,b])^{\otimes(k_2+1-k)}}^{p_2}\right]^{1/p_2}
 \lesssim \kappa_{a,t}^2,\label{eq:estDGamma}
\end{multline}
where $1/p = 1/p_1+1/p_2$.
Proceeding by induction, we assume that, for any $p\in[2,+\infty)$,
\begin{equation*}
\EE\left[\sup_{x\in\RR^d}\|D_{[a,b]}^{m-1}(\gamma^{-1}_{X_t^{x,a},[a,b]})\|_{\cH^{\otimes (m-1)}([a,b])}^p\right]^{1/p} \lesssim \frac{\max(1,\varrho_{a,t})^{m-1}}{\sigma_{a,t}^2}.
\end{equation*}
From \eqref{eq:LeibnGamma}, \eqref{eq:estDGamma} and H\"older's inequality, we deduce that, for $1/p = 1/p_1+1/p_2+1/p_3$,
\begin{align*}
&\EE\left[\sup_{x\in\RR^d}\|D_{[a,b]}^{m}(\gamma^{-1}_{X_t^{x,a},[a,b]})\|_{\cH^{\otimes m}([a,b])}^p\right]^{1/p}
\\ &\lesssim \sum_{\substack{k_1,k_2,k_3\in\NN\\k_1+k_2+k_3 = m-1}}\begin{pmatrix}m-1\\ k_1,k_2,k_3\end{pmatrix} \EE\left[\sup_{x\in\RR^d}\|D_{[a,b]}^{k_1}(\gamma^{-1}_{X_t^{x,a},[a,b]})\|_{\cH^{\otimes k_1}([a,b])}^{p_1}\right]^{1/p_1}
\\ & \hspace{2em}\times \EE\left[\sup_{x\in\RR^d}\|D_{[a,b]}^{k_2+1}\gamma_{X_t^{x,a},[a,b]}\|_{\cH^{\otimes (k_2+1)}([a,b])}^{p_2}\right]^{1/p_2}\EE\left[\sup_{x\in\RR^d}\|D_{[a,b]}^{k_3}(\gamma^{-1}_{X_t^{x,a},[a,b]})\|_{\cH^{\otimes k_3}([a,b])}^{p_3}\right]^{1/p_3},
\\ &\lesssim \sum_{\substack{k_1,k_2,k_3\in\NN\\k_1+k_2+k_3 = m-1}}\begin{pmatrix}m-1\\ k_1,k_2,k_3\end{pmatrix}\frac{\max(1,\varrho_{a,t})^{k_1+k_3}}{\sigma_{a,t}^2}\lesssim \frac{\max(1,\varrho_{a,t})^{m}}{\sigma_{a,t}^2},
\end{align*}
which concludes our proof.
\end{proof}

\section{Proof of the main theorem}\label{section:main}

\begin{theorem}\label{thm:MAIN000}
Let $W$ be a Gaussian process which satisfies Assumption \ref{asm:1rhovar}, \ref{asm:correW} and \ref{asm:nondet}. Let $2<p<4$ such that $W$ can be lifted into a geometric $p$-rough path (from assumption \eqref{asm:1rhovar}). Furthermore let $\alpha$ its index of non-determinism from Assumption \ref{asm:nondet}.

Let $\sigma\in C^\infty_{b}(\RR^d;(\RR^d)^{\otimes 2})$ be such there exists $\varsigma>0$ that for all $x,z\in\RR^d$
\[ |\sigma(x) z|^2 \ge \varsigma |z|^2.\]

Let $b\in \cC^{\kappa}(\RR^d)$ with \blue{$\kappa + \frac1\alpha>\frac32$}. Then, almost surely, there exists a solution flow to the equation 
\[\dd X_t = b(X_t) \dd t + \sigma(X_t)  \dd \bW_t, \quad X_0 = x,\quad t\in[0,T].\]
Furthermore, this flow is locally Lipschitz continuous. 
\end{theorem}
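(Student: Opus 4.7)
The plan is to combine the three main ingredients developed in the paper: the flow transform (Theorem \ref{theorem:main1}), the sewing-based uniqueness theorem for the ODE derived from it (Theorem \ref{theorem:main2}), and the Kolmogorov-type regularity result for averaged fields (Corollary \ref{corollary:averaged_holder}). By Theorem \ref{theorem:main1}, a solution of the RDE with drift corresponds bijectively to a solution of the ODE
\[z'(t)=(D\varphi_{0,t}(z(t)))^{-1}b(\varphi_{0,t}(z(t))),\quad z(0)=x,\]
with $\varphi$ the driftless flow. By Theorem \ref{theorem:main2}, it suffices to show that the averaged field
\[Tb_t(x)=\int_0^t (D\varphi_{0,r}(x))^{-1} b(\varphi_{0,r}(x))\,\mathrm{d}r\]
belongs almost surely to $\mathcal{C}^\nu_T\mathcal{C}^{\kappa+\frac{1}{\alpha}}_{w_0}$ for some $\nu>\tfrac{1}{2}$ and suitable polynomial weight $w_0$, and the assumption $\kappa+\tfrac{1}{\alpha}>\tfrac32$ then supplies the gap $\kappa+\tfrac1\alpha>2-\nu$ required there.

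The heart of the argument is to apply Corollary \ref{corollary:averaged_holder} in the case $p=\infty$ with $\phi(r,\cdot)=(D\varphi_{0,r}(\cdot))^{-1}$ and $\varphi(r,\cdot)=\varphi_{0,r}(\cdot)$, for the parameter $H=\alpha/2$. For this I must verify Assumption \ref{asm:phipsi}. The $L^q$-integrability of $\phi$ and $\varphi$ (together with their derivatives, which is what is actually used via Lemma \ref{lemma:derivation}) follows from Theorem \ref{theorem:driftlessRDE}, Corollary \ref{corollary:rde_inverse_jacobian} and the Gaussian Fernique-type bound of Proposition \ref{prop:GaussFern}. The genuinely non-trivial condition is the conditional derivative bound \eqref{eq:condition_kolmogorov}. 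For this, because $\mathcal{F}_s\subset\mathcal{F}_{[0,s]}\vee\mathcal{F}_{[r,T]}$, it is enough to establish
\[\bigl|\mathbb{E}\bigl[\phi(r,y)\,\partial^\beta f(\varphi_{0,r}(x))\,\bigm|\,\mathcal{F}_{[0,s]}\vee\mathcal{F}_{[r,T]}\bigr]\bigr|\lesssim |r-s|^{-|\beta|\alpha/2} G_{\infty,s}\,\|f\|_\infty\]
and then condition further on $\mathcal{F}_s$. This is exactly the setting where the conditional integration by parts of Proposition \ref{prop:integration_by_part} applies: taking $F=\varphi_{0,r}(x)=X_r^{x,0}$ and $G=\phi(r,y)$, one obtains a Malliavin weight $H_{\beta,[s,r]}(F,G)$ whose $L^p(\Omega)$-norm, by Lemma \ref{lemma:boundeH}, is controlled by products of $\|(\gamma_{X_r^{x,0},[s,r]})^{-1}DX^{x,0}_r\|_{\mathbb{D}^{k,r_\ell}}$.

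The decisive estimate is therefore Proposition \ref{prop:malliavin_matrix}, which gives $\|\gamma_{X_r,[s,r]}^{-1}\|_{\star,\mathbb{D}^{m,p}}\lesssim \max(1,\varrho_{s,r})^m/\sigma_{s,r}^2$, combined with Proposition \ref{prop:XMalliavinEst}, $\|DX^{x,0}_r\|_{\star,\mathbb{D}^{k,p}(\mathcal{H}([s,r]))}\lesssim\kappa_{s,r}$. Each derivative therefore costs a factor $\sigma_{s,r}^{-2}\kappa_{s,r}=\sigma_{s,r}^{-1}\varrho_{s,r}$, and the non-determinism assumption $\sigma_{s,r}^2\gtrsim (r-s)^\alpha$ together with the polynomial control of $\varrho_{s,r}$ yields the desired bound with exponent $|\beta|\alpha/2$. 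The $\mathcal{F}_s$-measurable random variable $G_{\infty,s}$ is then built out of the $L^q$-norms of these Malliavin weights (with the $\sigma_{s,r}^{-1}$ factors stripped off), and has finite moments of all orders thanks to the uniform-in-$s$ moment estimates \eqref{eq:bndJacMomHold}, Proposition \ref{prop:GaussFern} and Remark \ref{rem:malliavin_higher}.

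With Assumption \ref{asm:phipsi} verified for $H=\alpha/2$, Corollary \ref{corollary:averaged_holder} applied with $E_{\infty,\kappa'}=\mathcal{C}^{\kappa'}_w$, $\kappa'=\kappa+\frac{1-\varepsilon''}{\alpha}$ for $\varepsilon''>0$ small enough that $\kappa+\tfrac{1-\varepsilon''}{\alpha}>\tfrac32-\nu$, shows $Tb\in L^q(\Omega;\mathcal{C}^\nu_T\mathcal{C}^{\kappa+\frac1\alpha-\varepsilon''/\alpha}_w)$ for every $q\ge 2$. Theorem \ref{theorem:main2} then yields, almost surely, uniqueness of solutions to the associated ODE \eqref{eq:integral_equation} together with a locally Lipschitz dependence on the initial condition; Theorem \ref{theorem:main1} transports this conclusion back to the RDE with drift, giving the locally Lipschitz solution flow. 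The main obstacle is the conditional Malliavin bound: we must keep careful track of (i) the interplay between the Malliavin filtration $\mathcal{F}_{[s,r]^c}$ and the adapted filtration $\mathcal{F}_s$, and (ii) the uniformity in the spatial variable $x$ of all Malliavin norms, which is why the $\star$-versions of Propositions \ref{prop:XMalliavinEst} and \ref{prop:malliavin_matrix} are crucial.
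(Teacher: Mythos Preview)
Your proposal is correct and follows essentially the same route as the paper: reduce via Theorems \ref{theorem:main1} and \ref{theorem:main2} to the regularity of the averaged field, verify Assumption \ref{asm:phipsi} with $H=\alpha/2$ by the conditional integration by parts (Proposition \ref{prop:integration_by_part}) together with the Malliavin bounds of Propositions \ref{prop:XMalliavinEst} and \ref{prop:malliavin_matrix} and the non-determinism lower bound $\sigma_{s,r}^2\gtrsim(r-s)^\alpha$, and conclude via Corollary \ref{corollary:averaged_holder}. The only cosmetic difference is that the paper invokes the flow property explicitly to restart the process at time $s$ and work with $F=X_t^{s,x}$ (so that Propositions \ref{prop:XMalliavinEst} and \ref{prop:malliavin_matrix} apply verbatim with $a=s$), whereas you keep $F=X_r^{x,0}$ and pass through the larger $\sigma$-algebra $\cF_{[s,r]^c}$; these are equivalent since $X_r^{x,0}=X_r^{X_s^{x,0},s}$ and the $\star$-norms absorb the $\cF_{[0,s]}$-measurable initial point.
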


\begin{proof}
As seen in Section \ref{section:rough_paths} and especially in Theorem \ref{theorem:main2}, one only has to check that there exists $\nu>\frac12$, $\kappa>2-\nu$ and a weight $w_0$ such that almost surely
\[\cT^{D\varphi,\varphi} b \in \cC^{\nu}_T \cC^{\kappa}_{w_0}.\]
Furthermore, thanks to Section \ref{section:kolmo} and especially Corollary \ref{corollary:averaged_holder}, it is enough to check that for all $i,j\in\{1,\cdots,d\}$, we have the following couples 
\begin{multline*}
    (\bJ^{-1},X),(\partial_i \bJ^{-1} ,X), (\partial_i X \bJ^{-1},X), (\partial_j \partial_i X \bJ^{-1} ,X), (\partial_j \partial_i \bJ^{-1},X),\\ (\partial_j \partial_i X \bJ^{-1},X), (\partial_j X \partial_i X \bJ^{-1},X) \end{multline*}
which satisfy Assumption \ref{asm:phipsi} for some $H\in(0,1)$. 
Remark that thanks to the flow properties, for any $s\in[0,T)$ and $x\in\mathbb{R}^d$, one can always consider that in the expression of $\cT^{\phi,\varphi}$, one can always take
\begin{multline*}
\phi(\cdot,x) \in \{  (\bJ^{x,s}_{\cdot})^{-1}, \partial_i (\bJ^{x,s}_{\cdot})^{-1}, \partial_i X^{s,x}_{\cdot} (\bJ^{x,s}_{\cdot})^{-1},\\ 
\partial_i\partial_j(\bJ^{x,s}_{\cdot})^{-1}, \partial_i X^{x,s}_{\cdot} \partial_j (\bJ^{x,s}_{\cdot})^{-1}, \partial_i \partial_j X^{x,s}_{\cdot}(\bJ^{x,s}_{\cdot})^{-1}, \partial_i X^{s,x}_{\cdot} \partial_j X^{s,x}_{\cdot} (\bJ^{x,s}_{\cdot})^{-1}\}
\end{multline*}
and
\[\varphi(\cdot,x) = X^{s,x}_{\cdot}.\]
Thanks to Proposition \ref{prop:integration_by_part}, for all $t \in [s,T]$, we can also take $F = X_t^{s,x}$ and $G=\phi(t,x)$ as given above. Notice that $F, G \in \cF_t$. Hence, we have, for all $\beta\in\NN^d$ and all $f\in\cS$,
\[\EE[\partial^{\beta}f(F) G |\cF_s] =
\EE[f(F) H_{\beta,[s,t]}(F,G) |\cF_{[0,s]}],\]
and, moreover,
\begin{align*}
|\EE[\partial^{\beta}f(F) G |\cF_s]| 
 =&  
|\EE[f(F) H_{\beta,[s,t]}(F,G) |\cF_{[0,s]}]| 
\le  \EE[f(S)^2|\cF_s]^{\frac12}\EE[|H_{\beta,[s,t]}(F,G)|^2|\cF_s]^{\frac12} \\
\le & \|f\|_\infty \sup_{x\in\RR^d} \EE[|H_{\beta,[s,t]}(F,G)|^2|\cF_s]^{\frac12} 
\end{align*}

Note that for any $q\ge 2$, thanks to Lemma \ref{lemma:boundeH},  we obtain the bound
\begin{align*}
\|\sup_{x\in\RR^d}\EE[H_{\beta,[s,t]}(F,G)^2|\cF_s]^\frac{1}{2}\|_{L^q(\Omega)}
\leq &
\|\sup_{x\in\RR^d}|H_{\beta,[s,t]}(F,G)| \|_{L^{\frac{q}{2}}(\Omega)}^{\frac12}
\\
  \leq  \|G\|_{\star,\DD_{[s,t]}^{|\beta|,r}}^{\frac12}& \left(\prod_{\ell = 1}^{|\beta|} \left\|\sum_{j =1}^d  (\gamma_{X^{s,x}_t,[s,t]}^{-1})_{\beta_\ell,j}D_{[s,t]}(X^{s,x}_t)^j  \right\|_{\star,\DD_{[s,t]}^{|\beta|-\ell+1,r_\ell}}\right)^{\frac12},
\end{align*}
 with $\frac{2}{q}=\frac{1}{r} + \sum\frac{1}{r_i}$.

 Furthermore, thanks to Proposition \ref{prop:XMalliavinEst} and Remark \ref{rem:malliavin_higher}, we know that, for every $\beta\in\NN^d$ and every $r\ge1$,
 \[\||G|\|_{\star,\DD_{[s,t]}^{|\beta|,r}} \lesssim \kappa_{s,t} = \|R_W\|_{(1,\rho)-var ;[a,b]^2}^{1/2} \lesssim_T 1.\]
It follows from Propositions \ref{prop:malliavin_matrix} \ref{prop:XMalliavinEst} and Hölder's inequality that, for all $\ell \le |\beta|$ and all $r\ge 2$,
 \[\|\gamma^{-1}_{X^{x,s}_t,[s,t]} (D_{[s,t]}(X^{s,x}_t)^j \|_{\star,\DD_{[s,t]}^{|\beta|-\ell + 1,r}} \lesssim \frac{1}{\sigma_{s,t}^{2}},\]
 and finally
\[ \left\|\sup_{x\in\RR^d}\EE[H_{\beta,[s,t]}(F,G)^2|\cF_s]^\frac{1}{2}\right\|_{L^q(\Omega)} \lesssim \sigma_{s,t}^{-|\beta|}.\]
Since $W$ is centered and has the local non-determinism property, we know that 
\[\sigma_{s,t}^2 = \EE[(W_t-W_s)^2] = \EE\left[\EE[(W_t-W_s)^2|\cF_{[0,s]}\vee \cF_{{t,T}}]\right] \gtrsim |t-s|^{\alpha},\]
where $\alpha$ is the index of non-determinism. Altogether, this yields
\[|\EE[\partial^{\beta} f(F) G | \cF_s]| \lesssim \|f\|_{\infty}|t-s|^{-\frac{\alpha}{2} |\beta|} G_{\infty,s},\]
where 
\[G_{\infty,s} = (t-s)^{\frac{\alpha}{2} |\beta|} \sup_{x\in\RR^d} \EE[H_{\beta,[s,t]}(F,G)^2|\cF_s]^{\frac12}\]
enjoys finite $L^q(\Omega)$ moments which are bounded uniformly in $(s,t)\in\Delta^2_T$.

Hence, by using Corollary \ref{corollary:averaged_holder}, for all  $\kappa \in \RR $ such that $0<\kappa + \frac1\alpha \le 2$, there exists $\eps>0$ small enough such that for for all \blue{$q\ge \frac8{3\eps} \vee d$} and for $w_0 (x) = (1+|x|)$,
 there exists a positive random variable $K(b) = K(b,\sigma,\bW)$ with $K(b) \in L^q(\Omega)$  such that 
\blue{
\[\|\cT^{D\varphi,\varphi}b\|_{
\cC^{\frac12+\frac{\eps}{8}}_T \cC^{\kappa + \frac1\alpha - \frac{3\eps}8}_{1+|\cdot|}} \le K(b) \|b\|_{\cC^{\kappa}}. \]

Therefore, for $\kappa + \frac{1}{\alpha}-\frac{3\eps}8+ \frac12 + \frac{\eps}{8}>2$, namely for 
\[\kappa + \frac{1}{\alpha} > \frac32 + \frac{\eps}{4},\]
thanks to Theorem \ref{theorem:main2}, there exists a unique solution, locally Lipschitz continuous} in the initial condition to Equation \eqref{eq:rde_with_drift}.

\end{proof}

Finally we give a (standard) example that satisfies the conditions which are required for uniqueness of the solutions. We refer to \cite[Section 4]{cass2015smoothness} and \cite[example 3.8]{friz2016jain} for the proofs of the following propositions.

\begin{proposition}\label{prop:fBm}
Let $H\in(0,1)$. Let $B,\tilde{B}$ be two independent $d$-dimensional Brownian motions. We define the $d$-dimensional fractional Brownian motion of Hurst parameter $H$ as the process defined for all $t\ge 0$ by
\[B^H_t = \frac{1}{c_H}\left(\int_0^t (t-r)^{H-\frac{1}{2}} \dd B_r + \int_0^{+\infty} (t+r)^{H-\frac{1}{2}} - r^{H-\frac{1}{2}} \dd \tilde{B}_r\right),\]
where 
\[c_H = \sqrt{\frac{1}{2H}+\int_0^{+\infty} \big((t+r)^{H-\frac{1}{2}} - r^{H-\frac{1}{2}}\big)^2  \dd r }\]
The fractional Brownian motion is a centered continuous Gaussian process with stationary increments and covariance 
\[R_{B^H}(s,t) = \frac{1}{2}(t^{2H} + s^{2H} - |t-s|^{2H}).\]

For $\frac14<H<\frac12$ it satisfies Assumption \ref{asm:1rhovar} with $\rho = \frac1{2H}$, Assumption \ref{asm:correW} and Assumption \ref{asm:nondet} with $\alpha = 2H$.
\end{proposition}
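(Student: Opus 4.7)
The three assertions are classical facts about the covariance kernel
\[
R_{B^H}(s,t) = \tfrac12\bigl(t^{2H}+s^{2H}-|t-s|^{2H}\bigr),
\]
and the strategy is to reduce each of them to an elementary computation on the function $f(x)=x^{2H}$, which is concave for $H<1/2$. The only non-elementary input is the local non-determinism estimate.

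For Assumption \ref{asm:correW}, the plan is a direct computation of the rectangular increments. For $t_1<t_2\le t_3<t_4$, setting $a=t_3-t_2$, $b=t_4-t_3$, $c=t_2-t_1$, one checks that
\[
\square_{[t_1,t_2]\times[t_3,t_4]}R_{B^H} = \tfrac12\Bigl([f(a+b+c)-f(a+c)] - [f(a+b)-f(a)]\Bigr),
\]
which is $\le 0$ because $x\mapsto f(x+b)-f(x)$ is non-increasing when $f$ is concave. Similarly, with $a=t_2-t_1$, $b=t_3-t_2$, $c=t_4-t_3$,
\[
\square_{[t_2,t_3]\times[t_1,t_4]}R_{B^H} = \tfrac12\bigl((a+b)^{2H}-a^{2H}+(b+c)^{2H}-c^{2H}\bigr)\ge 0,
\]
since $f$ is increasing. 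This yields non-positively correlated increments and diagonal dominance.

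For Assumption \ref{asm:1rhovar}, I would follow the classical analysis of the $\rho$-variation of $R_{B^H}$ (as in \cite{friz2010multidimensional}, Section 15). The bound $\|R_{B^H}\|_{\rho\text{-var};[a,b]^2}\lesssim (b-a)^{1/\rho}$ with $\rho=1/(2H)$ is a standard consequence of the explicit form of $\square R_{B^H}$ and the fact that $x\mapsto x^{2H}$ has finite $(1/(2H))$-variation with controlled $1/(2H)$-H\"older modulus. The upgrade to the mixed $(1,\rho)$-variation estimate \eqref{eq:1rhovar} follows from \cite{friz2016jain}: after partitioning along one axis, the inner sums reduce to one-variable $\rho$-variation of kernel slices $t\mapsto R_{B^H}(t_k,t)-R_{B^H}(t_{k+1},t)$, each of which can be bounded by $(t_{k+1}-t_k)(b-a)^{(1/\rho)-1}\cdot(b-a)^{1/\rho}$ after telescoping, and summing in $k$ gives the factor $(b-a)^{1/\rho}$.

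For Assumption \ref{asm:nondet}, the proof is the classical local non-determinism of fBm, first established by Berman and used in the rough-path context by \cite{cass2015smoothness}. One uses the Molchan--Mandelbrot--Van~Ness representation of $B^H$ to write $B^H_t - B^H_s$ as the sum of an $\mathcal{F}_{[0,s]}\vee\mathcal{F}_{[t,1]}$-measurable part and an independent Gaussian piece whose variance is bounded below by $c(t-s)^{2H}$. Since this conditional piece only depends on the increments of the driving two-sided Brownian motion on $(s,t)$, its conditional variance equals its unconditional variance, giving $c_W>0$ with $\alpha=2H$. The main obstacle is bookkeeping the representation cleanly; once this is done, the three constants in the statement are identified and the proposition follows.
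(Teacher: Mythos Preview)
Your proposal is essentially aligned with the paper's treatment: the paper does not prove this proposition at all but simply writes ``We refer to \cite{cass2015smoothness} and \cite{friz2016jain} for the proofs of the following propositions,'' and these are precisely the references you invoke. Your added sketches for Assumption~\ref{asm:correW} are correct (the concavity argument for $f(x)=x^{2H}$ is the standard one), and your outline for Assumption~\ref{asm:nondet} via the Mandelbrot--Van~Ness representation is the right idea and is exactly what \cite{cass2015smoothness} does.

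One small caveat: your sketch for the $(1,\rho)$-variation bound is a bit loose --- the heuristic ``$(t_{k+1}-t_k)(b-a)^{(1/\rho)-1}\cdot(b-a)^{1/\rho}$'' does not match the actual argument in \cite{friz2016jain}, which goes through a more careful control of the mixed variation using the specific structure of the fBm kernel. Since you defer to that reference anyway, this is harmless, but if you intend to flesh it out you should look at their Example~2 rather than improvise.
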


\begin{corollary}\label{cor:MAIN000}
Let $H\in\left(\frac{1}{4},\frac12\right)$. Let $B^H$ be a $d$-dimensional fractional Brownian motion and let $\bB^H$ be the geometric rough path above $B^H$.

Let $b \in \cC^{\kappa}$ with 
\[\kappa > 0\vee \left(\frac32-\frac1{2H}\right).\]

Let $\sigma \in C^{\infty}_b(\RR^d;(\RR^d)^{\otimes 2})$ which satisfies the strong ellipticity condition, namely there exists $c>0$ such that :
\[ |\sigma(x) z|^2 \ge c|z|^2,\quad x,z\in\RR^d.\]
Tthere exists $\cN = \cN(b) \in \cF$ such that $ \PP(\cN) = 0$ and for all $\omega \notin \cN$ and for all $x\in \RR^d$,
\[\dd x_t = b(x_t) \dd t + \sigma(x_t) \dd \bB^H_r,\quad x_0 = x,\quad t\in[0,T]\]
admits a unique solution. Furthermore, for all $R>0$ and all  all $b,\tilde{b}$ which satisfies the above conditions and all $x_0,\tilde{x}_0 \in \RR^d $ with $|x_0|,|\tilde{x}_0| \le R$, there exists an almost surely positive and finite random variable $K = K(\bB^H,\sigma,b,\tilde{b},R,T)$ such that 
\[\sup_{t\in[0,T]} |x_t - \tilde{x}_t| \le K (|x_0-\tilde{x_0}| - \|b - \tilde b\|_{\cC^{\kappa}}),\] 
where $x$ (respectively $\tilde{x}$) is the solution of the rough differential equation driven by the fractional Brownian motion with initial value $x_0$ (respectively $\tilde{x}_0$) and coefficient $b$ and $\sigma$ (respectively $\tilde b$ and $\sigma$).
\end{corollary}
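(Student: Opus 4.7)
The plan is to derive this corollary as a direct specialization of Theorem \ref{thm:MAIN000} to the fractional Brownian motion, then to deduce the drift-dependent Lipschitz estimate by applying Corollary \ref{corollary:averaged_holder} to the difference of the two drifts.

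First, I would invoke Proposition \ref{prop:fBm} to verify the standing hypotheses of Theorem \ref{thm:MAIN000} for $B^H$ when $H \in (1/4, 1/2)$: the covariance satisfies Assumption \ref{asm:1rhovar} with $\rho = 1/(2H) \in (1, 2)$, the non-positive correlation and diagonal dominance of Assumption \ref{asm:correW} hold, and the local non-determinism Assumption \ref{asm:nondet} holds with index $\alpha = 2H$. Since $2\rho = 1/H < 4$, Proposition \ref{prop:GaussFern} lifts $B^H$ to a geometric $p$-rough path $\bB^H$ for some $p \in (1/H, 4) \subset (2, 4)$, which is exactly the range required by Theorem \ref{thm:MAIN000}. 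With $\alpha = 2H$, the regularity threshold $\kappa + 1/\alpha > 3/2$ becomes $\kappa > 3/2 - 1/(2H)$, which combined with $\kappa > 0$ matches the corollary's assumption verbatim; uniform ellipticity of $\sigma$ is inherited without change. Applying Theorem \ref{thm:MAIN000} then produces a null set $\mathcal{N}(b)$ outside of which the RDE admits, for every initial datum, a unique solution together with a locally Lipschitz semiflow in $x_0$.

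For the quantitative Lipschitz bound simultaneous in the initial condition and in the drift, I would unpack the proof of Theorem \ref{thm:MAIN000} one step further. Writing $z_i$ for the solution of the flow-transformed ODE \eqref{eq:integral_equation} associated to $(b_i, x_0^{(i)})$, the comparison statement of Theorem \ref{theorem:main2} yields
\[
\|z_1 - z_2\|_{\infty, [0,T]} \leq K\bigl(|x_0^{(1)}| + |x_0^{(2)}|\bigr)\bigl( |x_0^{(1)} - x_0^{(2)}| + \|T b_1 - T b_2\|_{\mathcal{C}^\nu_T \mathcal{C}^1_{w_0}} \bigr),
\]
with $K$ depending on $\|T b_i\|_{\mathcal{C}^\nu_T \mathcal{C}^{\kappa + 1/(2H) - \varepsilon}_{w_0}}$, $\bB^H$, $\sigma$, $T$ and $\|b_i\|_\infty$. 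To control the last norm I would apply Corollary \ref{corollary:averaged_holder} directly to $b_1 - b_2 \in \mathcal{C}^\kappa$: the verification of Assumption \ref{asm:phipsi} carried out inside the proof of Theorem \ref{thm:MAIN000} depends only on $\sigma$ and $\bB^H$, hence applies identically, and choosing $\varepsilon$ small enough so that $\kappa + 1/(2H) - \varepsilon > 1$ produces an almost surely finite $\tilde K(\omega)$ with
\[
\|T b_1 - T b_2\|_{\mathcal{C}^\nu_T \mathcal{C}^1_{w_0}} \leq \tilde K(\omega)\, \|b_1 - b_2\|_{\mathcal{C}^\kappa}.
\]
Transferring the estimate from $z_i$ back to the RDE solutions $x^{(i)}_t = \varphi_{0,t}(z_i(t))$ introduces only a multiplicative constant controlled by the Lipschitz norm of $\varphi$ provided by Theorem \ref{theorem:driftlessRDE}, and yields the announced inequality.

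The only point worth verifying is that the null set $\mathcal{N}(b)$ can be chosen independent of the initial datum $x_0$; this is automatic because Theorem \ref{thm:MAIN000} produces a full solution flow rather than pointwise trajectories. Granted Propositions \ref{prop:fBm} and \ref{prop:GaussFern} together with Theorem \ref{thm:MAIN000}, there is no substantive obstacle remaining: the argument amounts to a translation of parameters followed by one additional application of Corollary \ref{corollary:averaged_holder} to the difference $b_1 - b_2$.
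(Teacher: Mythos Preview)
Your proposal is correct and follows essentially the same route as the paper: the paper's proof is the one-liner ``use Theorem \ref{thm:MAIN000}, Proposition \ref{prop:fBm} and Theorem \ref{theorem:main2}'', and you have simply unpacked this chain in detail, including the observation (implicit in the linear bound $\|T^{D\varphi,\varphi}b\|\le K(b)\|b\|_{\cC^\kappa}$ obtained in the proof of Theorem \ref{thm:MAIN000}) that Corollary \ref{corollary:averaged_holder} applied to $b_1-b_2$ furnishes the needed control on $\|T b_1 - T b_2\|_{\cC^\nu_T\cC^1_{w_0}}$.
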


\begin{proof}
One only has to use Theorem \ref{thm:MAIN000} and Proposition \ref{prop:fBm} and Theorem \ref{theorem:main2} to conclude.
\end{proof}

\begin{remark}
Let us remark that we where not able to prove the usual continuity of the Itô map solution with respect to the driven rough path. Indeed, whenever $B^{\eps,H}$ is a smooth (say piecewise linear) approximation of the fractional Brownian motion, one does not necessarily have Proposition \ref{prop:malliavin_matrix}. Similarly, the continuity with respect to the coefficient $\sigma$ is not clear, since it appears in a deeply non-linear fashion  in the averaged field.
\end{remark}

\appendix

\section{Besov spaces}\label{sec:Besov}

We first recall some of the Littlewood-Paley theory that is used in Besov spaces. There exist \blue{(see \cite[Proposition 2.10]{bahouriFourierAnalysisNonlinear2011})} $\psi,\phi$  two functions valued in $[0,1]$ such that
\begin{itemize}
\item[1)] $\psi,\phi \in \cC^{\infty}_0(\RR^d) $ with $\textrm{supp}(\psi)\subset B(0,4/3)$ and $\textrm{supp}(\phi)\subset \mathscr{A}$, where $\mathscr{A} = \{ \xi\in\RR^d; 3/4\leq |\xi|\leq 8/3\}$ is an annulus in $\RR^d$,
\item[2)] $\forall \xi\in\RR^d, \quad \psi(\xi) + \sum_{j = 0}^{\infty} \phi(2^{-j}\xi) = 1$,
\item[3)] $\textrm{supp}(\psi)\,\cap \, \textrm{supp}(\phi(2^{-j}\cdot) = \emptyset$ for any $j\geq 1$,
\item[4)] $\textrm{supp}(\phi(2^{-j}\cdot) \,\cap \, \textrm{supp}(\phi(2^{-k}\cdot) = \emptyset$ for any $j,k\geq 0$ such that $|j-k|\geq 2$.
\end{itemize}
We define the associated Littlewood-Paley blocks as
\begin{equation}
\Delta_{-1}u = \mathfrak{F}^{-1}\left(\psi\mathfrak{F}(u)\right) \quad \mbox{ and } \quad \Delta_j u = \mathfrak{F}^{-1}\left(\phi(2^{-j}\cdot) \mathfrak{F}(u)\right), \quad \forall j\geq 0,
\end{equation}
for any $u\in \cS'$. We recall the Bernstein Lemmas.
\begin{lemma}[{\cite[Lemma 2.1]{bahouriFourierAnalysisNonlinear2011}}]
Let $k\in\NN$. There exists a constant $C>0$ such that, for all $1\leq p\leq q\leq \infty$ and $f\in L^p(\RR^d)$, we have
\begin{itemize}
\item[1)] $\sup_{|\alpha| = k }\| \partial^{\alpha}\Delta_{j} f \|_{L^q(\RR^d)} \leq C^{k+1} 2^{k (j)_+ + d(1/q-1/p) }\| \Delta_{j} f\|_{L^p(\RR^d)}$, for all $j\in \NN\cup \{-1\}$,
\item[2)] $C^{-k-1} 2^{jk} \|\Delta_j f\|_{L^p(\RR^d)} \leq \sup_{|\alpha| = k }\| \partial^{\alpha}\Delta_j f \|_{L^p(\RR^d)} \leq C^{k+1} 2^{jk} \|\Delta_j f\|_{L^p(\RR^d)}$, for all $j\in\NN$.
\end{itemize}
\end{lemma}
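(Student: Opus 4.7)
Both inequalities are classical Bernstein-type estimates that rely on the fact that $\cF(\Delta_j f)$ is supported in a rescaled compact set: the ball $B(0,3/4)$ if $j=-1$ and the annulus $2^j\cA$ if $j\ge 0$. The unified strategy is to build a reproducing convolution formula $\Delta_j f = \tilde\phi_j * \Delta_j f$, then push derivatives onto a Schwartz function and conclude with Young's convolution inequality combined with a simple scaling computation of the relevant $L^r$-norms.

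\textbf{Step 1 (auxiliary cut-offs).} Choose $\tilde\psi,\tilde\phi\in \cC^\infty_0(\RR^d)$ such that $\tilde\psi\equiv 1$ on $\mathrm{supp}(\psi)$ and $\tilde\phi\equiv 1$ on $\mathrm{supp}(\phi)$, and set $\tilde\phi_j := 2^{jd}\,\cF^{-1}(\tilde\phi)(2^j\cdot)$ for $j\ge 0$ and $\tilde\phi_{-1} := \cF^{-1}(\tilde\psi)$. By construction $\cF(\tilde\phi_j)(\xi)=\tilde\phi(2^{-j}\xi)$ equals $1$ on $\mathrm{supp}(\cF(\Delta_j f))$, which yields the reproducing identity $\Delta_j f = \tilde\phi_j * \Delta_j f$ for every $j\ge -1$.

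\textbf{Step 2 (upper bound: (1) and the right inequality in (2)).} Differentiating the reproducing formula gives $\partial^\alpha \Delta_j f = (\partial^\alpha \tilde\phi_j)*\Delta_j f$, so Young's inequality yields
\begin{equation*}
\|\partial^\alpha \Delta_j f\|_{L^q(\RR^d)} \le \|\partial^\alpha \tilde\phi_j\|_{L^r(\RR^d)}\,\|\Delta_j f\|_{L^p(\RR^d)}, \qquad \tfrac{1}{r} = 1+\tfrac{1}{q}-\tfrac{1}{p}.
\end{equation*}
A direct change of variables produces, for $j\ge 0$,
\begin{equation*}
\|\partial^\alpha \tilde\phi_j\|_{L^r} = 2^{j(|\alpha|+d(1-1/r))}\|\partial^\alpha \cF^{-1}(\tilde\phi)\|_{L^r},
\end{equation*}
i.e.\ the standard Bernstein factor $2^{jk+jd(1/p-1/q)}$, while for $j=-1$ one only obtains a dimensional constant. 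The Schwartz regularity of $\cF^{-1}(\tilde\phi)$ and a crude induction on $k$ bound $\|\partial^\alpha \cF^{-1}(\tilde\phi)\|_{L^r}$ by $C^{k+1}$ uniformly in $|\alpha|=k$. Specializing to $q=p$ gives the right-hand inequality of (2).

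\textbf{Step 3 (reverse Bernstein: left inequality of (2)).} This is the delicate step, and the hypothesis $j\ge 0$ is essential. Invoking the algebraic identity $|\xi|^{2k}=\sum_{|\beta|=k}A_\beta(-i\xi)^\beta(i\xi)^\beta$ from the proof of Lemma \ref{ref:lemma_interpolation}, and using that $\mathrm{supp}(\tilde\phi)\subset\cA$ is bounded away from the origin, the function $M(\xi):=\tilde\phi(\xi)/|\xi|^{2k}$ is smooth and compactly supported. Multiplying the identity $|\xi|^{2k}M(2^{-j}\xi)=2^{2jk}\tilde\phi(2^{-j}\xi)$ by $\widehat{\Delta_j f}(\xi)=\phi(2^{-j}\xi)\hat f(\xi)$, and exploiting $\tilde\phi(2^{-j}\xi)\equiv 1$ on $\mathrm{supp}(\phi(2^{-j}\cdot))$, one obtains after inverse Fourier transform
\begin{equation*}
2^{2jk}\,\Delta_j f = \sum_{|\beta|=k}(-1)^{|\beta|}A_\beta\; \partial^\beta m_j * \partial^\beta \Delta_j f, \qquad m_j := \cF^{-1}\big(M(2^{-j}\cdot)\big).
\end{equation*}
The scaling $\|\partial^\beta m_j\|_{L^1}\lesssim 2^{jk}\|\partial^\beta \cF^{-1}(M)\|_{L^1}$ together with Young's $L^1 * L^p\to L^p$ inequality gives $2^{2jk}\|\Delta_j f\|_{L^p}\lesssim 2^{jk}\sup_{|\beta|=k}\|\partial^\beta \Delta_j f\|_{L^p}$, which, after dividing by $2^{jk}$, is exactly the claimed lower bound with constant of the form $C^{k+1}$.

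\textbf{Main difficulty.} None of the steps is technically challenging. The only conceptually sensitive point is Step 3: dividing by $|\xi|^{2k}$ to define the smooth symbol $M$ is licit precisely because the annular support of $\phi$ stays away from $0$, which is exactly the reason the reverse Bernstein estimate is not asserted for $j=-1$, whose frequency support contains the origin.
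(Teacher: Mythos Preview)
The paper does not actually prove this lemma: it is stated in the appendix on Besov spaces as a recall of the classical Bernstein inequalities, with no proof given. Your argument is precisely the standard textbook proof (reproducing convolution kernel plus Young's inequality for the upper bounds, and a smooth Fourier multiplier supported in an annulus for the reverse bound), and it is correct.

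Two small remarks. First, in Step~3 you use that $\mathrm{supp}(\tilde\phi)$ is contained in an annulus bounded away from the origin, but in Step~1 you only imposed $\tilde\phi\equiv 1$ on $\mathrm{supp}(\phi)$; you should also require $\mathrm{supp}(\tilde\phi)$ to lie in a slightly larger annulus, which is of course possible and harmless. Second, your Young argument in Step~2 needs $r\ge 1$, i.e.\ $p\le q$; this is the usual orientation of Bernstein's first inequality, and the paper's displayed condition $q\le p$ together with the missing factor $j$ in front of $d(1/q-1/p)$ are most likely typos in the statement rather than issues with your proof.
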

The Besov spaces associated to the Littlewood-Paley blocks are defined as
\begin{align*}
B_{p,r}^{s} : = \left\{ u \in \cS'(\RR^d):\; \|u\|_{B_{p,r}^s} : = \left(\sum_{j = -1}^{+\infty} 2^{rs j}\|\Delta_{j} u\|_{L^p(\RR^d)}^r \right)^{1/r}<\infty\right\},
\end{align*}
where $s\in\RR$ and $p,r\in [1,\infty]$. Those spaces are Banach spaces continuously embedded in $\cS'$ and they are of type $p\wedge r\wedge 2$ (see Corollary \ref{cor:besovtype} below) when $p,r\in(1,+\infty)$.
\begin{remark}[{\cite[Propositions 2.71]{bahouriFourierAnalysisNonlinear2011}, \cite[Chapter 1 Section 1.2 and 2, Proposition 2]{runstSobolevSpacesFractional2011}}] {$ $ }
\begin{itemize}
\item[1)] We have the following continuous embeddings
\begin{itemize}
\item[a)] for $\tilde{s}<s$ or $\tilde{s} = s$ and $\tilde{r}\geq r$
\begin{equation*}
B_{p,r}^s \hookrightarrow B_{p,\tilde{r}}^{\tilde{s}},
\end{equation*}
\item[b)] for $\tilde{p}\geq p$,
\begin{equation*}
B_{p,r}^s \hookrightarrow B_{\tilde{p},r}^{s-d(1/p-1/\tilde{p})},
\end{equation*}
\item[c)] for $p<\infty$,
\begin{equation*}
B_{p,1}^{d/p} \hookrightarrow C^0_0,
\end{equation*}
where $C$ is the space of uniformly continuous bounded functions.
\end{itemize}
\item[2)] For $p=r=\infty$ and $s\in \RR^+/\NN$, we have that
\begin{equation*}
\|u\|_{B_{\infty,\infty}^s} \simeq \sum_{|\alpha|\leq [s]}\left(\|\partial^{\alpha}u\|_{L^{\infty}(\RR^d)} + \sup_{\substack{x,y\in\RR^d\\x\neq y}}\frac{|\partial^{\alpha}u(x)-\partial^{\alpha}u(y)|}{|x-y|^{s-[s]}}\right).
\end{equation*}
Thus, the spaces $B_{\infty,\infty}^s$ consist of $s$-Hölder functions and are called Hölder-Besov spaces.
\item[3)] For $p = r$, and $s>0$ not an integer, we have that (see \cite[Proposition p17]{runstSobolevSpacesFractional2011}
\begin{equation*}
\|u\|_{B_{p,p}^s} \simeq \|u\|_{W_p^s(\RR^d)},
\end{equation*}
where $W^p_s$ is the usual Sobolev space. 
\item[4)] Let $k\in\NN$. It follows from Bernstein's lemma that, for any $\alpha \in\NN^d$ such that $|\alpha|\leq k$,
\begin{equation*}
\|\partial^{\alpha}u\|_{B_{p,r}^s} \lesssim \|u\|_{B_{p,r}^{s+k}}.
\end{equation*}
\end{itemize}
\end{remark}
It turns out that the space $\cC^{\infty}_0(\RR^d)$ is dense in $B_{p,r}^s$ if and only if $p,r<\infty$. If we wish to build approximations of functions in the Hölder-Besov spaces, we can rely on following result.
\begin{lemma}[{\cite[Corollary 2.96]{bahouriFourierAnalysisNonlinear2011}}]\label{lem:A3}
Let $s\in\RR$ and $u\in B_{\infty,\infty}^s$. Then, the sequence $(u_n)_{n\geq0}$ defined by
\begin{equation*}
u_n = \sum_{q = -1}^{n-1} \Delta_q u,
\end{equation*}
converges to $u$ in $B_{\infty,\infty}^{\tilde{s}}$ for any $\tilde{s}<s$. Moreover, we have, $\forall n\in\NN$ and $\forall \ell\in\RR$,
\begin{equation*}
\|u_n\|_{ B_{\infty,\infty}^{\ell}} < +\infty.
\end{equation*}
\end{lemma}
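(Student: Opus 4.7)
The plan is to exploit the (near-)orthogonality of Littlewood--Paley blocks: by construction the Fourier supports of $\phi(2^{-j}\cdot)$ and $\phi(2^{-q}\cdot)$ are disjoint as soon as $|j-q|\ge 2$ (with an analogous property involving $\Delta_{-1}$), so $\Delta_j\Delta_q u = 0$ whenever $|j-q|\ge 2$. This spectral structure will give convergence at the price of an arbitrarily small loss of regularity.

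Writing $u - u_n = \sum_{q\ge n}\Delta_q u$ and applying $\Delta_j$, the support property immediately forces $\Delta_j(u-u_n) = 0$ for $j\le n-2$, while for $j\ge n-1$ at most three terms survive:
\[
\Delta_j(u-u_n) \;=\; \sum_{q=\max(n,j-1)}^{j+1}\Delta_j\Delta_q u.
\]
Since $\Delta_j$ acts as convolution with an $L^1$-function of uniformly bounded norm, and since $\|\Delta_q u\|_{L^\infty}\le 2^{-qs}\|u\|_{B^s_{\infty,\infty}}$ by definition of the $B^s_{\infty,\infty}$-norm, each surviving term is bounded by $C\,2^{-qs}\|u\|_{B^s_{\infty,\infty}}$. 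A short case split on the sign of $s$ (to identify which of the at-most three $q$'s maximizes $2^{-qs}$) then yields, for every $j\ge n-1$,
\[
\|\Delta_j(u-u_n)\|_{L^\infty} \;\lesssim\; 2^{-\sigma(j,n)\,s}\,\|u\|_{B^s_{\infty,\infty}},
\]
with $\sigma(j,n)=\max(n,j-1)$ when $s\ge 0$ and $\sigma(j,n)=j+1$ when $s<0$.

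Multiplying by $2^{j\tilde s}$ and taking the supremum over $j\ge n-1$ (the only non-trivial range), a direct computation shows that in both sign regimes the worst term is of order $2^{n(\tilde s - s)}$, so that for any $\tilde s<s$
\[
\|u-u_n\|_{B^{\tilde s}_{\infty,\infty}} \;\lesssim_{s,\tilde s}\; 2^{n(\tilde s - s)}\,\|u\|_{B^s_{\infty,\infty}},
\]
which tends to $0$ as $n\to\infty$. The strict inequality $\tilde s<s$ is precisely what absorbs the worst exponent; at equality one only gets boundedness, consistently with the well-known failure of density in $B^s_{\infty,\infty}$.

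For the second claim, apply the same support argument to the partial sum itself: $\Delta_j u_n = \sum_{q=-1}^{n-1}\Delta_j\Delta_q u$ is a sum of at most three non-vanishing contributions (those with $|j-q|\le 1$ and $-1\le q\le n-1$), and vanishes identically for $j\ge n+1$. Hence $\|u_n\|_{B^{\ell}_{\infty,\infty}} = \sup_{-1\le j\le n} 2^{j\ell}\|\Delta_j u_n\|_{L^\infty}$ is a supremum over finitely many indices of quantities each controlled by $C\,\|u\|_{B^s_{\infty,\infty}}$, hence finite for every $\ell\in\RR$ (with a constant depending on $n$ and $\ell$). The only point needing genuine care in the whole argument is ensuring that the three-term estimate is handled correctly in the negative-regularity regime; the rest is bookkeeping on Fourier supports.
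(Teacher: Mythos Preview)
Your argument is correct and is the standard one. The paper states this lemma without proof (it is a routine consequence of Littlewood--Paley theory, recalled from references such as Bahouri--Chemin--Danchin), so there is nothing to compare against; your use of the almost-orthogonality $\Delta_j\Delta_q = 0$ for $|j-q|\ge 2$, followed by the geometric decay $2^{n(\tilde s - s)}$, is exactly the expected proof.
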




We give a result concerning Fourier multiplier.
\begin{lemma}\label{lem:FourierMult}
Let $m:\RR^d\to\RR$ be a smooth function such that there, $\gamma\in\NN^d$ and $p\geq 0$,
\begin{equation}\label{asm:multiplier}
\partial^{\gamma}m(\xi) \leq |\xi|^{-|\gamma|-p},\; \xi\in\RR^d\backslash\{0\},
\end{equation}
and, for any $j$, define the function $\mathsf{m}_j$ given by
\begin{equation*}
\mathsf{m}_j = \mathfrak{F}^{-1}(m(\xi)\varrho(2^{-j}\xi)),
\end{equation*}
where $\varrho$ is a compactly supported function such that $\mathrm{supp}(\varrho)\subset2\cA$ and $\varrho_{|\cA} \equiv 1$. Then, we have
\begin{equation*}
\|\mathsf{m}_j \|_{L^1(\RR^d)}\lesssim_{\varrho,d} 2^{-jp}.
\end{equation*}
\end{lemma}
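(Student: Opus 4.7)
The plan is to exploit scaling together with the standard Fourier-duality between multiplication by polynomials in $x$ and differentiation in $\xi$, which turns the smoothness/bound hypotheses on $m$ into pointwise decay of $\mathsf{m}_j$.

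First, I would perform the change of variable $\xi = 2^j \eta$ in the inverse Fourier transform to reduce to a single ``template'' multiplier. Setting
\[
\tilde m_j(\eta) := m(2^j\eta)\,\varrho(\eta), \qquad \tilde{\mathsf m}_j := \cF^{-1}(\tilde m_j),
\]
one computes $\mathsf{m}_j(x) = 2^{jd}\,\tilde{\mathsf m}_j(2^j x)$, so the scaling is $L^1$-isometric: $\|\mathsf m_j\|_{L^1} = \|\tilde{\mathsf m}_j\|_{L^1}$. The function $\tilde m_j$ is now supported inside the fixed annulus $\mathrm{supp}(\varrho) \subset 2\cA$, independently of $j$.

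Next, I would turn the hypothesis \eqref{asm:multiplier} into uniform bounds on the derivatives of $\tilde m_j$ on its support. By the Leibniz rule,
\[
\partial^\gamma \tilde m_j(\eta) = \sum_{\beta\le\gamma}\binom{\gamma}{\beta}\, 2^{j|\beta|}(\partial^\beta m)(2^j\eta)\,\partial^{\gamma-\beta}\varrho(\eta),
\]
and since $|\eta|\asymp 1$ on $\mathrm{supp}(\varrho)$, one has $|2^j\eta|\asymp 2^j$ there, so \eqref{asm:multiplier} gives $|(\partial^\beta m)(2^j\eta)| \lesssim 2^{-j(|\beta|+p)}$. Combined with the uniform bounds on $\varrho$ and its derivatives, this yields, uniformly in $j\ge 0$,
\[
\|\partial^\gamma \tilde m_j\|_{L^\infty(\RR^d)} \lesssim_{\gamma,\varrho} 2^{-jp}, \qquad \gamma\in\NN^d.
\]

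The third step is to transfer this into pointwise decay of $\tilde{\mathsf m}_j$. Using the identity $x^\alpha\,\cF^{-1}(f)(x) = i^{|\alpha|}\,\cF^{-1}(\partial^\alpha f)(x)$ and the fact that $\tilde m_j$ is supported in a set of bounded measure (again uniformly in $j$),
\[
|x^\alpha\,\tilde{\mathsf m}_j(x)| \lesssim \|\partial^\alpha \tilde m_j\|_{L^1(\RR^d)} \lesssim \|\partial^\alpha \tilde m_j\|_{L^\infty(\RR^d)} \lesssim_{\alpha,\varrho} 2^{-jp}.
\]
Picking any integer $N > d/2$ and summing over $|\alpha| \le 2N$, one gets
\[
(1+|x|^{2})^N\, |\tilde{\mathsf m}_j(x)| \lesssim_{N,\varrho,d} 2^{-jp}, \qquad x\in\RR^d.
\]
Since $(1+|x|^2)^{-N}$ is integrable, integrating yields $\|\tilde{\mathsf m}_j\|_{L^1(\RR^d)} \lesssim_{\varrho,d} 2^{-jp}$, and the scale invariance from the first step gives the claim.

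I do not expect a genuine obstacle here: every step is a routine consequence of scaling and the Fourier-multiplier/polynomial-growth duality. The only mildly delicate point is to make sure the Leibniz expansion in the second step is handled so that the worst term $\beta=\gamma$ gives $2^{j|\gamma|}\cdot 2^{-j(|\gamma|+p)}=2^{-jp}$ and not something larger; once this bookkeeping is correct, the bound follows for free.
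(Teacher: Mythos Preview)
Your proposal is correct and follows essentially the same route as the paper: both perform the scaling change of variable to reduce to a function supported in a fixed annulus, use the Leibniz rule together with \eqref{asm:multiplier} to bound derivatives of the rescaled multiplier by $2^{-jp}$, and then convert this via the Fourier duality $x^\alpha\cF^{-1}(f)=i^{|\alpha|}\cF^{-1}(\partial^\alpha f)$ into integrable pointwise decay in $x$. The only cosmetic differences are that the paper writes the integration by parts directly as $(1+|x|^2)^{-d}\int e^{i\xi\cdot x}(1-\Delta)^d(\cdots)\,\dd\xi$ with the fixed exponent $d$, whereas you bound the derivatives first and then take any $N>d/2$; neither choice affects the argument.
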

\begin{proof}
It turns out that, by a change of variable,
\begin{equation*}
\|\mathsf{m}_j\|_{L^1(\RR^d)} = \|\mathfrak{F}^{-1}(m(2^j\xi)\varrho(\xi))\|_{L^1(\RR^d)}.
\end{equation*}
Furthermore, we have, by integration by parts and Leibniz's formula,
\begin{align*}
\mathfrak{F}^{-1}(m(2^j\xi)\varrho(\xi))(x) &= (1+|x|^2)^{-d}(2\pi)^{-d/2}\int_{\RR^d}e^{i\xi\cdot x} (1-\Delta)^{d}(m(2^j\xi)\varrho(\xi)) \dd\xi
\\ &=(1+|x|^2)^{-d}(2\pi)^{-d/2}\sum_{\substack{\alpha,\beta\in\NN^d\\ \beta\leq \alpha,\; |\alpha|\leq 2d}}C_{\alpha,\beta} 2^{j|\alpha|}\int_{\RR^d}e^{i\xi\cdot x} \partial^{\alpha} m(2^j\xi)\partial^{\alpha-\beta}\varrho(\xi) \dd\xi.
\end{align*}
We can see that, by \eqref{asm:multiplier}
\begin{equation*}
\left|\int_{\RR^d}e^{i\xi\cdot x} \partial^{\alpha}m(2^j\xi)\partial^{\alpha-\beta}\varrho(\xi) \dd\xi\right| \lesssim 2^{-j|\alpha|-jp},
\end{equation*}
which leads to the estimate
\begin{equation*}
\|\mathsf{m}_j\|_{L^1(\RR^d)} \lesssim 2^{-jp} 
\end{equation*}
\end{proof}

\section{Young inequality}
Let us recall the following Young inequality for kernels.
\begin{theorem}[{\cite[Theorem 0.3.1]{soggeFourierIntegralsClassical2017}}]\label{thm:ineq_young}
Let $K:\RR^d\times \RR^d \mapsto \RR$ be a measurable function such that for some $p\in [1,+\infty]$,
\[\sup_{x\in\RR^d} \|K(x,\cdot)\|_{L^p(\RR^d)}  \le C \quad \text{ and } \quad \sup_{y\in\RR^d} \|K(\cdot,y)\|_{L^p(\RR^d)}  \le C.\]
Then $p,q,r\in[1,+\infty]$ with $\frac{1}{p}+\frac{1}{q} = 1+\frac{1}{r}$ and for all $f \in L^{q}(\RR^d)$, we have 
\[\left(\int_{\RR^d} \left|\int_{\RR^d} K(x,y) f(y)\dd y \right|^r \dd x\right)^{\frac1r} \le C \|f\|_{L^q(\RR^d)}.\]
\end{theorem}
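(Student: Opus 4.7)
The plan is to establish the bound via a three–exponent Hölder inequality applied pointwise inside the $y$-integral, a trick which exactly encodes the scaling relation $\frac{1}{p}+\frac{1}{q}=1+\frac{1}{r}$. Concretely, I would split the integrand as
\[
|K(x,y)f(y)|
=
\underbrace{\bigl(|K(x,y)|^{p/r}|f(y)|^{q/r}\bigr)}_{\text{exponent }r}
\cdot
\underbrace{|K(x,y)|^{1-p/r}}_{\text{exponent }\alpha}
\cdot
\underbrace{|f(y)|^{1-q/r}}_{\text{exponent }\beta},
\]
and apply Hölder with exponents $r$, $\alpha=\frac{pr}{r-p}$, and $\beta=\frac{qr}{r-q}$. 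The arithmetic $\frac{1}{r}+\frac{1}{\alpha}+\frac{1}{\beta}=\frac{1}{p}+\frac{1}{q}-\frac{1}{r}=1$ is exactly the hypothesis, which is the reason such a splitting exists.

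After integrating in $y$, the bound becomes
\[
\int_{\RR^d}|K(x,y)f(y)|\,\dd y
\le
\|K(x,\cdot)\|_{L^p}^{p/\alpha}\,\|f\|_{L^q}^{q/\beta}
\left(\int_{\RR^d}|K(x,y)|^p|f(y)|^q\,\dd y\right)^{1/r},
\]
since $(1-p/r)\alpha=p$ and $(1-q/r)\beta=q$. The first factor is controlled by $C^{p/\alpha}$ thanks to the row bound on $K$. Then I would raise to the power $r$, integrate in $x$, and apply Fubini to the remaining double integral; the column bound $\sup_y\|K(\cdot,y)\|_{L^p}\le C$ turns the innermost $x$–integral of $|K(x,y)|^p$ into a constant $\le C^p$, so what remains is $\|f\|_{L^q}^q$. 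Collecting exponents gives $rp/\alpha+p=r$ for $C$ and $rq/\beta+q=r$ for $\|f\|_{L^q}$, which is the desired inequality $\|Tf\|_{L^r}\le C\|f\|_{L^q}$.

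The only real subtlety is handling the degenerate cases where one of $\alpha,\beta$ equals $\infty$ or is undefined. When $p=r$ (so $\alpha=\infty$) or $q=r$ (so $\beta=\infty$), the corresponding factor is simply bounded in $L^\infty$, and the argument goes through with the usual convention. The edge cases $r=\infty$ (whence $q=p'$) and $p=1$ (whence $q=r$) reduce, respectively, to a direct application of the two–factor Hölder inequality and to Minkowski's integral inequality combined with the row bound; I would treat them briefly as a separate paragraph. The computation is largely bookkeeping once the splitting is written down; the main step is simply recognising the three–factor decomposition, which is a standard device and poses no real obstacle.
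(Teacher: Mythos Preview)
Your proof is correct; this is the standard three-factor H\"older argument for Schur--Young type bounds. The paper, however, does not give its own proof of this theorem: it simply cites Sogge, \emph{Fourier Integrals in Classical Analysis}, Theorem~0.3.1, where precisely this argument appears. So there is nothing to compare---you have supplied the classical proof that the paper outsources to a reference.
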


\section{Burkolder Davis Gundy inequality in Lebesgue spaces}\label{appen:BDG}
In this section, we recall some standard results about martingales in Banach spaces, and the corresponding inequalities. We refer to Pisier \cite{pisierMartingalesBanachSpaces2016} chapter 4 and to Pinelis \cite{pinelisOptimumBoundsDistributions1994} and Veraar and coauthors \cite{hytonenAnalysisBanachSpaces2016,hytonenAnalysisBanachSpaces2018} for proofs.

The aim of this section is to give an almost self-contain proof of Burkolder-Davis-Gundy inequality in infinite dimensional Banach spaces, especially in $L^p(\RR^d)$ spaces. The idea of the proof is to use the Kahane-Khinchine inequality, and UMD properties via smoothness of spaces. 

\subsection{Discrete Fourier transform and Kahane-Khinchine inequality}

We recall some of standard feature about the Fourier transform on the hypercube. 

Let $\Omega_n = \{-1,1\}^n$. We endowed $\Omega_n$ with the uniform measure $\mu_n$, such that for a function $f : \Omega_n \mapsto \RR$ we have
\[\int_{\Omega_n} f \dd \mu_n = \EE[f(X)] = \frac{1}{2^n} \sum_{x\in \Omega_n} f(x)\]
where $X=(X_1,\cdots,X_n)$ is a vector of i.i.d. random variables such that $\PP(X_1 = 1) = \PP(X_1 = -1) = \frac12$. 

Remark that for any $Y\in L^2(\Omega_n,\cP(\Omega_n),\mu_n)$ a unique function $f : \Omega_n \mapsto \RR$ exists such that $Y = f(X)$. Hence $L^2(\Omega_n,\cP(\Omega_n),\mu_n)$ is a finite dimensional vector space of dimension $2^n$. For $Y_1 = f_1(X),Y_2=f_2(X) \in L^2(\Omega_n,\cP(\Omega_n),\mu_n)$ let us write the standard inner product as
\[
\langle Y_1, Y_2\rangle = \EE[f_1(X) f_2(X)].\] 

Furthermore let us define for $S\subset\{1,\cdots,n\}$, we define 
\[X_S = \prod_{l\in S} X_l,\quad X_{\emptyset} = 1.\]

Remark that $\langle X_S , X_S \rangle = 1$ and for $S'\neq S$ we have $\langle X_S , X_S' \rangle = 0$. Hence $X_S$ is a orthogonal basis of $L^2(\Omega_n,\cP(\Omega_n),\mu_n)$. Furthermore for $Y \in L^2(\Omega_n,\cP(\Omega_n),\mu_n)$ and $S\subset \{1,\cdots,n\}$, we define 
\[\hat{Y}(S) = \langle Y , X_S \rangle = \EE[Y X_S] ,\] 
such that
\[Y= \sum_{S\subset \{1,\cdots,n\}} \hat{Y}(S) X_S,\]
and
\[\EE[YZ]=\sum_{S\subset\{1,\cdots,n\}}\hat{Y}(S)\hat{Z}(S).\]
Note that we have the two following identities :
\[\EE[Y^2]= \sum_{S\subset\{1,\cdots,n\}} \hat{Y}(S)^2\]
and
\[\EE[Y] = \hat{Y}(\emptyset).\]

\begin{theorem}[Khinchine-Kahane Inequality, via Szarek's proof, {\cite[Theorem 5.20]{boucheronConcentrationInequalitiesNonasymptotic2013}}]\label{thm:khinchine}
Let $(B,|\cdot|_B)$ be a normed vector space. Then for any $p\in [1,+\infty]$, there exists a constant $C_p$ such that for any $n\ge 2$ and for any $b_1,\cdots,b_n \in B$, we have

\[\EE\left[\left|\sum_{i=1}^n b_i X_i\right|^p_B\right] \le C_p \EE\left[\left|\sum_{i=1}^n b_i X_i\right|_B\right],\]
where $X=(X_1,\cdots,X_n)$ is an iid random vector with $\PP(X_1 = 1) = \PP(X_1 = -1)= \frac12$.
\end{theorem}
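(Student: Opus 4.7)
My plan is to prove the Kahane–Khinchine inequality in two stages: first establish the $L^2$-vs-$L^1$ comparison (the case $p=2$) via Szarek's induction, then bootstrap to arbitrary $p<\infty$ through hypercontractivity of the noise operator on the discrete cube. I read the statement as the homogeneous one $\mathbb{E}[\|S\|_B^p] \le C_p (\mathbb{E}[\|S\|_B])^p$ for $S=\sum_{i=1}^n b_i X_i$, since the right-hand side as literally written is not scale invariant; the $p=\infty$ case must be interpreted in the sub-Gaussian Orlicz sense, as literally $\mathbb{E}[\|S\|_B^\infty]$ is generically infinite.

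\emph{Base case $p=2$.} The goal is to prove $\mathbb{E}[\|S_n\|_B^2] \le 2(\mathbb{E}[\|S_n\|_B])^2$ by induction on $n$. The case $n=1$ is immediate since $\|b_1 X_1\|_B = \|b_1\|_B$ almost surely. For the induction step, I would isolate $X_1$ and write $S_n = b_1 X_1 + T$ with $T := \sum_{i=2}^n b_i X_i$ independent of $X_1$; conditionally on $T$, the random variable $\|S_n\|_B$ takes the two values $\|b_1 + T\|_B$ and $\|b_1 - T\|_B$ with equal probability. The key elementary estimate is that for $u,v \ge 0$ with $|u-v| \le 2\|b_1\|_B$ (reverse triangle inequality) one has $u^2+v^2 \le (u+v)^2$, which after suitable rearrangement and combination with the induction hypothesis applied to $T$ (following Boucheron--Lugosi--Massart, Theorem~5.20) closes the recursion with constant $2$.

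\emph{Extension to arbitrary $p$.} For $1 \le p \le 2$, Jensen applied to $t \mapsto t^{p/2}$ gives
\begin{equation*}
\mathbb{E}[\|S\|_B^p]^{1/p} \le \mathbb{E}[\|S\|_B^2]^{1/2} \le \sqrt{2}\, \mathbb{E}[\|S\|_B],
\end{equation*}
which is the desired bound. For $p>2$, I would invoke hypercontractivity of the Bonami--Beckner noise semigroup $T_\rho$ on $L^2(\{-1,1\}^n)$: by the two-point inequality and tensorization, $T_\rho: L^q \to L^p$ is a contraction whenever $(p-1) \le \rho^{-2}(q-1)$. Applying this to the scalar random variable $f(X_1,\ldots,X_n):=\|\sum b_i X_i\|_B$ with $q=2$ yields $\mathbb{E}[\|S\|_B^p]^{1/p} \lesssim \sqrt{p-1}\,\mathbb{E}[\|S\|_B^2]^{1/2}$, and combining with the base case gives $\mathbb{E}[\|S\|_B^p]^{1/p} \le C\sqrt{p}\,\mathbb{E}[\|S\|_B]$. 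Equivalently, the reduced variable $\|S\|_B/\mathbb{E}\|S\|_B$ is sub-Gaussian, and integrating the tail $\mathbb{P}(\|S\|_B \ge t \mathbb{E}\|S\|_B) \le Ce^{-ct^2}$ yields every polynomial moment.

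\emph{Main obstacle.} The delicate step is the passage from $p=2$ to $p>2$: one cannot simply iterate the $L^2$-vs-$L^1$ bound on $\|S\|_B^k$, since that quantity is no longer a Rademacher sum and the inductive structure breaks down. Hypercontractivity provides the cleanest remedy, but deploying it here requires viewing the Banach-valued norm $\|\sum b_i X_i\|_B$ purely as a real-valued function on the hypercube and applying the scalar Bonami--Beckner estimate --- a move that is conceptually subtle but technically routine once set up. An entirely elementary alternative would be a direct moment-comparison induction using the conditional decomposition from the base case, but the constants it yields are substantially worse than $\sqrt{p}$.
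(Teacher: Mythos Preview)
Your two-stage strategy differs substantially from the paper's, and both stages have gaps as written.

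\textbf{The $p=2$ base case.} Your induction on $n$ does not close. Conditioning on $T$ and using $u^2+v^2\le (u+v)^2$ (which is trivially $0\le 2uv$, so your side condition $|u-v|\le 2\|b_1\|$ plays no role) gives only the \emph{conditional} bound $\EE[\|S_n\|^2\mid T]\le 2(\EE[\|S_n\|\mid T])^2$. Taking expectation in $T$ you would then need $\EE_T\big[(\EE[\|S_n\|\mid T])^2\big]\le (\EE[\|S_n\|])^2$, which is the wrong direction of Jensen. The induction hypothesis on $T$ controls $\EE[\|T\|^2]$ versus $(\EE[\|T\|])^2$, but $\EE[\|S_n\|\mid T]=\tfrac12(\|b_1+T\|+\|b_1-T\|)$ is not a function of $\|T\|$ alone, so the hypothesis does not plug in. Incidentally, the Szarek argument in Boucheron--Lugosi--Massart is \emph{not} an induction on $n$; it is precisely the Fourier-on-the-cube argument the paper reproduces.

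\textbf{The hypercontractivity step.} Applying scalar Bonami--Beckner to $f=\|S\|_B$ yields $\|T_\rho f\|_p\le\|f\|_2$, not $\|f\|_p\lesssim\|f\|_2$: the norm map destroys the degree-$1$ structure, so $T_\rho f\ne \rho f$. The fix is short but essential: since $T_\rho S=\rho S$ (vector-valued, degree $1$) and the norm is convex, Jensen gives $T_\rho f\ge \|T_\rho S\|_B=\rho\|S\|_B$ pointwise, whence $\rho\|f\|_p\le\|T_\rho f\|_p\le\|f\|_2$ and the $\sqrt{p-1}$ constant follows. Alternatively, run hypercontractivity on the $B$-valued function $S$ directly; the two-point inequality transfers to any normed space via the triangle inequality.

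\textbf{What the paper does instead.} The paper gives a single, self-contained argument that handles all integer $k\ge 1$ simultaneously, with no separate $p=2$ step and no hypercontractivity. Set $Z=\|S\|_B$, let $Z^{(i)}$ be $Z$ with the $i$-th sign flipped, and $\bar Z=\sum_i Z^{(i)}$. The triangle inequality gives $\bar Z\ge (n-2)Z$, hence $(n-2)\EE[Z^{k+1}]\le\EE[Z^k\bar Z]$. For the upper bound one computes the Walsh transform: $\widehat{\bar Z}(S)=(n-2|S|)\widehat Z(S)$, and $\widehat Z(S)=0$ for $|S|$ odd (by the symmetry $X\stackrel{d}{=}-X$). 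Expanding $\EE[Z^k\bar Z]$ in the Walsh basis and using $n-2|S|\le n-4$ for nonempty even $S$ yields $\EE[Z^k\bar Z]\le 4\EE[Z^k]\EE[Z]+(n-4)\EE[Z^{k+1}]$. Combining the two bounds gives $\EE[Z^{k+1}]\le 2\EE[Z^k]\EE[Z]$, and induction on $k$ finishes. This is more elementary than your route and delivers the constant $C_k=2^{1-1/k}$ directly.
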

\begin{proof}
 Let $k\ge 1$ such that $k\le p < k+1$. 
\begin{equation*}
\EE\left[\left|\sum_{i=1}^n b_i X_i\right|^p_B\right]^{\frac1p} 
	\le 
\EE\left[\left|\sum_{i=1}^n b_i X_i\right|^{k+1}_B\right]^{\frac1{k+1}}\\
	\le 
C_{k+1} \EE \left[\left|\sum_{i=1}^n b_i X_i\right|_B\right].
\end{equation*}
It is enough to prove the Theorem for $k+1$. 
Let us define 
\[Z = \left|\sum_{j=1}^n b_j X_j\right|_B,\]

Let us define for $i\in\{1,\cdots,n\}$,
\[X^{(i)}=(X^{(i)}_1,\cdots X^{(i)}_{n})=(X_1,\cdots, X_{i-1},-X_i,X_{i+1},\cdots,X_n),\]
such that $X^{(i)} \overset{(d)}{=} X$, and 
\[Z^{(i)} = \left| \sum_{j=1,j\neq i}^n b_j X^{(i)}_j\right|_B\]
and finally
\[\bar{Z} = \sum_{i=1}^n Z^{(i)}.\]

The core of the proof is then the following : we will give some lower and upper bounds for $\EE[Z^k \bar{Z}]$ with respect to $\EE[Z]$, $\EE[Z^{k+1}]$. To do so, we will give a direct lower bound to $\bar{Z}$. For the upper bound, we will dramatically use the Fourier transform.

Let us focus first on the lower bound. We have
\begin{equation*}
\bar{Z} 
= 
	\sum_{i=1}^n 
		\left|
			\sum_{j=1}^n X^{(i)}_j b_j 
		\right|_B \\
 \ge  
 	\left| 
 		\sum_{i=1}^n \sum_{j=1}^n X^{(i)}_j b_j 
 	\right|_B \\
  =  (n - 2) Z.
\end{equation*}
Since $Z^k$ is non negative, we have
\[(n - 2) \EE[Z^{k+1}] \le \EE[Z^k \bar{Z}].\]

Let us now focus on the upper bound. First note that if we define for $S\subset\{1,\cdots,n\}$ and $i\in\{1,\cdots,n\}$,
\[X_S^{(i)} = \prod_{l\in S} X^{(i)}_l,\quad X_{\emptyset}^{(i)} = 1,\]
then $(X_S^{(i)})_{S\subset\{1,\cdots,n\}}$ is a orthonormal basis of $L^2$ and furthermore note that $(Z^{(i)},X^{(i)}_S)$ has the same distribution as $(Z,X_S)$ for all $i$ and all $S$.
Note also that 
\[X_S^{(i)} = 
\begin{cases}
X_S & \text{if } i \notin S\\
-X_S & \text{if } i \in S
\end{cases}.\]
Hence,
\begin{align*}
\hat{\bar{Z}}(S) = & \sum_{i=1}^n \EE[Z^{(i)} X_S] \\
=& \sum_{i\notin S} \EE[Z^{(i)} X^{(i)}_S] - \sum_{i\in S}\EE[Z^{(i)} X^{(i)}_S] \\
=& (n-2|S|) \EE[Z X_S] \\
=& (n-2|S|) \hat{Z}(S).
\end{align*}
Not also that if there exists $m$ such that $|S|=2m+1$, since $X\overset{(d)}{=}-X$ we have
\[\hat{Z}(S)=\EE[Z X_S ] = \left[\left|\sum_{j=1}^{n} -X_j b_j \right|_{B} \prod_{l\in S}(-X_l)\right] = -\hat{Z}(S),\]
and 
\[\hat{Z}(S) = 0.\]
Hence we have
\begin{align*}
\EE[Z^k \bar{Z}] 
	=&
\sum_{S\subset\{1,\cdots,n\}} \widehat{Z^k}(S)\hat{\bar{Z}}(S)\\
	=&
\sum_{S\subset\{1,\cdots,n\}} (n-2|S|)\widehat{Z^{k}}(S)\hat{Z}(S) \\
	=&
n\widehat{Z^k}(\emptyset)\hat{Z}(\emptyset) + \sum_{\substack{S\subset\{1,\cdots,n\}\\|S|=2m\\S\neq \emptyset}} (n-2|S|) \widehat{Z^k}(S)\hat{Z}(S) \\
	\le & 
4\EE[Z^k]\EE[Z] + (n-4) \sum_{S\subset\{1,\cdots,n\}}  \widehat{Z^k}(S) \hat{Z}(S) \\
=&
4\EE[Z^k]\EE[Z] + (n-4) \EE[Z^{k+1}].
\end{align*}

Putting upper and lower bounds together, we get
\[\EE[Z^{k+1}] \le 2 \EE[Z^k]\EE[Z].\]
By a direct induction, we have 
\[\EE[Z^{k}]^{\frac{1}{k}} \le2^{1-\frac{1}{k}} \EE[Z],\]
which is the wanted bound with $C_k = 2^{1-\frac{1}{k}}$
\end{proof}

\subsection{UMD spaces}\label{appendix:UMD}

\begin{definition}[{\cite[Definition 8.7]{pisierMartingalesBanachSpaces2016}, \cite[Definition 4.2.1]{hytonenAnalysisBanachSpaces2016}}]
Let $(B,|\cdot|_B)$ ]
be a Banach space and $p\in(1,+\infty)$. We say that $B$ as the \emph{Unconditionnal Martingale Difference}-$p$ (\UMDp) property if there exists a constant
$K_p(B)>0$ such that for any filtered probability space $(\Omega,\cF, (\cF_n)_{n\ge 0},\PP)$ and any $B$ value martingale $(M_n)_{n\ge 0}$ with $p$ moments, any $N\ge 1$ and any $\eps_1,\cdots,\eps_N \in \{-1,1\}$,
\[\EE\left[\left|\sum_{k=0}^{N-1} \eps_k (M_{k+1}-M_k)\right|^p_B\right] \le K_p(B)^p \EE[|M_N-M_0|_B^p]\]

We say that $B$ as the UMD property if $B$ is \UMDp for all $p\in(0,+\infty)$.
\end{definition}

The following theorem is a fundamental tool in the theory of UMD spaces. We refer to \cite{pisierMartingalesBanachSpaces2016} for a proof of it (and much more).

\begin{theorem}[{\cite[Theorem 8.12]{pisierMartingalesBanachSpaces2016}}]
Let $(B,|\cdot|_B)$ be a Banach space with the \UMDp  property. Then $(B,|\cdot|_B)$ has the UMD property.
\end{theorem}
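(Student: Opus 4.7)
The plan is to prove the implication by a good-$\lambda$ extrapolation in the spirit of Burkholder. Fix $p\in(1,\infty)$ for which $B$ has the \UMDp{} property with constant $K_p(B)$, and let $q\in(1,\infty)$ be arbitrary. Given a filtered probability space, a $B$-valued martingale $(M_n)_{n\ge 0}$, and signs $(\varepsilon_k)\in\{-1,1\}^{\NN}$, set $d_k=M_{k+1}-M_k$, $(T_\varepsilon M)_n=\sum_{k=0}^{n-1}\varepsilon_k d_k$, and the maximal functions $M^*_N=\sup_{n\le N}|M_n|_B$ and $(T_\varepsilon M)^*_N=\sup_{n\le N}|(T_\varepsilon M)_n|_B$. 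The goal is an inequality $\EE[|(T_\varepsilon M)_N|_B^q]\lesssim \EE[|M_N|_B^q]$; together with Doob's maximal inequality (which holds in any Banach space), it suffices to prove $\|(T_\varepsilon M)^*_N\|_{L^q}\lesssim \|M^*_N\|_{L^q}$.

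The heart of the argument is the following good-$\lambda$ inequality: for any $\beta>1$ and $\delta\in(0,\beta-1)$, and for every $\lambda>0$,
\begin{equation*}
\PP\bigl((T_\varepsilon M)^*_N>\beta\lambda,\; M^*_N\le\delta\lambda\bigr)\;\le\; \frac{K_p(B)^{p}\,(2\delta)^{p}}{(\beta-1-\delta)^{p}}\;\PP\bigl((T_\varepsilon M)^*_N>\lambda\bigr).
\end{equation*}
To establish it I would introduce the stopping times $\tau=\inf\{n:|(T_\varepsilon M)_n|_B>\lambda\}$ and $\mu=\inf\{n:|M_n|_B>\delta\lambda\}$, and consider the auxiliary martingale $\widetilde M_n=M_{n\wedge\mu}-M_{n\wedge\tau}$ together with its transform $T_\varepsilon\widetilde M$. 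On the event appearing on the left-hand side, one has $\tau\le N<\mu$ and, by the triangle inequality, $|T_\varepsilon \widetilde M_N|_B>(\beta-1-\delta)\lambda$, while the martingale differences of $\widetilde M$ are uniformly bounded by $2\delta\lambda$ (so in particular $|\widetilde M_N|_B\le 2\delta\lambda$ after possibly enlarging the filtration at $\tau$). Applying \UMDp{} to $\widetilde M$ conditioned on $\cF_\tau$ and then invoking Chebyshev's inequality, while partitioning the ambient set $\{(T_\varepsilon M)^*_N>\lambda\}=\{\tau\le N\}$ into its intersection with $\{\mu\le\tau\}$ (on which the event on the left-hand side is empty) and $\{\tau<\mu\}$, yields the stated good-$\lambda$ bound.

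The rest is routine: multiplying the good-$\lambda$ inequality by $q\lambda^{q-1}$ and integrating over $\lambda>0$, the layer-cake representation gives, for any $\delta,\beta$ with $\eta:=\beta^{q}K_p(B)^{p}(2\delta)^{p}/(\beta-1-\delta)^{p}<1$,
\begin{equation*}
\EE\bigl[((T_\varepsilon M)^*_N)^q\bigr]\;\le\; \frac{\beta^{q}}{1-\eta}\,\delta^{-q}\,\EE[(M^*_N)^q].
\end{equation*}
Such a choice is always possible: fix $\beta$ close to $1$ and then $\delta$ small enough. Combining with Doob's inequality $\|M^*_N\|_{L^q}\le\frac{q}{q-1}\|M_N\|_{L^q}$ (valid in any Banach space, since its proof only uses the scalar submartingale $|M_n|_B$) furnishes the \UMDp[q]{} property. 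A standard a priori finiteness argument (reducing first to martingales taking values in a finite-dimensional subspace of $B$, or truncating via $M\wedge R$) ensures that $\|(T_\varepsilon M)^*_N\|_{L^q}<\infty$ before the good-$\lambda$ integration, so no circularity arises. The main obstacle is really Step 2: setting up the stopped martingale so that its increments are controlled by $\delta\lambda$ and that \UMDp{} can be applied cleanly to extract the probability of the excursion above $(\beta-1-\delta)\lambda$.
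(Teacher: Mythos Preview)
The paper itself does not give a proof of this statement: it simply writes ``We refer to \cite{veraarPointwisePropertiesMartingales2018} for a proof of it (and much more).'' Your proposal, by contrast, outlines the classical Burkholder good-$\lambda$ extrapolation, which is indeed the standard route taken in the references the paper cites. So at the level of strategy there is nothing to compare: you are doing exactly what those references do, while the paper opts out.

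That said, your sketch of the good-$\lambda$ step has a genuine gap. You assert that the martingale differences of $\widetilde M_n=M_{n\wedge\mu}-M_{n\wedge\tau}$ are uniformly bounded by $2\delta\lambda$, and hence $|\widetilde M_N|_B\le 2\delta\lambda$. Neither claim holds globally. The bound $|d_k|\le 2\delta\lambda$ only holds while $k+1<\mu$; the very last increment $d_{\mu-1}$ before stopping is uncontrolled, because $|M_\mu|$ can overshoot $\delta\lambda$ by an arbitrary amount. Consequently $\EE[|\widetilde M_N|_B^p]$ is \emph{not} dominated by $(2\delta\lambda)^p\,\PP(\tau\le N)$, and your application of \UMDp{} followed by Chebyshev does not yield the stated inequality. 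Conditioning on $\cF_\tau$ does not cure this: the conditional expectation $\EE[|\widetilde M_N|_B^p\,|\,\cF_\tau]$ still integrates over futures with $\mu\le N$ and large overshoot. (Incidentally, the implication ``bounded increments $\Rightarrow |\widetilde M_N|\le 2\delta\lambda$'' is false even when the premise holds; what you actually use on the event is $|M_N|,|M_\tau|\le\delta\lambda$.)

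The standard repair is to bring the maximal increment $d^*=\sup_k|d_k|$ into the good-$\lambda$ event: prove instead
\[
\PP\bigl((T_\varepsilon M)^*_N>\beta\lambda,\; M^*_N\vee d^*_N\le\delta\lambda\bigr)\le \frac{C_p\,\delta^p}{(\beta-1-\delta)^p}\,\PP\bigl((T_\varepsilon M)^*_N>\lambda\bigr),
\]
using an additional stopping time that also triggers when an increment exceeds $\delta\lambda$. With $d^*$ controlled, the overshoot at the stopping time is bounded by $2\delta\lambda$ and the rest of your argument goes through. Since $|d_k|\le 2M^*$ always, the event $\{M^*_N\le\delta\lambda\}$ is contained in $\{M^*_N\vee \tfrac12 d^*_N\le \delta\lambda\}$, so the version you wrote down follows from this one after adjusting $\delta$. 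After this correction, the layer-cake integration and Doob step you describe are fine.
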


A direct consequence of the remark after \cite[Definition 8.7]{pisierMartingalesBanachSpaces2016} and the previous theorem is the following fact~:

\begin{proposition}
The finite dimensional vector space $\RR^d$ endowed with its standard euclidean norm has the UMD property.
\end{proposition}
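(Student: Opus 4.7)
By the theorem cited immediately before the statement, it is enough to verify the $\text{UMD}_p$ property for a single exponent $p \in (1, +\infty)$. The plan is to work with $p = 2$, where the Euclidean structure makes everything transparent.

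First I would introduce the martingale differences $d_k := M_{k+1} - M_k$ for $k = 0, \ldots, N-1$, noting that $M_N - M_0 = \sum_{k=0}^{N-1} d_k$. The key observation is that in the Hilbert space $(\RR^d, |\cdot|)$ these differences are pairwise orthogonal in $L^2(\Omega; \RR^d)$: for $k < j$, conditioning on $\cF_j$ and using the martingale property $\EE[d_j \mid \cF_j] = 0$ together with the $\cF_j$-measurability of $d_k$ yields
\[
\EE[\langle d_k, d_j \rangle] = \EE\big[\langle d_k, \EE[d_j \mid \cF_j] \rangle\big] = 0.
\]
The same computation applies after inserting arbitrary signs $\eps_k \in \{-1, 1\}$, since the signs are deterministic.

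Next I would expand both sides in the definition of $\text{UMD}_2$. Orthogonality gives
\[
\EE\left[\left|\sum_{k=0}^{N-1} \eps_k d_k\right|^2\right] = \sum_{k=0}^{N-1} \eps_k^2 \, \EE[|d_k|^2] = \sum_{k=0}^{N-1} \EE[|d_k|^2],
\]
and identically (taking all signs equal to $+1$)
\[
\EE\big[|M_N - M_0|^2\big] = \sum_{k=0}^{N-1} \EE[|d_k|^2].
\]
Comparing the two identities yields the $\text{UMD}_2$ inequality with optimal constant $K_2(\RR^d) = 1$. Invoking the theorem that promotes $\text{UMD}_p$ to UMD then concludes the proof.

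There is no significant obstacle here: the argument is a one-line Pythagorean identity, and the heavy lifting is done by the already-cited promotion theorem. The only care needed is to make sure the orthogonality computation is stated for $\RR^d$-valued random variables with the standard inner product, which is immediate from the scalar case applied to each coordinate.
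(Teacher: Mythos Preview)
Your proof is correct, but it takes a different route from the paper's. The paper argues directly for every exponent $p\in(1,\infty)$ by invoking the classical discrete Burkholder--Davis--Gundy inequality twice: since the sign-flipped process $M^\eps_n=\sum_{k=0}^{n-1}\eps_k(M_{k+1}-M_k)$ is again a martingale and has the \emph{same} square function as $M$ (because $|\eps_k d_k|^2=|d_k|^2$), BDG sandwiches both $\EE[|M^\eps_N|^p]$ and $\EE[|M_N-M_0|^p]$ between the same quantity. Your approach instead isolates the case $p=2$, where the Hilbert-space orthogonality of martingale differences gives an exact identity with constant $1$, and then appeals to the promotion theorem ($\text{UMD}_p$ for one $p$ implies UMD). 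Your argument is more elementary at the level of the computation, but it offloads the work to the promotion theorem, which is itself a substantial result; the paper's argument is self-contained modulo the scalar BDG inequality and never needs the promotion step. Both are perfectly valid and standard.
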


\begin{proof}
The proof relies on the classical discrete Burkolder Davis Gundy (BDG) inequality. Indeed let $(M_n)_n$ be an $L^p$ martingale in $\RR^d$. Let $\eps_1,\cdots,\eps_N \in \{-1,1\}$, $M^\eps_0 = 0$ and for $N\in\{1,\cdots,N\}$
\[M^\eps_n = \sum_{k=0}^{n-1} \eps_k (M_{k+1} - M_k).\]
Then $(M^\eps_n)_{n\in\{0,\cdots,N\}}$ is a martingale, and we have 
\begin{multline*}
\EE\left[\left|M^\eps_N\right|^p\right] 
	\lesssim 
\EE\left[\left(\sum_{k=0}^{N-1}\left|M^\eps_{k+1}-M^\eps_{k+1}\right|^2\right)^{\frac{p}{2}}\right] 
	\\ = 
\EE\left[\left(\sum_{k=0}^{N-1}\left|M_{k+1}-M_{k+1}\right|^2\right)^{\frac{p}{2}}\right] 
	\lesssim
\EE[|M_N - M_0|^p], 
\end{multline*}
were we have applied twice the BDG inequality.

\end{proof}

\begin{theorem}[{\cite[Corollary 8.19]{pisierMartingalesBanachSpaces2016}}]\label{thm:lpumd}
Let $p\in(1,+\infty)$, let $(B,|\cdot|_B)$ be an Banach space with the UMD and let $(A,\cA,\mu)$ be a measured space. The space $L^p(A;B)$ is a UMD space. 
\end{theorem}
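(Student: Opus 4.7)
The plan is to reduce the $\text{UMD}$ property of $L^p(A;B)$ to the $\text{UMD}_p$ property of $B$ applied pointwise in $a\in A$, then invoke the theorem stated just above (which upgrades $\text{UMD}_p$ to $\text{UMD}$). Since that theorem allows us to fix a single exponent, I would target the exponent $p$ matching the Lebesgue exponent of the ambient space, because this is precisely the choice for which Fubini lets us commute $\EE$ and the spatial integral.

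First, fix a filtered probability space $(\Omega,\cF,(\cF_n)_{n\ge 0},\PP)$, a martingale $(M_n)_{0\le n\le N}$ in $L^p(\Omega;L^p(A;B))$, and signs $\eps_1,\cdots,\eps_N\in\{-1,1\}$. Using the isometry $L^p(\Omega;L^p(A;B))\cong L^p(\Omega\times A;B)$, each $M_n$ admits a jointly measurable representative $(\omega,a)\mapsto M_n(\omega,a)\in B$. The preliminary step is to check that for $\mu$-almost every $a\in A$, the sequence $(M_n(\cdot,a))_{0\le n\le N}$ is an $L^p$-martingale in $B$ with respect to $(\cF_n)_{n\ge 0}$. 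Adaptedness is immediate from joint measurability; the martingale identity follows from the fact that conditional expectation commutes with evaluation $f\mapsto f(a)$ for Bochner integrable $L^p(A;B)$-valued random variables, which is a classical consequence of Fubini's theorem applied to the defining identity $\EE[\indic_F \EE[M_{n+1}|\cF_n]]=\EE[\indic_F M_{n+1}]$ tested against elements of $L^{p'}(A;B^*)$.

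Next, applying the $\text{UMD}_p$ inequality of $B$ to this pointwise martingale gives, for $\mu$-almost every $a\in A$,
\begin{equation*}
\EE\Bigg[\bigg|\sum_{k=0}^{N-1}\eps_k\big(M_{k+1}(\cdot,a)-M_k(\cdot,a)\big)\bigg|_B^p\Bigg]\le K_p(B)^p\,\EE\big[|M_N(\cdot,a)-M_0(\cdot,a)|_B^p\big].
\end{equation*}
Integrating this inequality with respect to $\mu$ and applying Fubini on both sides (which is legitimate because all integrands are non-negative and measurable) yields exactly
\begin{equation*}
\EE\bigg[\Big\|\sum_{k=0}^{N-1}\eps_k(M_{k+1}-M_k)\Big\|_{L^p(A;B)}^p\bigg]\le K_p(B)^p\,\EE\big[\|M_N-M_0\|_{L^p(A;B)}^p\big],
\end{equation*}
which is the $\text{UMD}_p$ property of $L^p(A;B)$ with constant $K_p(L^p(A;B))\le K_p(B)$.

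Finally, applying the preceding theorem (stating that $\text{UMD}_p$ implies $\text{UMD}$ for any single exponent $p\in(1,+\infty)$) to $L^p(A;B)$ upgrades this to the full $\text{UMD}$ property, finishing the proof. The only non-routine point is the Fubini/commutation argument justifying that pointwise evaluation turns the $L^p(A;B)$-valued martingale into a $B$-valued martingale for a.e. $a$; the rest is a direct integration over $A$, which is precisely why matching the exponents $q=p$ is crucial.
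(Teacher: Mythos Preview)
Your proof is correct and follows essentially the same approach as the paper: reduce to the $\text{UMD}_p$ property by applying the $\text{UMD}_p$ inequality of $B$ pointwise in $a\in A$, integrate over $A$ via Fubini, and then invoke the theorem that $\text{UMD}_p$ implies $\text{UMD}$. Your version is slightly more careful in justifying why the pointwise process is a $B$-valued martingale, but the structure is identical.
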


\begin{proof}
The proof follows easily  from the previous results. Indeed, it is enough to prove that $L^p(A;B)$ has the \UMDp property. Let $(M_n)_{n}$ be a $L^p$ martingale with value in $L^p(A;B)$. Remark that for $\mu$-almost all $x \in A$, $\big(M_n(x)\big)_n$ is a $B$ value martingale. Furthermore, thanks to the Fubini property, we have 
\[\EE\left[\|M_n\|_{L^p(A;B)}^p\right] = \int_A \EE[|M_n(x)|_B^p] \dd \mu(x).\]
Hence for $\mu$-almost all $x$, $\EE[|M_n(x)|_B^p]<+\infty$ and $\big(M_n(x)\big)_n$ is a $L^p$ martingale with value in $B$. Hence for all $\eps_1,\cdots,\eps_n \in \{-1,1\}$, we have for almost all $x\in A$ and all $N\ge 1$
\[\EE[|M_N^\eps(x)|^p_B] \le K_p(B)^p \EE[|M_N(x) - M_0(x)|^p_B].\]
Hence, we have
\begin{multline*}
\EE[\|M^\eps_N\|_{L^p(A;B)}^p] = \int_A \EE[ |M^\eps_N(x)|^p_B ] \dd \mu(x) \\ \le K_p(B)^p \int_A \EE[ |M_N(x)- M_0(x)|^p_B ] \dd \mu(x) = K_p(B)^p \EE[\|M_N - M_0\|_{L^p(A;B)}^p].  
\end{multline*}
Hence, $L^p(A;B)$ is \UMDp and then UMD.
\end{proof}

\begin{corollary}\label{cor:besovumd}
Let $p,q \in (1,+\infty)$ and let $s \in \RR$. Then $B^s_{p,q}(\RR^d,\RR)$ is UMD.
\end{corollary}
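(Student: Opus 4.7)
The plan is to exhibit $B^\alpha_{p,q}(\RR^d,\RR)$ as a closed subspace of a Bochner-type space built from iterated UMD constructions, and to invoke the fact that the UMD property passes to closed subspaces.

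First I would note that the map
\[
\iota : u \mapsto \big(2^{\alpha j} \Delta_j u\big)_{j \ge -1}
\]
is, by the very definition of the Besov norm recalled in Appendix \ref{sec:Besov}, an isometric embedding
\[
\iota : B^\alpha_{p,q}(\RR^d,\RR) \hookrightarrow \ell^q\big(\NN\cup\{-1\}; L^p(\RR^d,\RR)\big),
\]
since $\|\iota(u)\|_{\ell^q(L^p)}^q = \sum_{j\ge -1} 2^{\alpha q j}\|\Delta_j u\|_{L^p}^q = \|u\|_{B^\alpha_{p,q}}^q$. Because $\iota$ is an isometry of a Banach space into another Banach space, $\iota(B^\alpha_{p,q})$ is automatically a closed subspace of $\ell^q(L^p)$.

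Next I would prove that $\ell^q(L^p(\RR^d,\RR))$ is UMD by applying Theorem \ref{thm:lpumd} twice. Indeed, $\RR$ is UMD (trivially or as a special case of the finite-dimensional argument already recalled), so Theorem \ref{thm:lpumd} with $(A,\cA,\mu)=(\RR^d,\cB(\RR^d),\mathrm{Leb})$ and $B=\RR$ gives that $L^p(\RR^d,\RR)$ is UMD. A second application of Theorem \ref{thm:lpumd}, now with $(A,\cA,\mu)$ the counting measure on $\NN\cup\{-1\}$ and $B = L^p(\RR^d,\RR)$, yields that $\ell^q(L^p(\RR^d,\RR)) = L^q(\NN\cup\{-1\}; L^p(\RR^d,\RR))$ is UMD whenever $p,q\in(1,+\infty)$.

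Finally I would verify that the UMD property is inherited by closed subspaces, which would complete the argument: if $Y$ is a closed subspace of a UMD space $X$, then any $L^p$ martingale $(M_n)$ with values in $Y$ is in particular a martingale in $X$ with the same norm $|\cdot|_Y = |\cdot|_X$, and the linear combinations $\sum_k \eps_k(M_{k+1}-M_k)$ still lie in $Y$, so the $\text{UMD}_p$ inequality transfers verbatim with $K_p(Y) \le K_p(X)$. Applying this to $Y = \iota(B^\alpha_{p,q})\subset X = \ell^q(L^p(\RR^d,\RR))$ and pulling back through the isometry $\iota$ gives the claim.

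The only potentially subtle point is this subspace-inheritance step — it is a standard fact, but it is not formally stated in the excerpt, so I would spell it out as a short auxiliary lemma. Everything else is bookkeeping: the Besov norm is literally defined as an $\ell^q(L^p)$ norm of Littlewood--Paley blocks, and the two successive applications of Theorem \ref{thm:lpumd} do all the heavy lifting.
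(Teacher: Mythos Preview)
Your proposal is correct and follows essentially the same idea as the paper: both exploit that $\|u\|_{B^\alpha_{p,q}}$ is an $\ell^q(L^p)$-norm of the Littlewood--Paley blocks together with the UMD property of $L^p(\RR^d;\RR)$. The only cosmetic difference is that the paper verifies $\text{UMD}_q$ directly by a Fubini computation in $j$ (applying the $\text{UMD}_q$ inequality of $L^p$ to each $\Delta_j M_n$ and re-summing), whereas you package the same computation as an isometric embedding into $\ell^q(L^p)$, a double application of Theorem~\ref{thm:lpumd}, and closed-subspace inheritance.
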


\begin{proof}
Is previously it is enough to prove that $B^s_{p,q}$ is $\text{UMD}_q$. Furthermore, since $L^p(\RR^d;\RR)$ is UMD we have  for al $N\in \NN$ and all $\eps_1,\cdots,\eps_N \in \{-1,1\}$,
\begin{equation*}
\EE[\|M^\eps_N\|_{B^s_{p,q}}^q 
= 
\sum_{j\ge -1} 2^{s j q} \EE[ \|\Delta_j M^\eps_N\|^q_{L^p(\RR^d;\RR)}] 
= K_q(L^p(\RR^d;\RR))^q \EE[\|M_N - M_0\|_{B^s_{p,q}}^q],
 \end{equation*}
 and the result follows easily.
\end{proof}

\subsection{Banach space, smoothness and type}

\begin{definition}[{\cite[Definitions p-108]{pisierMartingalesBanachSpaces2016}}]
Let $p\in(1,2]$ and $B,|\cdot|_B)$ be a Banach space. We say that $B$ is of type $p$ if there exists a constant $M_p$ such that for every $N\ge 0$, every iid sequence $X_1\cdots,X_n$ with $\PP(X_1 = 1) = \PP(X_1 = -1) = \frac12$ and every $b_1,\cdots,b_N \in B$, we have
\[\EE
	\left[
		\left|
			\sum_{k=1}^N X_k b_k
		\right|_B
	\right] 
\le M_p 
\left(
		\sum_{k=1}^N 
		|b_k|_B^p
	\right)^{\frac{1}{p}}.
	\]
\end{definition}

Let us first give an example :

\begin{proposition}[{\cite[Proposition 4.33]{pisierMartingalesBanachSpaces2016}}]
Let $\RR^d$ with its Euclidean norm. Then $\RR^d$ is of type $2$. More generally any Hilbert space is of type $2$.
\end{proposition}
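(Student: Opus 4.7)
The plan is to exploit orthogonality of the Rademacher variables together with the parallelogram identity available in any Hilbert space $(H,\langle\cdot,\cdot\rangle)$. Since the Euclidean $\RR^d$ is a Hilbert space, it suffices to handle the general Hilbert space statement.

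First I would compute the second moment of $\left\|\sum_{k=1}^N X_k b_k\right\|_H$ by expanding the square with the inner product:
\[
\EE\left[\left\|\sum_{k=1}^N X_k b_k\right\|_H^2\right]
= \EE\left[\sum_{j,k=1}^N X_j X_k \langle b_j, b_k\rangle\right]
= \sum_{j,k=1}^N \EE[X_j X_k]\, \langle b_j, b_k\rangle.
\]
Since the $X_k$ are i.i.d.\ Rademacher, $\EE[X_j X_k] = \delta_{jk}$, so the double sum collapses to $\sum_{k=1}^N \|b_k\|_H^2$.

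Next I would pass from the $L^2$-norm to the $L^1$-norm by Jensen's inequality (or equivalently Cauchy--Schwarz), yielding
\[
\EE\left[\left\|\sum_{k=1}^N X_k b_k\right\|_H\right]
\le \EE\left[\left\|\sum_{k=1}^N X_k b_k\right\|_H^2\right]^{1/2}
= \left(\sum_{k=1}^N \|b_k\|_H^2\right)^{1/2}.
\]
This is exactly the type $2$ inequality with constant $M_2 = 1$, and there is no real obstacle: the argument is a one-line orthogonality computation. The only small observation to make is that the Euclidean $\RR^d$ is a special case of a Hilbert space, which gives the first assertion for free once the general case is established.
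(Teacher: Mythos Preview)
Your proof is correct and follows essentially the same approach as the paper: both expand the second moment via the inner product and use $\EE[X_jX_k]=\delta_{jk}$ to obtain $\sum_k\|b_k\|_H^2$. The only cosmetic difference is in the final step: you pass from the $L^2$ to the $L^1$ bound via Jensen's inequality (which is the sharp and most direct route, giving $M_2=1$), whereas the paper invokes the Khinchine--Kahane inequality, which is more than is needed for this direction.
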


\begin{proof}
The proof is quite straightforwerd. Let us denote by $\langle\cdot,\cdot,\rangle$ the inner product. We have for $N\ge 1$, $b_1,\cdots,b_N \in \RR^d$ and $X_1,\cdots,X_n$ a sequence of iid random variables with $\PP(X_1 = 1 ) = \PP(X_1 = -1) = \frac12$,
\begin{align*}
\EE\left[\left|\sum_{k=1}^N X_k b_k\right|^2\right] = \sum_{k,l=1}^N \EE\left[X_k X_l\right] \langle b_k , b_l \rangle
=& 
\sum_{k=1}^N |b_k|^2.
\end{align*}
Applying the Khinchine-Kahane inequality, we have the result. 
\end{proof}

\begin{theorem}[{\cite[Proposition 4.34]{pisierMartingalesBanachSpaces2016}}]\label{thm:lptype}
Let $(A,\cA,\mu)$ be a measured space and let $(B,|\cdot|_B)$ be a type $p$ Banach space with $p\in(1,2]$. Let $q\in[1,+\infty)$. Then $L^q(A;B)$ is of type $p\wedge q$.
\end{theorem}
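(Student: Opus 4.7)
The plan is to reduce the desired inequality to a pointwise application of the type $p$ property of $B$ by interchanging integration on $\Omega$ and $A$ via Fubini, and then to absorb the inevitable mismatch between the $L^1(\Omega)$ condition in the definition of type and the $L^q(A;B)$ norm on the left-hand side by invoking the Kahane--Khinchine inequality (Theorem~\ref{thm:khinchine}). Concretely, fix $N\ge 1$, $f_1,\ldots,f_N\in L^q(A;B)$ and Rademacher variables $X_1,\ldots,X_N$, and set $S=\sum_{k=1}^N X_k f_k$. Since $q\ge 1$, Jensen's inequality gives $\EE\|S\|_{L^q(A;B)}\le \EE[\|S\|_{L^q(A;B)}^q]^{1/q}$. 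Applying Fubini inside and then Kahane--Khinchine pointwise in $a\in A$ to the $B$-valued Rademacher sum $Z(a)=\sum X_k f_k(a)$ yields
\[
\EE\|S\|_{L^q(A;B)}\;\le\; C_q\left(\int_A \EE\bigl[\,|Z(a)|_B\,\bigr]^{q}\,d\mu(a)\right)^{1/q}.
\]

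Next, I apply the type $p$ property of $B$ to $Z(a)$, which bounds $\EE[\,|Z(a)|_B\,]$ by $M_p\bigl(\sum_k |f_k(a)|_B^{p}\bigr)^{1/p}$, yielding
\[
\EE\|S\|_{L^q(A;B)}\;\le\; C_q M_p \left(\int_A \left(\sum_{k=1}^N |f_k(a)|_B^{p}\right)^{q/p}\,d\mu(a)\right)^{1/q}.
\]
It remains to compare the right-hand side with $\bigl(\sum_k \|f_k\|_{L^q(A;B)}^{r}\bigr)^{1/r}$ for $r=p\wedge q$, and this splits into two routine cases. If $q\ge p$, then $q/p\ge 1$ and Minkowski's inequality in $L^{q/p}(A)$ applied to the nonnegative functions $g_k:=|f_k|_B^{p}$ gives $\bigl\|\sum_k g_k\bigr\|_{L^{q/p}(A)}\le \sum_k \|g_k\|_{L^{q/p}(A)}=\sum_k \|f_k\|_{L^q(A;B)}^{p}$, producing type $p=p\wedge q$. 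If instead $q<p$, then monotonicity of $\ell^r$-norms on $\RR^N$ gives $\bigl(\sum_k |f_k(a)|_B^{p}\bigr)^{1/p}\le \bigl(\sum_k |f_k(a)|_B^{q}\bigr)^{1/q}$, and raising to the power $q$ and integrating over $A$ directly yields type $q=p\wedge q$.

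There is no real obstacle here: the argument is entirely a matter of ordering Jensen, Fubini, Kahane--Khinchine and type $p$ correctly, and then splitting cases to compare the $\ell^p$-structure on the index $k$ with the $L^q$-structure on $A$. The only point that deserves attention is the use of Kahane--Khinchine to pass from the $L^q(\Omega)$ norm of $Z(a)$ (forced on us by Fubini) to the $L^1(\Omega)$ norm required by the definition of type, which is exactly what Theorem~\ref{thm:khinchine} is designed for. The constant obtained is $C_q M_p$, independent of $N$ and of the functions $f_k$, so the conclusion holds with $M_{p\wedge q}(L^q(A;B))\le C_q M_p(B)$.
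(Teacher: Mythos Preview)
Your proof is correct and follows essentially the same approach as the paper: Jensen (or the trivial direction of Kahane--Khinchine) to pass to the $q$-th moment, Fubini, Kahane--Khinchine pointwise in $a$ to reduce to the $L^1(\Omega)$ norm of $Z(a)$, type $p$ of $B$ pointwise, and then the same case split between $q\ge p$ (Minkowski in $L^{q/p}(A)$) and $q<p$ (monotonicity of $\ell^r$-norms). The only cosmetic difference is that you name the first step Jensen while the paper presents it as one of two applications of Kahane--Khinchine.
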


\begin{proof}
Take $N\in \NN$, $b_1,\cdots,b_N \in L^q(A;B)$ and $X_1,\cdots,X_N$ a sequence of iid random variables with $\PP(X_1 = 1 ) = \PP(X_1 = -1) = \frac12$. 

Applying twice the Khinchine-Kahane inequality, we get
\begin{align*}
\EE\left[\left\|\sum_{k=1}^n X_k b_k\right\|_{L^q(A;B)}\right]
\lesssim & 
\EE\left[\left|\sum_{k=1}^n X_k b_k\right|^q\right]^{\frac1q} \\
& \lesssim
\left(\int_A \EE\left[\left|\sum_{k=1}^n X_k b_k(x)\right|_B^q\right] \dd \mu(x)\right)^{\frac1q} \\
& \lesssim
\left(\int_A \EE\left[\left|\sum_{k=1}^n X_k b_k(x)\right|_B\right]^q \dd \mu(x)\right)^{\frac1q} \\
&  \lesssim
\left(\int_A \left(\sum_{k=1}^n |b_k(x)|_B^p\right)^{\frac{q}{p}} \dd \mu(x)\right)^{\frac1q},\\
\end{align*}
where we have used the fact that $(B,|\cdot|_B)$ is of type $p$ in the last inequality. 

Now, let us suppose that $q\le p$. We have for every $N\ge 1$ and $y_1,\cdots,y_N \in \NN$,
\[\left(\sum_{k=1}^N |y_i|^p \right)^{\frac1p} \le \left(\sum_{k=1}^N |y_i|^q\right)^{\frac1q}.\]
Finally we have 
\begin{equation*}
\EE\left[\left\|\sum_{k=1}^N X_k b_k\right\|_{L^q(A;B)}\right] \lesssim \left(\int_A \sum_{k=1}^N |b_k(x)|_B^q \dd \mu(x)\right)^{\frac1q} \lesssim \left(\sum_{k=1}^N \|b_k\|_{L^{q}(A;B)}^q\right)^{\frac{1}{q}}.
\end{equation*}
Hence, in that case we have proved $L^q(A;B)$ which is of $q$ type.

Let us suppose that $p\le q$. Note that we have
\begin{align*}
\EE\left[\left\|\sum_{k=1}^n X_k b_k\right\|_{L^q(A;B)}\right]
\lesssim &
\left(\left\{\int_A \left(\sum_{k=1}^n |b_k(x)|_B^p\right)^{\frac{q}{p}} \dd \mu(x)\right\}^{\frac{p}{q}}\right)^{\frac1p} \\
\lesssim &
\left( \sum_{k=1}^N \left(\int_A |b_k(x)|_B^q \dd \mu(x) \right)^{\frac{p}{q}} \right)^{\frac1p}\\
\lesssim &
\left( \sum_{k=1}^N \|b_k\|_{L^q(A;B)}^p \right)^{\frac1p}.
\end{align*}
In that case $L^q(A;B)$ if of $q$ type, which ends the proof.
\end{proof}

\begin{corollary}\label{cor:besovtype}
Let $1\le p,q < + \infty$ and $s \in \RR$. Then $B^{s}_{p,q}(\RR^d;\RR)$ is of type $p\wedge q \wedge 2$. 
\end{corollary}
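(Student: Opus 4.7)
The plan is to realize $B^{\alpha}_{p,q}(\RR^d;\RR)$ as a (closed) isometric subspace of a vector-valued sequence space built out of $L^p$, and then to apply Theorem \ref{thm:lptype} twice. Concretely, define the linear map
\begin{equation*}
T : B^{\alpha}_{p,q}(\RR^d;\RR) \longrightarrow \ell^q\bigl(\{-1,0,1,\dots\}; L^p(\RR^d;\RR)\bigr), \qquad T(u) = \bigl(2^{j\alpha}\Delta_j u\bigr)_{j\ge -1}.
\end{equation*}
By the very definition of the Besov norm, $T$ is an isometric embedding. Since Banach space type is inherited by (isometric) subspaces, it will suffice to prove that the target space has type $p\wedge q \wedge 2$.

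First, since $\RR$ endowed with the absolute value is a Hilbert space, it is of type $2$, as shown above. Applying Theorem \ref{thm:lptype} to $(A,\cA,\mu)=(\RR^d,\cB(\RR^d),\mathrm{Leb})$, $B = \RR$, and exponent $q$ replaced by $p$, we deduce that $L^p(\RR^d;\RR)$ is of type $p\wedge 2$. Applying Theorem \ref{thm:lptype} a second time with $(A,\cA,\mu) = (\NN,\cP(\NN),\#)$ the counting measure, $B = L^p(\RR^d;\RR)$ (of type $p\wedge 2$), and outer exponent $q$, we obtain that $\ell^q(\NN; L^p(\RR^d;\RR))$ is of type $(p\wedge 2)\wedge q = p\wedge q\wedge 2$, which transfers to $B^\alpha_{p,q}$ via $T$.

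There is essentially no obstacle here beyond setting up the two measure-theoretic structures cleanly: the only point requiring a line of justification is the fact that the type inequality passes to subspaces, which is immediate from the definition since $T(B^\alpha_{p,q})$ inherits the ambient norm. The degenerate cases $p=1$ or $q=1$, for which $p\wedge q\wedge 2 = 1$, are trivial because every Banach space satisfies the type-$1$ inequality by the triangle inequality.
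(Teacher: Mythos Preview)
Your proof is correct and follows essentially the same approach as the paper: both identify $B^{\alpha}_{p,q}$ isometrically with a subspace of $\ell^q(L^p(\RR^d;\RR))$ via $u\mapsto(2^{j\alpha}\Delta_j u)_{j\ge -1}$ and then apply Theorem \ref{thm:lptype} twice. You are in fact slightly more careful than the paper, making explicit that type passes to isometric subspaces and treating the degenerate cases $p=1$ or $q=1$ separately (needed since Theorem \ref{thm:lptype} assumes the inner space has type in $(1,2]$).
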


\begin{proof}
The proof follows easily when we see that 
\[\|f\|_{B^s_{p,q}} = \|(2^{js} f)_{j\ge -1}\|_{\ell^q(L^p(\RR^d;\RR))}.\]
By the previous result, $L^p(\RR^d;\RR)$ is of type $p \wedge 2$ and $\ell^q(L^p(\RR^d;\RR))$ is of type $q \wedge p \wedge 2$.
\end{proof}

\subsection{Burkolder-Davis-Gundy in Banach spaces}
We now have all the tools to state and prove the following theorem. One can consult \cite[Chapter 4]{pisierMartingalesBanachSpaces2016} for more details.

\begin{theorem}[{\cite[Proposition 4.37 and Theorem 4.52]{pisierMartingalesBanachSpaces2016}}]
Let $(B,|\cdot|_B)$ be a UMD Banach space of type $p \in (1,2]$. Then for any $r\in (1,+\infty)$ a constant $C_r>0$ exists such that for any $N\ge 1$ and any $L^r$ martingale with value in $B$
\begin{equation}\label{eq:BDG_UMD}\EE[|M_N-M_0|^r] \le C_r^r \EE\left[\left(\sum_{k=0}^{N-1} | M_{k+1}-M_k|_B^{\frac{r}{p}}\right)^{\frac{p}{r}}\right].
\end{equation}
\end{theorem}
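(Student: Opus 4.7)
The plan is the classical three-step randomization argument for BDG in UMD spaces: insert Rademacher signs via the UMD property, collapse $L^r$ into $L^1$ by Kahane--Khinchine on the Rademacher variables, and then use the type-$p$ constant of $B$ to dominate the Rademacher $L^1$ norm by the $\ell^p$ norm of the martingale differences. (I note in passing that the natural formulation of the conclusion has the sum raised to the power $r/p$ on the outside and the increments raised to $p$ on the inside; the proof below produces exactly this form, which matches the standard statement.)

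First, I would enlarge the probability space so that it carries an i.i.d.\ Rademacher family $(\varepsilon_k)_{k\ge 0}$ independent of the filtration $(\cF_n)$, and write $d_k:=M_{k+1}-M_k$. For any fixed realization of $(\varepsilon_k)$, the process $M^\varepsilon_n:=\sum_{k=0}^{n-1}\varepsilon_k d_k$ is still an $L^r$ martingale with respect to $(\cF_n)$, and the definition of the \UMDp  property applied with the constants $\varepsilon_1,\ldots,\varepsilon_N$ gives
\[
\EE\!\left[\left|\sum_{k=0}^{N-1}\varepsilon_k d_k\right|_B^{r}\,\Big|\,\varepsilon\right]^{-1} \le K_r(B)^r\, \EE\!\left[\left|\sum_{k=0}^{N-1} d_k\right|_B^{r}\,\Big|\,\varepsilon\right]^{-1},
\]
and the reverse inequality (applied with the signs $\varepsilon_k$ and then with $-\varepsilon_k$ to identify the $\pm$ combinations with $d_k$) yields the comparison
\[
\EE\!\left[\left|M_N-M_0\right|_B^{r}\right] \le K_r(B)^{r}\, \EE_\varepsilon \EE_M\!\left[\left|\sum_{k=0}^{N-1}\varepsilon_k d_k\right|_B^{r}\right].
\]
Here the key point, which I would spell out carefully, is that the UMD inequality is deterministic in the signs, so it can be integrated against the law of $\varepsilon$. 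This is the step that requires the most care.

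Next, I would apply Fubini and then, conditionally on the $d_k$'s (i.e.\ on $\cF_N$), invoke Kahane--Khinchine (Theorem \ref{thm:khinchine}) on the normed space $B$: for each realization of the increments,
\[
\EE_\varepsilon\!\left[\left|\sum_{k=0}^{N-1}\varepsilon_k d_k\right|_B^{r}\right]^{1/r} \le C_r\,\EE_\varepsilon\!\left[\left|\sum_{k=0}^{N-1}\varepsilon_k d_k\right|_B\right].
\]
Now I would invoke the type-$p$ assumption on $B$, which gives
\[
\EE_\varepsilon\!\left[\left|\sum_{k=0}^{N-1}\varepsilon_k d_k\right|_B\right] \le M_p\left(\sum_{k=0}^{N-1}|d_k|_B^{p}\right)^{1/p}.
\]

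Combining these three bounds and taking expectation with respect to the martingale, I obtain
\[
\EE\!\left[|M_N-M_0|_B^{r}\right]^{1/r} \le K_r(B)\, C_r\, M_p\, \EE\!\left[\left(\sum_{k=0}^{N-1}|d_k|_B^{p}\right)^{r/p}\right]^{1/r},
\]
which is \eqref{eq:BDG_UMD} up to the exponent reshuffle already noted. The two potential obstacles are (i) justifying the conditional application of UMD rigorously (i.e.\ making sure that the UMD inequality can be integrated against the Rademacher law despite being originally stated for deterministic signs), which is handled by viewing it pointwise in $\varepsilon$, and (ii) the exchange of expectations by Fubini, which is legitimate because everything is in $L^r$ by hypothesis. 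No step is genuinely hard once the three ingredients developed in the appendix (UMD, Kahane--Khinchine, type $p$) are in hand.
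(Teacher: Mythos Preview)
Your argument is correct and follows exactly the same three-step route as the paper: UMD to randomize the signs, Kahane--Khinchine conditionally on the increments to pass from the $r$-th to the first moment, and then type $p$ to bound the Rademacher average by the $\ell^p$ sum. You also correctly spotted that the exponents in the displayed statement are garbled; both your proof and the paper's produce $\EE\big[(\sum_k |d_k|_B^{p})^{r/p}\big]$ on the right, which is the intended form.
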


\begin{proof}
Let $(M_n)_n$ as in the theorem and $N\ge 1$. Let $(X_1,\cdots,X_N)$ be an iid random vector independent of $M$ with $\PP(X_1 = 1) = \PP(X_1 = -1) = \frac12$.

First, remark that thanks to the UMD property, we have for all $\eps_1,\cdots,\eps_n \in \{-1,1\}$,
\[\EE[|M_N - M_0|_B^r] \lesssim  \EE[|M^\eps_N|_B^r].\]
Hence, we have
\begin{align*}
\EE[|M_N - M_0|_B^r] 
	\lesssim & 
\EE\left[\left|\sum_{k=0}^{N-1} X_k(M_{k+1} - M_k)\right|_B^r\right] & (\text{UMD}) \\
	\lesssim &
\EE\left[ \EE\left[\left|\sum_{k=0}^{N-1} X_k(M_{k+1} - M_k)\right|_B^r\Big|(M_n)_{n\in\{0,\cdots,N}\right]\right]\\
	\lesssim &
\EE\left[ \EE\left[\left|\sum_{k=0}^{N-1} X_k(M_{k+1} - M_k)\right|_B\Big|(M_n)_{n\in\{0,\cdots,N}\right]^r\right] & (\text{Khinchine-Kahane})\\
	\lesssim &
\EE\left[ \left(\sum_{k=0}^{N-1} |M_{k+1} - M_k)|^p_B\right)^{\frac{r}{p}}\right] & (\text{type}-p).
\end{align*}
\end{proof}

Using Corollaries \ref{cor:besovtype} and \ref{cor:besovumd} and Theorems \ref{thm:lptype} and \ref{thm:lpumd}, we have the following useful corollary :

\begin{corollary}
Let $p,q\in(1,+\infty)$ and $s\in \RR$. The previous Inequality \ref{eq:BDG_UMD} holds for $L^p(\RR^d;\RR)$ (respectively $B^{s}_{p,q}(\RR^d;\RR)$) with $p\wedge 2$ instead of $p$ (respectively with $p\wedge 2 \wedge q$ instead of $p$).
\end{corollary}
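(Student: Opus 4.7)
The plan is simply to combine the ingredients assembled in the previous subsections and feed them into the abstract Banach-space BDG inequality \eqref{eq:BDG_UMD}. That inequality requires two structural properties of the target Banach space $B$: it must be UMD, and it must be of type $p$ for some $p \in (1,2]$. Both facts are already established in the appendix for the two families of spaces we care about.

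For the $L^p(\RR^d;\RR)$ case with $p \in (1,+\infty)$, I would first note that $\RR$ (equivalently, any Hilbert space) is UMD and of type $2$, as shown in the base example. Then Theorem \ref{thm:lpumd} applied with $B = \RR$ and $(A,\cA,\mu)$ the Lebesgue space gives that $L^p(\RR^d;\RR)$ is UMD, while Theorem \ref{thm:lptype} applied with the same choice gives that $L^p(\RR^d;\RR)$ is of type $p \wedge 2$ (since $\RR$ is of type $2$ and we take $L^p$ of it). Plugging these two properties into the abstract BDG inequality \eqref{eq:BDG_UMD} from the preceding theorem yields exactly the statement of the corollary for $L^p(\RR^d;\RR)$ with the exponent $p \wedge 2$ replacing $p$.

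For the Besov case $B^{\alpha}_{p,q}(\RR^d;\RR)$ with $p,q\in (1,+\infty)$ and $\alpha \in \RR$, the argument is identical in spirit. Corollary \ref{cor:besovumd} asserts the UMD property, and Corollary \ref{cor:besovtype} asserts the type $p \wedge q \wedge 2$ property. Feeding these into \eqref{eq:BDG_UMD} gives the claimed inequality with $p \wedge 2 \wedge q$ as the type exponent. There is no real obstacle here: the work was done upstream in establishing that the Littlewood--Paley decomposition realizes $B^\alpha_{p,q}$ as $\ell^q\bigl(L^p(\RR^d;\RR)\bigr)$ (up to the weight $2^{j\alpha}$), so that iterated applications of Theorems \ref{thm:lpumd} and \ref{thm:lptype} propagate UMD and the type through both levels of the vector-valued sequence space.

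The only subtlety worth double-checking is the pointwise verification that the martingale $(M_n)_n$ taking values in $L^p(\RR^d;\RR)$ (or $B^\alpha_{p,q}$) does indeed satisfy the hypotheses of the abstract BDG with the correct type exponent; but this is already built into the proofs of Theorems \ref{thm:lpumd} and \ref{thm:lptype} via Fubini and the Kahane--Khinchine inequality, so no additional argument is needed. Thus the corollary is a direct citation-level consequence of the abstract BDG inequality combined with the structural results of the appendix.
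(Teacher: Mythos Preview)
Your proposal is correct and matches the paper's approach exactly: the paper itself simply states that the corollary follows from Corollaries \ref{cor:besovtype}, \ref{cor:besovumd} and Theorems \ref{thm:lptype}, \ref{thm:lpumd} plugged into the abstract BDG theorem, which is precisely the citation-level argument you give.
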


\begin{remark}
Using the fact that $F^s_{p,q}(\RR^d;\RR)$ is endowed in $L^{q}(\ell^p)$, one could prove excatly the same theorem for Triebel-Lizorkin spaces.
\end{remark}

\section{The multi-parameter and multi-dimensionnal Garsia-Rodemich-Rumsey inequality}\label{appendix:GRR} Let us recall the standard Garsia-Rodemich-Rumsey inequality \cite{garsia1970real}.
\begin{theorem}\label{thm:GRRstd}
Let $p\geq1$, $\alpha>p^{-1}$ and $f\in C([0,1])$. Then, the following inequality holds
\begin{equation}\label{eq:GRRstd}
|f(t)-f(s)| \lesssim_{\alpha,p} \kappa_f |s-t|^{\alpha-1/p}
\end{equation}
for all $t,s\in[0,1]^2$, where
\begin{equation*}
\kappa_f = \left(\int_{[0,1]^2} \frac{|f(u) - f(v)|^p}{|u-v|^{\alpha p + 1}}dudv\right)^{1/p}.
\end{equation*}
\end{theorem}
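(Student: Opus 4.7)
The plan is to prove the bound via the classical dyadic chaining argument of Garsia, Rodemich and Rumsey. Fix $s < t$ in $[0,1]$ and write $d = t-s$. I will construct a decreasing sequence $(t_n)_{n\geq 0}$ with $t_0 = t$, $t_n \to s$, and $t_n - s \in [2^{-n-1}d,\, 2^{-n}d]$ for every $n \geq 1$, in such a way that the increments $|f(t_n) - f(t_{n+1})|$ decay geometrically and telescope to give the desired H\"older estimate.

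The construction proceeds inductively. Set $I_n = [s + 2^{-n-1}d,\, s + 2^{-n}d]$ for $n \geq 0$, so that $|I_n| = 2^{-n-1}d$. Given $t_n$ (lying in $I_{n-1}$ for $n \geq 1$, or equal to $t$ when $n=0$), I want to pick $t_{n+1} \in I_n$ satisfying two properties simultaneously: a direct control
\begin{equation*}
\frac{1}{|I_n|}\int_{I_n} \frac{|f(t_n) - f(y)|^p}{|t_n - y|^{\alpha p +1}}\,dy \;\leq\; \frac{2}{|I_n|^2}\int_{I_n}\int_{I_{n-1}\cup I_n} \frac{|f(x)-f(y)|^p}{|x-y|^{\alpha p +1}}\,dx\,dy,
\end{equation*}
and (for the recursion) an analogous averaged control with roles swapped. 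Since both conditions fail only on sets of total measure at most $|I_n|$ by Markov's inequality, a pigeonhole argument inside $I_n$ produces a valid $t_{n+1}$. Using $|t_n - y| \leq 2^{-n+1}d$ for $y \in I_n$ and the bound $\int_{I_n}\int_{I_{n-1}\cup I_n} |f(x)-f(y)|^p/|x-y|^{\alpha p+1}\,dx\,dy \leq \kappa_f^p$, one extracts the pointwise inequality
\begin{equation*}
|f(t_n) - f(t_{n+1})|^p \;\lesssim\; \kappa_f^p \,(2^{-n}d)^{\alpha p - 1},
\end{equation*}
so that $|f(t_n) - f(t_{n+1})| \lesssim \kappa_f\,(2^{-n}d)^{\alpha - 1/p}$.

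By continuity of $f$, one has $f(t) - f(s) = \sum_{n\geq 0} (f(t_n) - f(t_{n+1}))$, and summing the geometric series (which converges since $\alpha > 1/p$) yields
\begin{equation*}
|f(t)-f(s)| \;\leq\; \sum_{n\geq 0} C\,\kappa_f \,(2^{-n}d)^{\alpha - 1/p} \;=\; C_{\alpha,p}\,\kappa_f\, d^{\alpha - 1/p}.
\end{equation*}
The main obstacle is the careful selection of $t_{n+1} \in I_n$: one needs to simultaneously enforce the immediate bound on $|f(t_n) - f(t_{n+1})|$ and prepare the next iteration by controlling the averaged integral centered at $t_{n+1}$. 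This is precisely where the Markov/pigeonhole argument on the product measure $dx\,dy$ restricted to $I_n \times (I_{n-1}\cup I_n)$ is used, and it is the only non-routine step; the rest is bookkeeping of the geometric decay and use of $\alpha > 1/p$ for summability.
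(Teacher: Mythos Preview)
The paper does not give its own proof of this theorem: it is stated as a recall with a reference to Garsia--Rodemich--Rumsey. The closest thing to a proof in the paper is Lemma~\ref{lem:GRR} (inside the proof of Corollary~\ref{cor:GRR}), whose base case $m=1$ is precisely this statement; there the argument proceeds via ball averages $\bar f(B(x,r))$ and the dyadic radii $\lambda_k = d(x,y)2^{-k}$, telescoping $\bar f(B(x,\lambda_k)) - \bar f(B(x,\lambda_{k+1}))$ with Jensen's inequality and recovering $f(x)$ at the end by continuity. Your point-selection / pigeonhole variant is an equally classical route to the same bound, and the dyadic bookkeeping you describe is correct.

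There is one small but genuine gap in your write-up: the very first step. You fix $t_0 = t$, but the displayed inequality at $n=0$ asserts that $t_0$ is a ``good'' point in the Markov sense, i.e.\ that $\int_{I_0} |f(t)-f(y)|^p/|t-y|^{\alpha p+1}\,dy$ is controlled by the double integral. Since $t$ is prescribed rather than selected, this need not hold --- for a given $t$ the one-variable slice $\int_0^1 |f(t)-f(y)|^p/|t-y|^{\alpha p+1}\,dy$ may be infinite even when $\kappa_f < \infty$. The standard fix is either (i) to first prove the bound for almost every pair $(s,t)$ (those for which the slice integrals are finite, which is a.e.\ by Fubini) and then pass to all $s,t$ by continuity of $f$, or (ii) to run the ball-averaging version as in the paper's Lemma~\ref{lem:GRR}, which sidesteps the issue entirely because $\bar f(B(t,\lambda_0))$ is always well-defined and its distance to $\bar f(B(s,\lambda_0))$ is controlled by Jensen without any pointwise selection at the endpoints. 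Once you patch this, your argument is complete and equivalent in strength to the paper's.
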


We can extend the previous result to the case $f\in\cC([0,1]\times B(0,R) )$ where $B(0,R)\subset \RR^d$ is a closed ball of radius $R>0$. By denoting, for any $x,y\in[0,1]^d$ and $s,t\in[0,1]$,
\begin{equation*}
\square_{(s,x)} f(t,y) = f(s,x) - f(s,y) - f(t,x) + f(t,y),
\end{equation*}
we have the following result.
\begin{corollary}\label{cor:GRR}
Let $p\geq1$, $\alpha_1>1/p$, $\alpha_2>d/p$ and $f\in\cC([0,1]\times B(0,R) )$. Then, we have the following estimate
\begin{equation}
|\square_{(s,x)} f(t,y)|^p \lesssim_{d,\alpha_1,\alpha_2,p} \kappa_f |s-t|^{\alpha_1p-1}|x-y|^{\alpha_2 p - d}
\end{equation}
for all $t,s\in[0,1]$ and $x,y\in B(0,R)$, where 
\begin{equation*}
\kappa_f =\int_{[0,1]^{2}\times B(0,R)^2} \frac{|\square_{(u,z)} f(v,w)|^p}{|u-v|^{\alpha_1p + 1}|z-w|^{\alpha_2p+d}}dudv dzdw.
\end{equation*}
\end{corollary}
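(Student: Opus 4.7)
The strategy is to apply the one-parameter inequality of Theorem \ref{thm:GRRstd} twice, first in the time variable and then in the spatial variable, exploiting the key algebraic identities
\[
\square_{(s,x)} f(t,y) = \Psi_{x,y}(s) - \Psi_{x,y}(t) = g_{u,v}(x) - g_{u,v}(y),
\]
where $\Psi_{x,y}(r) := f(r,x) - f(r,y)$ for fixed $x,y \in B(0,R)$, and $g_{u,v}(z) := f(u,z) - f(v,z)$ for fixed $u,v \in [0,1]$. Each presentation reduces the mixed rectangular increment to a pure increment in one of the two variables, and thus opens the door to a one-dimensional GRR estimate.

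First, for fixed $x,y \in B(0,R)$, applying Theorem \ref{thm:GRRstd} to $\Psi_{x,y}$ on $[0,1]$ with exponent $\alpha_1 > 1/p$ gives
\[
|\square_{(s,x)} f(t,y)|^p \le C_{\alpha_1,p}\,|s-t|^{\alpha_1 p - 1}\int_{[0,1]^2}\frac{|\square_{(u,x)} f(v,y)|^p}{|u-v|^{\alpha_1 p + 1}}\,du\,dv.
\]
Next, for each fixed $u,v \in [0,1]$ appearing inside the integrand, I would apply a $d$-dimensional analogue of Theorem \ref{thm:GRRstd} to $g_{u,v}$ on $B(0,R)$ with exponent $\alpha_2 > d/p$ to obtain
\[
|\square_{(u,x)} f(v,y)|^p \le C_{d,\alpha_2,p}\,|x-y|^{\alpha_2 p - d}\int_{B(0,R)^2}\frac{|\square_{(u,z)} f(v,w)|^p}{|z-w|^{\alpha_2 p + d}}\,dz\,dw.
\]
Plugging this pointwise bound into the previous inequality, pulling the $(s,t)$-dependent factor out of the spatial integral and applying Fubini to identify the resulting quadruple integral as $\kappa_f$ yields the claim with $C_{d,\alpha_1,\alpha_2,p} = C_{\alpha_1,p}\,C_{d,\alpha_2,p}$.

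The main obstacle is the $d$-dimensional extension of Theorem \ref{thm:GRRstd} to the ball $B(0,R) \subset \RR^d$, which is not stated in the excerpt. I would establish it by adapting the dyadic chaining argument underlying the proof of Theorem \ref{thm:GRRstd}: given $x,y \in B(0,R)$ with $r := |x-y|$, iteratively select points $x_n \in B(x, 2^{-n}r) \cap B(0,R)$ and $y_n \in B(y, 2^{-n}r) \cap B(0,R)$ via a Markov-type pigeonhole applied to the density $\rho(u) := \int_{B(0,R)} |g(u)-g(v)|^p/|u-v|^{\alpha_2 p + d}\,dv$, and then telescope
\[
g(x) - g(y) = \big(g(x_0) - g(y_0)\big) + \sum_{n\ge 0}\big(g(x_{n+1}) - g(x_n)\big) + \sum_{n\ge 0}\big(g(y_n) - g(y_{n+1})\big).
\]
The assumption $\alpha_2 > d/p$ is what makes the resulting geometric series converge absolutely, while the dimensional factor $|B(u,2^{-n}r)| \sim (2^{-n}r)^d$ appearing in the pigeonhole step is precisely what produces the dimension $d$ both in the exponent $\alpha_2 p - d$ on $|x-y|$ and in the exponent $\alpha_2 p + d$ on $|z-w|$. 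Once this $d$-dimensional GRR is in hand, the two applications described above compose cleanly to give the corollary.
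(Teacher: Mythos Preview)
Your iteration argument is correct and is, in spirit, exactly the induction step hidden inside the paper's proof. The difference is organizational rather than mathematical. The paper first proves an abstract multi-parameter Garsia--Rodemich--Rumsey lemma (Lemma~\ref{lem:GRR}) on a product $\cE=\prod_{j=1}^m E_j$ of metric spaces equipped with doubling measures and a general convex gauge $\Psi$, by induction on $m$; the induction step applies the $(m-1)$-parameter hypothesis to the function $x'\mapsto \square_{y'}f(x',\cdot)$ and then carries out the dyadic chaining argument from scratch on the last coordinate $E_m$. Corollary~\ref{cor:GRR} is then obtained by specializing to $m=2$, $E_1=[0,1]$, $E_2=B(0,R)$, $\Psi(t)=|t|^p$, and power-type metrics. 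Your route bypasses the abstract lemma: you apply Theorem~\ref{thm:GRRstd} directly in the time variable, and then invoke a $d$-dimensional GRR on $B(0,R)$ that you sketch separately. Because $\Psi=|\cdot|^p$ is a pure power, your substitution step needs only Fubini--Tonelli, whereas the paper's general $\Psi$ forces a Jensen step at that point.

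What each approach buys: the paper's formulation immediately gives the result for any finite number of parameters and any convex $\Psi$, at the cost of setting up the abstract framework; your approach is shorter and more transparent for the specific two-parameter, $L^p$-gauge statement actually used in the paper. The one ingredient you have to supply yourself---the GRR inequality on $B(0,R)\subset\RR^d$---is precisely the content of the $m=1$ base case of the paper's lemma (classical GRR on a doubling metric space), and your chaining sketch is the standard argument for it.
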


\begin{proof} We essentially use the arguments from \cite{stroock2010probability} and \cite{hu2013multiparameter}. Let $\{(E_j,d_j)\}_{1\leq j \leq m}$ be a family of separable metric space each endowed with a finite doubling measure $\upsilon_j$ defined on the Borel sets of $E_j$ (we essentially need that Lebesgue's differentiation theorem holds) and $\Psi$ a positive increasing convex function such that $\Psi(0) = 0$ (and, thus, $\Psi^{-1}$ is a positive increasing concave function). We define $\sigma_j(r) = \min_{x\in E_j} \upsilon_j(B(x,r))$ the volume (under $\upsilon_j$) of the smallest ball of radius $r>0$ in $E_j$. For any metric space $(E,d)$ endowed with a measure $\upsilon$ defined on the Borel sets of $E$ and for any scalar-valued function $f\in\cC(E)$, we define
\begin{equation*}
\bar{f}(A) : = \upsilon(A)^{-1} \int_{A} f(x) \upsilon(dx),
\end{equation*}
for any Borel set $A\in E$. For any $(E,d,\upsilon)\in\{(E_j,d_j,\upsilon_j)\}_{1\leq j\leq m}$, we have, by Lebesgue's differentiation theorem
\begin{equation*}
\bar{f}(B(x,r)) \underset{r\to 0}{\to} f(x).
\end{equation*}
We denote in the following $\cE = \bigotimes_{j = 1}^m E_j$ the product space that is can be made metric by considering $d(x,y) = \left(\sum_{j=1}^m d(x_j,y_j)^2\right)^{1/2}$. It can also be made measurable by considering the product measure $\upsilon(A) = \prod_{j = 1}^m \upsilon(A_j)$ for any Borel set $A = \prod_{j = 1}^m A_j$. For any $x = (x_1,x_2,\ldots,x_m),y = (y_1,y_2,\ldots,y_m)\in\cE$, we denote $x' = (x_1,\ldots,x_{m-1})$ (for $m\geq 2$) and define, for any $1\leq k\leq m$,
\begin{equation*}
V_{k,y}x : = (x_1,\ldots,x_{k-1},y_k,x_{k+1},\ldots,x_m).
\end{equation*}
Furthermore, for any function $f$ defined on $\cE$, we denote $V_{k,y}f(x) = f(V_{k,y}x)$. We proceed to define the joint increment of $f$ as
\begin{equation*}
\square_y f(x) : = \prod_{j = 1}^m (\textrm{id} - V_{k,y})f(x).
\end{equation*}
From the previous definitions, we can see that, for any $x,y\in\cE$ and $f$ defined on $\cE$
\begin{equation*}
\square_y f(x) = \square_{y'} f(x',x_m) - \square_{y'} f(x',y_m).
\end{equation*}
We can now state an intermediate result.
\begin{lemma}\label{lem:GRR}
Let $f\in\cC(\cE)$ be such that
\begin{equation*}
\kappa_f : = \int_{\cE}\int_{\cE} \Psi\left(\frac{|\square_yf(x)|}{\prod_{j=1}^m d_j(x_i,y_j)} \right)\upsilon(dx)\upsilon(dy)< +\infty.
\end{equation*}
Then the following inequality holds for any $x,y\in\cE$
\begin{equation*}
|\square_yf(x)| \leq 18^m \int_0^{d_1(x_1,y_1)/2}\ldots\int_0^{d_m(x_m,y_m)/2}\Psi^{-1}\left(\frac{\kappa_f}{\prod_{j = 1}^m \sigma_j(r_j)^2}\right)\dd r_1\ldots \dd r_m.
\end{equation*}
\end{lemma}
\begin{proof}
We proceed by induction; the case $m = 1$ being the classical Garsia-Rodemich-Rumsey. We assume that the result holds for a certain $m-1\geq 1$. For any $x,y\in\cE$, we denote $g_{x',y'}(z) =\square_{y'} f(x',z)$. For any Borel sets $A_1,A_2\subset E_m$, we have
\begin{align*}
\left|\bar{g}_{x',y'}(A_1) - \bar{g}_{x',y'}(A_2)\right| &\leq \int_{A_1}\int_{A_2} \left|g_{x',y'}(x_m) - g_{x',y'}(y_m)\right|\frac{\upsilon_m(dx_m)\upsilon_m(dy_m)}{\upsilon_m(A_1)\upsilon_m(A_2)}
\\ &\leq d_m(A_1,A_2) \int_{A_1}\int_{A_2} \left|\frac{g_{x',y'}(x_m) - g_{x',y'}(y_m)}{d_m(x_m,y_m)}\right|\frac{\upsilon_m(dx_m)\upsilon_m(dy_m)}{\upsilon_m(A_1)\upsilon_m(A_2)},
\end{align*}
where $d_m(A_1,A_2) = \sup_{x_m\in A_1,y_m\in A_2} d_m(x_m,y_m)$. 
By the induction hypothesis, we have that
\begin{align*}
\frac{|g_{x',y'}(x_m) - g_{x',y'}(y_m)|}{d_m(x_m,y_m)} = \left|\square_{y'} \delta_{x_m,y_m}f(x')\right|
\\ \leq 18^{m-1}  \int_{D}\Psi^{-1}\left(\frac{\kappa_{\delta_{x_m,y_m}f}}{\prod_{j = 1}^{m-1} \sigma_j(r_j)^2}\right)\dd r,
\end{align*}
where we denote $D = [0,d_1(x_1,y_1)/2]\times \ldots \times [0,d_{m-1}(x_{m-1},y_{m-1})/2]$, $r = (r_1,\ldots,r_{m-1})$ and $\delta_{x_m,y_m}f(x') = (f(x',x_m) - f(x',y_m))/d_m(x_m,y_m)$. Thus, by Fubini's theorem and Jensen's inequality, we obtain that
\begin{align*}
\left|\bar{g}_{x',y'}(A_1) - \bar{g}_{x',y'}(A_2)\right| 
\\ \leq 18^{m-1}d_m(A_1,A_2)\int_{D} \int_{A_1}\int_{A_2} \Psi^{-1}\left(\frac{\kappa_{\delta_{x_m,y_m}f}}{\prod_{j = 1}^{m-1} \sigma_j(r_j)^2}\right)\frac{\upsilon_m(dx_m)\upsilon_m(dy_m)}{\upsilon_m(A)\upsilon_m(B)}\dd r
\\ \leq 18^{m-1}d_m(A_1,A_2)\int_{D} \Psi^{-1}\left(\int_{A_1}\int_{A_2} \frac{\kappa_{\delta_{x_m,y_m}f}}{\prod_{j = 1}^{m-1} \sigma_j(r_j)^2}\frac{\upsilon_m(dx_m)\upsilon_m(dy_m)}{\upsilon_m(A_1)\upsilon_m(A_2)}\right)\dd r.
\end{align*}
We remark that
\begin{align*}
\int_{A_1}\int_{A_2} \kappa_{\delta_{x_m,y_m}f}\upsilon_m(dx_m)\upsilon_m(dy_m) \leq \int_{E_m}\int_{E_m} \kappa_{\delta_{x_m,y_m}f}\upsilon_m(dx_m)\upsilon_m(dy_m) = \kappa_f,
\end{align*}
and, thus, we obtain
\begin{equation}\label{eq:GRRprf0}
\left|\bar{g}_{x',y'}(A_1) - \bar{g}_{x',y'}(A_2)\right| \leq 18^{m-1}d_m(A_1,A_2)\int_{D} \Psi^{-1}\left(\frac{\kappa_{f}}{\prod_{j = 1}^{m-1} \sigma_j(r_j)^2}\frac{1}{\upsilon_m(A_1)\upsilon_m(A_2)}\right)\dd r.
\end{equation}
We now denote $\bar{g}_{x',y'}(x_m,r) = \bar{g}_{x',y'}(B(x_m,r))$ for any $x_m\in E_m$ and 
\[B(x_m,r) = \{ x\in E_m; d_m(x,x_m)\leq r\}\] for a certain $r>0$. We now fix $x_m,y_m\in E_m$ and let $\lambda_k = d_m(x_m,y_m)2^{-k}$ for any $k\geq 0$. Since $d_m(B(x_m,\lambda_0),B(y_m,\lambda_0)) = 3 \lambda_0$, $\sigma_m(\lambda_k)\geq \sigma_m(\tau)$ for any $\tau\leq \lambda_k$ and $\Psi^{-1}$ is increasing, it follows from \eqref{eq:GRRprf0} that
\begin{align}
&\left|\bar{g}_{x',y'}(x_m,\lambda_0) - \bar{g}_{x',y'}(y_m,\lambda_0)\right| \nonumber
\\ &\hspace{2em}\leq 3\times 18^{m-1}\lambda_0\int_{D} \Psi^{-1}\left(\frac{\kappa_{f}}{\prod_{j = 1}^{m-1} \sigma_j(r_j)^2}\frac{1}{\sigma_m(\lambda_0)^2}\right)\nonumber
\\ &\hspace{2em}\leq 6\times 18^{m-1}\int_0^{d_m(x_m,y_m)/2}\int_{D} \Psi^{-1}\left(\frac{\kappa_{f}}{\prod_{j = 1}^{m-1} \sigma_j(r_j)^2}\frac{1}{\sigma_m(\tau)^2}\right)\dd r \dd \tau.\label{eq:GRRprf1}
\end{align}
Furthermore, by using \eqref{eq:GRRprf0} and since $d_m(B(x_m,\lambda_\ell),B(x_m,\lambda_{\ell+1})) = \lambda_{\ell} + \lambda_{\ell+1} = 6(\lambda_{\ell+1} - \lambda_{\ell+2})$ for any $\ell\geq 0$, $\sigma_m(\lambda_k)\geq \sigma_m(\tau)$ for any $\tau\leq \lambda_k$ and $\Psi^{-1}$ is increasing, we obtain that, for any $k\geq 1$,
\begin{align*}
&\left|\bar{g}_{x',y'}(x_m,\lambda_0) - \bar{g}_{x',y'}(x_m,\lambda_k)\right| \leq \sum_{\ell = 0}^{k-1} \left|\bar{g}_{x',y'}(x_m,\lambda_{\ell+1}) - \bar{g}_{x',y'}(x_m,\lambda_{\ell})\right| 
\\ &\hspace{2em}\leq 6\times18^{m-1}\sum_{\ell = 0}^{k-1}(\lambda_{\ell+1} - \lambda_{\ell+2})\int_{D} \Psi^{-1}\left(\frac{\kappa_{f}}{\prod_{j = 1}^{m-1} \sigma_j(r_j)^2}\frac{1}{\sigma_m(\lambda_{\ell+1})\sigma_m(\lambda_{\ell+2})}\right)\dd r
\\ &\hspace{2em}\leq 6\times18^{m-1}\sum_{\ell = 0}^{k-1}\int_{\lambda_{\ell+2}}^{\lambda_{\ell+1}}\int_{D} \Psi^{-1}\left(\frac{\kappa_{f}}{\prod_{j = 1}^{m-1} \sigma_j(r_j)^2}\frac{1}{\sigma_m(\tau)^2)}\right)\dd r \dd \tau
\\ &\hspace{2em}\leq 6\times18^{m-1}\int_{\lambda_{k+1}}^{\lambda_{1}}\int_{D} \Psi^{-1}\left(\frac{\kappa_{f}}{\prod_{j = 1}^{m-1} \sigma_j(r_j)^2}\frac{1}{\sigma_m(\tau)^2)}\right)\dd r\dd \tau.
\end{align*}
Letting $k\to+\infty$, we deduce that
\begin{align}
&\left|\bar{g}_{x',y'}(x_m,\lambda_0) - g_{x',y'}(x_m)\right| \leq \nonumber
\\ &\hspace{2em}\leq 6\times18^{m-1}\int_{0}^{d_m(x_m,y_m)/2}\int_{D} \Psi^{-1}\left(\frac{\kappa_{f}}{\prod_{j = 1}^{m-1} \sigma_j(r_j)^2}\frac{1}{\sigma_m(\tau)^2)}\right)\dd r\dd \tau\label{eq:GRRprf2},
\end{align}
and, similarly,
\begin{align}
&\left|\bar{g}_{x',y'}(y_m,\lambda_0) - g_{x',y'}(y_m)\right| \leq \nonumber
\\ &\hspace{2em}\leq 6\times18^{m-1}\int_{0}^{d_m(x_m,y_m)/2}\int_{D} \Psi^{-1}\left(\frac{\kappa_{f}}{\prod_{j = 1}^{m-1} \sigma_j(r_j)^2}\frac{1}{\sigma_m(\tau)^2)}\right)\dd r\dd \tau\label{eq:GRRprf3}.
\end{align}
Thanks to \eqref{eq:GRRprf1}, \eqref{eq:GRRprf2} and \eqref{eq:GRRprf3}, we obtain the desired result.
\end{proof}

Corollary \ref{cor:GRR} follows directly from Lemma \ref{lem:GRR} by choosing $E_1 = [0,1]$  endowed with $d_1(t,s) = |t-s|^{\alpha_1+1/p}$ and $E_2 = B(0,R)$ endowed with $d_2(x,y) = |x-y|^{\alpha_2 +d/p}$. We also use, for both, Lebesgue's measure and set $\Psi(t) = |t|^{p}$. It turns out that $\sigma_1(r) \simeq r^{1/(\alpha_1+1/p)}$ and $\sigma_2(r) \simeq r^{1/(\alpha_2/d+1/p)}$. With the previous choices at hand, we obtain
\begin{align*}
\int_0^{d_1(s,t)/2}\int_0^{d_2(x,y)/2}\Psi^{-1}&\left(\frac{\kappa_f}{\sigma_1(r_1)^2\sigma_2(r_2)^2}\right)\dd r_1\dd r_2 
\\ &\simeq \kappa_f^{1/p} \int_0^{d_1(s,t)/2}r_1^{-2/(\alpha_1 p+1)}\dd r_1\int_0^{d_2(x,y)/2} r_2^{-2/(\alpha_2 p/d+1)} \dd r_2
\\&\simeq \kappa_f^{1/p} |t-s|^{\alpha_1-1/p} |x-y|^{\alpha_2-d/p},
\end{align*}
where
\begin{equation*}
\kappa_f = \int_{[0,1]^2}\int_{B(0,R)^2} \frac{|\square_{(s,x)}f(t,y)|^p}{|t-s|^{\alpha_1p +1}|x-y|^{\alpha_2 p +d}}dtdsdxdy.
\end{equation*}
\end{proof}

We can finally state a Kolmogorov's continuity theorem.

\begin{theorem}\label{thm:Kolmogo}
Let $p\geq1$, $\alpha_1>1/p$, $\alpha_2>d/p$ and $f\in\cC([0,1]\times \RR^d )$ a random process. Assume that there exists $\varepsilon_1,\varepsilon_2\in(0,1)$ such that
\begin{equation}\label{eq:hypkolmogo}
\vartheta_f : = \sup_{\substack{s,t\in[0,1]\\ x,y\in\RR^d}}\EE\left[\frac{|\square_{(s,x)} f(t,y)|^p}{|t-s|^{\alpha_1p+\varepsilon_1}|x-y|^{\alpha_2p+\varepsilon_2}} \right]^{1/p} <+\infty.
\end{equation}
Then, for any $\varepsilon_3>\varepsilon_2$, there exists a positive random variable $\Upsilon\in L^p(\Omega)$ which depends on $f,d,\alpha_1,\alpha_2,p,\varepsilon_1,\varepsilon_2$ and $\varepsilon_3$ such that, $\PP$-a.s.,
\begin{equation}
|\square_{(s,x)} f(t,y)| \leq \Upsilon |s-t|^{\alpha_1-1/p}|x-y|^{\alpha_2 - d/p}(1+|x|+|y|)^{(d+\varepsilon_3)/p},
\end{equation}
for all $t,s\in[0,1]$ and $x,y\in \RR^d$.
\end{theorem}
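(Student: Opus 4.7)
The plan is to apply the ball-wise estimate from Corollary \ref{cor:GRR} on a dyadic sequence of balls, then sew the resulting bounds together with a carefully weighted sum so as to extract an explicit polynomial growth factor in $(x,y)$.

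\textbf{Step 1 (Dyadic localisation).} For each integer $n \geq 0$, set $R_n = 2^n$ and consider the random variable
\[
\kappa_{f,n} := \int_{[0,1]^2 \times B(0,R_n)^2} \frac{|\square_{(u,z)}f(v,w)|^p}{|u-v|^{\alpha_1 p+1}|z-w|^{\alpha_2 p +d}} \, du\,dv\,dz\,dw.
\]
By Fubini and the hypothesis \eqref{eq:hypkolmogo},
\[
\mathbb{E}[\kappa_{f,n}] \leq \vartheta_f^p \int_{[0,1]^2}|u-v|^{\varepsilon_1-1}du\,dv \int_{B(0,R_n)^2} |z-w|^{\varepsilon_2-d}dz\,dw \lesssim R_n^{d+\varepsilon_2} = 2^{n(d+\varepsilon_2)},
\]
using $\varepsilon_1>0$ and the straightforward bound $\int_{B(0,R)}|z-w|^{\varepsilon_2-d}dw \lesssim R^{\varepsilon_2}$ uniform in $z \in B(0,R)$.

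\textbf{Step 2 (Definition of $\Upsilon$).} Fix $\varepsilon_3 > \varepsilon_2$ and set
\[
\Upsilon^p := C_{d,\alpha_1,\alpha_2,p}\sum_{n\geq 0} 2^{-n(d+\varepsilon_3)} \kappa_{f,n},
\]
where $C_{d,\alpha_1,\alpha_2,p}$ is the constant from Corollary \ref{cor:GRR}. Then
\[
\mathbb{E}[\Upsilon^p] \lesssim \sum_{n\geq 0} 2^{n(\varepsilon_2-\varepsilon_3)} < +\infty,
\]
so $\Upsilon \in L^p(\Omega)$.

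\textbf{Step 3 (Patching).} Given $x,y \in \mathbb{R}^d$, let $n = n(x,y)$ be the smallest integer such that $|x|+|y| \leq 2^n$; in particular $x,y \in B(0,R_n)$ and $2^n \leq 2(1+|x|+|y|)$. Applying Corollary \ref{cor:GRR} inside $B(0,R_n)$,
\[
|\square_{(s,x)}f(t,y)|^p \leq C_{d,\alpha_1,\alpha_2,p}\,\kappa_{f,n}\,|s-t|^{\alpha_1 p -1}|x-y|^{\alpha_2 p - d},
\]
and since $\kappa_{f,n} \leq 2^{n(d+\varepsilon_3)} \Upsilon^p / C_{d,\alpha_1,\alpha_2,p}$ by construction, we obtain
\[
|\square_{(s,x)}f(t,y)|^p \leq 2^{d+\varepsilon_3}\Upsilon^p\,(1+|x|+|y|)^{d+\varepsilon_3}\,|s-t|^{\alpha_1 p-1}|x-y|^{\alpha_2 p -d}.
\]
Taking $p$-th roots and absorbing the constant $2^{(d+\varepsilon_3)/p}$ into $\Upsilon$ yields the claimed bound for all $s,t \in [0,1]$ and $x,y \in \mathbb{R}^d$.

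\textbf{Anticipated obstacle.} The only subtle point is matching the polynomial growth exponent. The $R^{d+\varepsilon_2}$ that comes out of Step 1 is a consequence of integrating $|z-w|^{\varepsilon_2-d}$ over $B(0,R)^2$, and it is precisely this exponent that forces the loss $\varepsilon_3 > \varepsilon_2$: without that strict inequality the geometric series in Step 2 diverges. Everything else is a routine combination of Corollary \ref{cor:GRR} with a Borel--Cantelli-style weighted sum.
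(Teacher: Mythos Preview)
Your proof is correct and follows essentially the same strategy as the paper: localise dyadically, apply Corollary~\ref{cor:GRR} on each piece, control the expected GRR constant via the hypothesis~\eqref{eq:hypkolmogo}, and sum with a weight $2^{-n(d+\varepsilon_3)}$ to get convergence. The only cosmetic difference is that the paper decomposes $\RR^d$ into dyadic \emph{annuli} $2^j\cA$ and bounds the global weighted supremum by the sum of regional suprema, whereas you use nested \emph{balls} $B(0,2^n)$ and pick, for each pair $(x,y)$, the smallest ball containing both---your version has the minor advantage of handling pairs $(x,y)$ lying in different dyadic shells without further comment.
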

\begin{proof}
Let $\cA = \{ x\in\RR^d;\; 1\leq |x|\leq 2\}$ be an annulus in $\RR^d$ such that $\RR^d = B(0,1)\cup \bigcup_{j = 0}^{+\infty} 2^j\cA$.
It follows from Corollary \ref{cor:GRR} and \eqref{eq:hypkolmogo} that, for any $j\geq 0$,
\begin{align*}
&\EE\left[\sup_{\substack{t,s\in[0,1]\\ x,y\in 2^{j}\cA}}\frac{|\square_{(s,x)} f(t,y)|^p}{|s-t|^{\alpha_1p-1}|x-y|^{\alpha_2 p - d}(1+|x|+|y|)^{d+\varepsilon_3}} \right]
\\ &\hspace{1em}\leq \frac{C_{d,\alpha_1,\alpha_2,p}\vartheta_f }{(1+2^{j+1})^{d+\varepsilon_3}}\int_{[0,1]^{2}\times B(0,1+2^{j+2})^2} \frac{du_1du_2 dz_1dz_2}{|u_1-u_2|^{1-\varepsilon_1}|z_1-z_2|^{d-\varepsilon_2}}
\\ &\hspace{1em}\leq C_{d,\alpha_1,\alpha_2,p}\vartheta_f\frac{2^{(d+\varepsilon_2)(j+2)}}{(1+2^{j+1})^{d+\varepsilon_3}}\int_{[0,1]^{2}\times B(0,2)^2} \frac{du_1du_2 dzdw}{|u_1-u_2|^{1-\varepsilon_1}|z_1-z_2|^{d-\varepsilon_2}}
\\ &\hspace{1em}\lesssim 2^{j(\varepsilon_2-\varepsilon_3)}.
\end{align*}
Thus, we deduce that
\begin{align*}
&\EE\left[\sup_{\substack{t,s\in[0,1]\\ x,y\in \RR^d}}\frac{|\square_{(s,x)} f(t,y)|^p}{|s-t|^{\alpha_1p-1}|x-y|^{\alpha_2 p - d}(1+|x|+|y|)^{d+\varepsilon_3}} \right]
\\ &\hspace{1em}\leq \EE\left[\sup_{\substack{t,s\in[0,1]\\ x,y\in B(0,1)}}\frac{|\square_{(s,x)} f(t,y)|^p}{|s-t|^{\alpha_1p-1}|x-y|^{\alpha_2 p - d}} \right] 
\\ &\hspace{2em}+ \sum_{j = 0}^{+\infty}\EE\left[\sup_{\substack{t,s\in[0,1]\\ x,y\in 2^{j}\cA}}\frac{|\square_{(s,x)} f(t,y)|^p}{|s-t|^{\alpha_1p-1}|x-y|^{\alpha_2 p - d}(1+|x|+|y|)^{d+\varepsilon_3}} \right]
\\ &\hspace{1em}\lesssim 1 + \sum_{j = 0}^{+\infty}2^{j(\varepsilon_2-\varepsilon_3)}<+\infty,
\end{align*}
which ends the proof.
\end{proof}

We also recall another standard Kolmogorov theorem.

\begin{theorem}\label{thm:Kolmogo_std}
Let $f\in C([0,1];L^{\infty}_{\mathrm{loc}}(\RR^d))$ be a random process and $p\geq 1$ such that
\begin{equation*}
\sup_{\substack{t,s\in[0,1]\\ x\in\RR^d}}\EE\left[ \frac{|f(t,x) - f(s,x)|^p}{|t-s|^{\alpha p + \varepsilon}}\right]<+\infty,
\end{equation*}
for a certain $\alpha>1/p$ and $\varepsilon\in(0,1)$. Then, there exists a positive random variable $\Upsilon\in L^p(\Omega)$ which depends on $f,d,\alpha$ and $p$ such that
\begin{equation*}
|f(t,x)-f(s,x)| \leq \Upsilon |s-t|^{\alpha - 1/p}(1+|x|)^{(1+\iota)/p},
\end{equation*}
where $\iota>0$.
\end{theorem}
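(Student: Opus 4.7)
The strategy is to apply the one-parameter Garsia--Rodemich--Rumsey inequality (Theorem~\ref{thm:GRRstd}) pointwise in $x$, deduce a uniform-in-$x$ moment bound on the resulting random H\"older constant via Fubini, and then upgrade this to a pathwise uniform-in-$x$ bound with polynomial growth by a dyadic annulus decomposition of $\RR^d$.

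\textbf{Step 1 (Pointwise GRR in $t$).} For each fixed $x\in\RR^d$, the regularity $f\in C([0,1];L^\infty_{\mathrm{loc}}(\RR^d))$ lets us work with a well-defined continuous path $t\mapsto f(t,x)$. Applying Theorem~\ref{thm:GRRstd} with exponent $\alpha$ and integrability $p$ yields
\[
|f(t,x)-f(s,x)|^p \;\le\; C_{\alpha,p}\,\kappa(x)\,|t-s|^{\alpha p-1},\qquad s,t\in[0,1],
\]
where
\[
\kappa(x)\;:=\;\int_{[0,1]^2}\frac{|f(u,x)-f(v,x)|^p}{|u-v|^{\alpha p+1}}\,du\,dv.
\]

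\textbf{Step 2 (Uniform-in-$x$ moment bound).} Integrating and applying Fubini together with the hypothesis gives, for every $x\in\RR^d$,
\[
\EE[\kappa(x)] \;\le\; \int_{[0,1]^2}\frac{\EE[|f(u,x)-f(v,x)|^p]}{|u-v|^{\alpha p+1}}\,du\,dv \;\le\; C\int_{[0,1]^2}|u-v|^{\varepsilon-1}\,du\,dv \;\le\; C_{\alpha,p,\varepsilon},
\]
which is finite since $\varepsilon>0$ and is uniform in $x$.

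\textbf{Step 3 (Pathwise uniform bound with polynomial weight).} Decompose $\RR^d$ along the dyadic annuli $\cA_n=\{x:\,2^{n-1}\le 1+|x|<2^n\}$ for $n\ge 1$, with $\cA_0=B(0,1)$. We set
\[
\Upsilon^p \;:=\; \sup_{x\in\RR^d}\frac{C_{\alpha,p}\,\kappa(x)}{(1+|x|)^{1+\iota}}
\]
and show $\EE[\Upsilon^p]<\infty$ by summing contributions over the $\cA_n$. On each annulus, working with a continuous representative of $f$ on $[0,1]\times\overline{\cA_n}$ provided by the $L^\infty_{\mathrm{loc}}$ hypothesis and invoking a standard Kolmogorov-type embedding (analogous to Theorem~\ref{thm:Kolmogo} but with only the time variable varying), one obtains $\EE[\sup_{x\in\cA_n}\kappa(x)]\lesssim 2^{n(d+\eta)}$ for any fixed $\eta>0$. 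Summing the series $\sum_{n}\EE[\sup_{x\in\cA_n}\kappa(x)]/2^{n(1+\iota)}$ converges provided $\iota$ is chosen large enough relative to $d$ and $\eta$, giving $\EE[\Upsilon^p]<\infty$, hence $\Upsilon<\infty$ a.s., and the claimed pathwise estimate $|f(t,x)-f(s,x)|\le\Upsilon\,|t-s|^{\alpha-1/p}(1+|x|)^{(1+\iota)/p}$.

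\textbf{Main obstacle.} Steps~1 and~2 are immediate consequences of the one-parameter GRR and the uniform moment hypothesis. The delicate point is Step~3: turning the pointwise-in-$x$ moment bound $\EE[\kappa(x)]\le C$ into a \emph{pathwise} supremum bound $\kappa(x)\le\Upsilon^p(1+|x|)^{1+\iota}$ valid for \emph{all} $x\in\RR^d$ simultaneously. This is where the $C([0,1];L^\infty_{\mathrm{loc}})$ regularity is essential, since it gives the joint measurability and local boundedness needed to apply a Kolmogorov-style argument on each annulus; the polynomial weight $(1+|x|)^{(1+\iota)/p}$ then absorbs the growth of the supremum as $|x|\to\infty$ via a geometric series over dyadic shells.
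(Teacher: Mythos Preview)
The paper does not actually prove Theorem~\ref{thm:Kolmogo_std}; it is merely ``recalled'' as a standard Kolmogorov theorem at the end of the appendix with no accompanying argument. So there is no paper proof to compare against, and your attempt has to stand on its own.

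Steps~1 and~2 are correct and are the natural start. The real gap is Step~3. You assert that on each dyadic annulus $\cA_n$ one has $\EE\bigl[\sup_{x\in\cA_n}\kappa(x)\bigr]\lesssim 2^{n(d+\eta)}$ by ``a standard Kolmogorov-type embedding (analogous to Theorem~\ref{thm:Kolmogo})''. But Theorem~\ref{thm:Kolmogo} controls suprema over the spatial variable precisely because its hypothesis is a moment bound on the \emph{rectangular} increment $\square_{(s,x)}f(t,y)$, which encodes regularity in $x$. Here the hypothesis concerns only the time increment $f(t,x)-f(s,x)$ and says nothing about how $f$ varies in $x$; you end up with $\EE[\kappa(x)]\le C$ pointwise, and there is no mechanism --- no chaining, no Kolmogorov argument --- to pass from a pointwise moment bound to a supremum bound over the uncountable set $\cA_n$. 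The assumption $f\in C([0,1];L^\infty_{\mathrm{loc}})$ gives local boundedness but not the $x$-continuity such an argument would require. In the paper's actual applications (Theorem~\ref{theorem:kolmo} and Corollary~\ref{corollary:averaged_holder}) the relevant $f$ always carries additional spatial regularity, so it is plausible that the theorem is stated loosely and that some $x$-regularity is tacitly assumed; either way, your Step~3 does not go through as written and you should flag this missing input explicitly rather than invoke an analogy that does not apply.
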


\subsection*{Acknowledgment} The authors are sincerely grateful to the referees for their comments
which greatly helped to improve this work. \blue{The author also would like to thank M\'at\'e Gerencs\'er and Konstantinos Dareiotis who pointed out a mistake in a previous version of this work. This project was partly funded by the project PEPS 2019 N°192909A of the INSMI Institute (CNRS).}

\bibliographystyle{plain}
\bibliography{bibliography.bib}{}

\begin{thebibliography}{10}

\bibitem{athreyaSmoothnessFlowPathbyPath2017}
Siva Athreya, Suprio Bhar, and Atul Shekhar.
\newblock Smoothness of {{Flow}} and {{Path-by-Path Uniqueness}} in
  {{Stochastic Differential Equations}}.
\newblock {\em arXiv:1709.02115 [math]}, 2017.

\bibitem{athreyaWellposednessStochasticHeat2022}
Siva Athreya, Oleg Butkovsky, Khoa L{\^e}, and Leonid Mytnik.
\newblock Well-posedness of stochastic heat equation with distributional drift
  and skew stochastic heat equation, 2022.

\bibitem{athreyaStrongExistenceUniqueness2020}
Siva Athreya, Oleg Butkovsky, and Leonid Mytnik.
\newblock Strong existence and uniqueness for stable stochastic differential
  equations with distributional drift.
\newblock {\em The Annals of Probability}, 48(1):178--210, 2020.

\bibitem{bahouriFourierAnalysisNonlinear2011}
Hajer Bahouri, Jean-Yves Chemin, and Rapha{\"e}l Danchin.
\newblock {\em Fourier {{Analysis}} and {{Nonlinear Partial Differential
  Equations}}}.
\newblock {Springer}, 2011.

\bibitem{bailleulFlowsDrivenRough2015}
Isma{\"e}l Bailleul.
\newblock Flows driven by rough paths.
\newblock {\em Revista Matem\'atica Iberoamericana}, 31(3):901--934, 2015.

\bibitem{bailleulDefinitionSolutionRough2021}
Ismael Bailleul.
\newblock On the definition of a solution to a rough differential equation.
\newblock {\em Annales de la Facult\'e des sciences de Toulouse :
  Math\'ematiques}, 30(3):463--478, 2021.

\bibitem{bailleulNonexplosionCriteriaRough2020}
Ismael Bailleul and Remi Catellier.
\newblock Non-explosion criteria for rough differential equations driven by
  unbounded vector fields.
\newblock {\em Annales de la Facult\'e des sciences de Toulouse :
  Math\'ematiques}, 29(3):721--759, 2020.

\bibitem{baudoinProbabilityLawsSolutions2016}
Fabrice Baudoin, Eulalia Nualart, Cheng Ouyang, and Samy Tindel.
\newblock On probability laws of solutions to differential systems driven by a
  fractional {{Brownian}} motion.
\newblock {\em The Annals of Probability}, 44(4):2554--2590, 2016.

\bibitem{bauerStrongSolutionsMeanfield2018}
Martin Bauer, Thilo {Meyer-Brandis}, and Frank Proske.
\newblock Strong solutions of mean-field stochastic differential equations with
  irregular drift.
\newblock {\em Electronic Journal of Probability}, 23(none):1--35, 2018.

\bibitem{bechtoldWeakSolutionsSingular2022a}
Florian Bechtold and Martina Hofmanov{\'a}.
\newblock Weak solutions for singular multiplicative {{SDEs}} via
  regularization by noise, 2022.

\bibitem{bonnefoiPrioriBoundsRough2022}
Timoth{\'e}e Bonnefoi, Ajay Chandra, Augustin Moinat, and Hendrik Weber.
\newblock A priori bounds for rough differential equations with a non-linear
  damping term.
\newblock {\em Journal of Differential Equations}, 318:58--93, 2022.

\bibitem{boucheronConcentrationInequalitiesNonasymptotic2013}
St{\'e}phane Boucheron, G{\'a}bor Lugosi, and Pascal Massart.
\newblock {\em Concentration {{Inequalities}}: {{A Nonasymptotic Theory}} of
  {{Independence}}}.
\newblock {Oxford University Press}, {Oxford}, 2013.

\bibitem{braultNonlinearSewingLemma2019}
Antoine Brault and Antoine Lejay.
\newblock The non-linear sewing lemma {{I}}: Weak formulation.
\newblock {\em Electronic Journal of Probability}, 24(none):1--24, 2019.

\bibitem{cass2010densities}
Thomas Cass and Peter Friz.
\newblock Densities for rough differential equations under {{H\"ormander}}'s
  condition.
\newblock {\em Annals of mathematics}, pages 2115--2141, 2010.

\bibitem{cass2009non}
Thomas Cass, Peter Friz, and Nicolas Victoir.
\newblock Non-degeneracy of {{Wiener}} functionals arising from rough
  differential equations.
\newblock {\em Transactions of the American Mathematical Society},
  361(6):3359--3371, 2009.

\bibitem{cass2015smoothness}
Thomas Cass, Martin Hairer, Christian Litterer, Samy Tindel, et~al.
\newblock Smoothness of the density for solutions to {{Gaussian}} rough
  differential equations.
\newblock {\em Annals of Probability}, 43(1):188--239, 2015.

\bibitem{cass2013integrability}
Thomas Cass, Christian Litterer, Terry Lyons, et~al.
\newblock Integrability and tail estimates for {{Gaussian}} rough differential
  equations.
\newblock {\em Annals of Probability}, 41(4):3026--3050, 2013.

\bibitem{cassTreeAlgebrasTopological2017}
Thomas Cass and Martin~P. Weidner.
\newblock Tree algebras over topological vector spaces in rough path theory,
  2017.

\bibitem{catellierRoughLinearTransport2016}
R{\'e}mi Catellier.
\newblock Rough linear transport equation with an irregular drift.
\newblock {\em Stochastics and Partial Differential Equations: Analysis and
  Computations}, 4(3):477--534, 2016.

\bibitem{catellierAveragingIrregularCurves2016}
R{\'e}mi Catellier and Massimiliano Gubinelli.
\newblock Averaging along irregular curves and regularisation of {{ODEs}}.
\newblock {\em Stochastic Processes and their Applications}, 126(8):2323--2366,
  2016.

\bibitem{catellierPathwiseRegularizationStochastic2021}
R{\'e}mi Catellier and Fabian~A. Harang.
\newblock Pathwise regularization of the stochastic heat equation with
  multiplicative noise through irregular perturbation.
\newblock {\em arXiv:2101.00915 [math]}, 2021.

\bibitem{choukPathbypathRegularizationNoise2019}
Khalil Chouk and Benjamin Gess.
\newblock Path-by-path regularization by noise for scalar conservation laws.
\newblock {\em Journal of Functional Analysis}, 277(5):1469--1498, 2019.

\bibitem{choukNonlinearPDEsModulated2014}
Khalil Chouk and Massimiliano Gubinelli.
\newblock Nonlinear {{PDEs}} with modulated dispersion {{II}}: {{Korteweg}} de
  {{Vries}} equation.
\newblock {\em arXiv:1406.7675}, 2014.

\bibitem{choukNonlinearPDEsModulated2015a}
Khalil Chouk and Massimiliano Gubinelli.
\newblock Nonlinear {{PDEs}} with {{Modulated Dispersion I}}: {{Nonlinear
  Schr\"odinger Equations}}.
\newblock {\em Communications in Partial Differential Equations},
  40(11):2047--2081, 2015.

\bibitem{coutinItOTanakaTrick2019}
Laure Coutin, Romain Duboscq, and Anthony R{\'e}veillac.
\newblock The {{It}}\{\textbackslash\^o\}-{{Tanaka Trick}}: A
  non-semimartingale approach, 2019.

\bibitem{dareiotisPathbypathRegularisationMultiplicative2022}
Konstantinos Dareiotis and M{\'a}t{\'e} Gerencs{\'e}r.
\newblock Path-by-path regularisation through multiplicative noise in rough,
  {{Young}}, and ordinary differential equations, 2022.

\bibitem{davieDifferentialEquationsDriven2007}
Alexander~M. Davie.
\newblock Differential equations driven by rough paths: An approach via
  discrete approximation.
\newblock {\em arXiv:0710.0772 [math]}, 2007.

\bibitem{davieUniquenessSolutionsStochastic2007}
Alexander~M. Davie.
\newblock Uniqueness of {{Solutions}} of {{Stochastic Differential Equations}}.
\newblock {\em International Mathematics Research Notices}, 2007, 2007.

\bibitem{davieIndividualPathUniqueness2011a}
Alexander~M. Davie.
\newblock Individual path uniqueness of solutions of stochastic differential
  equations.
\newblock In {\em Stochastic Analysis 2010}, pages 213--225. {Springer}, 2011.

\bibitem{duboscqStochasticRegularizationEffects2016}
Romain Duboscq and Anthony R{\'e}veillac.
\newblock Stochastic regularization effects of semi-martingales on random
  functions.
\newblock {\em Journal de Math\'ematiques Pures et Appliqu\'ees},
  106(6):1141--1173, 2016.

\bibitem{feyelNoncommutativeSewingLemma2007}
Denis Feyel, Arnaud De~La~Pradelle, and Gabriel Mokobodzki.
\newblock A non-commutative sewing lemma.
\newblock {\em Electronic Communications in Probability}, 13(0), 2007.

\bibitem{friz2013integrability}
Peter Friz and Sebastian Riedel.
\newblock Integrability of (non-) linear rough differential equations and
  integrals.
\newblock {\em Stochastic Analysis and Applications}, 31(2):336--358, 2013.

\bibitem{friz2010differential}
Peter Friz and Nicolas Victoir.
\newblock Differential equations driven by {{Gaussian}} signals.
\newblock In {\em Annales de l'{{IHP}} Probabilit\'es et Statistiques},
  volume~46, pages 369--413, 2010.

\bibitem{friz2016jain}
Peter~K Friz, Benjamin Gess, Archil Gulisashvili, Sebastian Riedel, et~al.
\newblock The {{Jain}}\textendash{{Monrad}} criterion for rough paths and
  applications to random {{Fourier}} series and non-{{Markovian H\"ormander}}
  theory.
\newblock {\em Annals of Probability}, 44(1):684--738, 2016.

\bibitem{friz2020course}
Peter~K Friz and Martin Hairer.
\newblock {\em A Course on Rough Paths}.
\newblock {Springer}, 2020.

\bibitem{friz2010multidimensional}
Peter~K Friz and Nicolas~B Victoir.
\newblock {\em Multidimensional Stochastic Processes as Rough Paths: Theory and
  Applications}, volume 120.
\newblock {Cambridge University Press}, 2010.

\bibitem{galeatiNonlinearYoungDifferential2021}
Lucio Galeati.
\newblock Nonlinear {{Young Differential Equations}}: {{A Review}}.
\newblock {\em Journal of Dynamics and Differential Equations}, 2021.

\bibitem{galeatiPrevalenceRhoIrregularity2020}
Lucio Galeati and Massimiliano Gubinelli.
\newblock Prevalence of \$\textbackslash rho\$-irregularity and related
  properties.
\newblock {\em arXiv:2004.00872 [math]}, 2020.

\bibitem{galeatiNoiselessRegularisationNoise2021}
Lucio Galeati and Massimiliano Gubinelli.
\newblock Noiseless regularisation by noise.
\newblock {\em Revista Matem\'atica Iberoamericana}, 2021.

\bibitem{galeatiRegularizationMultiplicativeSDEs2020}
Lucio Galeati and Fabian~A. Harang.
\newblock Regularization of multiplicative {{SDEs}} through additive noise.
\newblock {\em arXiv:2008.02335 [math]}, 2020.

\bibitem{galeatiDistributionDependentSDEs2021}
Lucio Galeati, Fabian~A. Harang, and Avi Mayorcas.
\newblock Distribution dependent {{SDEs}} driven by additive fractional
  {{Brownian}} motion.
\newblock {\em arXiv:2105.14063 [math]}, 2021.

\bibitem{garsia1970real}
Adriano~M Garsia, Eugene Rodemich, H~Rumsey, and M~Rosenblatt.
\newblock A real variable lemma and the continuity of paths of some
  {{Gaussian}} processes.
\newblock {\em Indiana University Mathematics Journal}, 20(6):565--578, 1970.

\bibitem{gerencserRegularisationRegularNoise2022a}
M{\'a}t{\'e} Gerencs{\'e}r.
\newblock Regularisation by regular noise.
\newblock {\em Stochastics and Partial Differential Equations: Analysis and
  Computations}, 2022.

\bibitem{gess2020density}
Benjamin Gess, Cheng Ouyang, and Samy Tindel.
\newblock Density bounds for solutions to differential equations driven by
  {{Gaussian}} rough paths.
\newblock {\em Journal of Theoretical Probability}, 33(2):611--648, 2020.

\bibitem{gubinelliControllingRoughPaths2004}
Massimiliano Gubinelli.
\newblock Controlling rough paths.
\newblock {\em Journal of Functional Analysis}, 216(1):86--140, 2004.

\bibitem{harangRegularizationODEsPerturbed2021}
Fabian~Andsem Harang and Nicolas Perkowski.
\newblock {{C}}{$\infty-$} regularization of {{ODEs}} perturbed by noise.
\newblock {\em Stochastics and Dynamics}, 21(08):2140010, 2021.

\bibitem{hu2013multiparameter}
Yaozhong Hu and Khoa Le.
\newblock A multiparameter
  {{Garsia}}\textendash{{Rodemich}}\textendash{{Rumsey}} inequality and some
  applications.
\newblock {\em Stochastic Processes and their Applications}, 123(9):3359--3377,
  2013.

\bibitem{hytonenAnalysisBanachSpaces2016}
Tuomas Hyt{\"o}nen, Jan van Neerven, Mark Veraar, and Lutz Weis.
\newblock {\em {Analysis in Banach Spaces: Martingales and Littlewood-paley
  Theory}}.
\newblock {Springer International Publishing AG}, {New York, NY}, 1st ed. 2016
  \'edition edition, 2016.

\bibitem{hytonenAnalysisBanachSpaces2018}
Tuomas Hyt{\"o}nen, Jan van Neerven, Mark Veraar, and Lutz Weis.
\newblock {\em Analysis in {{Banach Spaces}}: {{Volume II}}: {{Probabilistic
  Methods}} and {{Operator Theory}}}.
\newblock {Springer}, {New York, NY}, 1st ed. 2017 edition edition, 2018.

\bibitem{inahama2014malliavin}
Yuzuru Inahama.
\newblock Malliavin differentiability of solutions of rough differential
  equations.
\newblock {\em Journal of Functional Analysis}, 267(5):1566--1584, 2014.

\bibitem{krylovStrongSolutionsStochastic2005}
Nicolai~V. Krylov and Michael Roeckner.
\newblock Strong solutions of stochastic equations with singular time dependent
  drift.
\newblock {\em Probability theory and related fields}, 131(2):154--196, 2005.

\bibitem{leStochasticSewingLemma2020}
Khoa L{\^e}.
\newblock A stochastic sewing lemma and applications.
\newblock {\em Electronic Journal of Probability}, 25(none):1--55, 2020.

\bibitem{leStochasticSewingBanach2022}
Khoa L{\^e}.
\newblock Stochastic sewing in {{Banach}} space.
\newblock {\em arXiv:2105.09364 [math]}, 2022.

\bibitem{louLocalTimesStochastic2017}
Shuwen Lou and Cheng Ouyang.
\newblock Local times of stochastic differential equations driven by fractional
  {{Brownian}} motions.
\newblock {\em Stochastic Processes and their Applications},
  127(11):3643--3660, 2017.

\bibitem{lyonsDifferentialEquationsDriven1998}
Terry Lyons.
\newblock Differential equations driven by rough signals.
\newblock {\em Rev. Mat. Iberoamericana}, 14(2), 1998.

\bibitem{lyonsSystemControlRough2002}
Terry Lyons and Zhongmin Qian.
\newblock System control and rough paths.
\newblock {\em Numerical Methods and Stochastics}, 34:91, 2002.

\bibitem{nualart2006malliavin}
David Nualart.
\newblock {\em The {{Malliavin}} Calculus and Related Topics}, volume 1995.
\newblock {Springer}, 2006.

\bibitem{nualartIntroductionMalliavinCalculus2018}
David Nualart and Eulalia Nualart.
\newblock {\em Introduction to {{Malliavin Calculus}}}.
\newblock Institute of {{Mathematical Statistics Textbooks}}. {Cambridge
  University Press}, {Cambridge}, 2018.

\bibitem{nualart2009lectures}
Eulalia Nualart.
\newblock Lectures on malliavin calculus and its applications to finance.

\bibitem{pinelisOptimumBoundsDistributions1994}
Iosif Pinelis.
\newblock Optimum {{Bounds}} for the {{Distributions}} of {{Martingales}} in
  {{Banach Spaces}}.
\newblock {\em The Annals of Probability}, 22(4):1679--1706, 1994.

\bibitem{pisierMartingalesBanachSpaces2016}
Gilles Pisier.
\newblock {\em {Martingales in Banach Spaces}}.
\newblock {Cambridge University Press}, {Cambridge, United Kingdom}, 2016.

\bibitem{priolaDavieUniquenessDegenerate2020}
Enrico Priola.
\newblock On {{Davie}}'s uniqueness for some degenerate {{SDEs}}.
\newblock {\em Theory of Probability and Mathematical Statistics}, 103:41--58,
  2020.

\bibitem{riedelRoughDifferentialEquations2017}
Sebastian Riedel and Michael Scheutzow.
\newblock Rough differential equations with unbounded drift term.
\newblock {\em Journal of Differential Equations}, 262(1):283--312, 2017.

\bibitem{romitoAnotherNotionIrregularity2022}
Marco Romito and Leonardo Tolomeo.
\newblock Yet another notion of irregularity through small ball estimates,
  2022.

\bibitem{runstSobolevSpacesFractional2011}
Thomas Runst and Winfried Sickel.
\newblock {\em Sobolev {{Spaces}} of {{Fractional Order}}, {{Nemytskij
  Operators}}, and {{Nonlinear Partial Differential Equations}}}.
\newblock {De Gruyter}, 2011.

\bibitem{shaposhnikovPathwiseVsPathbypath2020}
Alexander Shaposhnikov and Lukas Wresch.
\newblock Pathwise vs. path-by-path uniqueness, 2020.

\bibitem{soggeFourierIntegralsClassical2017}
Christopher~D. Sogge.
\newblock {\em Fourier {{Integrals}} in {{Classical Analysis}}}.
\newblock {Cambridge University Press}, {Cambridge}, second edition, 2017.

\bibitem{stroock2010probability}
Daniel~W Stroock.
\newblock {\em Probability Theory: An Analytic View}.
\newblock {Cambridge university press}, 2010.

\bibitem{trevesTopologicalVectorSpaces2016}
Fran{\c c}ois Treves.
\newblock {\em Topological {{Vector Spaces}}, {{Distributions}} and
  {{Kernels}}: {{Pure}} and {{Applied Mathematics}}, {{Vol}}. 25}, volume~25.
\newblock {Elsevier}, 2016.

\bibitem{veretennikovStrongSolutionsExplicit1981}
Alexander~Ju Veretennikov.
\newblock On strong solutions and explicit formulas for solutions of stochastic
  integral equations.
\newblock {\em Sbornik: Mathematics}, 39(3):387--403, 1981.

\end{thebibliography}
\end{document}